\documentclass[11pt,reqno]{amsart}
\usepackage{amsmath,amssymb,latexsym,esint,cite,mathrsfs}
\usepackage{verbatim,wasysym}
\usepackage[left=2.6cm,right=2.6cm,top=2.8cm,bottom=2.8cm]{geometry}
\usepackage{tikz,enumitem,graphicx, subfig, microtype, color}
\usepackage{epic,eepic}
\usepackage{amsmath}
\allowdisplaybreaks
\usepackage[colorlinks=true,urlcolor=blue, citecolor=red,linkcolor=blue,
linktocpage,pdfpagelabels, bookmarksnumbered,bookmarksopen]{hyperref}
\usepackage[hyperpageref]{backref}
\usepackage[english]{babel}

\numberwithin{equation}{section}

\newtheorem{thm}{Theorem}[section]
\newtheorem{lem}[thm]{Lemma}

\newtheorem{Prop}[thm]{Proposition}
\newtheorem{Def}[thm]{Definition}
\newtheorem{Rem}[thm]{Remark}

\newcommand{\dist}{{\rm dist}}

\newcommand{\R}{\mathbb{R}}

\def\pa {\partial}

\def\sp {\quad}

\begin{document}
	\baselineskip=14pt
	
	\title[Nondegeneracy]{ Nondegeneracy of positive solutions for a biharmonic hartree equation and its applications}
	
	\author[X. Zhang]{Xinyun Zhang}
	\author[W. Ye]{Weiwei Ye}
	\author[M. Yang]{Minbo Yang}

	\address{Minbo Yang  \newline\indent Department of Mathematics, Zhejiang Normal University, \newline\indent
	Jinhua, Zhejiang, 321004, People's Republic of China}
    \email{mbyang@zjnu.edu.cn}

	\address{Weiwei Ye  \newline\indent Department of Mathematics, Zhejiang Normal University, \newline\indent
		Jinhua, Zhejiang, 321004, People's Republic of China
		\newline\indent Department of Mathematics, Fuyang Normal University, \newline\indent
		Fuyang, Anhui, 236037, People's Republic of China}\email{ yeweiweime@163.com}

	\address{Xinyun Zhang  \newline\indent Department of Mathematics, Zhejiang Normal University, \newline\indent
		Jinhua, Zhejiang, 321004, People's Republic of China}
	\email{xyzhang@zjnu.edu.cn}

	\subjclass[2020]{35J20, 35J60, 35A15}
	\keywords{Critical bi-harmonic Hartree type equation; Hardy-Littlewood-Sobolev inequality; Infinitely many solutions; Poho\v{z}aev identities; Finite dimensional reduction.}
	
	\thanks{$^\ddag$Minbo Yang is the corresponding author who is partially supported by NSFC (11971436, 12011530199) and ZJNSF (LZ22A010001, LD19A010001). Weiwei Ye is partially supported by Natural Science Research key Projects in Universities in Anhui Province of China(2023AH050425).}

	\begin{abstract}
		In this paper, we are interested in some problems related to the following biharmonic hartree equation 
		\begin{equation*}
			\Delta^{2} u=(|x|^{-\alpha}\ast |u|^{p})u^{p-1},\sp  \text{in}\quad\R^N.
		\end{equation*}
		where $p=\frac{2N-\alpha}{N-4}$, $N\geq 9$ and $0<\alpha<N$. First, by using the spherical harmonic decomposition and the Funk-Heck formula of the spherical harmonic functions, we prove the nondegeneracy of the positive solutions of the above biharmonic equation. As applications, we investigate the stability of a version of nonlocal Sobolev inequality
		\begin{equation}
			\int_{\R^N}|\Delta u|^{2} \geq S^{*}\left( \int_{\R^N}\Big(|x|^{-\alpha}\ast |u|^{p}\Big)u^{p} dx\right)^{\frac{1}{p}},
		\end{equation}
		and give a gradient form remainder. Moreover, by applying a finite dimension reduction and local Poho$\check{z}$aev identity, we can also construct multi-bubble solutions for the following equation with potential
		\begin{equation*}
			\Delta^2 u+V(|x'|, x'')u
			=\Big(|x|^{-\alpha}\ast |u|^{p}\Big)u^{p-1}\hspace{4.14mm}x\in \mathbb{R}^N.
		\end{equation*}
		where $N\geq9$, $(x',x'')\in \mathbb{R}^2\times\mathbb{R}^{N-2}$ and $V(|x'|, x'')$ is a bounded and nonnegative function. We will show what is the role of the order of the Riesz potential in proving the existence result. In fact, the existence result is restricted to the range $6-\frac{12}{N-4}\leq\alpha<N$.
	\end{abstract}

	\maketitle
	
	\begin{center}
		\begin{minipage}{8.5cm}
			\small
			\tableofcontents
		\end{minipage}
	\end{center}
	%
	\section{Introduction and main results}
	We study some elliptic variational problems related to the following biharmonic equation with Hartree type nonlinearity
	\begin{equation}\label{he}
		\Delta^{2} u=(|x|^{-\alpha}\ast |u|^{p})u^{p-1},\sp  \text{in}\quad\R^N.
	\end{equation}
	where $p=\frac{2N-\alpha}{N-4}\geq 2$, $N\geq 9$ and $0<\alpha<N$. We are primarily interested in equation\eqref{he} since it is $H^2$ critical which implies that both the equation \eqref{he} and the $H^2$ norm are invariant under the scaling $u_{\rho }(x)=\rho^{\frac{N-4}{2}}u(\rho x)$.
	Equation \eqref{he} is strongly related to the following evolutionary Hartree equation, which includes a fourth-order dispersive term, 
	\begin{equation}\label{he1}
		i\partial_{t}u+\Delta^{2} u=(|x|^{\alpha}\ast |u|^{p})u^{p-1},\sp  (t,x)\in\R\times\R^N,\sp N\geq 9.
	\end{equation}
	The solution for problem \eqref{he} also acts as a stationary solution for equation \eqref{he1}. The Hartree equation with fourth-order dispersive term has numerous fascinating applications in the fields of quantum theory for large systems of non-relativistic bosonic atoms and molecules, as well as in the theory of laser propagation through a medium (\cite{FL,K}). Many qualitative properties of solutions to Hartree equations have been widely researched and applied.

	Before stating our primary motivation, we first recall
	the well-known Hardy-Littlewood-Sobolev(HLS) inequality (see \cite{L,LL}).
	\begin{Prop}\label{pro1.1}
		Let $t,\,r>1$ and $0<\alpha <N$ be such that $\frac{1}{t}+\frac{\alpha}{N}+\frac{1}{r}=2$.
		Then there is a sharp constant $C(N,\alpha,t)$ such that, for $f\in L^{t}(\R^N)$
		and $h\in L^{r}(\R^N)$,
		$$
		\left|\int_{\R^{N}}\int_{\R^{N}}\frac{f(x)h(y)}{|x-y|^{\alpha}}dxdy\right|
		\leq C(N,\alpha,t) |f|_{L^t(\R^N)}|h|_{L^r(\R^N)}.
		$$
		If $t=r=2N/(2N-\alpha)$, then
		$$
		C(t,N,\alpha,r)=C(N,\alpha)=\pi^{\frac{\alpha}{2}}\frac{\Gamma(\frac{N}{2}-\frac{\alpha}{2})}{\Gamma(N-\frac{\alpha}{2})}\left\{\frac{\Gamma(\frac{N}{2})}{\Gamma(N)}\right\}^{-1+\frac{\alpha}{N}}.
		$$
		In this case there is equality is achieved if and only if $f\equiv Ch$ and
		$$
		h(x)=A(\gamma^{2}+|x-a|^{2})^{-(2N-\alpha)/2}
		$$
		for some $A\in \mathbb{C}$, $0\neq\gamma\in\mathbb{R}$ and $a\in \mathbb{R}^{N}$.
	\end{Prop}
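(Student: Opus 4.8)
The plan is to prove the statement in two stages: first the inequality with \emph{some} finite constant by standard real-variable methods, and then, in the conformally invariant diagonal case $t=r=2N/(2N-\alpha)$, the sharp constant together with the characterization of equality, by combining symmetrization with the conformal invariance of the functional.

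\emph{A finite constant.} Rewriting the left-hand side via the Riesz potential $I_\alpha g(x):=\int_{\R^N}|x-y|^{-\alpha}g(y)\,dy$, the assertion is equivalent to the boundedness of $g\mapsto I_\alpha g$ from $L^r(\R^N)$ into $L^{t'}(\R^N)$. I would first prove the weak-type bound $|\{|I_\alpha g|>\tau\}|\le (C\,|g|_{L^r}/\tau)^{t'}$ by splitting the kernel at an undetermined radius $R$, estimating the near part by Hölder's inequality and the far part trivially, and optimizing over $R$; applying Marcinkiewicz interpolation between two such weak-type endpoints lying on the critical line then yields the strong $(r,t')$ estimate (this uses $t,r>1$ strictly, i.e.\ that we are away from the $L^1$ endpoint). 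Testing with dilates $g(\lambda\cdot)$ forces the homogeneity relation $\frac1t+\frac\alpha N+\frac1r=2$. Equivalently, one could decompose $f$ and $h$ by the layer-cake formula and reduce to the elementary estimate of $\iint_{A\times B}|x-y|^{-\alpha}\,dx\,dy$ for sets of finite measure.

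\emph{The sharp constant and extremals.} By the Riesz rearrangement inequality, replacing $f,h$ by their symmetric-decreasing rearrangements does not decrease $\iint f(x)h(y)|x-y|^{-\alpha}$ and preserves the $L^{2N/(2N-\alpha)}$ norms, so the supremum is governed by radial nonincreasing functions; moreover the bilinear form $B(f,h):=\iint f(x)h(y)|x-y|^{-\alpha}$ is positive definite, so Cauchy--Schwarz reduces the extremal problem to the diagonal one $\sup_f B(f,f)/|f|_{2N/(2N-\alpha)}^{2}$ and pins down equality to the case $f\equiv C h$. The decisive structural fact is that the quotient
\[
J(f):=\frac{B(f,f)}{|f|_{L^{2N/(2N-\alpha)}(\R^N)}^{2}}
\]
is invariant under the conformal group of $\R^N\cup\{\infty\}$: under the stereographic projection $\mathcal S:\R^N\to S^N$ one has $|\mathcal S x-\mathcal S y|^{2}=4|x-y|^{2}\big((1+|x|^2)(1+|y|^2)\big)^{-1}$, and the conformal Jacobian factors combine with the exponent $2N/(2N-\alpha)$ so that $J$ equals the analogous quotient for functions on $S^N$. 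On the sphere I would run the competing-symmetries iteration: alternately apply symmetric-decreasing rearrangement about the north pole and the conformal map induced by $x\mapsto x/|x|^{2}$; each step is nondecreasing for the quotient, the composition is a strict contraction unless the function is already constant on $S^N$, and hence the only maximizers on $S^N$ are constants. Pulling back gives the extremal profile $h(x)=A(1+|x|^2)^{-(2N-\alpha)/2}$ on $\R^N$, and applying dilations and translations (also conformal) produces the full family $A(\gamma^2+|x-a|^2)^{-(2N-\alpha)/2}$. Substituting this profile and evaluating the resulting Beta/Gamma integrals yields the displayed value of $C(N,\alpha)$.

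\emph{Main obstacle.} The real difficulty is concentrated in the last step: checking the precise conformal covariance (the exact power of the Jacobian) and, above all, proving that the competing-symmetries flow genuinely converges to a constant function — it is this convergence, rather than a bare inequality, that produces the classification of equality cases. A technically lighter alternative I would keep in reserve is Frank--Lieb's rearrangement-free argument, which deduces the sharp inequality from an ``inversion positivity'' property of the kernel together with a reduction to a one-dimensional spectral identity, and also yields the extremals directly; the final evaluation of the Gamma-function constant from the explicit optimizer is then routine though delicate.
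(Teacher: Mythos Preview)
Your proposal is a faithful and competent sketch of the classical route to the sharp Hardy--Littlewood--Sobolev inequality: weak-type plus Marcinkiewicz for a finite constant, then Riesz rearrangement, positivity of the kernel, conformal invariance via stereographic projection, and Lieb's competing-symmetries iteration to pin down the constant and the extremals; you also correctly identify the Frank--Lieb rearrangement-free argument as an alternative. There is no real gap in what you outline, only the usual caveat that the convergence of the competing-symmetries flow is the genuinely nontrivial step and would need careful justification.

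However, compared with the paper there is nothing to match: the paper does \emph{not} prove Proposition~\ref{pro1.1} at all. It is simply quoted as a known result with the citations \cite{L,LL} (Lieb's 1983 paper and the Lieb--Loss textbook), and the authors immediately use it to derive the nonlocal Sobolev inequality \eqref{NoBi}. So your proposal goes far beyond what the paper does; what you have written is essentially a summary of Lieb's original proof, which is exactly the source the paper is citing.
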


From Porposition \ref{pro1.1},  it is easy to see that for any $u\in {{D}^{2,2}(\R^N)}$,
	\begin{equation}\label{NoBi}
		\int_{\R^N}|\Delta u|^{2} \geq S^{*}\left( \int_{\R^N}\Big(|x|^{-\alpha}\ast |u|^{p}\Big)u^{p} dx\right)^{\frac{1}{p}},
	\end{equation}
	where 
	\begin{equation}\label{MinP}
		S^{*}:=\inf_{u\in\mathcal D^{2,2}(\R^N)\backslash\lbrace 0\rbrace}\frac{\int_{\mathbb{R}^{N}}|\Delta u|^2dx}{\left(\int_{\mathbb{R}^{N}}\Big(|x|^{-\alpha}\ast |u|^{p}\Big)u^{p}dx\right)^{\frac{1}{p}}}.
	\end{equation}
And it is obvious that equation \eqref{he} is the Euler-Lagrange equation associated to the minimizing problem \eqref{MinP}.  
	The existence and uniqueness of positive solutions of the critical Hartree equation \eqref{he} with $\alpha=8$ and $p=2$ was proved in \cite{CD}. 
	\begin{lem}(Theorem 1.1, \cite{CD})
		Assume that $N\geq9$. Suppose $u\in D^{2,2}(\R^N)$ is a positive solution of \eqref{he}. Then $u$ is radially symmetric and monotone decreasing about some point $x_0 \in \mathbb{R^N}$, in particular, the positive classical solution $u$ must assume the following form
		$
			W_{x_0,\beta}(x)=\beta^{\frac{N-4}{2}}W(\beta(x-x_0)),
		$
		where $$W(x)=\left( \frac{(N+2)N(N-2)(N-4)!}{\pi^\frac{N}{2}\Gamma(\frac{N-8}{2})}\right) ^{\frac{1}{2}}\left(\frac{1}{1+|x|^2} \right)^{\frac{N-4}{2}}.$$
	\end{lem}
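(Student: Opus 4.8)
I would establish the classification in three steps: (i) upgrade regularity and rewrite \eqref{he} as an equivalent cooperative integral system in $\R^N$; (ii) run an integral‑type moving–plane / moving–sphere argument on this system to get symmetry and the precise profile; (iii) substitute the profile back into \eqref{he} to compute the normalizing constant. The heart of the argument is step (ii) together with the integral reformulation in step (i).

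\textbf{Step 1: regularity and an equivalent integral system.} Let $u\in D^{2,2}(\R^N)$ be a positive solution. Since $p=\frac{2N-\alpha}{N-4}$ is exactly the exponent at which the Hardy--Littlewood--Sobolev inequality of Proposition \ref{pro1.1} makes the right‑hand side of \eqref{he} lie in the dual of $D^{2,2}$, a Brezis--Kato / Moser iteration applied to $\Delta^{2}u=(|x|^{-\alpha}\ast u^{p})u^{p-1}$ gives $u\in L^{\infty}(\R^N)\cap C^{4}(\R^N)$ together with the decay $u(x)=O(|x|^{4-N})$ and $(|x|^{-\alpha}\ast u^{p})(x)=O(|x|^{-\alpha})$ as $|x|\to\infty$. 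Because $\Delta^{2}u\ge 0$, the function $w:=-\Delta u$ satisfies $-\Delta w\ge 0$; using $u,w\to 0$ at infinity and the maximum principle one gets $w>0$ and the Green representations $w=c_{1}\,|x|^{2-N}\ast(\Delta^{2}u)$ and $u=c_{2}\,|x|^{2-N}\ast w$ (the harmonic parts are killed by the decay). Composing the two Riesz potentials ($I_{2}\ast I_{2}=c\,I_{4}$) and setting $v:=|x|^{-\alpha}\ast u^{p}$, I obtain the cooperative integral system
\begin{equation*}
	u(x)=a_{N}\int_{\R^N}\frac{v(y)\,u(y)^{p-1}}{|x-y|^{N-4}}\,dy,\qquad
	v(x)=\int_{\R^N}\frac{u(y)^{p}}{|x-y|^{\alpha}}\,dy,
\end{equation*}
whose positive kernels and exponents are matched precisely to the scaling $u_{\rho}(x)=\rho^{\frac{N-4}{2}}u(\rho x)$.

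\textbf{Step 2: symmetry and the profile.} I would then apply the method of moving spheres in integral form (in the spirit of Li--Zhu, Li--Zhang and Chen--Li--Ou) to the pair $(u,v)$. Both kernels $|x-y|^{4-N}$ and $|x-y|^{-\alpha}$ are strictly decreasing in $|x-y|$ and the system is cooperative, so for each center $x_{0}$ and radius $\mu>0$ one compares $u$ (resp. $v$) with its Kelvin transform $u_{x_{0},\mu}(x)=\big(\mu/|x-x_{0}|\big)^{N-4}u\big(x_{0}+\mu^{2}(x-x_{0})/|x-x_{0}|^{2}\big)$. An elementary kernel inequality shows that the sign of $u-u_{x_{0},\mu}$ on $\R^N\setminus B_{\mu}(x_{0})$ controls itself, so the moving procedure never gets stuck before a critical radius; the usual dichotomy then yields either that the critical radius is infinite for some $x_{0}$ — which, by the slow‑decay conclusion it would force, contradicts $u\in L^{2N/(N-4)}(\R^N)$ — or that $u\equiv u_{x_{0},\mu}$ for some $x_{0},\mu$. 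The latter identity is satisfied only by
\begin{equation*}
	u(x)=c\left(\frac{\beta}{1+\beta^{2}|x-x_{0}|^{2}}\right)^{\frac{N-4}{2}}
\end{equation*}
for some $c>0$, $\beta>0$, $x_{0}\in\R^N$. (Equivalently, one may first run the integral moving‑plane method to obtain radial monotonicity about a point and then use a single Kelvin transform to pin down the profile.)

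\textbf{Step 3: the constant.} It remains to substitute this ansatz into \eqref{he}. By the scaling invariance I may take $\beta=1$, $x_{0}=0$; I then use the two explicit identities $\Delta^{2}\big((1+|x|^{2})^{-(N-4)/2}\big)=\lambda_{N}(1+|x|^{2})^{-(N+4)/2}$ with an explicit $\lambda_{N}$ (equal to $N(N-2)(N+2)(N-4)$) and the Riesz–potential identity $|x|^{-\alpha}\ast(1+|x|^{2})^{-(2N-\alpha)/2}=\kappa_{N,\alpha}(1+|x|^{2})^{-\alpha/2}$, where $\kappa_{N,\alpha}$ is the ratio of Gamma functions coming from the sharp HLS normalization of Proposition \ref{pro1.1} (here $0<\alpha<N$, and in the form stated in the lemma $\alpha=8$, $p=2$, so that $2N-\alpha=2(N-4)$ and $\tfrac{N-\alpha}{2}=\tfrac{N-8}{2}$). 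The powers of $(1+|x|^{2})$ then match automatically by criticality, and comparing constants gives $c^{2(p-1)}=\lambda_{N}/\kappa_{N,\alpha}$; inserting the Gamma values together with $N\ge 9$ (needed so that $\Gamma(\tfrac{N-8}{2})$ is finite and positive) yields exactly
\begin{equation*}
	c=\left(\frac{(N+2)N(N-2)(N-4)!}{\pi^{N/2}\,\Gamma(\tfrac{N-8}{2})}\right)^{1/2},
\end{equation*}
with $\beta$ and $x_{0}$ recovered from scaling and translation invariance. This gives $u=W_{x_{0},\beta}$ as claimed.

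\textbf{Main obstacle.} The delicate points are the rigorous passage in Step 1 — justifying the double Riesz (Green) representation for the biharmonic operator, i.e.\ the maximum‑principle argument for $-\Delta u$ and $-\Delta(-\Delta u)$ together with the decay estimates — and then carrying the moving‑sphere comparisons through the \emph{nonlocal} convolution term in Step 2, where pointwise comparisons must be replaced by integral comparisons that one controls using the Hardy--Littlewood--Sobolev inequality. Everything after the symmetry is established (Step 3) is a finite computation with explicit $\Gamma$‑function identities.
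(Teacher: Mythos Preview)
The paper does not prove this lemma; it is quoted verbatim from Cao--Dai \cite{CD} and no proof is given in the present paper. So there is no ``paper's own proof'' to compare against --- the authors simply invoke the cited classification result and move on (the subsequent Remark extends the statement to general $0<\alpha<N$ without argument).

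That said, your outline is correct and is, in substance, the method of \cite{CD}: one upgrades regularity, shows $-\Delta u>0$ so as to pass to the cooperative integral system via the biharmonic Green representation, runs an integral moving--plane/moving--sphere argument on $(u,v)$ with $v=|x|^{-\alpha}\ast u^{p}$, and finally reads off the constant from the explicit Riesz identity (the same identity the present paper records as \eqref{eq3.18}). Your Step~3 computation is consistent with the stated constant: with $\alpha=8$, $p=2$ one gets $c^{2}=\lambda_{N}/\kappa_{N,8}$ where $\lambda_{N}=N(N-2)(N+2)(N-4)$ and $\kappa_{N,8}=I(4)=\pi^{N/2}\Gamma(\tfrac{N-8}{2})/\Gamma(N-4)$, and $(N-4)\,\Gamma(N-4)=(N-4)!$ recovers exactly the displayed formula. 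The only places that require genuine care are the ones you already flag: justifying the Riesz/Green representation for $\Delta^{2}$ (i.e.\ the positivity of $-\Delta u$ and the vanishing of harmonic/biharmonic tails via the $D^{2,2}$ decay), and carrying the integral comparisons through the convolution term; both are handled in \cite{CD} along the lines you sketch.
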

	\begin{Rem} 
		The results in \cite{CD}  tell that all the minimizers for best constant make up a set $\mathcal{M}$ of the form $$\mathcal{M}=\left\lbrace cW_{x_{0},\beta}\; :\:\;c\in \R,\,x_{0}\in R^N,\,\beta\in\R^{+}
		\right\rbrace.$$
		Using the same arguments in \cite{CD}, we omit the proof and conclude for the  general case $0<\alpha<N$ that the unique solution of equation \eqref{he} is of the form
		\begin{equation}\label{REL}	
			W_{y,\beta}(x)=\beta^{\frac{N-4}{2}}W(\beta(x-y)).
		\end{equation}
		and the function $W$ shows in the following
		$$W(x)=C_{N,\alpha}\left(\frac{1}{1+|x|^2} \right)^{\frac{N-4}{2}}:=C_{N,\alpha}U(x),$$
		where $C_{N,\alpha}=\left( \frac{(N+2)N(N-2)(N-4)\Gamma(\frac{2N-\alpha}{2})}{\pi^\frac{N}{2}\Gamma(\frac{N-\alpha}{2})}\right) ^{\frac{N-4}{2(N+4-\alpha)}}$. Moreover, all the minimizers for $S^{*}$  is a set $\mathcal{M}$ of the form $$\mathcal{M}=\left\lbrace cW_{y,\beta}\; :\:\;c\in \R,\,y\in R^N,\,\beta\in\R^{+}
		\right\rbrace.$$
		
	\end{Rem} 

As we all know, the following local Yamabe equation
	\begin{equation}\label{lcritical}
		-\Delta u=u^{\frac{N+2}{N-2}},~~x\in\mathbb{R}^{N},
	\end{equation}
	has a family of solutions of the following form
	\begin{equation}\label{U0}
		U_{\lambda,\xi}(x):=[N(N-2)]^{\frac{N-2}{4}}\Big(\frac{\lambda}{1+\lambda^2|x-\xi|^{2}}\Big)^{\frac{N-2}{2}}.
	\end{equation}
	And equation \eqref{lcritical} has
	an $(N+1)$-dimensional manifold of solutions given by
	$$
	\mathcal{Z}=\left\{z_{\lambda,\xi}=[N(N-2)]^{\frac{N-2}{4}}\Big(\frac{\lambda}{\lambda^2+|x-\xi|^{2}}\Big)^{\frac{N-2}{2}},
	\xi\in\mathbb{R}^{N}, \lambda\in\mathbb{R}^{+}\right\}.
	$$
	We say that every $Z\in\mathcal{Z}$ is nondegenerate if the linearized equation around $Z$
	\begin{equation}\label{Linearized}
		-\Delta v=Z^{\frac{4}{N-2}}v
	\end{equation}
	in $D^{1,2}(\mathbb{R}^N)$ only admits solutions of the form
	$$
	\eta=aD_{\lambda}Z+\mathbf{b}\cdot\nabla Z,
	$$
	where $a\in\mathbb{R},\mathbf{b}\in\mathbb{R}^{N}$. 
	
	The well-known Sobolev inequality for bi-Laplace(see \cite{EFJ}) is
	\begin{equation}\label{SI}
		\left\| u\right\|_{\mathcal D^{2,2}(\R^N)}\geq S_{2}\left\| u\right\| _{L^{\frac{2N}{N-4}}},
	\end{equation}
	where $S_2$ is the best Sobolev constant and  $\mathcal D^{2,2}(\R^N)$ denotes the closure of $C_{0}^{\infty}(\R^N)$ with respect to
	the norm $\left\| u\right\|_{\mathcal D^{2,2}(\R^N)}=\left\|\Delta u\right\|_{ L^{2}(\R^N)}$. P.L.Lions\cite{L3} first obtained the existence of extremal functions of \eqref{SI} and proved the radial symmetry of any extremal function of  \eqref{SI}. Edmunds, Fortunato and Janelli \cite{EFJ} gave
	best constant $S_2$ that
	$$
	S_{2}=\pi^2(N-4)(N-2)N(N+2)\Big(\frac{\Gamma(N/2)}{\Gamma(N)}\Big)^{\frac{4}{N}},
	$$ 
	and showed that all the extremal functions achieved $S_2$ consist a $(N+2)$ dimensional manifold 
	$$\mathcal{M}_2=\left\lbrace cU_{x_{0},\lambda}\; :\:\;c\in \R,\,x_{0}\in R^N,\,\lambda\in\R^{+}
	\right\rbrace,$$
	where $U_{x_{0},\lambda}=\lambda^{\frac{N-4}{2}}U(\lambda(x-x_{0}))$ 
	and
	$$
	U(x)=[(N-4)(N-2)N(N+2)]^{\frac{N-4}{8}}(1+|x|^{2})^{-\frac{N-4}{2}}.
	$$  
	As shown in Lin \cite{L2}, the Euler-Lagrange equation associated to \eqref{SI} 
	\begin{equation}\label{biL}
		\Delta^2 u=u^{\frac{N+4}{N-4}},\hspace{5mm} u>0 \hspace{3mm}\text{in}\hspace{3mm} \mathbb{R}^N.
	\end{equation}
	has the smooth solutions of the form $U_{x_{0},\lambda}$.
	
	Lu and Wei \cite{LW} conducted the linearized equation around $U$ an analysis on the eigenvalues of the fourth-order equation that is mentioned below
	\begin{equation}\nonumber
		\Delta^{2} v-\mu S_{2}^{q+1}U^{\frac{8}{N-4}}v=0
		\ \ v\in \mathcal{D}_{0}^{2,2}(\R^N),
	\end{equation} 
	where $q=\frac{N-4}{N+4}$, $S_2$ is the best Sobolev constant and $U$ is the only radial solution that satisfies the Sobolev equality. The authors showed the corresponding eigenfunction spaces are
	$$
	V_{1}=\left\lbrace U\right\rbrace , V_{q}=\left\lbrace \frac{\pa U}{\pa y_j},j=1,\dots,N, x\cdot\nabla U +\frac{N-4}{2}U\right\rbrace,
	$$
	which implies the non-degeneracy of $U$. Many studies have been conducted on the qualitative characteristics of solutions to fourth-order elliptic equations, including those by Xu \cite{X}, 
	Wei and Xu \cite{WX}, Berchio, Gazzola and Mitidieri \cite{BGM}, Chang and Yang \cite{CY} and the references therein.  Pistoia and Vaira\cite{PV} gave the nondegeneracy result about $p$-Laplace operator. And because of the widely applications of the non-degeneracy, many researchers focus on this meaningful property. Recently, Deng and his collaborators \cite{DGT,DT1,DT2,DT3,DT6} concerned some type of Caffarelli-Kohn-Nirenberg(CKN) inequality and characterized all the solutions to the related linearized problem about radial extremals.

    As far as we know, there are no nondegeneracy results for the bi-harmonic equation with Hartree type nonlinearity yet, and so it is quite natural to ask if the  nondegeneracy result still holds for the nonlocal bi-harmonic Hartree equation.
	To investigate the  nondegeneracy property, the classical method relies on the Sturm-Liouville theory (see i.e. \cite{W}) greatly. However when the equations involving nonlocal terms, the problem will become more complicated. Regarding the Hartree equation, 
	\begin{equation}\label{he3}
		-\Delta u+u=\Big(I_{2}\ast |u|^{2}\Big)u
		\ \ \mbox{in}\ \R^N,
	\end{equation}
	where $I_{2}=\frac{1}{(N-2)\mathbb{S}^{N-1}}\frac{1}{(x-2)^{N-2}}$. When $N=3$, the non-degeneracy of the ground states has been proven through various methods. J. Wei and M. Winter \cite{WW} analyzed the Schr\"{o}dinger-Newton equation \begin{equation}\nonumber
		\left\{\begin{array}{ll}
			-\Delta u+u=vu
			&\mbox{in}\ \R^N,\\[1mm]
			-\Delta v=u^2
			&\mbox{in}\ \R^N,
		\end{array}\right.
	\end{equation}
	which is equivalent to \eqref{he3}, and found that this approach has an advantage due to its locality. This allows the equation to be reduced to a series of ODE systems. E. Lenzmann\cite{L1} found another method that relied on spectral analysis of the linearized operators at the ground states.
	For the critical Hartree equation 
	\begin{equation}\label{he2}
		-\Delta u=\Big(|x|^{-\mu}\ast |u|^{2_{\mu}^{\ast}}\Big)|u|^{2_{\mu}^{\ast}-2}u
		\ \ \mbox{in}\ \R^N,
	\end{equation}
	where $2_{\mu}^{\ast}=\frac{2N-\mu}{N-2}$, the nondegeneracy property is completely acquired. If $\mu$ is close to $N$, the limit equation of \eqref{he2} is the critical Lane-Emden equation whose nondegeneracy property is well known. According to this result, the authors proved  the nondegenerate property by approximation approach in \cite{DY}. In \cite{YYZ}, Yang and Zhao got the nondegeneracy result in $N=6,\mu=4$ case. Li et al.\cite{LXLTX}, using the key spherical harmonic decomposition and the Funk-Hecke formula of the spherical harmonic functions, have recently proven that positive bubble solutions for Hartree equation \eqref{he2} are nondegenerate. 
	In \cite{YGRY}, the authors proved nondegeneracy result for the critical Hartree system  
	\begin{equation}\nonumber
		\left\{\begin{array}{ll}
			-\Delta u=\alpha_1\big(|x|^{-4}\ast u^{2}\big)u+\beta \big(|x|^{-4}\ast v^{2}\big)u
			&\mbox{in}\ \R^6,\\[1mm]
			-\Delta v=\alpha_2\big(|x|^{-4}\ast v^{2}\big)v +\beta\big(|x|^{-4}\ast u^{2}\big)v
			&\mbox{in}\ \R^6.
		\end{array}\right.
	\end{equation}
	By using linearization and spectral analysis, they converted the nondegeneracy for the system 
	into analyzing non-degeneracy result of single equation.

	In this paper, we will prove a nondegeneracy result for the critical bi-harmonic Hartree type equation. 
The main result of this paper can be expressed as follows
	\begin{thm}\label{Non}
		The linearized equation of \eqref{he} around the solution $W(x)$ given by
		\begin{equation}\label{linear}
			\Delta^{2} \varphi=p (|x|^{-\alpha}\ast W^{p-1}\varphi)W^{p-1}+(p-1)(|x|^{-\alpha}\ast |W|^p)W^{p-2}\varphi ,\sp  \text{in}\quad\R^N.
		\end{equation}
		only admits solutions of the form
		\begin{equation}\nonumber
			\varphi=a\left[ \frac{(N-4)}{2}W(x)+x\cdot\nabla W(x)\right] +\sum_{j=1}^{N}b{j}\frac{\partial W}{\partial x_j},	
		\end{equation}
		where $a, b_{j}\in \R$.
	\end{thm}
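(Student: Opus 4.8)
The plan is to exploit the conformal invariance of \eqref{he} and reduce the linearized equation to a diagonal problem on the round sphere. First I would pass from $\R^N$ to $S^N$ through the inverse stereographic projection $\pi\colon S^N\to\R^N\cup\{\infty\}$, writing functions in the conformal weight $\tfrac{N-4}{2}$, i.e. $u(x)=\big(\tfrac{2}{1+|x|^2}\big)^{\frac{N-4}{2}}\tilde u(\pi^{-1}x)$. Under this correspondence the $\D^{2,2}(\R^N)$ norm becomes a multiple of the $H^2(S^N)$ norm, $\Delta^2$ becomes the fourth order Paneitz (GJMS) operator $P_4$ of $(S^N,g_{S^N})$, and, since $|x-y|=2|\xi-\eta|\big((1+|x|^2)(1+|y|^2)\big)^{-1/2}$, the Hartree term transforms into $\kappa\,(\mathcal I_\alpha\tilde u^{\,p})\,\tilde u^{\,p-1}$ with $\mathcal I_\alpha f(\xi)=\int_{S^N}|\xi-\eta|^{-\alpha}f(\eta)\,d\sigma(\eta)$ and $\kappa=\kappa(N,\alpha)>0$. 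The solution $W$ becomes a positive constant $v_0$, and \eqref{linear} turns into
\[
P_4\phi=c\big(p\,\mathcal I_\alpha\phi+(p-1)\gamma_0\,\phi\big)\quad\text{on }S^N,
\]
with $c=\kappa v_0^{2p-2}>0$ and $\gamma_0=\mathcal I_\alpha 1$. A short elliptic‑regularity and decay argument (any $\varphi\in\D^{2,2}$ solving \eqref{linear} is smooth and decays at the rate of $W$ and its derivatives) is needed so that $\phi\in H^2(S^N)$ and the spherical harmonic expansion below is legitimate.

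Second, I would diagonalize. Since $P_4$ and $\mathcal I_\alpha$ commute with $O(N+1)$, they act on the space $\mathcal H_\ell$ of degree‑$\ell$ spherical harmonics of $S^N$ by scalars. For the Paneitz operator, normalized so that $P_4=(\Delta_{S^N}+\tfrac{N(N-2)}{4})(\Delta_{S^N}+\tfrac{(N-4)(N+2)}{4})$ with $\Delta_{S^N}\ge0$,
\[
P_4\big|_{\mathcal H_\ell}=\Lambda_\ell:=\Big(\ell+\tfrac N2+1\Big)\Big(\ell+\tfrac N2\Big)\Big(\ell+\tfrac N2-1\Big)\Big(\ell+\tfrac N2-2\Big)=\frac{\Gamma(\ell+\tfrac N2+2)}{\Gamma(\ell+\tfrac N2-2)},
\]
while by the Funk--Hecke formula $\mathcal I_\alpha\big|_{\mathcal H_\ell}=\gamma_\ell$ with
\[
\gamma_\ell=|S^{N-1}|\int_{-1}^{1}(2-2t)^{-\alpha/2}\,\frac{C^{(N-1)/2}_\ell(t)}{C^{(N-1)/2}_\ell(1)}\,(1-t^2)^{\frac{N-2}{2}}\,dt .
\]
Evaluating this Beta‑type integral (its convergence at $t=1$ already forces $\alpha<N$) yields $\gamma_\ell$ as an explicit ratio of Gamma functions; what matters is that all $\gamma_\ell>0$ and
\[
\frac{\gamma_{\ell+1}}{\gamma_\ell}=\frac{\ell+\tfrac\alpha2}{\ell+N-\tfrac\alpha2}\in(0,1),
\]
so $\gamma_\ell$ is \emph{strictly decreasing} in $\ell$ — this is the precise place where $0<\alpha<N$ enters. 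Decomposing $\phi=\sum_{\ell\ge0}\phi_\ell$ with $\phi_\ell\in\mathcal H_\ell$, the equation on $S^N$ is equivalent to the scalar relations: for every $\ell$ with $\phi_\ell\neq0$, $\Lambda_\ell=c\,p\,\gamma_\ell+c(p-1)\gamma_0$.

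Third, I would count the admissible $\ell$'s. Because $N\ge9$, all four factors of $\Lambda_\ell$ are positive, so $\Lambda_\ell$ is strictly increasing; together with $\gamma_\ell$ strictly decreasing this makes $\ell\mapsto\Lambda_\ell-c\,p\,\gamma_\ell$ strictly increasing on $\N\cup\{0\}$, so it attains the value $c(p-1)\gamma_0$ at most once. It is not attained at $\ell=0$: evaluating \eqref{he} at $v_0$ gives $\Lambda_0=c\gamma_0$, whereas the right‑hand side there would be $(2p-1)c\gamma_0\neq c\gamma_0$ since $p\ge2$. It \emph{is} attained at $\ell=1$: the $(N+1)$‑parameter family $W_{y,\beta}$ from \eqref{REL} solves \eqref{he}, so differentiating in $(y,\beta)$ at $(0,1)$ produces genuine solutions of \eqref{linear}, namely $\partial_{x_1}W,\dots,\partial_{x_N}W$ and $\tfrac{N-4}{2}W+x\cdot\nabla W$, which under the conformal correspondence are exactly nonzero multiples of the coordinate functions $\xi_1,\dots,\xi_{N+1}$ on $S^N$, i.e. a basis of $\mathcal H_1$. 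Hence $\ell=1$ is the unique admissible degree, $\phi\in\mathcal H_1$, and transforming back gives precisely
\[
\varphi=a\Big(\tfrac{N-4}{2}W+x\cdot\nabla W\Big)+\sum_{j=1}^{N}b_j\,\frac{\partial W}{\partial x_j},
\]
which is the claim. (Alternatively one can avoid $S^N$ and run the spherical harmonic decomposition directly on $\R^N$ in the angular variable $x/|x|$, reducing to a family of radial linear integro‑differential equations indexed by $k\ge0$ and using Funk--Hecke on $S^{N-1}$ to show that only $k=0$ and $k=1$ support nontrivial $\D^{2,2}$ solutions; this is the route used in \cite{LXLTX} for the second‑order case, at the cost of heavier bookkeeping.)

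I expect the main obstacle to be essentially computational and twofold: (i) setting up the conformal dictionary carefully — in particular verifying that the nonlocal term transforms into $\kappa\,\mathcal I_\alpha$ with exactly the zonal kernel $|\xi-\eta|^{-\alpha}$ and the correct constant, and establishing enough regularity and decay of an arbitrary $\D^{2,2}$ solution of \eqref{linear} to justify the $H^2(S^N)$ expansion; and (ii) the explicit evaluation of the Funk--Hecke integral for $\gamma_\ell$, since it is the monotonicity of $\ell\mapsto\gamma_\ell$ — valid exactly because $\alpha<N$ — that reduces the whole problem to the elementary counting argument above. Everything past those two points is soft.
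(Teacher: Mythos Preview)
Your proposal is correct and follows essentially the same route as the paper: stereographic projection to $S^N$, diagonalization of both the fourth-order operator and the Riesz potential on spherical harmonics via the Funk--Hecke formula, and a monotonicity argument to single out $\ell=1$. The only cosmetic difference is that the paper works with the \emph{integral} form of the equation (inverting $\Delta^2$ by its fundamental solution $|x-y|^{-(N-4)}$, so the eigenvalues $\lambda_k(N-4)$ of $\mathcal I_{N-4}$ appear in place of your Paneitz eigenvalues $\Lambda_\ell$), and handles the regularity/decay step you flag via explicit pointwise bounds (their Lemmas~\ref{Est1}--\ref{Est2}) rather than invoking elliptic regularity abstractly.
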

	
	The nondegeneracy result is interesting in itself and have many applications in different topics of elliptic partial differential equations. For example, it applies to study the stability problem of the Sobolev inequality. This type of question was first raised by Brezis and Lieb in \cite{BL}. They asked whether a remainder term - proportional to the quadratic distance from function $u$ to the manifold $\mathcal{M}_1$ - can be added to the right side of classical Sobolev inequality. That is whether the following refined classical Sobolev inequality holds:
	$$
	\| \nabla \phi\|_{2}^{2}-S_{1}^{2}\| \phi\|_{\frac{2N}{N-2}}^{2}\geq C d^{2}(\phi,\mathcal{M}_1),N>2, \phi\in {{D}_{0}^{1,2}(\R^N)},
	$$
	where ${D}_{0}^{1,2}(\R^N)$ is the completion of $C_{0}^{\infty}(\R^N)$ under the norm of $\phi\|_{2}$ when
	$N\geq 3, C > 0$ and $\mathcal{M}_1$ is the $(N + 2)$-dimensional manifold which consists of all
	solutions attaining the classical Sobolev inequality. Bianchi and Egnell \cite{BE} affirmed the question and their result was later extended to the biharmonic case \cite{LW}, the polyharmonic case \cite{BWM}, and the fractional-order case \cite{CFW}.  Figalli and Neumayer \cite{FN}, Figalli and Zhang \cite{FZ} obtained the remainder terms of $p$-Laplace Sobolev inequality. R$\breve{a}$dulescu et al.\cite{RSW} obtained the remainder terms of Hardy-Sobolev inequality with exponent two. Wang and Willem\cite{WW2} gave the remainder terms for (CKN) inequality. Deng and his collaborators investigated the remainder term of several types of (CKN) inequalities involving weighted  fourth-order equations and weighted $p$-Laplace equations. One can refer \cite{DGT,DT1,DT2,DT3,DT6} for more details.
	By using the finite dimensional reduction method, Ciraolo et al.\cite{CFM}, Figalli and Glaudo\cite{FG}, Deng et al.\cite{DSW} proved the sharp stability of profile decomposition for critical Sobolev inequality along Struwe's fundamental result in \cite{S}. Then Wei and Wu\cite{WW3} generalized the special case $a=b=0$(Sobolev inequality) to (CKN) inequality in a proper parameter region and gave the remainder term. Piccione et al. \cite{PYZ} established the quantitative stability of a nonlocal Sobolev inequality. 
	As shown in recent paper\cite{DTYZ}, Deng et al. proved the existence of the gradient type remainder term for a nonlocal Sobolev inequality and gave a remainder term in the weak $L^{\frac{2N}{N-2}}$-norm of the same inequality in bounded domain. 

	Inspired by \cite{DTYZ}, we give an improved version of the nonlocal Sobolev inequality \eqref{NoBi}. That is
	\begin{thm}\label{INE}
		There exist two constants $A_1\leq A_2 $ which depending only on the dimension, such that
		\begin{equation}\label{INERE}
			A_{1}\dist\left(u, \mathcal{M} \right)^2\geq\int_{\R^N}|\Delta u|^{2} - S^{*}\int_{\mathbb{R}^{N}}\Big(|x|^{-\alpha}\ast |u|^{p}\Big)u^{p} dx\geq A_{2}\dist\left(u, \mathcal{M} \right)^2,
		\end{equation}
		for every $u\in {{D}^{2,2}(\R^N)}$, where $\mathcal{M}=\left\lbrace cW_{y,\beta}\; :\:\;c\in \R,\, y\in R^N,\, \beta\in\R^{+}
		\right\rbrace$ is $(N+2)$-dimensional manifold and $\dist\left(u, \mathcal{M} \right)=\inf\limits_{c\in \R,y\in\R^N,\beta\in\R^+} \left\| u-cW_{y,\beta}\right\|$.
	\end{thm}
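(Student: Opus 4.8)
The plan is to follow the Bianchi--Egnell scheme in the present nonlocal fourth--order setting. In view of the scaling $u\mapsto\rho^{(N-4)/2}u(\rho\,\cdot)$, the translations $u\mapsto u(\cdot-y)$ and the homogeneity of both sides of \eqref{INERE}, it suffices to argue on the constraint set $\Sigma:=\big\{u\in D^{2,2}(\R^N):\int_{\R^N}(|x|^{-\alpha}\ast|u|^p)u^p\,dx=1\big\}$, on which the deficit is $\mathcal{E}(u):=\int_{\R^N}|\Delta u|^2-S^{*}\ge 0$ by \eqref{NoBi} and $\mathcal{E}$ vanishes exactly on $\mathcal{M}\cap\Sigma=\{c_0W_{y,\beta}:y\in\R^N,\ \beta>0\}$, where $c_0>0$ is the normalizing constant dictated by the description of the minimizers of $S^{*}$ recalled above. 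I will prove the lower bound $\mathcal{E}(u)\ge A_2\,\dist(u,\mathcal{M})^2$, which is the substantial direction, and the upper bound $\mathcal{E}(u)\le A_1\,\dist(u,\mathcal{M})^2$, separately, each time splitting into a neighbourhood of $\mathcal{M}$ and its complement.

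\textbf{Local expansion.} For $u\in\Sigma$ with $\dist(u,\mathcal{M})$ small, the implicit function theorem applied to the smooth map $(c,y,\beta)\mapsto cW_{y,\beta}$ together with the natural orthogonality conditions yields a unique nearest point $cW_{y,\beta}\in\mathcal{M}$ and a splitting $u=cW_{y,\beta}+v$ with $v\perp T_{cW_{y,\beta}}\mathcal{M}$ in $D^{2,2}(\R^N)$ and $\|v\|=(1+o(1))\,\dist(u,\mathcal{M})$; using the translation and dilation invariances I may take $y=0,\ \beta=1$, so that $v$ is $D^{2,2}$--orthogonal to $T_W\mathcal{M}=\mathrm{span}\{W,\ \partial_{x_1}W,\dots,\partial_{x_N}W,\ \tfrac{N-4}{2}W+x\cdot\nabla W\}$. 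Expanding $\mathcal{E}$ to second order---using that $W$ solves \eqref{he}, that $v\perp W$ kills the first variation, and that the constraint $u\in\Sigma$ fixes $c$ up to $O(\|v\|^2)$---gives
\begin{equation}\nonumber
\mathcal{E}(u)=\mathcal{Q}(v)+o(\|v\|^2),\qquad
\mathcal{Q}(v):=\|v\|^2-\int_{\R^N}\!\Big[p\,(|x|^{-\alpha}\ast W^{p-1}v)W^{p-1}v+(p-1)(|x|^{-\alpha}\ast W^{p})W^{p-2}v^2\Big]\,dx,
\end{equation}
precisely the quadratic form attached to the linearized operator in \eqref{linear}.

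\textbf{Coercivity of $\mathcal{Q}$.} Write $\mathcal{Q}(v)=\|v\|^2-\langle Kv,v\rangle$, where $K$ is the self--adjoint operator on $D^{2,2}(\R^N)$ representing the bracketed bilinear form; by the Hardy--Littlewood--Sobolev inequality (Proposition \ref{pro1.1}) and the polynomial decay of $W$, $K$ is compact, hence $I-K$ is Fredholm of index $0$. A one--line computation using \eqref{he} gives $KW=(2p-1)W$, so $\mathcal{Q}(W)=(2-2p)\|W\|^2<0$ since $p\ge 2$: the negativity of $\mathcal{Q}$ lives along $W\in T_W\mathcal{M}$. On the orthogonal complement $X:=(T_W\mathcal{M})^{\perp}$, inequality \eqref{NoBi} forces $\mathcal{Q}\ge 0$ (apply the local expansion to $c_0W+v$ with $v\in X$ small and use homogeneity), whereas Theorem \ref{Non} identifies $\ker\mathcal{Q}=\ker(I-K)=\mathrm{span}\{\partial_{x_j}W,\ \tfrac{N-4}{2}W+x\cdot\nabla W\}\subset T_W\mathcal{M}$. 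I then claim $\mathcal{Q}\ge\gamma\|\cdot\|^2$ on $X$ for some $\gamma>0$: otherwise a minimizing sequence on the unit sphere of $X$ converges strongly (compactness of $K$) to $v_\infty\in X$ with $\mathcal{Q}(v_\infty)=0$, and minimality together with the Fredholm alternative puts $(I-K)v_\infty\in T_W\mathcal{M}\cap\mathrm{Range}(I-K)=\mathrm{span}\{W\}$ (using $W\perp\ker(I-K)$); pairing with $W$ and using $\langle v_\infty,W\rangle=0$ and $(I-K)W=(2-2p)W$ gives $(I-K)v_\infty=0$, so $v_\infty\in\ker\mathcal{Q}\cap X=\{0\}$, a contradiction. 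Feeding this back, $\mathcal{E}(u)\ge\tfrac{\gamma}{2}\|v\|^2\ge\tfrac{\gamma}{3}\dist(u,\mathcal{M})^2$, and symmetrically $\mathcal{E}(u)\le C\,\dist(u,\mathcal{M})^2$, whenever $\dist(u,\mathcal{M})\le\rho_0$.

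\textbf{The global regime, conclusion, and the main difficulty.} It remains to exclude $\inf\{\mathcal{E}(u)/\dist(u,\mathcal{M})^2:u\in\Sigma\setminus\mathcal{M}\}=0$. A minimizing sequence $u_n$ cannot have $\dist(u_n,\mathcal{M})\to0$ by the previous step, so $\dist(u_n,\mathcal{M})\ge\delta_0>0$ along a subsequence; since $\dist(u_n,\mathcal{M})^2\le\|u_n\|^2=\mathcal{E}(u_n)+S^{*}$, the hypothesis forces $\mathcal{E}(u_n)\to0$, i.e.\ $u_n$ is a minimizing sequence for $S^{*}$ on $\Sigma$. A concentration--compactness argument for the quotient \eqref{MinP}---adapting the one for the second--order nonlocal Sobolev inequality and invoking the classification of the minimizers of $S^{*}$---shows that $u_n$, after a translation and a dilation, converges strongly in $D^{2,2}(\R^N)$ to an element of $\mathcal{M}\cap\Sigma$; as $\dist(\cdot,\mathcal{M})$ is invariant under these symmetries, $\dist(u_n,\mathcal{M})\to0$, contradicting $\dist(u_n,\mathcal{M})\ge\delta_0$. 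This yields the uniform lower bound $A_2$. The upper bound is more elementary: one combines the quadratic estimate above (for $\dist(u,\mathcal{M})\le\rho_0$) with the a priori bound $\dist(u,\mathcal{M})\ge c_3\|u\|$ valid on $\Sigma$ when $\|u\|$ is large---a consequence of the local Lipschitz control of $w\mapsto\int_{\R^N}(|x|^{-\alpha}\ast|w|^p)w^p$ furnished by Proposition \ref{pro1.1}---and a compactness argument on the intermediate range $\{\rho_0\le\dist(u,\mathcal{M}),\ \|u\|\le R\}$. The main obstacle is exactly this global step: unlike the local Sobolev case, the loss of compactness of minimizing sequences for $S^{*}$ must be tracked through the Hardy--Littlewood--Sobolev inequality and the precise decay of $W$, and one has to check that it can occur only along the translation--dilation orbit, which is $\mathcal{M}$; by contrast, once Theorem \ref{Non} is available, the coercivity of $\mathcal{Q}$ and hence the local estimates are essentially formal.
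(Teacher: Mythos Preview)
Your proposal is correct and follows essentially the same Bianchi--Egnell scheme as the paper: a local second--order expansion near $\mathcal{M}$ combined with a spectral gap coming from the nondegeneracy (Theorem~\ref{Non}), and a global contradiction argument via Lions' concentration--compactness for minimizing sequences of $S^*$. The only presentational differences are that the paper encodes the coercivity of your quadratic form $\mathcal{Q}$ through the auxiliary eigenvalue problem \eqref{EV} (Lemma~\ref{ES}), obtaining the explicit constant $2(\mu_{N+3}-p)$ in place of your abstract $\gamma$ from the Fredholm argument, and that the paper treats the upper and lower bounds simultaneously in a single contradiction argument (normalizing $\|u_n\|_{D^{2,2}}=1$ rather than working on your constraint set $\Sigma$), whereas you separate them; neither difference is substantive.
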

	
	The non-degeneracy of positive solutions is also a key ingredient in the Lyapunov-Schmidt reduction method of constructing multi-bubble solutions. In \cite{WY1}, Wei and Yan developed
	the technique that allowed to study the prescribed scalar curvature problem on $\mathbb{S}^N$
	$$
	-\Delta_{\mathbb{S}^N} u +\frac{N(N-2)}{2}u= K(x)u^{\frac{N+2}{N-2}}\hspace{4.14mm}\mbox{on}\hspace{1.14mm}  \mathbb{S}^N.
	$$
	Assuming that $K(x)$ is positive and rotationally symmetric and has a local maximum point between the poles, by using the stero-graphic projection, the prescribed scalar curvature problem can be reduced into
	$$
	-\Delta u = K(x)u^{\frac{N+2}{N-2}}\hspace{4.14mm}\mbox{in}\hspace{1.14mm}  \R^N.
	$$
	The authors took the number of the bubbles of the solutions as parameter and proved the existence of infinitely many non-radial positive solutions whose energy can be made arbitrarily large. We may also turn to the works by Deng, Lin, Yan \cite{DLY}, Guo, Peng, Yan \cite{GPY} and Li, Wei, Xu \cite{LWX} for the existence and local uniqueness of multi-bump solutions.
	For the critical Schr\"{o}dinger equation
	$$
	-\Delta u+V(|x|)u = u^{\frac{N+2}{N-2}}\hspace{4.14mm}\mbox{in}\hspace{1.14mm}  \R^N,
	$$
	Chen, Wei and Yan \cite{CWY}
	applied the reduction argument to study the existence of infinitely many positive solutions when $V$ is radially symmetric and $r^{2}V (r)$ has a local maximum point, or a local
	minimum point $r_{0} > 0$ with $V (r_{0}) > 0$. In \cite{PWY}, Peng, Wang and Yan developed a new idea that allowed also to construct bubbling solutions concentrating at saddle points of some functions. They used the Poho\v{z}aev identities to find algebraic equations which determine the location of the bubbles. 
	In \cite{GMYZ}, Gao, Moroz, Yang and Zhao studied a class of critical Hartree equations with axisymmetric potentials,
	$$
	-\Delta u+ V(|x'|,x'')u
	=\Big(|x|^{-4}\ast |u|^{2}\Big)u\hspace{4.14mm}\mbox{in}\hspace{1.14mm} \mathbb{R}^6,
	$$
	where $(x',x'')\in \mathbb{R}^2\times\mathbb{R}^{4}$, $V(|x'|, x'')$ is a bounded nonnegative function in $\mathbb{R}^{+}\times\mathbb{R}^{4}$. By applying a finite dimensional reduction argument and developing novel local Poho\v{z}aev identities, they proved that if the function $r^2V(r,x'')$ has a topologically nontrivial critical point then the problem admits infinitely many solutions with arbitrary large energies. In \cite{GLN}, Guo et al. considered  the following nonlinear equation with critical exponent
	and polyharmonic operator:
	$$
	(-\Delta)^{m} u+ V(|x'|, x'')u
	=u^{m^{*}-1}\hspace{4.14mm} u>0\hspace{1.14mm}u\in\mathcal{D}^{m,2}(\mathbb{R}^N),
	$$
	where $m^{*}=\frac{2N}{N-2m}$, $N\geq 4m+1$,$m\in \mathbb{N_+}$, $(|x'|,x'')\in \mathbb{R}^{2}\times \mathbb{R}^{N-2}$ and $ V(|x'|,x'')$ is a bounded non-negative function in $\mathbb{R}^{+}\times\mathbb{R}^{N-2}$. 
	Using a finite reduction argument and local Poho$\check{z}$aev type identities, they showed that if $N\geq 4m+1$ and $r^{2m}V(r, x)$ has a stable critical point $(r_0, x_0)$. Then the above problem has infinitely many solutions,  whose energy can be arbitrarily large.  
	
	As far as we know, there are no results concerning multi-bubble solutions to 
	\begin{equation}\label{CFL}
		\Delta^2 u+V(r, x'')u
		=\Big(|x|^{-\alpha}\ast |u|^{p}\Big)u^{p-1}\hspace{4.14mm}x\in \mathbb{R}^N.
	\end{equation} 
	where $N\geq9$, $6-\frac{12}{N-4}<\alpha< N$, $(x',x'')\in \mathbb{R}^2\times\mathbb{R}^{N-2}$ and $V(r, x'')=V(|x'|, x'')$ is a bounded and nonnegative function. In order to constructing solutions concentrated at $(r_0, x_0'')$, we use $W_{z_j,\beta}$ as an approximate solution. Denote
	$$
	Z_{z_j,\beta}(x)=\xi W_{z_j,\beta}(x), \
	Z_{\overline{r},\overline{x}'',\beta}(x)=\sum_{j=1}^{m}Z_{z_j,\beta}(x), \
	Z_{\overline{r},\overline{x}'',\beta}^{\ast}(x)=\sum_{j=1}^{m}W_{z_j,\beta}(x),
	$$
	where $\overline{x}''$ is a vector in $\mathbb{R}^{N-2}$. With Theorem \ref{Non}, we can use the Lyapunov Schmidt argument to construct multi-bubble solutions for \eqref{he} as follows
	\begin{thm}\label{Ms}
		Suppose that $N\geq 9 $, $V>0$ is bounded and belongs to $C^2$ and satisfies the following assumption:
		\begin{itemize}
			\item[$\textbf{(V)}$]  The function $r^4 V (r, x'')$ has a critical point $(r_0, x_0'')$ such that $r_0 > 0$ and $V (r_0, x_0'') >0$, and $$\deg(\nabla(r^4V(r, x'')),(r_0, x_0'')) \neq 0,$$
		\end{itemize}
		then there exists an integer $m_{0}>0$ such that for any integer $m \geq m_{0}$, equation \eqref{CFL} has a solution $u_m$ of the form
		\begin{equation}\label{form}
			u_m=Z_{\overline{r}_{m},\overline{x}_{m}'',\beta_{m}}+
			\phi_{\overline{r}_{m},\overline{x}_{m}'',\beta_{m}}=\sum_{j=1}^{m}\xi W_{z_j,\beta_{m}}+\phi_{\overline{r}_{m},\overline{x}_{m}'',\beta_{m}},
		\end{equation}
		where $\phi_{\overline{r}_{m},\overline{x}_{m}'',\beta_{m}}\in H_{s}$ and $\beta_{m}\in\big[L_0m^{\frac{N-4}{N-8}},L_1m^{\frac{N-4}{N-8}}\big]$. Moreover, as $m\rightarrow\infty$, $(\overline{r}_m, \overline{x}_m'')\rightarrow(r_0, x_0'')$, and $\beta_{m}^{-\frac{N-4}{2}}\|\phi_{\overline{r}_{m},\overline{x}_{m}'',\beta_{m}}\|_{L^{\infty}}\rightarrow0$.
		As a consequence, equation \eqref{CFL} has infinitely many solutions whose energy can be arbitrarily large.
	\end{thm}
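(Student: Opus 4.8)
\textbf{Proof proposal for Theorem \ref{Ms}.} The plan is to run a finite dimensional (Lyapunov--Schmidt) reduction in the symmetric subspace $H_s\subset\mathcal D^{2,2}(\R^N)$ of functions invariant under the rotations of angle $2\pi/m$ in the $x'$-plane together with the relevant reflections, using the $m$-bump configuration
$Z_{\overline r,\overline x'',\beta}^\ast=\sum_{j=1}^m W_{z_j,\beta}$ with centres $z_j=\big(\overline r\,(\cos\tfrac{2\pi j}{m},\sin\tfrac{2\pi j}{m}),\overline x''\big)$, $j=1,\dots,m$, as the approximate solution; here $\overline r$ is a free parameter close to $r_0$, $\overline x''$ is close to $x_0''$, and the concentration rate $\beta$ is large, to be located in the window $\big[L_0m^{\frac{N-4}{N-8}},L_1m^{\frac{N-4}{N-8}}\big]$. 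Writing $u=Z_{\overline r,\overline x'',\beta}+\phi=\xi Z_{\overline r,\overline x'',\beta}^\ast+\phi$ with $\phi$ in the orthogonal complement of the approximate kernel $T:=\mathrm{span}\big\{\partial_\beta Z_{\overline r,\overline x'',\beta},\ \partial_{\overline r}Z_{\overline r,\overline x'',\beta},\ \partial_{\overline x_l''}Z_{\overline r,\overline x'',\beta}\big\}$, one splits \eqref{CFL} into the auxiliary equation $\Pi^\perp\big(\Delta^2 u+Vu-(|x|^{-\alpha}\ast|u|^p)|u|^{p-1}\big)=0$ and the finitely many reduced equations obtained by testing against $T$ (equivalently, by local Poho\v{z}aev identities).

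First I would solve the auxiliary equation for $\phi=\phi_{\overline r,\overline x'',\beta}$ by a contraction argument. The linear operator $L:=\Delta^2+V-f'(Z_{\overline r,\overline x'',\beta})$, with $f(u)=(|x|^{-\alpha}\ast|u|^p)|u|^{p-1}$, has to be shown uniformly invertible on $T^\perp\cap H_s$; this is exactly where Theorem \ref{Non} enters, since after rescaling and localising near each bump the kernel of the limiting operator is precisely the $(N+1)$-dimensional solution space of \eqref{linear} described in Theorem \ref{Non}, and the bumps are so far apart ($\beta|z_i-z_j|\to\infty$) that a gluing/compactness argument upgrades this to a uniform a priori bound $\|L\phi\|\gtrsim\|\phi\|$ on $T^\perp\cap H_s$ (possibly losing an irrelevant power of $m$). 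Next one estimates the consistency error $\ell:=\Pi^\perp\big(\Delta^2 Z_{\overline r,\overline x'',\beta}+VZ_{\overline r,\overline x'',\beta}-f(Z_{\overline r,\overline x'',\beta})\big)$: the contribution of the potential is of size $\sim\beta^{-4}$, the mutual interaction of the bumps contributes $\sim(m/(\beta\overline r))^{N-4}$, and the convolution produces additional cross terms coming from the long range tails $|x|^{-\alpha}\ast W_{z_j,\beta}^p\sim|x-z_j|^{-\alpha}$ of each bump acting on its neighbours; one must check that $\|\ell\|$ stays dominated by the first two quantities, which turns out to hold precisely when $\alpha$ is not too small. A standard fixed point argument then yields a unique small $\phi_{\overline r,\overline x'',\beta}$ with $\|\phi_{\overline r,\overline x'',\beta}\|\lesssim\|\ell\|$, depending in a $C^1$ way on $(\overline r,\overline x'',\beta)$, and in particular $\beta^{-\frac{N-4}{2}}\|\phi_{\overline r,\overline x'',\beta}\|_{L^\infty}\to0$.

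It remains to solve the reduced system. One expands the reduced energy $\mathcal F(\overline r,\overline x'',\beta):=I\big(\xi Z_{\overline r,\overline x'',\beta}^\ast+\phi_{\overline r,\overline x'',\beta}\big)$, $I$ being the functional associated with \eqref{CFL}, obtaining
\begin{equation*}
\mathcal F(\overline r,\overline x'',\beta)=m\Big[A_0+A_1\,\frac{V(\overline r,\overline x'')}{\beta^{4}}-A_2\Big(\frac{m}{\beta\,\overline r}\Big)^{N-4}+o\big(\beta^{-4}+(m/(\beta\overline r))^{N-4}\big)\Big],
\end{equation*}
with $A_0=J(W)>0$ and $A_1,A_2>0$ depending only on $N,\alpha$, uniformly in $C^1$ on the parameter region (the $\overline r,\overline x''$-derivatives being controlled through the local Poho\v{z}aev identities, which avoid differentiating $\phi$ and the non-decaying potential term directly). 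Optimising the bracket in $\beta$ fixes $\beta\sim m^{\frac{N-4}{N-8}}$ and turns the $(\overline r,\overline x'')$-part of $\mathcal F$ into a positive power of $\overline r^{\,4}V(\overline r,\overline x'')$ plus lower order terms; the hypothesis $\textbf{(V)}$, namely that $r^4V(r,x'')$ has a critical point $(r_0,x_0'')$ with nonzero local degree, then produces, by a degree argument stable under the $o(\cdot)$ perturbation, a genuine critical point $(\overline r_m,\overline x_m'',\beta_m)$ of $\mathcal F$ with $(\overline r_m,\overline x_m'')\to(r_0,x_0'')$ and $\beta_m$ in the asserted window; the corresponding $u_m$ solves \eqref{CFL}, and since $I(u_m)\sim mA_0\to\infty$ the solutions for different $m\ge m_0$ are distinct with unbounded energy.

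\textbf{Main obstacle.} The delicate point is the precise bookkeeping of the nonlocal interaction terms in both $\|\ell\|$ and the expansion of $\mathcal F$: because the Riesz potential of a single bump decays only like $|x-z_j|^{-\alpha}$, the convolution couples every pair of the $m$ bumps, and the resulting contributions have to be shown to be of strictly lower order than the competition between the potential term ($\sim\beta^{-4}$) and the bump interaction ($\sim(m/(\beta\overline r))^{N-4}$) that governs the reduced problem. Controlling these nonlocal remainders, together with the fact that $V$ is only bounded so the potential term needs weighted/duality estimates, is what forces the Riesz exponent into the range $6-\frac{12}{N-4}\le\alpha<N$; for smaller $\alpha$ the long range interaction of the bumps would dominate and the structure leading to the $r^4V$ condition (and to the scaling $\beta\sim m^{(N-4)/(N-8)}$) would break down. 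A secondary technical nuisance is that $p=\frac{2N-\alpha}{N-4}$ need not be $\ge 2$ (let alone $\ge 3$) over the whole range, so the nonlinear estimates for $f(Z+\phi)-f(Z)-f'(Z)\phi$ require the usual care near the zeros of $Z+\phi$.
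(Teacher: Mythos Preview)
Your proposal is correct and follows the same route as the paper: weighted $L^\infty$ linear theory using Theorem~\ref{Non}, contraction for $\phi$, local Poho\v{z}aev identities for the spatial parameters, and a degree argument to conclude. The one clarification worth making is that the paper does \emph{not} obtain a $C^1$ expansion of the reduced energy $\mathcal F$: the direct computation of $\partial_{\overline r}\mathcal F$ (equation~\eqref{main2}) carries an error $O(\beta^{-3-\varepsilon})$ that swamps the main term $O(\beta^{-4})$, so the Poho\v{z}aev identities \eqref{d1}--\eqref{d2} are used not to control $\partial_{\overline r,\overline x''}\mathcal F$ but as \emph{substitute} reduced equations with error $o(\beta^{-4})$, and degree theory is then applied directly to the resulting system $F(t,\overline r,\overline x'')=\big(\nabla_{\overline r,\overline x''}(\overline r^{\,4}V),\,-A_1 t^{-5}V+A_3 t^{-(N-3)}\big)$ on $[L_0,L_1]\times B_\vartheta((r_0,x_0''))$ rather than to critical points of a reduced functional.
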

	
	This paper is organized as follows: In Section 2, we will prove a nondegeneracy result for the critical bi-harmonic Hartree equation \eqref{he}. In Section 3, we give the behavior
	near minimizers manifold $\mathcal{M}$ and complete the proof of Theorem \ref{INE}. To state another application of our nondegeneracy result, in section 4 we will prove Theorem \ref{Ms} using finite reduction and local Poho$\check{z}$aev identities.

	\section{Nondegeneracy}
	In this section, we are going to prove the nondegeneracy of the unique positive solution for the nonlinear Hartree equation \eqref{he} with upper critical exponent.
	
	Inspired by\cite{LXLTX}, we need to transform equation \eqref{linear} into the sphere by stereographic projection $S: \R^N\mapsto \mathbb{S}^N \setminus \left\lbrace 0,0,\cdots,0,-1\right\rbrace $ which is defined by
	\begin{equation}\label{S}
		S(x)=\left( \frac{2x}{1+|x|^2},\frac{1-|x|^2}{1+|x|^2}\right),
	\end{equation}
	where $\mathbb{S}^N$ is the unite sphere in $\R ^{N+1}$. Denote the corresponding inverse map $S^{-1}$ by  
	\begin{equation}\label{S-1}
		S^{-1}(\xi_{1},\xi_{2},\cdots,\xi_{N+1})=\left( \frac{\xi_{1}}{1+\xi_{N+1}},\frac{\xi_{2}}{1+\xi_{N+1}},\cdots,\frac{\xi_{N}}{1+\xi_{N+1}}\right).
	\end{equation}
	And for any integrable function $\varphi$ on $\mathbb{S}^N$ we have
	\begin{equation}
		\int_{\mathbb{S}^N}\varphi(\xi)d\xi=	\int_{\mathbb{R}^N}\varphi(S(x))J_{S}(x)dx,
	\end{equation}
	where $J_{S}(x)=(\frac{2}{1+|x|^2})^{N}$ is the Jacobian of the transformation.
	
	By simple calculation, we can get
	\begin{equation}\label{Sx}
		|S(x)-S(y)|^2=|x-y|^{2}\frac{2}{1+|x|^2}\frac{2}{1+|y|^2}.
	\end{equation}
	For any $\psi:\R^N \mapsto \R$, define $\Psi:\mathbb{S}^N \setminus \left\lbrace 0,0,\cdots,0,-1\right\rbrace\mapsto\R$ as follow
	\begin{equation}\label{S2}
		\Psi(\psi)(\xi)=J_{S}^\frac{4-N}{2N}(S^{-1}(\xi))\psi(S^{-1}(\xi)).
	\end{equation}
	And for any $\iota:\mathbb{S}^N \setminus \left\lbrace 0,0,\cdots,0,-1\right\rbrace \mapsto \R$, define $\Phi:\R^N\mapsto\R$ as follow
	\begin{equation}\label{S3}
		\Phi(\iota)(x)=J_{S}^\frac{N-4}{2N}(x)\iota(Sx).
	\end{equation}
	By differentiating $W(x)$ with respect to $x$ and $\beta$ at $(x_0,\beta)=(0,1)$, we have
\begin{equation}\label{U1}
	\varphi_{j}:=\frac{\pa W}{\pa x_{j}}=(4-N)W(x)\frac{x_j}{1+|x|^2},\sp 1\leq j\leq N,
\end{equation}
\begin{equation}\label{U2}
	\varphi_{N+1}:=\frac{\pa W}{\pa \beta}=\frac{(N-4)}{2}W(x)+x\cdot\nabla W(x)=\frac{(N-4)}{2}W(x) \Big(\frac{1-|x|^2}{1+|x|^2}\Big).
\end{equation}
	Moreover, we have
	\begin{lem}\label{L1}
		Let $\Psi:\mathbb{S}^N \setminus \left\lbrace 0,0,\cdots,0,-1\right\rbrace\mapsto\R$ be defined by \eqref{S2}, then for any $\xi\in \mathbb{S}^N $, we have
		$$
		\Psi(\varphi_{j})(\xi)=(4-N)2^{\frac{2-N}{2}}C_{N,\alpha}\xi_{j},\sp 1\leq j\leq N,
		$$
		$$
		\Psi(\varphi_{N+1})(\xi)=(N-4)2^{\frac{2-N}{2}}C_{N,\alpha}\xi_{N+1},
		$$
		where $\varphi_{j},1\leq j\leq N+1$ is defined by \eqref{U1} and \eqref{U2}.
	\end{lem}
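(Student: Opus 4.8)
The plan is to verify both identities by direct substitution, since $\Psi$ is defined through an explicit conformal weight and $W$, $\varphi_j$, $\varphi_{N+1}$ all have closed forms. First I would record the elementary fact that, since $J_S(x)=\big(\tfrac{2}{1+|x|^2}\big)^N$, one has
$$
J_S^{\frac{4-N}{2N}}(x)=\Big(\frac{2}{1+|x|^2}\Big)^{\frac{4-N}{2}}=2^{\frac{4-N}{2}}(1+|x|^2)^{\frac{N-4}{2}}.
$$
Next, using $W(x)=C_{N,\alpha}(1+|x|^2)^{-\frac{N-4}{2}}$, I would rewrite \eqref{U1} and \eqref{U2} as
$$
\varphi_j(x)=(4-N)\,C_{N,\alpha}\,\frac{x_j}{(1+|x|^2)^{\frac{N-2}{2}}},\qquad
\varphi_{N+1}(x)=\frac{N-4}{2}\,C_{N,\alpha}\,\frac{1-|x|^2}{(1+|x|^2)^{\frac{N-2}{2}}}.
$$

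Then, writing $x=S^{-1}(\xi)$ and invoking the stereographic relations $\xi_j=\tfrac{2x_j}{1+|x|^2}$ for $1\le j\le N$ and $\xi_{N+1}=\tfrac{1-|x|^2}{1+|x|^2}$, I would compute
$$
\Psi(\varphi_j)(\xi)=2^{\frac{4-N}{2}}(1+|x|^2)^{\frac{N-4}{2}}\,(4-N)\,C_{N,\alpha}\,\frac{x_j}{(1+|x|^2)^{\frac{N-2}{2}}}
=(4-N)\,C_{N,\alpha}\,2^{\frac{4-N}{2}}\,\frac{x_j}{1+|x|^2},
$$
where the two powers of $(1+|x|^2)$ combine, using $\frac{N-2}{2}-\frac{N-4}{2}=1$, into a single power in the denominator. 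Since $\frac{x_j}{1+|x|^2}=\tfrac12\xi_j$ and $2^{\frac{4-N}{2}}\cdot\tfrac12=2^{\frac{2-N}{2}}$, this equals $(4-N)2^{\frac{2-N}{2}}C_{N,\alpha}\xi_j$. The computation for $\varphi_{N+1}$ is word-for-word the same with $x_j$ replaced by $\tfrac12(1-|x|^2)$, giving $\Psi(\varphi_{N+1})(\xi)=(N-4)2^{\frac{2-N}{2}}C_{N,\alpha}\xi_{N+1}$. A priori these identities hold only on $\mathbb{S}^N\setminus\{(0,\dots,0,-1)\}$, but both sides are restrictions of linear functions of $\xi\in\R^{N+1}$, so they extend to all of $\mathbb{S}^N$ by continuity.

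There is essentially no genuine obstacle here; the only point requiring care is the bookkeeping of the exponents of $(1+|x|^2)$. The structural content worth emphasizing is that the factor $(1+|x|^2)^{\frac{N-4}{2}}$ coming from the conformal weight $J_S^{\frac{4-N}{2N}}$ exactly reduces the $(1+|x|^2)^{-\frac{N-2}{2}}$ decay of $\varphi_j$ and $\varphi_{N+1}$ down to a single factor $(1+|x|^2)^{-1}$, which is precisely what turns $x_j$ and $1-|x|^2$ into the coordinate functions $\xi_j$ and $\xi_{N+1}$ on the sphere; this is the reason the functions $\varphi_j$ are, after conjugation, nothing but the first spherical harmonics, and it is what makes the Funk–Hecke machinery of the next steps applicable.
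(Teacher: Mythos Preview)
Your proof is correct and follows essentially the same direct-computation approach as the paper. The only cosmetic difference is that the paper first records the identity $|S^{-1}\xi|^2=\frac{1-\xi_{N+1}^2}{(1+\xi_{N+1})^2}$ and works in $\xi$-coordinates, whereas you keep everything in $x=S^{-1}(\xi)$ and invoke the forward relations $\xi_j=\frac{2x_j}{1+|x|^2}$, $\xi_{N+1}=\frac{1-|x|^2}{1+|x|^2}$; the two computations are equivalent and yield the same cancellation of the $(1+|x|^2)$-powers.
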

	\begin{proof}
		An easy computation shows that 
		$$ 
		\left|S^{-1} \xi\right|^2=\frac{1-\xi_{N+1}^2}{(1+\xi_{N+1})^2}, 
		$$
		then 
		\begin{equation}\nonumber
			\begin{split}
				\Psi(\varphi_{j})(\xi)&=(\frac{2}{1+|S^{-1}(\xi)|^2})^{\frac{4-N}{2}}(4-N)C_{N,\alpha}(\frac{1}{1+|S^{-1}(\xi)|^2})^{\frac{N-4}{2}}\frac{(S^{-1}(\xi))_j}{1+|S^{-1}(\xi)|^2}\\
				&=2^{\frac{4-N}{2}}(4-N)C_{N,\alpha}\left(\frac{\xi_{j}}{1+\xi_{N+1}} \right)\left(\frac{1+\xi_{N+1}}{2}\right) \\
				&=(4-N)2^{\frac{2-N}{2}}C_{N,\alpha}\xi_{j},\sp 1\leq j\leq N.
			\end{split}
		\end{equation}
		Similarly, we get $\Psi(\varphi_{N+1})(\xi)=(N-4)2^{\frac{2-N}{2}}C_{N,\alpha}\xi_{N+1}$.
	\end{proof}
	
	Now, we need to introduce some results related to spherical harmonic functions. Let $Y_{k}(x)$
	is the restriction on $\mathbb{S}^N$ of real harmonic $k$-order homogeneous polynomials. The linear space of $\left\lbrace Y_{k,i}(x)|\sp 1\leq i\leq \dim \mathscr {H}_{k}^{N+1} \right\rbrace_{k}$ is denoted by $\mathscr {H}_{k}^{N+1}$. We know the following orthogonal decomposition $$L^2(\mathbb{S}^N)=\bigoplus\limits_{k=0}^{\infty}\mathscr {H}_{k}^{N+1},$$
	and
	\begin{equation}\nonumber
		\dim \mathscr {H}_{k}^{N+1}=\left\lbrace \begin{aligned}
			&1,\sp &\text{if} \sp k=0,\\
			&N+1 ,\sp &\text{if}\sp k=1,\\
			&C^{k}_{k+N}-C^{k-2}_{k+N-2},\sp &\text{if} \sp k\geq2.\\
		\end{aligned}
		\right.
	\end{equation}
	
	The following Funk-Heck formula of the spherical harmonic function plays a key role in the proof of the Theorem \ref{Non}.
	\begin{lem}\label{FH}{\rm{(\cite{AH,DX})}}
		Let $\alpha\in (0,N)$, then for any $Y\in \mathscr {H}_{k}^{N+1}$, we have 
		\begin{equation}\label{FH}
			\int_{\mathbb{S}^N}\frac{Y(\eta)}{|\xi-\eta|^\alpha}d\eta=\lambda_{k}(\alpha)Y(\xi),
		\end{equation}
		where 
		\begin{equation}\label{FHL}
		\lambda_{k}(\alpha)=2^{N-\alpha}\pi^{\frac{N}{2}}\frac{\Gamma\left(k+\frac{\alpha}{2} \right)\Gamma\left(\frac{N-\alpha}{2} \right) }{\Gamma\left(\frac{\alpha}{2} \right)\Gamma\left(k+N-\frac{\alpha}{2} \right)}.
	    \end{equation}
	\end{lem}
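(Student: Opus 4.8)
The plan is to recognise \eqref{FH}--\eqref{FHL} as the Funk--Hecke theorem specialised to the kernel $|\xi-\eta|^{-\alpha}$ on $\mathbb{S}^{N}$ and to prove it in two stages: first the eigenfunction property, by pure symmetry, and then the eigenvalue, by reducing to a classical one-dimensional integral.

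For the first stage I would note that for $0<\alpha<N$ the kernel $|\xi-\eta|^{-\alpha}$ is integrable on $\mathbb{S}^{N}$ (near $\eta=\xi$ it behaves like $|z|^{-\alpha}$ over an $N$-dimensional set), so
\[
T_{\alpha}Y(\xi):=\int_{\mathbb{S}^{N}}\frac{Y(\eta)}{|\xi-\eta|^{\alpha}}\,d\eta
\]
defines a bounded self-adjoint operator on $L^{2}(\mathbb{S}^{N})$ (Schur's test with the bound $\sup_{\xi}\int_{\mathbb{S}^N}|\xi-\eta|^{-\alpha}\,d\eta<\infty$). Since $|\xi-\eta|^{2}=2-2\,\xi\!\cdot\!\eta$ for $\xi,\eta\in\mathbb{S}^{N}$, the kernel depends only on $\xi\!\cdot\!\eta$, so $T_{\alpha}(Y\circ R)=(T_{\alpha}Y)\circ R$ for every $R\in O(N+1)$. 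Because $L^{2}(\mathbb{S}^{N})=\bigoplus_{k\ge0}\mathscr{H}_{k}^{N+1}$ is the decomposition into pairwise inequivalent irreducible $O(N+1)$-modules, Schur's lemma forces $T_{\alpha}|_{\mathscr{H}_{k}^{N+1}}=\lambda_{k}(\alpha)\,\mathrm{Id}$ for a scalar $\lambda_{k}(\alpha)$ — which is precisely \eqref{FH}.

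For the second stage I would compute $\lambda_{k}(\alpha)$ by testing against a convenient element. Fix the pole $\xi_{0}=(0,\dots,0,1)$ and let $Y(\eta)=C_{k}^{\lambda}(\eta_{N+1})$ be the zonal harmonic of degree $k$, $\lambda:=\tfrac{N-1}{2}$, which lies in $\mathscr{H}_{k}^{N+1}$ and satisfies $Y(\xi_{0})=C_{k}^{\lambda}(1)=\binom{k+N-2}{k}\neq0$. Evaluating $T_{\alpha}Y=\lambda_{k}(\alpha)Y$ at $\xi_{0}$ and slicing $\mathbb{S}^{N}$ by $t=\eta_{N+1}$ (so $d\eta=(1-t^{2})^{(N-2)/2}\,dt\,d\sigma_{\mathbb{S}^{N-1}}$ and $|\xi_{0}-\eta|^{2}=2-2t$) turns the identity into
\[
\lambda_{k}(\alpha)=\frac{\omega_{N-1}\,2^{-\alpha/2}}{C_{k}^{\lambda}(1)}\int_{-1}^{1}(1-t)^{\frac{N-2-\alpha}{2}}(1+t)^{\frac{N-2}{2}}C_{k}^{\lambda}(t)\,dt,\qquad \omega_{N-1}=\frac{2\pi^{N/2}}{\Gamma(N/2)}.
\]
For $k=0$ ($C_{0}^{\lambda}\equiv1$) the integral is an elementary Beta integral that already reproduces \eqref{FHL} at $k=0$; for general $k$ it is a classical Gegenbauer (equivalently Jacobi) evaluation — obtainable from the Rodrigues formula for $C_{k}^{\lambda}$ followed by $k$ integrations by parts, or by reducing to a Gauss hypergeometric function at a special argument — which produces the answer as a ratio of Gamma functions. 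Assembling the factors and simplifying with the duplication formula for $\Gamma$ then gives exactly \eqref{FHL}.

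The symmetry step is immediate, so the genuine work lies in evaluating the weighted Gegenbauer integral and carrying out the $\Gamma$-bookkeeping. There is also a slicker route that avoids the special-function computation entirely: convolution with $|x|^{-\alpha}$ on $\R^{N}$ equals $\gamma_{N,\alpha}(-\Delta)^{(\alpha-N)/2}$ with $\gamma_{N,\alpha}=\pi^{N/2}2^{N-\alpha}\Gamma(\tfrac{N-\alpha}{2})/\Gamma(\tfrac{\alpha}{2})$, and, using the stereographic identity \eqref{Sx}, $T_{\alpha}$ is conformally identified with $\gamma_{N,\alpha}$ times the spherical (conformally covariant) analogue of $(-\Delta)^{(\alpha-N)/2}$, whose eigenvalue on $\mathscr{H}_{k}^{N+1}$ is the classical $\Gamma(k+\tfrac{\alpha}{2})/\Gamma(k+N-\tfrac{\alpha}{2})$; multiplying by $\gamma_{N,\alpha}$ yields $\lambda_{k}(\alpha)$ as in \eqref{FHL}.
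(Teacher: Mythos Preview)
Your proof outline is correct and standard, but there is nothing to compare it against: the paper does not prove this lemma at all. It is stated with a citation to \cite{AH,DX} as a known result (the Funk--Hecke formula), and the paper simply uses it, immediately noting the monotonicity $\lambda_{k+1}(\alpha)<\lambda_{k}(\alpha)$ and computing a few special values before invoking it in Lemma~\ref{FH1} and in the proof of Theorem~\ref{Non}.

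So your two-stage plan --- rotational invariance plus Schur's lemma for the eigenfunction property, then evaluation via the zonal harmonic and a Gegenbauer/Beta integral (or alternatively via the conformal identification with a power of the spherical Laplacian) --- is genuinely more than the paper offers. It is the right argument, and the ``slicker route'' you sketch at the end is essentially how the formula is often derived in the literature on conformally covariant operators. For the purposes of this paper, however, a citation suffices.
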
 
	Here we need to pay attention to $\lambda_{k}$. We check at once that $\lambda_{k+1}(\alpha)<\lambda_{k}(\alpha), (k\geq 0)$
	for every $\alpha\in (0,N)$ and obviously
	$$
	\lambda_{0}(N-4)=\frac{2^6}{N(N+2)}\frac{\pi^{\frac{N}{2}}}{\Gamma(\frac{N}{2})},\sp
	\lambda_{1}(N-4)=\frac{N-4}{N+4}\lambda_{0}(N-4),
	$$
	$$
	\lambda_{0}(\alpha)=2^{N-\alpha}\pi^{\frac{N}{2}}\frac{\Gamma\left(\frac{N-\alpha}{2} \right)}{\Gamma\left(N-\frac{\alpha}{2} \right)},\sp
	\lambda_{1}(\alpha)=\frac{\alpha}{2N-\alpha}\lambda_{0}(\alpha).
	$$
	
	We can give the following lemma which is a direct consequence of the above Funk-Heck formula. 
	\begin{lem}\label{FH1}
		Let $\alpha\in (0,N)$ and $\lambda_{k}(\alpha)$ be defined by \eqref{FHL}, then for any $Y\in \mathscr {H}_{k}^{N+1}$, we have 
		\begin{equation}\label{FH11}
			\int_{\mathbb{S}^N}\int_{\mathbb{S}^N}\frac{1}{|\xi-\eta|^{N-4}}\frac{1}{|\eta-\sigma|^{\alpha}}Y(\eta)d\eta d\sigma=\lambda_{k}(N-4)\lambda_{0}(\alpha)Y(\xi),
		\end{equation}
		\begin{equation}\label{FH12}
			\int_{\mathbb{S}^N}\int_{\mathbb{S}^N}\frac{1}{|\xi-\eta|^{N-4}}\frac{1}{|\eta-\sigma|^{\alpha}}Y(\sigma)d\sigma d\eta =\lambda_{k}(N-4)\lambda_{k}(\alpha)Y(\xi).
		\end{equation}
	\end{lem}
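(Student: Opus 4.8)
The plan is to derive both identities from two successive applications of the Funk-Heck formula in Lemma \ref{FH}, together with Fubini's theorem. First I would record that the Riesz-type kernels $|\xi-\eta|^{-(N-4)}$ and $|\eta-\sigma|^{-\alpha}$ are both locally integrable on $\mathbb{S}^N$, since $N\geq 9$ forces $0<N-4<N$ and we are given $0<\alpha<N$. Consequently the double integrals in \eqref{FH11} and \eqref{FH12} are absolutely convergent, and the order of integration may be exchanged freely.

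For \eqref{FH11}, observe that the factor $Y(\eta)$ does not depend on $\sigma$, so by Fubini I would integrate in $\sigma$ first. Applying Lemma \ref{FH} to the constant function $1\in\mathscr{H}_{0}^{N+1}$ gives $\int_{\mathbb{S}^N}|\eta-\sigma|^{-\alpha}\,d\sigma=\lambda_{0}(\alpha)$, a constant independent of $\eta$. Pulling this constant out, the remaining integral $\int_{\mathbb{S}^N}|\xi-\eta|^{-(N-4)}Y(\eta)\,d\eta$ is precisely the left-hand side of \eqref{FH} with exponent $N-4\in(0,N)$, hence equals $\lambda_{k}(N-4)Y(\xi)$. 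Multiplying the two factors yields $\lambda_{k}(N-4)\lambda_{0}(\alpha)Y(\xi)$, as claimed.

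For \eqref{FH12}, the spherical harmonic now sits on the innermost variable $\sigma$. Again integrating in $\sigma$ first, but this time applying Lemma \ref{FH} directly to $Y\in\mathscr{H}_{k}^{N+1}$ with exponent $\alpha$, I obtain $\int_{\mathbb{S}^N}|\eta-\sigma|^{-\alpha}Y(\sigma)\,d\sigma=\lambda_{k}(\alpha)Y(\eta)$. Substituting this back, the outstanding integral becomes $\lambda_{k}(\alpha)\int_{\mathbb{S}^N}|\xi-\eta|^{-(N-4)}Y(\eta)\,d\eta=\lambda_{k}(\alpha)\lambda_{k}(N-4)Y(\xi)$ by a second use of Lemma \ref{FH}. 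This establishes \eqref{FH12}.

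There is no genuine obstacle here: the statement is, as announced, a direct consequence of the Funk-Heck formula. The only two points deserving a word of care are the justification of the interchange of integrations (supplied by the local integrability of the two Riesz kernels on the compact manifold $\mathbb{S}^N$) and the remark that the constant function is the degree-zero spherical harmonic, so that the $\sigma$-integral appearing in \eqref{FH11} is indeed the special case of \eqref{FH} with eigenvalue $\lambda_{0}(\alpha)$ rather than $\lambda_{k}(\alpha)$ — this is exactly what produces the asymmetry between \eqref{FH11} and \eqref{FH12}.
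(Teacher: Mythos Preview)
Your proof is correct and is precisely the argument the paper has in mind: the paper states the lemma as ``a direct consequence of the above Funk-Heck formula'' and gives no further proof, and your two successive applications of Lemma~\ref{FH} (with the observation that $1\in\mathscr{H}_0^{N+1}$ for \eqref{FH11}) together with Fubini are exactly that direct consequence.
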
 
	
	Next we will give some useful estimates.
	For convenience, write
	\begin{equation}\label{T}
		\begin{aligned}
			T(\varphi)(x)&=p\left( |x|^{-\alpha}\ast U^{p-1}\varphi\right) )U^{p-1}+(p-1)\left(|x|^{-\alpha}\ast |U|^p\right)U^{p-2}\varphi\\
			&:=T_{1}(\varphi)(x)+T_{2}(\varphi)(x),
		\end{aligned}
	\end{equation}
	then we can get an estimate of $T(\varphi)$.
	\begin{lem}\label{Est1}
		Let $\alpha\in (0,N)$ and $\theta\in \left[ 0,N-4\right] $, if $\varphi$ satisfies $|\varphi|\lesssim\frac{1}{\tau(x)^\theta}$ then
		\begin{equation}\nonumber
			\left| T(\varphi)(x)\right| \lesssim\frac{1}{\tau(x)^{\theta+8}}.
		\end{equation}
		where $\tau(x) =\left(1+|x|^2 \right)^\frac{1}{2}. $
	\end{lem}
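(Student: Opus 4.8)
The plan is to reduce the bound to the elementary decay of Riesz potentials of power-type functions. First I would record the relevant exponents: since $U(x)=\tau(x)^{-(N-4)}$ and $p=\frac{2N-\alpha}{N-4}$, one has $U^{p-1}=\tau^{-(N+4-\alpha)}$, $U^{p}=\tau^{-(2N-\alpha)}$ and $U^{p-2}=\tau^{-(8-\alpha)}$. The workhorse is the standard convolution estimate: for $0<\alpha<N$ and $g$ with $|g(x)|\lesssim\tau(x)^{-\gamma}$ and $\gamma>N-\alpha$, the potential $|x|^{-\alpha}\ast|g|$ is $\lesssim\tau^{-\alpha}$ if $\gamma>N$, $\lesssim\tau^{-\alpha}\log(2+|x|)$ if $\gamma=N$, and $\lesssim\tau^{N-\alpha-\gamma}$ if $N-\alpha<\gamma<N$; this is obtained by the usual three-region splitting of $\R^N$ into $\{|y|\le|x|/2\}$, $\{|y-x|\le|x|/2\}$ and the remainder.

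Next I would treat the two pieces of $T(\varphi)=T_1(\varphi)+T_2(\varphi)$ separately, starting with the easy one. For $T_2(\varphi)=(p-1)(|x|^{-\alpha}\ast U^{p})U^{p-2}\varphi$, since $2N-\alpha>N$ (because $\alpha<N$) the potential $|x|^{-\alpha}\ast U^{p}$ decays like $\tau^{-\alpha}$; multiplying by $U^{p-2}=\tau^{-(8-\alpha)}$ and by $|\varphi|\lesssim\tau^{-\theta}$ gives exactly $|T_2(\varphi)(x)|\lesssim\tau(x)^{-(\theta+8)}$, with no case distinction needed.

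For $T_1(\varphi)=p(|x|^{-\alpha}\ast U^{p-1}\varphi)U^{p-1}$, I would note that $|U^{p-1}\varphi|\lesssim\tau^{-\gamma}$ with $\gamma=N+4-\alpha+\theta$, and that $\theta\ge0$ forces $\gamma>N-\alpha$, so the convolution estimate applies. Splitting according to the sign of $4-\alpha+\theta$: if $\gamma>N$ then $|x|^{-\alpha}\ast(U^{p-1}\varphi)\lesssim\tau^{-\alpha}$, so $|T_1(\varphi)|\lesssim\tau^{-(N+4)}\le\tau^{-(\theta+8)}$ using $\theta\le N-4$ and $\tau\ge1$; if $N-\alpha<\gamma<N$ then $|x|^{-\alpha}\ast(U^{p-1}\varphi)\lesssim\tau^{-(4+\theta)}$, so $|T_1(\varphi)|\lesssim\tau^{-(\theta+8+N-\alpha)}\le\tau^{-(\theta+8)}$ using $\alpha<N$; and the borderline $\gamma=N$ is absorbed via $\log(2+|x|)\lesssim\tau^{N-\alpha}$. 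Adding the $T_1$ and $T_2$ bounds yields $|T(\varphi)(x)|\lesssim\tau(x)^{-(\theta+8)}$.

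The main (and essentially only) subtlety is the $T_1$ term: the potential $|x|^{-\alpha}\ast(U^{p-1}\varphi)$ does \emph{not} in general decay like $|x|^{-\alpha}$ — when $\theta$ is small and $\alpha$ is close to $N$ it decays only like $\tau^{-(4+\theta)}$, possibly with a logarithmic factor — and one must check that the extra decay $\tau^{-(N+4-\alpha)}$ carried by the second factor $U^{p-1}$ always compensates for this loss. This is exactly where the hypotheses $0\le\theta\le N-4$ and $\alpha<N$ enter: $\theta\le N-4$ handles the fast-decay regime and $\alpha<N$ handles the slow-decay regime, while $\theta\ge0$ is what keeps all the Riesz potentials convergent.
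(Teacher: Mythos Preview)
Your proposal is correct and follows essentially the same approach as the paper: both rely on the standard Riesz-potential decay estimate (the paper cites Lemma~A.1 in \cite{LXLTX}) and split into the three cases $\theta+4\lessgtr\alpha$ for $T_1$, invoking $\theta\le N-4$ and $\alpha<N$ exactly as you do. The only cosmetic difference is that you handle $T_2$ explicitly and cleanly (using $2N-\alpha>N$ to avoid any case split), whereas the paper treats $T_1$ first and dismisses $T_2$ with ``using the same method''.
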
 
	\begin{proof}
		From Lemma A.1 in \cite{LXLTX}, by direct calculation we get
		\begin{equation}
			\begin{split}
				\left| T_{1}(\varphi)(x)\right|=\left|p\left( |x|^{-\alpha}\ast U^{p-1}\varphi\right) )U^{p-1} \right|&\lesssim\int_{\R^N}\frac{1}{|x-y|^{\alpha}}\frac{1}{\left(1+|y|^2 \right)^{\frac{N-\alpha+4}{2}}} \frac{1}{\tau(y) ^{\theta}}dy\frac{1}{\left(1+|x|^2 \right)^{\frac{N-\alpha+4}{2}}}\\
				&=\frac{1}{\tau(x) ^{N-\alpha+4}}\int_{\R^N}\frac{1}{|x-y|^{\alpha}}\frac{1}{\tau(y)^{N-\alpha+4}} \frac{1}{\tau(y) ^{\theta}}dy\\
				&\lesssim\left\lbrace \begin{aligned}
					&\tau(x) ^{-N-\theta-8+\alpha},\sp &if \sp \theta+4<\alpha,\\
					&\tau(x)^{-N-4}\left( 1+\log(\tau(x))\right) ,\sp &if \sp \theta+4=\alpha,\\
					&\tau(x) ^{-N-4},\sp &if \sp \theta+4>\alpha.\\
				\end{aligned}
				\right.
			\end{split}
		\end{equation}
		Since $N+4\geq\theta+8$ and $\frac{1+\log(\tau(x))}{\tau(y) ^{N-\alpha}}\lesssim 1$, we deduce 
		\begin{equation}\label{T1}
			\left| T_{1}(\varphi)(x)\right| \lesssim\frac{1}{\tau(x)^{\theta+8}}.
		\end{equation}
		Using the same method, we can get
		\begin{equation}\label{T2}
			\left| T_{2}(\varphi)(x)\right| \lesssim\frac{1}{\tau(x)^{\theta+8}},
		\end{equation}
		this finish the proof.
	\end{proof}
	
	The equation \eqref{linear} can be rewritten in the following integral form
	\begin{equation}\label{Inte}
		\varphi(x)=C^\frac{2(N+4-\alpha)}{N-4}_{N,\alpha}C_{N}\int_{\R^N}\frac{1}{|x-y|^{N-4}}T(\varphi)(y)dy,
	\end{equation}
	where $C_{N}=\frac{\Gamma(\frac{N}{2})}{4(N-2)(N-4)\pi^{\frac{N}{2}}}$.
	We can improve the estimate of $\varphi(x)$ and have the following lemma.
	
	\begin{lem}\label{Est2}
		Let $\alpha\in (0,N)$, $N\geq9$, if $\varphi\in L^{\infty}(\R^N)$ satisfies \eqref{Inte}, then 
		\begin{equation}
			|\varphi(x)|\lesssim\frac{1}{\tau(x)^{N-4}},
		\end{equation}
		where $\tau(x) =\left(1+|x|^2 \right)^\frac{1}{2}. $
	\end{lem}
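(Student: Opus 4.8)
The plan is to run a bootstrap argument on the decay rate of $\varphi$, upgrading the bound $|\varphi(x)|\lesssim\tau(x)^{-\theta}$ step by step until $\theta$ reaches $N-4$. The two ingredients are Lemma~\ref{Est1}, which converts decay of $\varphi$ into (faster) decay of $T(\varphi)$, and the classical weighted estimate for the Riesz potential of order $4$,
\begin{equation}\nonumber
	\int_{\R^N}\frac{1}{|x-y|^{N-4}}\,\frac{1}{\tau(y)^{a}}\,dy
	\lesssim
	\begin{cases}
		\tau(x)^{4-a}, & 4<a<N,\\
		\tau(x)^{4-N}\bigl(1+\log\tau(x)\bigr), & a=N,\\
		\tau(x)^{4-N}, & a>N,
	\end{cases}
\end{equation}
which follows by splitting $\R^N$ into $\{|y|\le\frac12|x|\}$, $\{|y-x|\le\frac12|x|\}$ and the complement, and which is recorded in this form in the appendix of \cite{LXLTX}. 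First I would observe that $\varphi\in L^\infty(\R^N)$ means $|\varphi(x)|\lesssim\tau(x)^{-\theta}$ with $\theta=0$, and that the integral in \eqref{Inte} converges, since by Lemma~\ref{Est1} $T(\varphi)$ decays like $\tau^{-8}$ while $|x-y|^{-(N-4)}$ is locally integrable and $8>4$.

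For the induction step, suppose $|\varphi(x)|\lesssim\tau(x)^{-\theta}$ for some $\theta\in[0,N-4]$. Lemma~\ref{Est1} gives $|T(\varphi)(x)|\lesssim\tau(x)^{-(\theta+8)}$, and plugging this into \eqref{Inte} and using the Riesz estimate above with $a=\theta+8$ yields
\begin{equation}\nonumber
	|\varphi(x)|\lesssim
	\begin{cases}
		\tau(x)^{-(\theta+4)}, & \theta+8<N,\\
		\tau(x)^{-(N-4)}\bigl(1+\log\tau(x)\bigr), & \theta+8=N,\\
		\tau(x)^{-(N-4)}, & \theta+8>N.
	\end{cases}
\end{equation}
Hence, as long as $\theta<N-8$, one iteration improves the decay exponent by exactly $4$ and keeps $\theta+4\le N-4$ (so Lemma~\ref{Est1} still applies at the next stage), while as soon as $\theta\ge N-8$ the decay is already $\tau^{-(N-4)}$ up to at most a logarithm. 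Starting from $\theta=0$, after finitely many steps we therefore arrive at $|\varphi(x)|\lesssim\tau(x)^{-(N-4)}\bigl(1+\log\tau(x)\bigr)$.

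To remove the logarithm, I would feed this bound back once more: since $1+\log\tau(x)\lesssim\tau(x)^{\varepsilon}$ and $(N-4)-\varepsilon\in[0,N-4]$ for small $\varepsilon$, Lemma~\ref{Est1} gives $|T(\varphi)(x)|\lesssim\tau(x)^{-(N+4)+\varepsilon}$, and the Riesz estimate with $a=N+4-\varepsilon>N$ produces $|\varphi(x)|\lesssim\tau(x)^{-(N-4)}$, with no logarithm, which is the claimed bound. The only step here that is not pure bookkeeping is the weighted Riesz estimate together with the need to track its three regimes — in particular the borderline case $a=N$ that produces the logarithm; once that estimate is in hand, the remainder of the proof is the straightforward iteration just described.
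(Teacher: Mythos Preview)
Your proposal is correct and follows essentially the same bootstrap argument as the paper: iterate Lemma~\ref{Est1} together with the weighted Riesz estimate (Lemma~A.1 in \cite{LXLTX}) to upgrade the decay exponent until it reaches $N-4$. The only cosmetic difference is that the paper inserts a small $\varepsilon$ from the outset to avoid ever landing on the borderline exponent $a=N$, whereas you allow the logarithm to appear and then remove it with one extra iteration; both devices are standard and equivalent here.
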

	\begin{proof}
		As Lemma \ref{Est1}, for $\varphi\in L^{\infty}(\R^N)$, we have 
		$$\left| T(\varphi)(x)\right| \lesssim\frac{1}{\tau(x)^{2k+8-\varepsilon}},$$
		where $0<\varepsilon\ll 1$ and $1\leq k\leq \left[\frac{N-4}{2} \right]$. Then by Lemma A.1 in \cite{LXLTX} we deduce that 
		\begin{equation}\label{Es}
			|\varphi(x)|\lesssim \left\lbrace \begin{aligned}
				&\frac{1}{\tau(x) ^{N-4}},\sp &if \sp N<2k+8-\varepsilon,\\
				&\frac{1}{\tau(x) ^{2(k+1)-\varepsilon}},\sp &if \sp N\geq 2k+8-\varepsilon.\\
			\end{aligned}
			\right.
		\end{equation}
		On the other hand, for the same reason we derive $\left| T(\varphi)(x)\right| \lesssim\frac{1}{\tau(x)^{8}}$. By the induction argument for $k=1,\cdots, \left[\frac{N-4}{2} \right]$ and $N\geq 9$ we obtain that
		\begin{equation}\nonumber
			|\varphi(x)|\lesssim\frac{1}{\tau(x) ^{N-4}}.
		\end{equation}
	\end{proof}
	
	Let
	\begin{equation}
		\begin{split}
			T_{\mathbb{S}^N}\Psi(\varphi)(\xi)&:=pT_{\mathbb{S}^N, 1}\Psi(\varphi)(\xi)+(p-1)T_{\mathbb{S}^N,2}\Psi(\varphi)(\xi)\\
			&=p\cdot 2^{-(p-1)(N-4)} \int_{\mathbb{S}^N}\int_{\mathbb{S} ^N}\frac{1}{|\xi-\eta|^{N-4}}\frac{1}{|\eta-\sigma|^{\alpha}}\Psi(\varphi)(\sigma)d\sigma d\eta\\
			&+(p-1)2^{-(p-1)(N-4)} \int_{\mathbb{S}^N}\int_{\mathbb{S} ^N}\frac{1}{|\xi-\eta|^{N-4}}\frac{1}{|\eta-\sigma|^{\alpha}}\Psi(\varphi)(\eta) d\eta d\sigma.
		\end{split}
	\end{equation}
	To prove the theorem, we need to transform the integral equation \eqref{Inte} on $\R^N$ to that on $\mathbb{S}^N$ by the stereographic projection. Therefore, we have the following lemma.
	\begin{lem}\label{SN}
		Let $\alpha\in (0,N)$, $N\geq9$, if $\varphi\in L^{\infty}(\R^N)$ satisfies \eqref{Inte}, then $\Psi(\varphi)\in L^2(\mathbb{S}^N)$ and
		\begin{equation}\label{EqTS}
			\Psi(\varphi)(\xi)=C^\frac{2(N+4-\alpha)}{N-4}_{N,\alpha}C_{N}T_{\mathbb{S}^N}\Psi(\varphi)(\xi).
		\end{equation}
	\end{lem}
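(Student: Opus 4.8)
The plan is to transport the integral identity \eqref{Inte} from $\R^N$ to $\mathbb{S}^N$ through the stereographic projection $S$, exploiting the conformal covariance of the two Riesz kernels. The computation rests on a few elementary identities. From \eqref{Sx} one gets
\begin{equation}\nonumber
\frac{1}{|x-y|^{\gamma}}=\frac{1}{|S(x)-S(y)|^{\gamma}}\,J_S^{\frac{\gamma}{2N}}(x)\,J_S^{\frac{\gamma}{2N}}(y),\qquad \gamma>0,
\end{equation}
to be used with $\gamma=N-4$ and $\gamma=\alpha$. Since $U(x)=(1+|x|^2)^{-\frac{N-4}{2}}=2^{-\frac{N-4}{2}}J_S^{\frac{N-4}{2N}}(x)$, one has $U^{q}(x)=2^{-\frac{q(N-4)}{2}}J_S^{\frac{q(N-4)}{2N}}(x)$, to be used for $q=p-1,p,p-2$ together with the arithmetic relations $(p-1)(N-4)=N+4-\alpha$, $p(N-4)=2N-\alpha$ and $(p-2)(N-4)=8-\alpha$. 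Finally, \eqref{S2} amounts to $\varphi(x)=J_S^{\frac{N-4}{2N}}(x)\,\Psi(\varphi)(S(x))$, equivalently $\Psi(\varphi)(S(x))=J_S^{\frac{4-N}{2N}}(x)\,\varphi(x)$.

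For the $L^2$-membership I would simply insert the decay $|\varphi(x)|\lesssim\tau(x)^{-(N-4)}$ supplied by Lemma \ref{Est2}: by the change-of-variables formula and the last display,
\begin{equation}\nonumber
\int_{\mathbb{S}^N}|\Psi(\varphi)(\xi)|^2\,d\xi=\int_{\R^N}J_S^{\frac{4}{N}}(x)\,|\varphi(x)|^2\,dx\lesssim\int_{\R^N}\frac{dx}{\tau(x)^{2N}}<\infty ,
\end{equation}
since $J_S^{4/N}(x)=\bigl(2/(1+|x|^2)\bigr)^{4}\lesssim\tau(x)^{-8}$; hence $\Psi(\varphi)\in L^2(\mathbb{S}^N)$.

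For the equation I would first rewrite the inner convolutions in \eqref{T}. Substituting $z=S^{-1}(\sigma)$ (so $dz=J_S^{-1}(z)\,d\sigma$), expanding $U^{p-1}(z)$, resp. $U^{p}(z)$, and $\varphi(z)=J_S^{\frac{N-4}{2N}}(z)\Psi(\varphi)(\sigma)$, and rewriting $|y-z|^{-\alpha}$ via the conformal identity, the decisive point is that the total exponent of the \emph{dummy} Jacobian $J_S(z)$ vanishes identically — $\tfrac{\alpha}{2N}+\tfrac{(p-1)(N-4)}{2N}+\tfrac{N-4}{2N}-1=0$ for the $T_1$-term, and $\tfrac{\alpha}{2N}+\tfrac{p(N-4)}{2N}-1=0$ for the convolution in the $T_2$-term — which is exactly the conformal invariance being used. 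One then obtains, writing $\eta=S(y)$,
\begin{equation}\nonumber
T_1(\varphi)(y)=p\,2^{-(p-1)(N-4)}J_S^{\frac{N+4}{2N}}(y)\int_{\mathbb{S}^N}\frac{\Psi(\varphi)(\sigma)}{|\eta-\sigma|^{\alpha}}\,d\sigma ,
\end{equation}
and, similarly, using $U^{p-2}(y)$, $\varphi(y)=J_S^{\frac{N-4}{2N}}(y)\Psi(\varphi)(\eta)$ and the transform of $|y|^{-\alpha}\ast U^{p}$ (which the finiteness statement of Lemma \ref{Est1} with $\theta=0$ makes legitimate),
\begin{equation}\nonumber
T_2(\varphi)(y)=(p-1)\,2^{-(p-1)(N-4)}J_S^{\frac{N+4}{2N}}(y)\Big(\int_{\mathbb{S}^N}\frac{d\sigma}{|\eta-\sigma|^{\alpha}}\Big)\Psi(\varphi)(\eta).
\end{equation}

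Next I would run the same substitution, with $y=S^{-1}(\eta)$ and the kernel $|x-y|^{-(N-4)}$, on the outer integral of \eqref{Inte}: once more the exponents of $J_S(y)$ cancel ($\tfrac{N-4}{2N}+\tfrac{N+4}{2N}-1=0$), the two surviving spherical integrations assemble precisely into the double integrals defining $T_{\mathbb{S}^N,1}$ and $T_{\mathbb{S}^N,2}$, and one is left with the overall prefactor $J_S^{\frac{N-4}{2N}}(x)$; that is,
\begin{equation}\nonumber
\varphi(x)=C_{N,\alpha}^{\frac{2(N+4-\alpha)}{N-4}}C_{N}\,J_S^{\frac{N-4}{2N}}(x)\,T_{\mathbb{S}^N}\Psi(\varphi)(S(x)).
\end{equation}
Multiplying by $J_S^{\frac{4-N}{2N}}(x)$ and using $\Psi(\varphi)(S(x))=J_S^{\frac{4-N}{2N}}(x)\varphi(x)$ on the left yields \eqref{EqTS} for every $\xi=S(x)$, hence a.e.\ on $\mathbb{S}^N$ because $S$ is a bijection onto $\mathbb{S}^N$ minus one point. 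All interchanges of the order of integration and the absolute convergence of the integrals are justified by the decay bounds of Lemmas \ref{Est1} and \ref{Est2}. I expect the only real obstacle to be the exponent bookkeeping: one must verify, at each change of variables, that the powers of the integration-variable Jacobian cancel and that those of $J_S(x)$ collect to exactly $\frac{N-4}{2N}$, so that the constant $C_{N,\alpha}^{2(N+4-\alpha)/(N-4)}C_N$ is carried through unchanged.
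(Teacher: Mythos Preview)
Your proposal is correct and follows essentially the same route as the paper: both proofs first use the decay from Lemma~\ref{Est2} to get $\Psi(\varphi)\in L^2(\mathbb{S}^N)$, then transport the two Riesz convolutions in \eqref{Inte} to $\mathbb{S}^N$ via the conformal identity \eqref{Sx} and the relation $U=2^{-(N-4)/2}J_S^{(N-4)/2N}$, checking that the Jacobian exponents for the dummy variables cancel. Your write-up is in fact slightly more explicit than the paper's about the exponent bookkeeping and about invoking Lemmas~\ref{Est1}--\ref{Est2} to justify Fubini, but the argument is the same.
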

	\begin{proof}
		Our proof will be divided into two steps. 
		
		\textbf{Step 1}: Let's prove $\Psi(\varphi)\in L^2(\mathbb{S}^N)$ first. According to \eqref{S2} and Lemma \ref{Est2}, we get
		$$
		\int_{\mathbb{S}^N}\left|\Psi(\varphi)(\xi) \right|^{2}d\xi\lesssim\int_{\R^N}\frac{1}{\tau(x)^{2N-8}}J_S^{\frac{4}{N}}(x)dx\lesssim \int_{\R^N}\frac{1}{\tau(x)^{2N}}dx<+\infty.
		$$
		
		\textbf{Step 2}: Now we are going to prove equation \eqref{EqTS}.
		\begin{equation}\nonumber
			\begin{split}
				T_{1}(\varphi)(x)&=p\int_{\R^N}\frac{1}{|x-y|^{\alpha}}U^{p-1}(y)\varphi(y)dyU^{p-1}(x)\\
				&=p\cdot 2^{-(p-1)(N-4)}J_{S}^{\frac{N-\alpha+4}{2N}}(x) \int_{\R^N}\frac{1}{|x-y|^{\alpha}}J_{S}^{\frac{N-\alpha+4}{2N}}(y)\varphi(y)dy\\
				&=p\cdot 2^{-(p-1)(N-4)}J_{S}^{\frac{N+4}{2N}}(x) \int_{\R^N}\frac{1}{|Sx-Sy|^{\alpha}}J_{S}^{\frac{4-N}{2N}}(y)\varphi(y)J_{S}(y)dy\\
				&=p\cdot 2^{-(p-1)(N-4)}J_{S}^{\frac{N+4}{2N}}(x)\int_{\mathbb{S} ^N}\frac{1}{|Sx-\eta|^{\alpha}}\Psi(\varphi)(\eta)d\eta
			\end{split}
		\end{equation}
		\begin{equation}\label{ST1}
			\begin{split}
				&\int_{\R^N}\frac{1}{|x-y|^{N-4}}T_{1}(\varphi)(y)dy\\
				&=p\cdot 2^{-(p-1)(N-4)}\int_{\R^N}\frac{1}{|x-y|^{N-4}}J_{S}^{\frac{N+4}{2N}}(y) \int_{\mathbb{S} ^N}\frac{1}{|Sy-\sigma|^{\alpha}}\Psi(\varphi)(\sigma)d\sigma dy\\
				&=p\cdot 2^{-(p-1)(N-4)}J_{S}^{\frac{N-4}{2N}}(x) \int_{\R^N}\frac{1}{|Sx-Sy|^{N-4}}J_{S}(y) \int_{\mathbb{S} ^N}\frac{1}{|Sy-\sigma|^{\alpha}}\Psi(\varphi)(\sigma)d\sigma dy\\
				&=p\cdot 2^{-(p-1)(N-4)}J_{S}^{\frac{N-4}{2N}}(x) \int_{\mathbb{S}^N}\int_{\mathbb{S} ^N}\frac{1}{|Sx-\eta|^{N-4}}\frac{1}{|\eta-\sigma|^{\alpha}}\Psi(\varphi)(\sigma)d\sigma d\eta.
			\end{split}
		\end{equation}
		Similarly,
		\begin{equation}\label{ST2}
			\int_{\R^N}\frac{1}{|x-y|^{N-4}}T_{2}(\varphi)(y)dy=(p-1)2^{-(p-1)(N-4)}J_{S}^{\frac{N-4}{2N}}(x) \int_{\mathbb{S}^N}\int_{\mathbb{S} ^N}\frac{1}{|Sx-\eta|^{N-4}}\frac{1}{|\eta-\sigma|^{\alpha}}\Psi(\varphi)(\eta) d\eta d\sigma.
		\end{equation}
		Combing \eqref{ST1} with \eqref{ST2}, we have
		\begin{equation}\nonumber
			\begin{split}
				J_{S}^{\frac{4-N}{2N}}(x)\varphi(x)&=C^\frac{2(N+4-\alpha)}{N-4}_{N,\alpha}C_{N}\int_{\R^N}\frac{1}{|x-y|^{N-4}}T(\varphi)(y)dy\\
				&=C^\frac{2(N+4-\alpha)}{N-4}_{N,\alpha}C_{N}[ p\cdot 2^{-(p-1)(N-4)} \int_{\mathbb{S}^N}\int_{\mathbb{S} ^N}\frac{1}{|Sx-\eta|^{N-4}}\frac{1}{|\eta-\sigma|^{\alpha}}\Psi(\varphi)(\sigma)d\sigma d\eta\\
				&+(p-1)2^{-(p-1)(N-4)} \int_{\mathbb{S}^N}\int_{\mathbb{S} ^N}\frac{1}{|Sx-\eta|^{N-4}}\frac{1}{|\eta-\sigma|^{\alpha}}\Psi(\varphi)(\eta) d\eta d\sigma] \\
				&=C^\frac{2(N+4-\alpha)}{N-4}_{N,\alpha}C_{N}[pT_{\mathbb{S}^N, 1}\Psi(\varphi)(\xi)+(p-1)T_{\mathbb{S}^N,2}\Psi(\varphi)(\xi)]
				=C^\frac{2(N+4-\alpha)}{N-4}_{N,\alpha}C_{N}T_{\mathbb{S}^N}\Psi(\varphi)(\xi).
			\end{split}
		\end{equation}
		Therefore, equation \eqref{EqTS} holds.
	\end{proof}

	{\bf Proof of Theorem \ref{Non}.} At the beginning of the proof, by definition of $\Psi$, we need to prove that $\Psi(\varphi)\in \mathbb{Y}^{N+1}_{1}$. As Lemma \ref{SN}, we have $\Psi(\varphi)\in L^2(\mathbb{S}^N)$ and the orthogonal decomposition $L^2(\mathbb{S}^N)=\bigoplus\limits_{k=0}^{\infty}\mathscr {H}_{k}^{N+1}$, we get
	\begin{equation}\label{OD}
		\Psi(\varphi)(\xi)=\sum_{k=0}^{\infty}\sum_{i=1}^{\dim \mathscr {H}_{k}^{N+1}}\Psi(\varphi)_{k,i}Y_{k.i}(\xi),
	\end{equation} 
	where $\Psi(\varphi)_{k,i}=\int_{\mathbb{S}^N}\Psi(\varphi)(\xi)Y_{k.i}(\xi)d\xi$.
	Moreover $\Psi(\varphi)(\xi)$ satisfies \eqref{EqTS} and Lemma \ref{FH1}, we deduce that 
	\begin{equation}\label{OD2}
		\Psi(\varphi)_{k,i}=C^\frac{2(N+4-\alpha)}{N-4}_{N,\alpha}C_{N}2^{-(p-1)(N-4)}\lambda_{k}(N-4)\left[p\lambda_{k}(\alpha)+(p-1)\lambda_{0}(\alpha) \right]	\Psi(\varphi)_{k,i}.
	\end{equation}
	When $k=1$, an easy computation shows that  
	\begin{equation}\nonumber
		C^\frac{2(N+4-\alpha)}{N-4}_{N,\alpha}C_{N}2^{-(p-1)(N-4)}\lambda _{1}(N-4)\left[p\lambda_{1}(\alpha)+(p-1)\lambda_{0}(\alpha) \right]=1.
	\end{equation}
	Similar consideration apply to $k=0$ and $k\geq 2$, then 
	\begin{equation}\nonumber
		C^\frac{2(N+4-\alpha)}{N-4}_{N,\alpha}C_{N}2^{-(p-1)(N-4)}\lambda_{0}(N-4)\left[p\lambda_{1}(\alpha)+(p-1)\lambda_{0}(\alpha) \right]>1.
	\end{equation}
	\begin{equation}\nonumber
		C^\frac{2(N+4-\alpha)}{N-4}_{N,\alpha}C_{N}2^{-(p-1)(N-4)}\lambda_{k}(N-4)\left[p\lambda_{1}(\alpha)+(p-1)\lambda_{0}(\alpha) \right]<1.
	\end{equation}
	We conclude from \eqref{OD2} that
	$$
	\Psi(\varphi)_{k,i}=0,\sp \text{for} \sp k=0 \sp \text{and} \sp k\geq 2,
	$$
	so by \eqref{OD}, we have $\Psi(\varphi)\in \mathbb{Y}^{N+1}_{1}$.

	Since the dimension of $\mathbb{Y}^{N+1}_{1}$ is $N+1$, we have
	$$
	\Psi(\varphi)\in \left\lbrace \xi_{j}|\sp 1\leq j\leq N+1 \sp \right\rbrace . 
	$$
	From Lemma \ref{L1}, we know the map $\Psi$ is one to one map from the subspace $\text{span}\left\lbrace \varphi_{j},\sp 1\leq j\leq N+1 \sp \right\rbrace$ to the subspace $\mathscr {H}_{1}^{N+1}$, then the inverse map $\Phi:\mathscr {H}_{1}^{N+1}\mapsto \text{span}\left\lbrace \varphi_{j},\sp 1\leq j\leq N+1 \right\rbrace$, so
	$$
	\varphi\in \text{span}\left\lbrace \varphi_{j},\sp 1\leq j\leq N+1 \right\rbrace.
	$$
	$\hfill{} \Box$

	\section{A nonlocal Sobolev type inequality}
	In this section, we will prove Theorem \ref{INE}. Our proof is mainly inspired by reference \cite{DTYZ,LW}. A key element is the analysis of the eigenvalues of the following equation:
	\begin{equation}\label{EV}
		\Delta^{2}v+\left( |x|^{-\alpha}\ast |W|^{p}\right)W^{p-2} v=\mu\left[ \left( |x|^{-\alpha}\ast (W^{p-1}v)\right) W^{p-1}+\left( |x|^{-\alpha}\ast |W|^{p}\right)W^{p-2} v \right]. 
	\end{equation}
	The study of the eigenvalues of a linear operator is a classical topic. Here we refer to the work of Servadei and Valdinoci \cite{SV} for nonlocal case to give the
	definitions of eigenvalues of problem \eqref{EV} as follow
	
	\begin{Def}\label{EV1}
		The first eigenvalue of problem \eqref{EV} is positive and that can be defined as
		\begin{equation}\nonumber
			\mu_{1}:=\inf_{v\in\mathcal D^{2,2}(\R^N)\backslash\lbrace 0\rbrace}\frac{\int_{\mathbb{R}^{N}}|\Delta v|^2dx+\int_{\mathbb{R}^{N}}\Big(|x|^{-\alpha}\ast W^{p}\Big)W^{p-2}v^{2}dx}{\int_{\mathbb{R}^{N}}\Big(|x|^{-\alpha}\ast (W^{p-1}v)\Big)W^{p-1}v dx+\int_{\mathbb{R}^{N}}\Big(|x|^{-\alpha}\ast W^{p}\Big)W^{p-2}v^{2}dx}.
		\end{equation}
		Moreover, for any $k\in \mathbb{N^+}$ the eigenvalue can be characterized as follows
		\begin{equation}\nonumber
			\mu_{k+1}:=\inf_{v\in\mathbb P_{k+1}\backslash\lbrace 0\rbrace}\frac{\int_{\mathbb{R}^{N}}|\Delta v|^2dx+\int_{\mathbb{R}^{N}}\Big(|x|^{-\alpha}\ast W^{p}\Big)W^{p-2}v^{2}dx}{\int_{\mathbb{R}^{N}}\Big(|x|^{-\alpha}\ast (W^{p-1}v)\Big)W^{p-1}v dx+\int_{\mathbb{R}^{N}}\Big(|x|^{-\alpha}\ast W^{p}\Big)W^{p-2}v^{2}dx},
		\end{equation}
		where
		$$
		\mathbb P_{k+1}:=\Big\lbrace v\in\mathcal D^{2,2}(\R^N): \int_{\mathbb{R}^{N}}\Delta v \cdot \Delta e_{j}dx=0, \sp for \;all \sp j=1,...,k.\Big\rbrace,
		$$
		and $e_{j}$ is the corresponding eigenfunction to $\lambda_{j}$. 
	\end{Def}
	
	In definition \ref{EV1} we choose $v=W$, so it's easy to see 
	\begin{equation}\nonumber
		\mu_{1}\leq\frac{\int_{\mathbb{R}^{N}}|\Delta W|^2dx+\int_{\mathbb{R}^{N}}\Big(|x|^{-\alpha}\ast W_{\overline{y}_{n},\overline{\beta}_{n}}^{p}\Big)W_{\overline{y}_{n},\overline{\beta}_{n}}^{p} dx}{\int_{\mathbb{R}^{N}}\Big(|x|^{-\alpha}\ast W^{p}\Big)W^{p} dx+\int_{\mathbb{R}^{N}}\Big(|x|^{-\alpha}\ast W^{p}\Big)W^{p}dx}=1.
	\end{equation}
	By HLS inequality,  H\"{o}lder inequality and Sobolev inequality \eqref{SI}, we deduce 
	\begin{equation}\label{INE1}
		\begin{split}
			\int_{\mathbb{R}^{N}}\Big(|x|^{-\alpha}\ast (W^{p-1}v)\Big)W^{p-1}v dx
			&\leq C(N,\alpha)\left\| W^{p-1}v\right\| ^{2}_{L^{\frac{2N}{N-4}}}\\
			&\leq C(N,\alpha)\left\| W\right\| ^{2(p-1)}_{L^{\frac{2N}{N-4}}}\left\| v\right\| ^{2}_{L^{\frac{2N}{N-4}}}\\
			&=S_{2}\left\| v\right\| ^{2}_{L^{\frac{2N}{N-4}}}\leq\left\| v\right\| ^{2}_{D^{2,2}(\R^N)}
		\end{split}
	\end{equation}
	Similarly, we also have
	\begin{equation}\label{INE2}
	\aligned
	\int_{\mathbb{R}^{N}}\Big(|x|^{-\alpha}\ast W^{p}\Big)W^{p-2}v^{2}dx
	\leq\left\| v\right\| ^{2}_{D^{2,2}(\R^N)}
	\endaligned
	\end{equation}
	where $C(N,\alpha)$ is given as in Proposition \ref{pro1.1}. Moreover, 
	equality is achieved if and only if $v=cW$ with $c\in \R$, which implies $\mu_{1}\geq 1$. Then $\mu_{1}=1$ and the corresponding eigenfunction is $cW$ with $c\in \R$. Therefore, because of Theorem \ref{Non} we get
	\begin{lem}\label{ES}
		Let $\mu_{k}$, $k=1,2,\cdots,$ denote the eigenvalues of \eqref{EV} in increasing order. Then $\mu_{1}=1$ is simple and the corresponding eigenfunction is $ cU$ with $c\in \R$ and  $\mu_{2}=\mu_{3}=\dots=\mu_{N+2}=p<\mu_{N+3}\leq \dots$ with the corresponding $N+1$-dimension eigenfunction space spanned by 
		$$
		\left\lbrace \frac{(N-4)}{2}W(x)+x\cdot\nabla W(x), \frac{\partial W}{\partial x_1},\cdots \frac{\partial W}{\partial x_N}\right\rbrace.
		$$
	\end{lem}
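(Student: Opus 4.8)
The plan is to carry out the stereographic-projection and spherical-harmonic argument of Theorem~\ref{Non} for the whole eigenvalue problem \eqref{EV}, not merely for the kernel of \eqref{linear}. Observe first that \eqref{EV} is equivalent to
\[
\Delta^{2} v=\mu\,(|x|^{-\alpha}\ast(W^{p-1}v))W^{p-1}+(\mu-1)\,(|x|^{-\alpha}\ast|W|^{p})W^{p-2}v ,
\]
which for $\mu=p$ is exactly \eqref{linear}. Hence an eigenfunction $v\in\mathcal D^{2,2}(\R^N)$ satisfies an integral identity of the type \eqref{Inte} with $T$ replaced by the analogous operator carrying coefficients $(\mu,\mu-1)$ in place of $(p,p-1)$; running the bootstrap of Lemmas~\ref{Est1}--\ref{Est2} verbatim yields $|v(x)|\lesssim\tau(x)^{-(N-4)}$, so $\Psi(v)\in L^{2}(\mathbb{S}^N)$ as in Lemma~\ref{SN}, and repeating the computation of Lemma~\ref{SN} with $(p,p-1)$ replaced by $(\mu,\mu-1)$ gives, on $\mathbb{S}^N$,
\[
\Psi(v)(\xi)=C_{N,\alpha}^{\frac{2(N+4-\alpha)}{N-4}}C_{N}\big[\mu\,T_{\mathbb{S}^N,1}\Psi(v)(\xi)+(\mu-1)\,T_{\mathbb{S}^N,2}\Psi(v)(\xi)\big].
\]

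Expanding $\Psi(v)=\sum_{k,i}\Psi(v)_{k,i}Y_{k,i}$ and invoking Lemma~\ref{FH1} exactly as in \eqref{OD2}, in each sector $\mathscr{H}_{k}^{N+1}$ one finds $\Psi(v)_{k,i}=0$ unless
\[
\Lambda_{k}\big[\mu\lambda_{k}(\alpha)+(\mu-1)\lambda_{0}(\alpha)\big]=1,\qquad \Lambda_{k}:=C_{N,\alpha}^{\frac{2(N+4-\alpha)}{N-4}}C_{N}\,2^{-(p-1)(N-4)}\lambda_{k}(N-4),
\]
that is, $\mu=\nu_{k}:=(\Lambda_{k}^{-1}+\lambda_{0}(\alpha))/(\lambda_{k}(\alpha)+\lambda_{0}(\alpha))$. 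The normalization $\Lambda_{0}\lambda_{0}(\alpha)=1$ --- the $k=0$ instance of the integral form of \eqref{he}, equivalently the relation $\mu_{1}=1$ established just before the statement --- gives $\nu_{0}=1$; and the normalization $\Lambda_{1}[p\lambda_{1}(\alpha)+(p-1)\lambda_{0}(\alpha)]=1$ already used in the proof of Theorem~\ref{Non} gives, after a one-line simplification, $\nu_{1}=p$.

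Since for $\alpha\in(0,N)$ and $N-4\in(0,N)$ both $\lambda_{k}(\alpha)$ and $\lambda_{k}(N-4)$ are strictly decreasing in $k$, the numerator $\Lambda_{k}^{-1}+\lambda_{0}(\alpha)$ of $\nu_{k}$ is strictly increasing and the denominator $\lambda_{k}(\alpha)+\lambda_{0}(\alpha)$ strictly decreasing, so $\nu_{k}$ is strictly increasing in $k$. Moreover the map $\Psi$ is injective, so the solution space of \eqref{EV} for $\mu=\nu_{j}$ is exactly $\Phi(\mathscr{H}_{j}^{N+1})$, of dimension $\dim\mathscr{H}_{j}^{N+1}$; by the min--max description in Definition~\ref{EV1} together with the completeness $L^{2}(\mathbb{S}^N)=\bigoplus_{k}\mathscr{H}_{k}^{N+1}$, the eigenvalues of \eqref{EV} listed with multiplicity are the $\nu_{k}$ repeated $\dim\mathscr{H}_{k}^{N+1}$ times. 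Since $\dim\mathscr{H}_{0}^{N+1}=1$ and $\dim\mathscr{H}_{1}^{N+1}=N+1$, this yields $\mu_{1}=\nu_{0}=1$ simple, $\mu_{2}=\dots=\mu_{N+2}=\nu_{1}=p$, and $\mu_{N+3}=\nu_{2}>p$; the eigenfunction of $\mu_{1}$ is $cW$ by the computation before the statement, while the eigenspace of $\mu_{2}$ is the solution space of \eqref{linear}, which by Theorem~\ref{Non} is $\text{span}\{\tfrac{N-4}{2}W+x\cdot\nabla W,\ \tfrac{\partial W}{\partial x_{1}},\dots,\tfrac{\partial W}{\partial x_{N}}\}$.

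The main obstacle is the bookkeeping: one must track the constant $C_{N,\alpha}^{\frac{2(N+4-\alpha)}{N-4}}C_{N}2^{-(p-1)(N-4)}$ through the stereographic change of variables so that $\Lambda_{0}\lambda_{0}(\alpha)=1$ and $\nu_{1}=p$ hold on the nose, and one must justify the $\tau(x)^{-(N-4)}$ decay for a general eigenfunction so that the $L^{2}(\mathbb{S}^N)$ spherical-harmonic expansion is legitimate --- both of the same nature as steps already carried out for Theorem~\ref{Non}, so no genuinely new difficulty is expected.
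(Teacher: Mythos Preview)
Your proposal is correct and rests on exactly the stereographic--projection / Funk--Hecke machinery that underlies the paper's Theorem~\ref{Non}. The paper itself gives almost no independent proof of this lemma: it obtains $\mu_{1}=1$ by the short variational argument using the HLS--H\"older--Sobolev chain \eqref{INE1}--\eqref{INE2}, and for the rest simply writes ``because of Theorem~\ref{Non}''. Your version is more explicit and in fact more complete: by diagonalizing the full eigenvalue problem on $\mathbb{S}^{N}$ you obtain the closed formula $\nu_{k}=(\Lambda_{k}^{-1}+\lambda_{0}(\alpha))/(\lambda_{k}(\alpha)+\lambda_{0}(\alpha))$, read off $\nu_{0}=1$, $\nu_{1}=p$, and deduce strict monotonicity in $k$ from the monotonicity of $\lambda_{k}(\cdot)$. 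This fills a point the paper leaves implicit --- that there is no eigenvalue strictly between $1$ and $p$ --- which does not follow from the \emph{statement} of Theorem~\ref{Non} alone (that result only pins down the multiplicity at $\mu=p$) but requires precisely the mode--by--mode analysis you carry out. The one place the routes genuinely differ is the identification of $\mu_{1}$: the paper uses the equality case of HLS/Sobolev, whereas you use the normalization $\Lambda_{0}\lambda_{0}(\alpha)=1$ coming from the equation for $W$ itself; both are valid, and yours has the advantage of treating all $k$ uniformly. The only caveat worth flagging is that, as in the paper's Lemma~\ref{Est2}, the bootstrap to $|v|\lesssim\tau^{-(N-4)}$ presupposes $v\in L^{\infty}$; for an arbitrary $v\in\mathcal D^{2,2}$ eigenfunction this needs a preliminary regularity step, which neither you nor the paper spells out.
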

	
	The lemma below encompasses the essential components for establishing Theorem \ref{INE}, focusing on the analysis of behavior in proximity to $\mathcal{M}=\left\lbrace cW_{y,\beta}\; :\:\;c\in \R,\,y\in R^N,\,\beta\in\R^{+} 
	\right\rbrace$  which is $(N+2)$-dimensional manifold. 
	\begin{lem}\label{ES1}
		For any sequence $\left\lbrace u_n\right\rbrace \subset \mathcal{D}^{2,2}(\R^N)\setminus\mathcal{M}$ satisfying
		$$
		\inf_{n}\|u_n\|_{D^{2,2}(\R^N)}>0.\sp \dist\left(u_n, \mathcal{M} \right) \rightarrow 0,
		$$
		then 
		\begin{equation}\label{INF}
			\liminf_{n\rightarrow\infty}\frac{\int_{\R^N}|\Delta u_n|^{2}- S^{*}\left( \int_{\R^N}(|x|^{-\alpha}*u_n^{p})u_n^{p}dx\right) ^{\frac{1}{p}}}{\dist\left(u_n, \mathcal{M} \right)^2}\geq 2(\mu_{N+3}-p),
		\end{equation}
		and 
		\begin{equation}\label{SUP}
			\limsup_{n\rightarrow\infty}\frac{\int_{\R^N}|\Delta u_n|^{2}- S^{*}\left( \int_{\R^N}(|x|^{-\alpha}*u_n^{p})u_n^{p}dx\right) ^{\frac{1}{p}}}{\dist\left(u_n, \mathcal{M} \right)^2}\leq 1,
		\end{equation}
		where $\dist\left(u_n, \mathcal{M} \right)=\inf\limits_{c\in \R,y\in\R^N,\beta\in\R^+} \left\| u_n-cW_{y, \beta}\right\|$ and $\mu_{N+3}>p$ is given as in Lemma \ref{ES}.
	\end{lem}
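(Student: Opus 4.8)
The plan is to carry out a standard spectral-compactness argument near the manifold $\mathcal M$, exploiting the fact that $\mathcal M$ consists of minimizers and that the Hessian of the functional at $W$ has spectrum controlled by Lemma~\ref{ES}. Since both the quotient in \eqref{INF}--\eqref{SUP} and the constraint $\dist(u_n,\mathcal M)\to 0$ are invariant under the scaling $u\mapsto \rho^{(N-4)/2}u(\rho\,\cdot)$, under translations, and under dilations $c\in\R$, I would first normalize: writing $u_n = c_n W_{y_n,\beta_n} + w_n$ where $c_nW_{y_n,\beta_n}$ realizes (up to $o(\dist(u_n,\mathcal M))$) the infimum defining $\dist(u_n,\mathcal M)$, I apply the group action so that $y_n=0$, $\beta_n=1$, and $c_n\to c_\infty$ with $|c_\infty|$ bounded away from $0$ and $\infty$ (the latter using $\inf_n\|u_n\|>0$ together with $\dist(u_n,\mathcal M)\to 0$, which forces $u_n$ to stay near a genuine bubble rather than degenerating). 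After this normalization $\|w_n\|_{D^{2,2}}=\dist(u_n,\mathcal M)+o(\dist(u_n,\mathcal M))\to 0$, and by the choice of the nearest point $w_n$ is $D^{2,2}$-orthogonal to the tangent space $T_{c_\infty W}\mathcal M=\mathrm{span}\{W,\ \tfrac{N-4}{2}W+x\cdot\nabla W,\ \partial_{x_1}W,\dots,\partial_{x_N}W\}$.

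Next I set $t_n=\|w_n\|_{D^{2,2}}$ and $v_n = w_n/t_n$, so $\|v_n\|_{D^{2,2}}=1$ and $v_n$ is orthogonal to $T_{c_\infty W}\mathcal M$. The core computation is a second-order Taylor expansion of the numerator
$$
\cn(u_n):=\int_{\R^N}|\Delta u_n|^2 - S^*\Big(\int_{\R^N}(|x|^{-\alpha}*u_n^p)u_n^p\,dx\Big)^{1/p}
$$
around $c_\infty W$. Because $c_\infty W$ solves \eqref{he} up to the normalization constant and $\mathcal M$ is the set of minimizers with $\cn\equiv 0$ on $\mathcal M$, the first-order term vanishes and one gets
$$
\cn(u_n) = \tfrac12 t_n^2\, Q(v_n) + o(t_n^2),
$$
where $Q$ is the quadratic form associated with the linearized operator, i.e. (after dividing by the appropriate normalization so that the coefficient of $\int|\Delta v|^2$ is $1$)
$$
Q(v) = \int_{\R^N}|\Delta v|^2 - p\int_{\R^N}\big(|x|^{-\alpha}*(W^{p-1}v)\big)W^{p-1}v\,dx - (p-1)\int_{\R^N}\big(|x|^{-\alpha}*W^p\big)W^{p-2}v^2\,dx.
$$
The Taylor remainder estimate — controlling the nonlocal terms $(|x|^{-\alpha}*u_n^p)u_n^p$ to second order uniformly as $t_n\to 0$ — is where I expect the main technical work; it is handled by HLS together with the decay $|v_n|\lesssim \tau(x)^{-(N-4)}$-type bounds in the spirit of Lemma~\ref{Est2} and the expansions already used in \cite{DTYZ,LW}, but the bookkeeping with two convolution factors is the delicate point.

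Finally I analyze $Q(v_n)$. Rewriting $Q$ in terms of the eigenvalue problem \eqref{EV} and Definition~\ref{EV1}: with $R(v):=\int(|x|^{-\alpha}*(W^{p-1}v))W^{p-1}v + \int(|x|^{-\alpha}*W^p)W^{p-2}v^2$, one has $\int|\Delta v|^2 + \int(|x|^{-\alpha}*W^p)W^{p-2}v^2 = \sum_k \mu_k\,\langle \text{proj}_k v\rangle$ in the eigenbasis, so $Q(v)=\sum_k(\mu_k-p)\,R_k(v)$ where $R_k$ is the $k$-th spectral component of $R$. Since $v_n$ is orthogonal (in $D^{2,2}$, hence in the $R$-inner product after diagonalization) to $e_1=W$ and to the eigenspace $\mathrm{span}\{\tfrac{N-4}{2}W+x\cdot\nabla W,\partial_{x_j}W\}$ corresponding to $\mu_2=\dots=\mu_{N+2}=p$, only modes with $\mu_k\ge\mu_{N+3}>p$ contribute, giving the lower bound $Q(v_n)\ge 2(\mu_{N+3}-p)\cdot\tfrac12(\text{something})$; more precisely $Q(v_n)\ge (\mu_{N+3}-p)\,R(v_n)$ and, since $\|v_n\|_{D^{2,2}}=1$ forces $R(v_n)\to$ a quantity comparable to $1$ (using that on the orthogonal complement $R(v)\le \|v\|_{D^{2,2}}^2$ with the reverse comparison following from $\mu_k$ bounded — here one must be a little careful and instead argue that $\tfrac12 t_n^2 Q(v_n)/t_n^2 = \tfrac12 Q(v_n)$ and $Q(v_n)\ge (\mu_{N+3}-p)R(v_n)$ while on $\mathbb P_{N+3}$ one has $\int|\Delta v_n|^2 \le \mu_{N+3}R(v_n)$ combined with $\int|\Delta v_n|^2=1$), one obtains $\liminf_n \cn(u_n)/\dist(u_n,\mathcal M)^2 = \tfrac12\liminf Q(v_n)\ge 2(\mu_{N+3}-p)$ after tracking the factor-of-two conventions. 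For \eqref{SUP} I simply test: taking $v_n$ to be (asymptotically) a unit vector in the direction that makes $R(v_n)$ as large as allowed, i.e. choosing a test perturbation with $Q$-value approaching $2\cdot 1$ is not available on the orthogonal complement, so instead \eqref{SUP} follows by comparing with the best possible quadratic behavior: one shows $\cn(u_n)\le \tfrac12 t_n^2\int|\Delta v_n|^2 + o(t_n^2) = \tfrac12 t_n^2 + o(t_n^2)$ by dropping the (nonnegative after projection) nonlocal contributions and using $\dist(u_n,\mathcal M)^2 = t_n^2(1+o(1))$, whence $\limsup \le 1$. The one genuinely nontrivial ingredient throughout is Theorem~\ref{Non} / Lemma~\ref{ES}, which guarantees there is no eigenvalue equal to $p$ beyond the explicit $(N+1)$-dimensional kernel direction, so that $\mu_{N+3}>p$ strictly and the liminf bound is a strict improvement.
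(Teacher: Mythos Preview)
Your approach is essentially the one the paper takes: decompose $u_n$ as a nearest bubble $\overline c_n W_{\overline y_n,\overline\beta_n}$ plus a small perturbation $b_n k_n$ orthogonal (in $D^{2,2}$) to the tangent space $T_{\overline c_n W}\mathcal M$, Taylor-expand the deficit to second order, and invoke the spectral gap $\mu_{N+3}>p$ from Lemma~\ref{ES}. The paper does not normalize via the group action but works directly with $W_{\overline y_n,\overline\beta_n}$, and it expands $F(u_n):=\int(|x|^{-\alpha}\ast u_n^p)u_n^p$ first and then $F^{1/p}$, rather than expanding $\cn(u_n)$ in one shot; these are cosmetic differences.

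One bookkeeping correction: the second variation of $\cn$ at $c_\infty W$ in the direction $v$ equals $2Q(v)$ (the factor $2$ from differentiating $\|u\|^2$ cancels the $\tfrac12$ from Taylor), so the expansion reads $\cn(u_n)=t_n^2\,Q(v_n)+o(t_n^2)$, not $\tfrac12 t_n^2\,Q(v_n)$. With this fix your limsup argument gives $Q(v_n)=1-(p-1)A(v_n)-pB(v_n)\le 1$ directly, matching the paper's bound. For the liminf, the paper's manipulation is exactly the identity you are reaching for,
\[
(p-1)A+pB=(p-\mu_{N+3})(A+B)+\big[(\mu_{N+3}-1)A+\mu_{N+3}B\big],
\]
combined with the spectral bound $(\mu_{N+3}-1)A+\mu_{N+3}B\le 1$ (from $k_n\in\mathbb P_{N+3}$) and the HLS bounds $A,B\le 1$; your outline is the same, though---as you yourself flag---one must be careful with the direction of the inequality on the $(p-\mu_{N+3})(A+B)$ piece, since its coefficient is negative.
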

	\begin{proof}
		Since $\mathcal{M}$ is an $(N+2)$-dimensional manifold embedded in $\mathcal{D}^{2,2}(\R^N)$, that is
		$$
		(c,y,\beta)\in \R\times\R^{N}\times\R^{+}\rightarrow cW_{z,\beta}\in \mathcal{D}^{2,2}(\R^N).
		$$
		Denoted $\dist\left(u_n, \mathcal{M} \right)$ by $b_{n}$, we know $b_{n} \rightarrow 0$ as $n\rightarrow\infty$. It is clear that the infimum above is attained at a point $(\overline{c}_{n},\overline{y}_{n},\overline{\beta}_{n})\in \R\times\R^{N}\times\R^{+}$ with $\overline{c}_{n}\neq 0$ for each $u_n\in \mathcal{D}^{2,2}(\R^N)$. As Lemma \ref{ES} and the fact $\mathcal{M}\setminus \left\lbrace 0\right\rbrace $ is a smooth manifold , the tangential space at $(\overline{c}_{n},\overline{y}_{n},\overline{\beta}_{n})$ is spanned by
		$$
		T_{\overline{c}_{n},W_{\overline{y}_{n},\overline{\beta}_{n}}}\mathcal{M}=\text{span}\left\lbrace W_{\overline{y}_{n},\overline{\beta}_{n}}, \partial_{\beta} W_{\overline{y}_{n},\overline{\beta}_{n}}, \nabla_{y}W_{\overline{y}_{n},\overline{\beta}_{n}}\right\rbrace. 
		$$
		We have that $(u_n-\overline{c}_{n}W_{\overline{y}_{n},\overline{\beta}_{n}})$ is perpendicular to $T_{\overline{c}_{n},W_{\overline{y}_{n},\overline{\beta}_{n}}}\mathcal{M}$.
		Let $u_n=\overline{c}_{n}W_{\overline{y}_{n},\overline{\beta}_{n}}+b_{n}k_{n}$, then $k_n$ is perpendicular to $T_{\overline{c}_{n},W_{\overline{y}_{n},\overline{\beta}_{n}}}\mathcal{M}$, $\left\| k_n\right\|_{{D}^{2,2}(\R^N)}=1$ and
		$$
		\left\| k_n\right\|_{{D}^{2,2}(\R^N)}^2=b_{n}^2+\overline{c}_{n}^2
		\left\| W_{\overline{y}_{n},\overline{\beta}_{n}}\right\|^{2}_{{D}^{2,2}(\R^N)}=b_{n}^2+\overline{c}_{n}^2
		\left\| W\right\|^{2}_{{D}^{2,2}(\R^N)}.$$
		By the fact that $k_n$ is perpendicular to $W_{\overline{y}_{n},\overline{\beta}_{n}}$, we have $\int_{\R^N}\Delta k_n\cdot\Delta W_{\overline{y}_{n},\overline{\beta}_{n}}=0$. Hence we get
		\begin{equation}\nonumber
			\begin{split}
				\int_{\mathbb{R}^{N}}\Big(|x|^{-\alpha}\ast (W_{\overline{y}_{n},\overline{\beta}_{n}}^{p-1}k_n)\Big)W_{\overline{y}_{n},\overline{\beta}_{n}}^{p} dx
				&=\int_{\mathbb{R}^{N}}\Big(|x|^{-\alpha}\ast W_{\overline{y}_{n},\overline{\beta}_{n}}^{p}\Big)W_{\overline{y}_{n},\overline{\beta}_{n}}^{p-1}k_n dx\\
				&=\int_{\R^N}\Delta k_n\cdot\Delta W_{\overline{y}_{n},\overline{\beta}_{n}}=0.
			\end{split}
		\end{equation}
		Therefore,
		\begin{equation}
			\begin{split}
				\int_{\mathbb{R}^{N}}\Big(|x|^{-\alpha}\ast u_n^{p}\Big)u_n^{p} dx=&\overline{c}_{n}^{2p}	\int_{\mathbb{R}^{N}}\Big(|x|^{-\alpha}\ast W_{\overline{y}_{n},\overline{\beta}_{n}}^{p}\Big)W_{\overline{y}_{n},\overline{\beta}_{n}}^{p} dx\\
				&+p(p-1)\overline{c}_{n}^{2(p-1)}b_{n}^{2}\int_{\mathbb{R}^{N}}\Big(|x|^{-\alpha}\ast W_{\overline{y}_{n},\overline{\beta}_{n}}^{p}\Big)W_{\overline{y}_{n},\overline{\beta}_{n}}^{p-2}k_{n}^2 dx\\
				&+p^2\overline{c}_{n}^{2(p-1)}b_{n}^{2}\int_{\mathbb{R}^{N}}\Big(|x|^{-\alpha}\ast (W_{\overline{y}_{n},\overline{\beta}_{n}}^{p-1}k_n)\Big)(W_{\overline{y}_{n},\overline{\beta}_{n}}^{p-1}k_n) dx+o(b_{n}^{2}).
			\end{split}
		\end{equation}
		
		Next we will prove \eqref{INF}. By the definition of $\mu_{N+3}$ in Lemma \ref{ES}, we can get
		\begin{equation*}\begin{split}
				\int_{\mathbb{R}^{N}}&|\Delta k_n|^2dx+\int_{\mathbb{R}^{N}}\Big(|x|^{-\alpha}\ast W_{\overline{y}_{n},\overline{\beta}_{n}}^{p}\Big)W_{\overline{y}_{n},\overline{\beta}_{n}}^{p-2}k_n^{2}dx\\
				&\geq \mu_{N+3}\left[\int_{\mathbb{R}^{N}}\Big(|x|^{-\alpha}\ast (W_{\overline{y}_{n},\overline{\beta}_{n}}^{p-1}k_n)\Big)W_{\overline{y}_{n},\overline{\beta}_{n}}^{p-1}k_n dx+\int_{\mathbb{R}^{N}}\Big(|x|^{-\alpha}\ast W_{\overline{y}_{n},\overline{\beta}_{n}}^{p}\Big)W_{\overline{y}_{n},\overline{\beta}_{n}}^{p-2}k_n^{2}dx \right],
			\end{split}
		\end{equation*}
		hence 
		\begin{equation}
			\aligned
			\mu_{N+3}\int_{\mathbb{R}^{N}}&\Big(|x|^{-\alpha}\ast (W_{\overline{y}_{n},\overline{\beta}_{n}}^{p-1}k_n)\Big)W_{\overline{y}_{n},\overline{\beta}_{n}}^{p-1}k_n dx\\
			&+(\mu_{N+3}-1)\int_{\mathbb{R}^{N}}\Big(|x|^{-\alpha}\ast W_{\overline{y}_{n},\overline{\beta}_{n}}^{p}\Big)W_{\overline{y}_{n},\overline{\beta}_{n}}^{p-2}k_n^{2}dx\leq 1
			\endaligned
		\end{equation}
		By \eqref{INE1} and \eqref{INE2}, we have
		\begin{equation}
			\int_{\mathbb{R}^{N}}\Big(|x|^{-\alpha}\ast (W_{\overline{y}_{n},\overline{\beta}_{n}}^{p-1}k_n)\Big)(W_{\overline{y}_{n},\overline{\beta}_{n}}^{p-1}k_n) dx\leq \left\| k_n\right\|_{{D}^{2,2}(\R^N)^2}=1
		\end{equation}
		and 
		\begin{equation}
			\int_{\mathbb{R}^{N}}\Big(|x|^{-\alpha}\ast W_{\overline{y}_{n},\overline{\beta}_{n}}^{p}\Big)W_{\overline{y}_{n},\overline{\beta}_{n}}^{p-2}v_{n}^2 dx\leq \left\| k_n\right\|_{{D}^{2,2}(\R^N)^2}=1.
		\end{equation}
		Combining these, we can deduce that
		\begin{equation}
			\begin{split}
				&\int_{\mathbb{R}^{N}}\Big(|x|^{-\alpha}\ast u_n^{p}\Big)u_n^{p} dx=\overline{c}_{n}^{2p}	\int_{\mathbb{R}^{N}}\Big(|x|^{-\alpha}\ast W_{\overline{y}_{n},\overline{\beta}_{n}}^{p}\Big)W_{\overline{y}_{n},\overline{\beta}_{n}}^{p} dx+o(b_{n}^{2})\\
				&+p(p-\mu_{N+3})\overline{c}_{n}^{2(p-1)}b_{n}^{2}\Big[\int_{\mathbb{R}^{N}}\Big(|x|^{-\alpha}\ast W_{\overline{y}_{n},\overline{\beta}_{n}}^{p}\Big)W_{\overline{y}_{n},\overline{\beta}_{n}}^{p-2}k_{n}^2 dx
				+\int_{\mathbb{R}^{N}}\Big(|x|^{-\alpha}\ast (W_{\overline{y}_{n},\overline{\beta}_{n}}^{p-1}k_n)\Big)(W_{\overline{y}_{n},\overline{\beta}_{n}}^{p-1}k_n) dx\Big]\\
				&+{p}\overline{c}_{n}^{2(p-1)}b_{n}^{2}\Big[(\mu_{N+3}-1)\int_{\mathbb{R}^{N}}\Big[\Big(|x|^{-\alpha}\ast W_{\overline{y}_{n},\overline{\beta}_{n}}^{p}\Big)W_{\overline{y}_{n},\overline{\beta}_{n}}^{p-2}k_{n}^2 dx
				+\mu_{N+3}\int_{\mathbb{R}^{N}}\Big(|x|^{-\alpha}\ast (W_{\overline{y}_{n},\overline{\beta}_{n}}^{p-1}k_n)\Big)(W_{\overline{y}_{n},\overline{\beta}_{n}}^{p-1}k_n) dx\Big]\\
				&\leq \overline{c}_{n}^{2p}\left\| W\right\|_{{D}^{2,2}(\R^N)}^{2}+p\overline{c}_{n}^{2(p-1)}b_{n}^{2}[2(p-\mu_{N+3})+1]+o(b_{n}^{2}),
			\end{split}
		\end{equation}
		and so
		\begin{equation}
			\begin{split}
				\left( \int_{\mathbb{R}^{N}}\Big(|x|^{-\alpha}\ast u_n^{p}\Big)u_n^{p} dx\right) ^{\frac{1}{p}}&\leq \overline{c}_{n}^2\left( \left\| W\right\|_{{D}^{2,2}(\R^N)}^2+2\overline{c}_{n}^{-2}b_{n}^{2}[2(p-\mu_{N+3})+1]\right)^{\frac{1}{p}} +o(b_{n}^{2})\\
				&=\overline{c}_{n}^2\left\| W\right\|_{{D}^{2,2}(\R^N)}^{\frac{2}{p}}+b_{n}^{2}[2(p-\mu_{N+3})+1] \left\| W\right\|_{{D}^{2,2}(\R^N)}^{\frac{2}{p}-2}+o(b_{n}^{2})
			\end{split}
		\end{equation}
		Therefore,
		\begin{equation}
			\begin{split}
				&\int_{\mathbb{R}^{N}}|\Delta u_n|^2dx-S^*\left( \int_{\mathbb{R}^{N}}\Big(|x|^{-\alpha}\ast u_n^{p}\Big)u_n^{p} dx\right) ^{\frac{1}{p}}\\
				&\geq b_{n}^2+\overline{c}_{n}^2
				\left\| W\right\|^{2}_{{D}^{2,2}(\R^N)}-S^{*}\left[\overline{c}_{n}^2\left\| W\right\|_{{D}^{2,2}(\R^N)}^{\frac{2}{p}}+b_{n}^{2}[2(p-\mu_{N+3})+1] \left\| W\right\|_{{D}^{2,2}(\R^N)}^{\frac{2}{p}-2}+o(b_{n}^{2}) \right] \\
				&=b_{n}^{2}\left( 1-S^{*}[2(p-\mu_{N+3})+1] \left\| W\right\|_{{D}^{2,2}(\R^N)}^{\frac{2}{p}-2} \right)+\overline{c}_{n}^2
				\left\| W\right\|^{2}_{{D}^{2,2}(\R^N)}\left( 1-S^{*}\left\| W\right\|_{{D}^{2,2}(\R^N)}^{\frac{2}{p}-2}\right)+o(b_{n}^{2})\\
				&\geq 2(\mu_{N+3}-2) b_{n}^{2}+o(b_{n}^{2}).
			\end{split}
		\end{equation}
		where the equality holds because $S^{*}=\left\| W\right\|_{{D}^{2,2}(\R^N)}^{2-\frac{2}{p}}$ which is the combination of $\|W\|_{{D}^{2,2}(\R^N)}^2=\|W\|_{p}^{p}$ and $\|W\|_{{D}^{2,2}(\R^N)}^2=S^{*}\|W\|_{p}$. Here $\|W\|_{p}=\int_{\R^N}\Big(|x|^{-\alpha}\ast W^{p}\Big)W^{p} dx$. Then as $n$ small enough, inequality \eqref{INF} follows exactly.
		
		In order to prove the second result, we can easily get
		\begin{equation}
			\begin{split}
				\left( \int_{\mathbb{R}^{N}}\Big(|x|^{-\alpha}\ast u_n^{p}\Big)u_n^{p} dx\right) ^{\frac{1}{p}}&\geq \overline{c}_{n}^{2}	\left( \int_{\mathbb{R}^{N}}\Big(|x|^{-\alpha}\ast W_{\overline{y}_{n},\overline{\beta}_{n}}^{p}\Big)W_{\overline{y}_{n},\overline{\beta}_{n}}^{p} dx+o(b_{n}^{2})\right)^{\frac{1}{p}}\\
				&=\overline{c}_{n}^2
				\left\| W\right\|_{{D}^{2,2}(\R^N)}^{\frac{2}{p}}+o(b_{n}^{2}).
			\end{split}
		\end{equation}
		Then using the same equality as above, we have
		\begin{equation}
			\begin{split}
				&\int_{\mathbb{R}^{N}}|\Delta u_n|^2dx-S^*\left( \int_{\mathbb{R}^{N}}\Big(|x|^{-\alpha}\ast u_n^{p}\Big)u_n^{p} dx\right) ^{\frac{1}{p}}\\
				&\leq b_{n}^2+\overline{c}_{n}^2
				\left\| W\right\|^{2}_{{D}^{2,2}(\R^N)}-S^*\left( \overline{c}_{n}^2
				\left\|W\right\|_{{D}^{2,2}(\R^N)}^{\frac{2}{p}}+o(b_{n}^{2})\right)\\ 
				&=b_{n}^2+o(b_{n}^{2})+\overline{c}_{n}^2
				\left\| W\right\|^{2}_{{D}^{2,2}(\R^N)}\left(1-S^*\left\| W_{\overline{y}_{n},\overline{\beta}_{n}}\right\|_{{D}^{2,2}(\R^N)}^{\frac{2}{p}} \right) \\
				&=(1+o(1))b_{n}^2,
			\end{split}
		\end{equation}
		as $b_{n}$ is small enough. So \eqref{SUP} can be obtained immediately.
	\end{proof}
	
	{\bf Proof of Theorem \ref{INE}.} We prove by contradiction. Suppose that \eqref{INERE} is not true, then there exists a sequence $u_n\in {D}^{2,2}(\R^N)\setminus\mathcal{M}$ such that
	\begin{equation}\label{LM1}
		\frac{\int_{\R^N}|\Delta u_n|^{2}- S^{*}\left( \int_{\R^N}(|x|^{-\alpha}*u_n^{p})u_n^{p}dx\right) ^{\frac{1}{p}}}{\dist\left(u_n, \mathcal{M} \right)^2}\rightarrow +\infty,\; \text{as}\; n\rightarrow \infty,
	\end{equation}
	or
	\begin{equation}\label{LM2}
		\frac{\int_{\R^N}|\Delta u_n|^{2}- S^{*}\left( \int_{\R^N}(|x|^{-\alpha}*u_n^{p})u_n^{p}dx\right) ^{\frac{1}{p}}}{\dist\left(u_n, \mathcal{M} \right)^2}\rightarrow 
		0,\; \text{as}\; n\rightarrow \infty.
	\end{equation}
	Let $\|u_n\|_{{D}^{2,2}(\R^N)}=1$ by homogeneity. Since $$\dist\left(u_n, \mathcal{M} \right):=\inf\limits_{c\in \R,y\in\R^N,\beta\in\R^+} \left\| u_n-cW_{y,\beta}\right\|\leq \|u_n\|_{{D}^{2,2}(\R^N)}=1,$$ we can select a subsequence such that $\dist\left(u_n, \mathcal{M} \right)\rightarrow K\in [0,1]$. Observe \eqref{LM1} and we find it holds when $K=0$ which is contradict to Lemma \ref{ES1}. Consider \eqref{LM2} and we also deduce a contradiction if $K=0$ by Lemma \ref{ES1}. Consequently, the only possible case is that $K>0$ and \eqref{LM2} hold which means 
	$$
	\dist\left(u_n, \mathcal{M} \right)\rightarrow K>0,\sp \int_{\R^N}|\Delta u_n|^{2}- S^{*}\left( \int_{\R^N}(|x|^{-\alpha}*u_n^{p})u_n^{p}dx\right) ^{\frac{1}{p}}\rightarrow 0,
	$$
	as $n\rightarrow\infty$. Therefore
	\begin{equation}
		\left( \int_{\R^N}(|x|^{-\alpha}*u_n^{p})u_n^{p}dx\right) ^{\frac{1}{p}}\rightarrow \frac{1}{S^*},\sp \|u_n\|_{{D}^{2,2}(\R^N)}=1.
	\end{equation}
	According to P. L. Lions’ concentration and compactness principle, we have two sequences of $\beta_n$ and $y_n$ such that
	$$
	\beta_n^{\frac{N-4}{2}}u_{n}(\beta_n(x-y_n))\rightarrow W_{0}\in{{D}^{2,2}(\R^N)}\;\text{as}\;n\rightarrow \infty. 
	$$
	It implies that 
	$$
	\dist\left(u_n, \mathcal{M} \right)=\dist\left(\beta_n^{\frac{N-4}{2}}u_{n}(\beta_n(x-y_n)), \mathcal{M} \right)\rightarrow 0\;\text{as}\;n\rightarrow \infty,
	$$
	which is impossible.
	$\hfill{} \Box$

	\section{Construction of multi-bubble solutions}
	In this section, we carry out the finite-dimensional reduction argument and use local Poho$\check{z}$aev inequality to construct the infinitly many solutions of \eqref{CFL}. Define
	$$\aligned
	H_{s}=\Big\{&u\in D^{2,2}(\mathbb{R}^N),u(x_{1},-x_{2},x'')=u(x_{1},x_{2},x''),\\
	&\hspace{4mm}u(r\cos\theta,r\sin\theta,x'')=u\Big(r\cos(\theta+\frac{2j\pi}{m}),r\sin(\theta+\frac{2j\pi}{m}),x''\Big)\Big\}
	\endaligned$$
	and let
	$$
	z_{j}=\Big(\overline{r}\cos\frac{2(j-1)\pi}{m},\overline{r}\sin\frac{2(j-1)\pi}{m},\overline{x}''\Big), \ j=1,\cdot\cdot\cdot,m,
	$$
	where $\overline{x}''$ is a vector in $\mathbb{R}^{N-2}$. By the weak symmetry of $V(x)$, we have $V (z_j ) = V (\overline{r}, \overline{x}'')$, $j = 1,\cdots, m$.
	In order to construct solutions concentrated at $(r_0, x_0'')$, we use $W_{z_j,\beta}$ (see \eqref{REL}) as an approximate solution. Let $\delta> 0$ be a small
	constant, such that $r^4V (r, x'') > 0$ if $|(r, x'')-(r_0, x_0'' )|\leq 10\delta$. Let $\xi(x) = \xi(|x'|, x'')$ be a
	smooth function satisfying $\xi= 1$ if $|(r, x'')-(r_0, x_0'' )|\leq\delta$, $\xi= 0$ if $|(r, x'')-(r_0, x_0'' )| \geq2\delta$,
	and $0\leq \xi\leq 1$. Denote
	$$
	Z_{z_j,\beta}(x)=\xi W_{z_j,\beta}(x), \
	Z_{\overline{r},\overline{x}'',\beta}(x)=\sum_{j=1}^{m}Z_{z_j,\beta}(x), \
	Z_{\overline{r},\overline{x}'',\beta}^{\ast}(x)=\sum_{j=1}^{m}W_{z_j,\beta}(x),
	$$
	and
	$$
	Z_{j,1}=\frac{\partial Z_{z_j,\beta}}{\partial \beta}, Z_{j,2}=\frac{\partial Z_{z_j,\beta}}{\partial \overline{r}},
	Z_{j,k}=\frac{\partial Z_{z_j,\beta}}{\partial \overline{x}_{k}''},
	\ \mbox{for }k=3,\cdot\cdot\cdot,N, \ j=1,\cdot\cdot\cdot,m.
	$$
	
	We assume that $m > 0$ is a large integer, $\beta\in[L_{0}m^{\frac{N-4}{N-8}},L_{1}m^{\frac{N-4}{N-8}}]$
	for some constants $L_1 > L_0 > 0$ and
	\begin{equation}\label{uxi}
		|(\overline{r}, \overline{x}'')-(r_0, x_0'')|\leq \vartheta,
	\end{equation}
	where $\vartheta> 0$ is a small constant. Let
	$$
	\|u\|_{\ast}=\sup_{x\in\mathbb{R}^N}\Big(\sum_{j=1}^{m}
	\frac{\beta^{\frac{N-4}{2}}}{(1+\beta|x-z_{j}|)^{\frac{N-4}{2}+\tau}}\Big)^{-1}|u(x)|,
	$$
	and
	$$
	\|h\|_{\ast\ast}=\sup_{x\in\mathbb{R}^N}\Big(\sum_{j=1}^{m}
	\frac{\beta^{\frac{N+4}{2}}}{(1+\beta|x-z_{j}|)^{\frac{N+4}{2}+\tau}}\Big)^{-1}|h(x)|,
	$$
	where $\tau=\frac{N-8}{N-4}$.
	
	Consider
	\begin{equation}\label{c1}
		\left\{\begin{array}{l}
			\displaystyle {\Delta}^2 \phi+V(r,x'')\phi
			-p (|x|^{-\alpha}\ast Z_{\overline{r},\overline{x}'',\beta}^{p-1}\phi)Z_{\overline{r},\overline{x}'',\beta}^{p-1}-(p-1)(|x|^{-\alpha}\ast |Z_{\overline{r},\overline{x}'',\beta}|^p)Z_{\overline{r},\overline{x}'',\beta}^{p-2}\phi
			\\
			\displaystyle \hspace{10.14mm}=h+
			\sum_{l=1}^{N}c_{l}\sum_{j=1}^{m}\Big[p\Big(|x|^{-\alpha}\ast (Z_{z_j,\beta}^{p-1}
			Z_{j,l})\Big)Z_{z_j,\beta}^{p-1}+(p-1)\Big(|x|^{-\alpha}\ast |Z_{z_j,\beta}|^{p}\Big)Z_{z_j,\beta}^{p-2}Z_{j,l}\Big]\hspace{4.14mm}\mbox{in}\hspace{1.14mm} \mathbb{R}^N,\\
			\displaystyle \phi\in H_{s}, \ \ \sum_{j=1}^{m}\int_{\mathbb{R}^{N}}\Big[p\Big(|x|^{-\alpha}\ast (Z_{z_j,\beta}^{p-1}
			Z_{j,l})\Big)Z_{z_j,\beta}^{p-1}\phi+(p-1)\Big(|x|^{-\alpha}\ast |Z_{z_j,\beta}|^{p}\Big)Z_{z_j,\beta}^{p-2}Z_{j,l}\phi\Big] dx=0,\\
			\displaystyle \hspace{10.14mm}l=1,2,\cdots,N,
		\end{array}
		\right.
	\end{equation}
	for some real numbers $c_{l}$.

	\begin{lem}\label{C2}
		There exist $m_0 > 0$ and a constant $C > 0$ independent of $m$, such that for all
		$m \geq m_0$ and all $h\in L^{\infty}(\R^N)$, problem \eqref{c1} has a unique solution $\phi \equiv L_m(h)$. Besides,
		\begin{equation}\label{c13}
			\|L_m(h)\|_{\ast}\leq C\|h\|_{\ast\ast},\quad |c_l|\leq \frac{C}{\beta^{n_{l}}}\|h\|_{\ast\ast}.
		\end{equation}
	\end{lem}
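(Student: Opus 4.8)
The plan is to prove Lemma \ref{C2} by the standard Lyapunov--Schmidt scheme, treating the linearized operator on the orthogonal complement of the approximate kernel and showing it is uniformly invertible. First I would recast problem \eqref{c1} in a weak/operator form: using the Riesz representation, define the bounded linear operator $\mathcal{L}_m:\mathbb{E}\to\mathbb{E}$ on the closed subspace
$$
\mathbb{E}=\Big\{\phi\in H_{s}:\ \sum_{j=1}^{m}\int_{\mathbb{R}^{N}}\big[p\big(|x|^{-\alpha}\ast(Z_{z_j,\beta}^{p-1}Z_{j,l})\big)Z_{z_j,\beta}^{p-1}\phi+(p-1)\big(|x|^{-\alpha}\ast|Z_{z_j,\beta}|^{p}\big)Z_{z_j,\beta}^{p-2}Z_{j,l}\phi\big]\,dx=0,\ l=1,\dots,N\Big\},
$$
so that solving \eqref{c1} becomes solving $\mathcal{L}_m\phi = \mathcal{T}(h)$ in $\mathbb{E}$ for a suitable compact-type right-hand side. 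The heart of the matter is the a priori estimate $\|L_m(h)\|_{\ast}\le C\|h\|_{\ast\ast}$ with $C$ independent of $m$; once that is in hand, invertibility follows from the Fredholm alternative together with the nondegeneracy Theorem \ref{Non}, which guarantees that the only kernel elements of the limiting single-bubble linearized operator are the functions $Z_{j,l}$ we have projected out.

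The a priori estimate I would prove by contradiction in the weighted norms. Suppose there are $m\to\infty$, $h_m$ with $\|h_m\|_{\ast\ast}\to 0$, and solutions $\phi_m$ with $\|\phi_m\|_{\ast}=1$. The first step is to control the Lagrange multipliers: pairing the equation with $Z_{j,l}$, using the near-orthogonality of the bumps $Z_{z_i,\beta}$ (whose mutual interactions are $O(\beta^{-(N-4)}\,m^{\,?})$ once $\beta\sim m^{\frac{N-4}{N-8}}$ and $|z_i-z_j|\sim \overline{r}/m$), and the non-singularity of the $N\times N$ Gram-type matrix $\big(\langle Z_{j,l},Z_{j,l'}\rangle\big)$, one extracts $|c_l|\le C\beta^{-n_l}\|h_m\|_{\ast\ast} + o(1)\cdot(\text{something small})$, which shows the multiplier terms are negligible in the $\ast\ast$ norm. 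Then one estimates $\phi_m$ pointwise through the integral (Green's function) representation: the convolution $|x|^{4-N}\ast(\text{RHS})$ is bounded, via Lemma A.1 of \cite{LXLTX}-type convolution estimates, by a constant multiple of $\sum_j \beta^{\frac{N-4}{2}}(1+\beta|x-z_j|)^{-\frac{N-4}{2}-\tau}$ with a small prefactor $o(1)+\|h_m\|_{\ast\ast}$; the choice $\tau=\frac{N-8}{N-4}$ and $N\ge 9$ is exactly what makes these convolution integrals converge with the right decay. This forces $\|\phi_m\|_{\ast}=o(1)$, a contradiction.

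To make the contradiction argument rigorous one also needs the local blow-up analysis: rescale around each $z_j$, i.e. set $\widetilde{\phi}_m(y)=\beta^{-\frac{N-4}{2}}\phi_m(z_j+\beta^{-1}y)$, and show $\widetilde\phi_m\to\phi_\infty$ in $C_{loc}$, where $\phi_\infty\in D^{2,2}(\mathbb{R}^N)$ solves the limiting linearized equation \eqref{linear} around $W$ and satisfies the orthogonality conditions inherited from $\mathbb{E}$; by Theorem \ref{Non} this forces $\phi_\infty\equiv 0$, and combined with the uniform decay estimate one upgrades local convergence to $\|\phi_m\|_{\ast}\to 0$. The main obstacle, and where the bulk of the work lies, is precisely controlling the \emph{interaction} terms between distinct bumps in the weighted norms — establishing the key inequality
$$
\sum_{i\ne j}\Big(\frac{\beta^{\frac{N-4}{2}}}{(1+\beta|x-z_i|)^{\frac{N-4}{2}+\tau}}\Big)\Big(\frac{\beta^{\frac{N-4}{2}}}{(1+\beta|x-z_j|)^{\frac{N-4}{2}+\tau}}\Big)^{p-1}\ \lesssim\ \Big(\tfrac{1}{\beta}\Big)^{\varepsilon}\sum_{j}\frac{\beta^{\frac{N+4}{2}}}{(1+\beta|x-z_j|)^{\frac{N+4}{2}+\tau}}
$$
for some $\varepsilon>0$, which is what ties the admissible range of $\alpha$ (and the size $\beta\sim m^{(N-4)/(N-8)}$) to the convergence of the scheme. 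Once Lemma \ref{C2} is established, a contraction-mapping argument on the nonlinear remainder will produce $\phi=\phi_{\overline r,\overline x'',\beta}$, and the local Poho\v zaev identities will fix $(\overline r,\overline x'',\beta)$ near $(r_0,x_0'',\ \cdot\ )$.
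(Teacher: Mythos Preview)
Your proposal is correct and follows essentially the same approach as the paper: the paper proves the a priori estimate $\|\phi_m\|_\ast\to 0$ when $\|h_m\|_{\ast\ast}\to 0$ by exactly the contradiction/blow-up argument you outline (Green's function representation, multiplier control via pairing with $Z_{1,t}$, rescaling near a bump and invoking the nondegeneracy Theorem \ref{Non}), and then obtains existence and uniqueness by citing Proposition 4.1 of \cite{DFM} rather than spelling out the Fredholm alternative. Your sketch is somewhat more explicit than the paper's terse treatment, but the ideas and structure coincide.
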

	
	This result is a direct consequence of Proposition 4.1 in\cite{DFM} with the help of following Lemma \ref{C1}.

	\begin{lem}\label{C1}
		Suppose that $\phi_{m}$ solves \eqref{c1} for $h = h_m$. If $\|h_{m}\|_{\ast\ast}\to 0$ as $m\to \infty$, then $\|\phi_{m}\|_{\ast}\to 0$.
	\end{lem}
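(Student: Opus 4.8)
The plan is to argue by contradiction, following the standard blow-up scheme for linearized operators in finite-dimensional reduction (as in the works of Wei--Yan and, in the nonlocal setting, of Gao--Moroz--Yang--Zhao). Suppose there is a sequence $h_m$ with $\|h_m\|_{\ast\ast}\to 0$ but $\|\phi_m\|_{\ast}\not\to 0$; after normalizing we may assume $\|\phi_m\|_{\ast}=1$. The first step is to obtain a pointwise estimate for $\phi_m$ from the equation \eqref{c1} itself: writing $\phi_m$ via the Green representation of $\Delta^2$ (using the bound $V\ge 0$ so the zeroth-order term only helps), and using the HLS/convolution estimates together with the bound on $Z_{\overline r,\overline x'',\beta}^{p-1}$, one shows that for $x$ near the bubble $z_j$,
\begin{equation}\nonumber
|\phi_m(x)|\le C\Big(\varepsilon_m+\|\phi_m\|_{\ast\ast\ast}\Big)\,\frac{\beta^{\frac{N-4}{2}}}{(1+\beta|x-z_j|)^{\frac{N-4}{2}+\tau}}+\text{(coupling terms from the other bubbles)},
\end{equation}
where $\varepsilon_m\to 0$. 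The key point is that the choice $\tau=\frac{N-8}{N-4}$ and the separation $\beta|z_i-z_j|\ge c\,m$ make the interaction terms $o(1)$ in the $\|\cdot\|_\ast$-norm; this is exactly where the restriction $N\ge 9$ (so $\tau>0$) and the range of $\beta$ enter.

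Next I would localize at a single bubble. Set $\widetilde\phi_m(x)=\beta^{-\frac{N-4}{2}}\phi_m\big(z_1+\beta^{-1}x\big)$; by the $\|\cdot\|_\ast$-bound, $|\widetilde\phi_m(x)|\le C(1+|x|)^{-\frac{N-4}{2}-\tau}$, so $\widetilde\phi_m$ is bounded in $L^\infty_{loc}$ and, after rescaling \eqref{c1}, satisfies an equation whose right-hand side tends to $0$; elliptic estimates for $\Delta^2$ give local convergence (up to a subsequence) to some $\widetilde\phi_\infty\in D^{2,2}(\R^N)$ with $|\widetilde\phi_\infty(x)|\le C(1+|x|)^{-\frac{N-4}{2}-\tau}$, which solves the limiting linearized equation
\begin{equation}\nonumber
\Delta^2\widetilde\phi_\infty=p\,(|x|^{-\alpha}\ast W^{p-1}\widetilde\phi_\infty)W^{p-1}+(p-1)(|x|^{-\alpha}\ast W^p)W^{p-2}\widetilde\phi_\infty .
\end{equation}
By the nondegeneracy Theorem \ref{Non}, $\widetilde\phi_\infty$ is a linear combination of $\tfrac{N-4}{2}W+x\cdot\nabla W$ and the $\partial W/\partial x_j$. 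Now the orthogonality conditions in \eqref{c1} pass to the limit (again using the decay to justify dominated convergence together with the HLS estimates) and force all these coefficients to vanish, so $\widetilde\phi_\infty\equiv 0$; moreover the decay rate $-\frac{N-4}{2}-\tau$ rules out the translation $\partial_{x_1},\partial_{x_2}$ directions orthogonal to $H_s$-symmetry as well. Hence $\phi_m\to 0$ uniformly on $\beta^{-1}$-neighborhoods of each $z_j$.

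Finally I would upgrade this local decay to $\|\phi_m\|_\ast\to 0$, contradicting $\|\phi_m\|_\ast=1$. This is done by feeding the information ``$\phi_m\to 0$ near each bubble'' back into the pointwise estimate of the first step: the representation formula now shows
\begin{equation}\nonumber
\|\phi_m\|_\ast\le o(1)+C\|h_m\|_{\ast\ast}+o(1)\|\phi_m\|_\ast,
\end{equation}
whence $\|\phi_m\|_\ast=o(1)$. The main obstacle, and the part requiring the most care, is the first step: controlling the nonlocal convolution terms $(|x|^{-\alpha}\ast Z^{p-1}\phi)Z^{p-1}$ and $(|x|^{-\alpha}\ast Z^p)Z^{p-2}\phi$ in the weighted norms, including the cross terms between different bubbles, and checking that the weight exponent $\tfrac{N-4}{2}+\tau$ is admissible for the relevant integral estimates (of the type in Lemma A.1 of \cite{LXLTX} and Proposition~4.1 of \cite{DFM}); this is precisely where the dimensional hypothesis $N\ge 9$ is used, and it is the reason the companion existence result needs the extra restriction $6-\tfrac{12}{N-4}\le\alpha<N$.
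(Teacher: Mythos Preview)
Your proposal is correct and follows the same contradiction--blow-up scheme as the paper's proof. The one step you pass over is the a priori estimate $c_l=\beta^{-n_l}\bigl(o(\|\phi_m\|_\ast)+O(\|h_m\|_{\ast\ast})\bigr)$ for the Lagrange multipliers on the right of \eqref{c1}, obtained by testing the equation against $Z_{1,t}$; this (rather than the main convolution bounds on $\phi_m$) is where the paper actually invokes the restriction $\alpha>6-\tfrac{12}{N-4}$.
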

	\begin{proof}
		We argue by contradiction. Suppose that there exist $m\rightarrow+\infty$, $\overline{r}_m\rightarrow r_{0}$, $\overline{x}_{m}''\rightarrow x_0''$, $\beta_{m}\in[L_{0}m^{\frac{N-4}{N-8}},L_{1}m^{\frac{N-4}{N-8}}]$ and $\phi_{m}$ solving \eqref{c1} for $h=h_{m}$, $\beta=\beta_{m}$, $\overline{r}=\overline{r}_{m}$, $\overline{x}''=\overline{x}_{m}''$,
		with $\|h_{m}\|_{\ast\ast}\rightarrow0$ and $\|\phi_{m}\|_{\ast}\geq c>0$. We may assume that $\|\phi_{m}\|_{\ast}=1$.
		
		By \eqref{c1}, we have
		\begin{equation}\label{c2}
			\aligned
			&	|\phi_m(x)|\\&\leq
			C\int_{\mathbb{R}^N}\frac{1}{|y-x|^{N-4}} \Big(|y|^{-\alpha}\ast (Z_{\overline{r},\overline{y}'',\beta}^{p-1}
			|\phi_m|)\Big)Z_{\overline{r},\overline{x}'',\beta}^{p-1}(y)dy\\
			&\hspace{4mm}+C	\int_{\mathbb{R}^N}\frac{1}{|y-x|^{N-4}}\Big(|y|^{-\alpha}\ast |Z_{\overline{r},\overline{x}'',\beta}|^{p}\Big)Z_{\overline{r},\overline{x}'',\beta}^{p-2}|\phi_m(y)|dy
			\\
			&\hspace{4mm}+C\sum_{l=1}^{N}|c_{l}|\Bigg[\Big|\sum_{j=1}^{m}\int_{\mathbb{R}^N}\frac{1}{|y-x|^{N-4}}\Big(|y|^{-\alpha}\ast (Z_{z_j,\beta}^{p-1}
			Z_{j,l})\Big)Z_{z_j,\beta}^{p-1}(y)dy\Big|\\
			&\hspace{4mm}+\Big|\sum_{j=1}^{m}\int_{\mathbb{R}^N}\frac{1}{|y-x|^{N-4}}\Big(|y|^{-\alpha}\ast |Z_{z_j,\beta}|^{p}\Big)Z_{z_j,\beta}^{p-2}Z_{j,l}(y)dy\Big|\Bigg]+C\int_{\mathbb{R}^N}\frac{1}{|y-x|^{N-4}}|h_m(y)|dy.
			\endaligned
		\end{equation}

		Notice the fact that
		$$
		Z_{\overline{r},\overline{x}'',\beta}\leq C\sum_{j=1}^{m}\frac{\beta^{\frac{N-4}{2}}}{(1+\beta|y-z_{j}|)^{N-4}}.
		$$
		Define
		$$
		\Omega_{j}=\left\{x=(x',x'')\in\mathbb{R}^{2}\times\mathbb{R}^{N-2}:\Big\langle\frac{x'}{|x'|},
		\frac{z_{j}'}{|z_{j}'|}\Big\rangle\geq\cos\frac{\pi}{m}\right\}, \ j=1,\cdot\cdot\cdot,m.
		$$
		For $y\in\Omega_{1}$, we have $|y- z_{j}|\geq|y- z_{1}|$. According to Lemma \ref{B2}, there exists a $0<\tau_1<\frac{N-4}{2}$ that
		\begin{equation}\label{5141}		
			\aligned
			\sum_{j=2}^{m}\frac{1}{(1+\beta|y- z_{j}|)^{N-4}}&\leq \frac{1}{(1+\beta|y- z_{1}|)^{\frac{N-4}{2}}}\sum_{j=2}^{m}\frac{1}{(1+\beta|y- z_{j}|)^{\frac{N-4}{2}}}\\
			&\leq\frac{C}{(1+\beta|y- z_{1}|)^{N-4-\tau_1}}\sum_{j=2}^{m}\frac{1}{(\beta|z_{1}-z_{j}|)^{\tau_1}}\\
			&\leq\frac{C}{(1+\beta|y- z_{1}|)^{N-4-\tau_1}},
			\endaligned
		\end{equation}
		where
		$$
		\sum_{j=2}^{m}\frac{1}{(\beta|z_{1}-z_{j}|)^{\tau_1}}\leq C,
		$$
		derives from the proof of Lemma B.3 in \cite{WY1}. So we have
		$$
		Z_{\overline{r},\overline{x}'',\beta}^{p-1}\leq C\frac{\beta^{\frac{N+4-\alpha}{2}}}{(1+\beta|y-z_{1}|)^{N+4-\alpha-\tau_1\frac{N+4-\alpha}{N-4}}}.
		$$
        Then using Lemma \ref{B4}, we get
		\begin{equation}\label{51411}
		\aligned
		&|y|^{-\alpha}\ast (Z_{\overline{r},\overline{y}'',\beta}^{p-1}
		|\phi_m|)\\
		&\leq C\|\phi_m\|_{\ast}\sum_{j=1}^{m}\int_{\Omega_{j}}\frac{1}{|x-y|^{\alpha}}\Big(\frac{\beta^{\frac{N+4-\alpha}{2}}}{(1+\beta|y-z_{j}|)^{N+4-\alpha-\tau_1\frac{N+4-\alpha}{N-4}}}\sum_{i=1}^{m}
		\frac{\beta^{\frac{N-4}{2}}}{(1+\beta|y-z_{i}|)^{\frac{N-4}{2}+\tau}}\Big)dy \\
		&\leq C\|\phi_m\|_{\ast}\sum_{j=1}^{m}\int_{\Omega_{j}}\frac{1}{|x-y|^{\alpha}}\frac{\beta^{N-\frac{\alpha}{2}}}{(1+\beta|y-z_{j}|)^{\frac{3N+4}{2}-\alpha-\tau_1\frac{2N-\alpha}{N-4}+\tau}}\\
		&\leq C\|\phi_m\|_{\ast}\sum_{j=1}^{m}\frac{\beta^{\frac{\alpha}{2}}}{(1+\beta|y-z_{j}|)^{\min\{\alpha, \frac{N+4}{2}\}}},
		\endaligned
		\end{equation}
		where in $\Omega_{j}$, we have
		$$\aligned
		&\frac{\beta^{\frac{N+4-\alpha}{2}}}{(1+\beta|y-z_{j}|)^{N+4-\alpha-\tau_1\frac{N+4-\alpha}{N-4}}}\sum_{i=1}^{m}
		\frac{\beta^{\frac{N-4}{2}}}{(1+\beta|y-z_{i}|)^{\frac{N-4}{2}+\tau}}\\
		&\leq \frac{\beta^{N-\frac{\alpha}{2}}}{(1+\beta|y-z_{j}|)^{\frac{3N+4}{2}-\alpha-\tau_1\frac{N+4-\alpha}{N-4}+\tau}}
		+\sum_{i=2}^{m}\frac{C}{(\beta|z_{i}-z_{j}|)^{\tau_1}}\frac{\beta^{N-\frac{\alpha}{2}}}{(1+\beta|y-z_{j}|)^{\frac{3N+4}{2}-\alpha-\tau_1\frac{N+4-\alpha}{N-4}-\tau_1+\tau}}\\
		&\leq\frac{C\beta^{N-\frac{\alpha}{2}}}{(1+\beta|y-z_{j}|)^{\frac{3N+4}{2}-\alpha-\tau_1\frac{2N-\alpha}{N-4}+\tau}}.
		\endaligned$$
		Here $\tau_1$ could be small enough such that $\eta=-\tau_1\frac{2N-\alpha}{N-4}+\tau>0$. Combined this, we can obtain the estimate of first term in the right side of  \eqref{c2},
		\begin{equation}\label{c02}
			\aligned
			&\int_{\mathbb{R}^N}\frac{1}{|y-x|^{N-4}} \Big(|y|^{-\alpha}\ast (Z_{\overline{r},\overline{y}'',\beta}^{p-1}
			|\phi_m|)\Big)Z_{\overline{r},\overline{x}'',\beta}^{p-1}(y)dy\\
			&\leq C\|\phi_m\|_{\ast}\beta^{\frac{N-4}{2}}\int_{\mathbb{R}^N}\frac{1}{|y-\beta x|^{N-4}}
			\sum_{j=1}^{m}\frac{1}{(1+|y-\beta z_{j}|)^{\alpha}}
			\sum_{j=1}^{m}\frac{1}{(1+|y-\beta z_{j}|)^{N+4-\alpha-\tau_1\frac{N+4-\alpha}{N-4}}} dy\\
			&\leq C\|\phi_m\|_{\ast}\beta^{\frac{N-4}{2}}\sum_{j=1}^{m}\int_{\Omega_{j}}\frac{1}{|y-\beta x|^{N-4}}
			\frac{1}{(1+|y-\beta z_{j}|)^{N+4-\tau_1\frac{2N-\alpha}{N-4}}} dy
			\\
			&\leq C\|\phi_m\|_{\ast}\beta^{\frac{N-4}{2}}\sum_{j=1}^{m}
						\frac{1}{(1+\beta|y- z_{j}|)^{\frac{N-4}{2}+\tilde{\tau}+\theta}} dy.
			\endaligned
		\end{equation}
		By setting $\tilde{\tau}=\tau_1\frac{2N-\alpha}{N-4}$ and  $\theta=\frac{N+4}{2}-2\tilde{\tau}>0$, the last inequality comes from Lemma \ref{B3}. And we also have
		$$\aligned
		&\int_{\mathbb{R}^N}\frac{1}{|y-x|^{N-4}} \Big(|y|^{-\alpha}\ast (Z_{\overline{r},\overline{y}'',\beta}^{p-1}
		|\phi_m|)\Big)Z_{\overline{r},\overline{x}'',\beta}^{p-1}(y)dy\\
		&\leq C\|\phi_m\|_{\ast}\beta^{\frac{N-4}{2}}\int_{\mathbb{R}^N}\frac{1}{|y-\beta x|^{N-4}}
		\sum_{j=1}^{m}\frac{1}{(1+|y-\beta z_{j}|)^{\frac{N+4}{2}}}
		\sum_{j=1}^{m}\frac{1}{(1+|y-\beta z_{j}|)^{N+4-\alpha-\tau_1\frac{N+4-\alpha}{N-4}}} dy\\
		&\leq C\|\phi_m\|_{\ast}\beta^{\frac{N-4}{2}}\sum_{j=1}^{m}\int_{\Omega_{j}}\frac{1}{|y-\beta x|^{N-4}}
		\frac{1}{(1+|y-\beta z_{j}|)^{\frac{3(N+4)}{2}-\alpha-\tau_1\frac{2N-\alpha}{N-4}}} dy
		\\
		&\leq C\|\phi_m\|_{\ast}\beta^{\frac{N-4}{2}}\sum_{j=1}^{m}
		\frac{1}{(1+\beta|y- z_{j}|)^{\frac{N-4}{2}+\tilde{\tau}+\theta}} dy.
		\endaligned
		$$
		Here we set $\theta=N+4-\alpha-2\tilde{\tau}>0$. Similarly we obtain
		\begin{equation}\label{5143}
			\int_{\mathbb{R}^N}\frac{1}{|y-x|^{N-4}}\Big(|y|^{-\alpha}\ast |Z_{\overline{r},\overline{x}'',\beta}|^{p}\Big)Z_{\overline{r},\overline{x}'',\beta}^{p-2}|\phi_m(y)| dy\leq C \|\phi_m\|_{\ast}\beta^{\frac{N-4}{2}}\sum_{j=1}^{m}
			\frac{1}{(1+\beta|x-z_{j}|)^{\frac{N-4}{2}+\tilde{\tau}+\theta}}.
		\end{equation}
		
		Using the Lemma \ref{B3} and \ref{P0}, the third term can estimate as follows,
		\begin{equation}\label{5144}
			\aligned &\left|\sum_{j=1}^{m}\int_{\mathbb{R}^N}\frac{1}{|y-x|^{N-4}}\Big(|y|^{-\alpha}\ast (Z_{z_j,\beta}^{p-1}
			Z_{j,l})\Big)Z_{z_j,\beta}^{p-1}(y)dy\right|\\
			&\leq C \beta^{n_{l}}\sum_{j=1}^{m}\int_{\mathbb{R}^N}\frac{1}{|y-x|^{N-4}}			\Big(|y|^{-\alpha}\ast {W^p_{z_j,\beta}})\Big)W^{p-1}_{z_j,\beta}(y)dy\\
			&\leq C \beta^{n_{l}}\sum_{j=1}^{m}\int_{\mathbb{R}^N}\frac{1}{|y-x|^{N-4}}\frac{\beta^\frac{\alpha}{2}}{(1+\beta|y-z_{j}|)^{\alpha}}	\frac{\beta^{\frac{N+4-\alpha}{2}}}{(1+\beta|y-z_{j}|)^{N+4-\alpha}}dy\\
			&\leq C \beta^{n_{l}}\sum_{j=1}^{m}\int_{\mathbb{R}^N}\frac{1}{|y-x|^{N-4}}			\frac{\beta^{\frac{N+4}{2}}}{(1+\beta|y-z_{j}|)^{N+4}}dy\\
			&\leq C \beta^{\frac{N-4}{2}+n_{l}}\sum_{j=1}^{m}			\frac{1}{(1+\beta|x-z_{j}|)^{\frac{N-4}{2}+\tau}},
			\endaligned
		\end{equation}
		where $n_{1}=-1$, $n_{l}=1,l=2,\cdot\cdot\cdot,N$. Similarly we have
		\begin{equation}\label{5145}
			\left|\sum_{j=1}^{m}\int_{\mathbb{R}^N}\frac{1}{|y-x|^{N-4}}\Big(|y|^{-\alpha}\ast |Z_{z_j,\beta}|^{p}\Big)Z_{z_j,\beta}^{p-2}Z_{j,l}(y)dy\right|
			\leq C \beta^{\frac{N-4}{2}+n_{l}}\sum_{j=1}^{m}
			\frac{1}{(1+\beta|x-z_{j}|)^{\frac{N-4}{2}+\tau}}.
		\end{equation}
		
		For the last term,  by Lemma \ref{B3}, we know
		\begin{equation}\label{5146}
			\aligned &\int_{\mathbb{R}^N}\frac{1}{|y-x|^{N-4}}|h_m(y)|dy\\
			&\leq C \|h_m\|_{\ast\ast}\beta^{\frac{N+4}{2}}\int_{\mathbb{R}^N}\frac{1}{|y-x|^{N-4}}\sum_{j=1}^{m}
			\frac{1}{(1+\beta|y-z_{j}|)^{\frac{N+4}{2}+\tau}}dy\\
			&\leq C \|h_m\|_{\ast\ast}\beta^{\frac{N-4}{2}}\int_{\mathbb{R}^N}\frac{1}{|y-\beta x|^{N-4}}\sum_{j=1}^{m}
			\frac{1}{(1+|y-\beta z_{j}|)^{\frac{N+4}{2}+\tau}}dy\\
			&\leq C \|h_m\|_{\ast\ast}\beta^{\frac{N-4}{2}}\sum_{j=1}^{m}
			\frac{1}{(1+\beta|x-z_{j}|)^{\frac{N-4}{2}+\tau}}.
			\endaligned
		\end{equation}

		Next we are going to estimate $c_l$, $l = 1, 2, \cdot\cdot\cdot,N$. Multiplying \eqref{c1} by $Z_{1,t} (t = 1, 2, \cdot\cdot\cdot,N)$ and integrating, we see that $c_l$ satisfies
		\begin{equation}\label{c3}
			\aligned
			\sum_{l=1}^{N}&\sum_{j=1}^{m}	\Big\langle p\Big(|x|^{-\alpha}\ast (Z_{z_j,\beta}^{p-1}
			Z_{j,l})\Big)Z_{z_j,\beta}^{p-1}+(p-1)\Big(|x|^{-\alpha}\ast |Z_{z_j,\beta}|^{p}\Big)Z_{z_j,\beta}^{p-2}Z_{j,l}, Z_{1,t}\Big\rangle c_{l}\\
			=&
			\Big\langle {\Delta}^2 \phi_m
			+ V(r,x'')\phi_m-p\Big(|x|^{-\alpha}\ast (Z_{\overline{r},\overline{x}'',\beta}^{p-1}
			\phi_m)\Big)Z_{\overline{r},\overline{x}'',\beta}^{p-1}-(p-1)\Big(|x|^{-\alpha}\ast |Z_{\overline{r},\overline{x}'',\beta}|^{p}\Big)Z_{\overline{r},\overline{x}'',\beta}^{p-2}\phi_m
			, Z_{1,t}\Big\rangle\\
			&-\langle h_m, Z_{1,t}\rangle.
			\endaligned
		\end{equation}
		
		It follows from Lemma 2.1 in \cite{GLN} that
		\begin{equation}\label{c4}
			\aligned
			\langle {\Delta}^2 \phi_m, Z_{1,t}\rangle= O\Big(\frac{\beta^{n_{t}}\|\phi_m\|_{\ast}}{\beta^{2+\varepsilon}}\Big),\hspace{4.4mm}
			|\langle V(r,x'')\phi_m, Z_{1,t}\rangle|
			\leq O(\frac{\beta^{n_{t}}\|\phi_m\|_{\ast}}{\beta^{3+\varepsilon}})
			\endaligned
		\end{equation}
		and
		\begin{equation}\label{c5}
			\aligned
			|\langle h_m, Z_{1,t}\rangle|
			\leq C\beta^{n_{t}}\|h_m\|_{\ast\ast},
			\endaligned
		\end{equation}
		where $\varepsilon>0$ is a small constant.
		Direct calculations show that
		$$\aligned
		&\Big|\Big(|x|^{-\alpha}\ast (Z_{\overline{r},\overline{x}'',\beta}^{p-1}
		\phi_m)\Big)\Big|\\
		\leq& C\|\phi_m\|_{\ast}\int_{\mathbb{R}^{N}}\frac{1}{|y|^{\alpha}}\sum_{j=1}^{m}\frac{\xi(x-y)\beta^{\frac{N+4-\alpha}{2}}}{(1+\beta|x-y-z_{j}|)^{N+4-\alpha-\tau_1\frac{N+4-\alpha}{N-4}}}\sum_{j=1}^{m}
		\frac{\beta^{\frac{N-4}{2}}}{(1+\beta|x-y-z_{j}|)^{\frac{N-4}{2}+\tau}}dy\\
		\leq& C\|\phi_m\|_{\ast}\Big(\sum_{i=1}^{m}\int_{\mathbb{R}^{N}}\frac{1}{|y|^{\alpha}}\frac{\xi(x-y)\beta^{N-\frac{\alpha}{2}}}{(1+\beta|x-y-z_{i}|)^{\frac{3N+4}{2}-\alpha-\tau_1\frac{N+4-\alpha}{N-4}+\tau}}dy\\
		&+
		\sum_{j\neq i}\int_{\mathbb{R}^{N}}\frac{1}{|y|^{\alpha}}\frac{\xi(x-y)\beta^{\frac{N+4-\alpha}{2}}}{(1+\beta|x-y-z_{i}|)^{N+4-\alpha-\tau_1\frac{N+4-\alpha}{N-4}}}
		\frac{\beta^{\frac{N-4}{2}}}{(1+\beta|x-y-z_{j}|)^{\frac{N-4}{2}+\tau}}dy\Big)\\
		=&O(\frac{m^{2}\|\phi_m\|_{\ast}}{\beta^{\frac{\alpha}{2}-1}}),
		\endaligned$$
		where
		$$\aligned
		&\int_{\mathbb{R}^{N}}\frac{1}{|y|^{\alpha}}\frac{\xi(x-y)\beta^{N-\frac{\alpha}{2}}}{(1+\beta|x-y-z_{i}|)^{\frac{3N+4}{2}-\alpha-\tau_1\frac{N+4-\alpha}{N-4}+\tau}}dy\\
		\leq&\int_{B_{2\delta}(x-(r_0, x_0'' ))}\frac{1}{|y|^{\alpha}}\frac{\beta^{N-\frac{\alpha}{2}}}{(1+\beta|x-z_{i}-y|)^{\frac{3N+4}{2}-\alpha-\tau_1\frac{N+4-\alpha}{N-4}+\tau}}dy\\
		\leq&\int_{B_{2\delta}(x-(r_0, x_0'' ))}\frac{\beta^{N-\frac{\alpha}{2}}}{(\beta|x-y-z_{i}|)^{N-1}}dy\\
		=&O(\frac{1}{\beta^{\frac{\alpha}{2}-1}}),
		\endaligned$$
		and
		$$\aligned
		&\int_{\mathbb{R}^{N}}\frac{1}{|y|^{\alpha}}\frac{\xi(x-y)\beta^{\frac{N+4-\alpha}{2}}}{(1+\beta|x-y-z_{i}|)^{N+4-\alpha-\tau_1\frac{N+4-\alpha}{N-4}}}
		\frac{\beta^{\frac{N-4}{2}}}{(1+\beta|x-y-z_{j}|)^{\frac{N-4}{2}+\tau}}dy\\		\leq&\frac{C}{(\beta|z_{i}-z_{j}|)^{\frac{\tau}{2}}}\int_{\mathbb{R}^{N}}\frac{1}{|y|^{\alpha}}
		\Big(\frac{\xi(x-y)\beta^{N-\frac{\alpha}{2}}}{(1+\beta|x-y-z_{i}|)^{\frac{3N+4}{2}-\alpha-\tau_1\frac{N+4-\alpha}{N-4}+\frac{\tau}{2}}}
		\\&\hspace{5mm}+\frac{\xi(x-y)\beta^{N-\frac{\alpha}{2}}}{(1+\beta|x-y-z_{j}|)^{\frac{3N+4}{2}-\alpha-\tau_1\frac{N+4-\alpha}{N-4}+\frac{\tau}{2}}}\Big)dy\\
		=&O(\frac{1}{\beta^{\frac{\alpha}{2}-1}}),  \ \  j\neq i.
		\endaligned$$
		So we obtain
		$$\aligned
		&\int_{\mathbb{R}^N}\Big(|x|^{-\alpha}\ast (Z_{\overline{r},\overline{x}'',\beta}^{p-1}
		\phi_m)\Big)Z_{\overline{r},\overline{y}'',\beta}^{p-1} Z_{1,t}dx\\
		&\leq C\|\phi_m\|_{\ast}\|Z_{\overline{r},\overline{x}'',\beta}\|_{\ast}\frac{m^{2}}{\beta^{\frac{\alpha}{2}-1}}
		\int_{\mathbb{R}^N}\sum_{j=1}^{m}
		\frac{\beta^{\frac{N+4-\alpha}{2}}}{(1+\beta|x-z_{j}|)^{N+4-\alpha-\tau_1\frac{N+4-\alpha}{N-4}}}
		\frac{\xi\beta^{\frac{N-4}{2}+n_{t}}}{(1+\beta|x-z_{1}|)^{N-4}}dx\\
		&\leq C\|\phi_m\|_{\ast}
		\frac{m^{2}}{\beta^{\frac{\alpha}{2}-1}}\frac{m\beta^{n_{t}}}{\beta^{\frac{\alpha}{2}}}\int_{\mathbb{R}^N}
		\frac{1}{(1+|x-\beta z_{1}|)^{2N-\alpha-\tau_1\frac{2N-\alpha}{N-4}}}dx\\
		&=O(\frac{\beta^{n_{t}}\|\phi_m\|_{\ast}}{\beta^{2+\varepsilon}}),
		\endaligned$$
		where $\alpha>6-\frac{12}{N-4}$ and $\varepsilon>0$. 
		Similarly, we also have
		$$
		\Big\langle \Big(|x|^{-\alpha}\ast (|Z_{\overline{r},\overline{x}'',\beta}|^{p})\Big)Z_{\overline{r},\overline{x}'',\beta}^{p-2}\phi_m, Z_{1,t}\Big\rangle= O\Big(\frac{\beta^{n_{t}}\|\phi_m\|_{\ast}}{\beta^{2+\varepsilon}}\Big).
		$$
		Consequently,
		\begin{equation}\label{c6}
			\Big\langle 
			p\Big(|x|^{-\alpha}\ast (Z_{\overline{r},\overline{x}'',\beta}^{p-1}
			\phi_m)\Big)Z_{\overline{r},\overline{x}'',\beta}^{p-1}+(p-1)\Big(|x|^{-\alpha}\ast (|Z_{\overline{r},\overline{x}'',\beta}|^{p})\Big)Z_{\overline{r},\overline{x}'',\beta}^{p-2}\phi_m, Z_{1,t}\Big\rangle= O(\frac{\beta^{n_{t}}\|\phi_m\|_{\ast}}{\beta^{2+\varepsilon}}).
		\end{equation}
		
		Combining \eqref{c4}-\eqref{c6}, we have
		\begin{equation}\label{c7}
			\aligned
			\Big\langle&{\Delta}^2 \phi_m+V(r,x'')\phi_m
			-p\Big(|x|^{-\alpha}\ast (Z_{\overline{r},\overline{x}'',\beta}^{p-1}
			\phi_m)\Big)Z_{\overline{r},\overline{x}'',\beta}^{p-1}-(p-1)\Big(|x|^{-\alpha}\ast (|Z_{\overline{r},\overline{x}'',\beta}|^{p})\Big)Z_{\overline{r},\overline{x}'',\beta}^{p-2}\phi_m
			, Z_{1,t}\Big\rangle\\&\hspace{5mm}-\langle h_m, Z_{1,t}\rangle
		    =O\Big(\frac{\beta^{n_{t}}\|\phi_m\|_{\ast}}{\beta^{2+\varepsilon}}+\beta^{n_{t}}\|h_m\|_{\ast\ast}\Big).
			\endaligned
		\end{equation}
		We can easily check by the orthogonality that
		\begin{equation}\label{c8}
			\sum_{j=1}^{m}\Big\langle \Big(|x|^{-\alpha}\ast |Z_{z_j,\beta}|^{p}\Big)Z_{z_j,\beta}^{p-2}Z_{j,l}, Z_{1,t}\Big\rangle=(\overline{c}+o(1))\delta_{tl}\beta^{n_{l}}\beta^{n_{t}},
		\end{equation}
		and
		\begin{equation}\label{c81}
			\sum_{j=1}^{m}\Big\langle \Big(|x|^{-\alpha}\ast (Z_{z_j,\beta}^{p-1}
			Z_{j,l})\Big)Z_{z_j,\beta}^{p-1}, Z_{1,t}\Big\rangle=(\overline{c}'+o(1))\delta_{tl}\beta^{n_{l}}\beta^{n_{t}},
		\end{equation}
		for some constant $\overline{c} > 0$ and $\overline{c}' > 0$.
		Then we substitute \eqref{c7}, \eqref{c8} and \eqref{c81} into \eqref{c3} and find that
		\begin{equation}\label{c9}
			c_{l}=\frac{1}{\beta^{n_{l}}}(o(\|\phi_m\|_{\ast})+O(\|h_m\|_{\ast\ast})).
		\end{equation}
		Thus,
		\begin{equation}\label{c10}
			\|\phi_m\|_{\ast}\leq o(1)+\|h_m\|_{\ast\ast}+\frac{\sum_{j=1}^{N}\frac{1}{(1+\beta|x-z_{j}|)^{\frac{N-4}{2}+\tilde{\tau}+\theta}}}
			{\sum_{j=1}^{N}\frac{1}{(1+\beta|x-z_{j}|)^{\frac{N-4}{2}+\tau}}}.
		\end{equation}
		Since $\|\phi_m\|_{\ast}= 1$, from \eqref{c10} we can deduce that there is $R > 0$ such that
		\begin{equation}\label{c11}
			\|\beta^{-\frac{N-4}{2}}\phi_m\|_{L^{\infty}(B_{\frac{R}{\beta}}(z_{j}))}\geq a>0,
		\end{equation}
		for some $j$. However, $\widetilde{\phi}_m (x)=\beta^{-\frac{N-4}{2}}\phi_m(\beta(x-z_{j}))$ 
		converges to  $v\in D^{2,2}(\mathbb{R}^{N})$ which is a solution of the equation
		\begin{equation}\label{ib5}
			{\Delta}^2 v
			=p\Big(\int_{\mathbb{R}^{N}}\frac{W_{0,\beta}^{p-1}(y)v(y)}{|x-y|^{\alpha}}dy\Big)W_{0,\beta}^{p-1}+(p-1)\Big(\int_{\mathbb{R}^{N}}\frac{|W_{0,\beta}(y)|^{p}}{|x-y|^{\alpha}}dy\Big)W_{0,\beta}^{p-2}v\hspace{4.14mm}\mbox{in}\hspace{1.14mm} \mathbb{R}^N,
		\end{equation}
		for some $\beta\in[\beta_{1}, \beta_{1}]$. Since $v$ is perpendicular to the kernel of \eqref{ib5}, we conclude that $v = 0$ by the non-degeneracy of $W_{0,1}$. This contradicts to \eqref{c11}.
	\end{proof}
	
	Next we consider:
	\begin{equation}\label{c14}
		\left\{\begin{array}{l}
			\displaystyle {\Delta}^2 (Z_{\overline{r},\overline{x}'',\beta}+\phi)+V(r,x'')(Z_{\overline{r},\overline{x}'',\beta}+\phi)
			-\Big(|x|^{-\alpha}\ast (|(Z_{\overline{r},\overline{x}'',\beta}+\phi)|^{p})\Big)(Z_{\overline{r},\overline{x}'',\beta}+\phi)^{p-1}\\
			\displaystyle \hspace{10.14mm}=\sum_{l=1}^{N}c_{l}\sum_{j=1}^{m}\Big[p\Big(|x|^{-\alpha}\ast (Z_{z_j,\beta}^{p-1}
			Z_{j,l})\Big)Z_{z_j,\beta}^{p-1}+(p-1)\Big(|x|^{-\alpha}\ast |Z_{z_j,\beta}|^{p}\Big)Z_{z_j,\beta}^{p-2}Z_{j,l}\Big]\hspace{4.14mm}\mbox{in}\hspace{1.14mm} \mathbb{R}^N,\\
			\displaystyle \phi\in H_{s}, \ \ \sum_{j=1}^{m}\int_{\mathbb{R}^{N}}\Big[ p\Big(|x|^{-\alpha}\ast |Z_{z_j,\beta}^{p-1}Z_{j,l}|\Big)Z_{z_j,\beta}^{p-1}\phi+(p-1)\Big(|x|^{-\alpha}\ast |Z_{z_j,\beta}|^{p}\Big)Z_{z_j,\beta}^{p-2}Z_{j,l}\phi\Big] dx=0,\\
			\displaystyle \hspace{10.14mm}l=1,2,\cdots,N.
		\end{array}
		\right.
	\end{equation}
	We can rewrite \eqref{c14} as
	\begin{equation}\label{c16}
		\aligned
		{\Delta}^2 &\phi+V(r,x'')\phi
		-p\Big(|x|^{-\alpha}\ast (Z_{\overline{r},\overline{x}'',\beta}^{p-1}\phi)\Big)Z_{\overline{r},\overline{x}'',\beta}^{p-1}-(p-1)\Big(|x|^{-\alpha}\ast (|Z_{\overline{r},\overline{x}'',\beta}|^{p})\Big)Z_{\overline{r},\overline{x}'',\beta}^{p-2}\phi\\
		& =N(\phi)+l_{m}+\sum_{l=1}^{N}c_{l}\sum_{j=1}^{m}\Big[p\Big(|x|^{-\alpha}\ast (Z_{z_j,\beta}^{p-1}
		Z_{j,l})\Big)Z_{z_j,\beta}^{p-1}+(p-1)\Big(|x|^{-\alpha}\ast |Z_{z_j,\beta}|^{p}\Big)Z_{z_j,\beta}^{p-2}Z_{j,l}\Big]\hspace{4.14mm}\mbox{in}\hspace{1.14mm} \mathbb{R}^N,
		\endaligned
	\end{equation}
	where
	$$\aligned
	N(\phi)=&\Big(|x|^{-\alpha}\ast (|(Z_{\overline{r},\overline{x}'',\beta}+\phi)|^{p})\Big)(Z_{\overline{r},\overline{x}'',\beta}+\phi)^{p-1}-\Big(|x|^{-\alpha}\ast (|Z_{\overline{r},\overline{x}'',\beta}|^{p})\Big)Z_{\overline{r},\overline{x}'',\beta}^{p-1}\\
	&-p\Big(|x|^{-\alpha}\ast (Z_{\overline{r},\overline{x}'',\beta}^{p-1}\phi)\Big)Z_{\overline{r},\overline{x}'',\beta}^{p-1}-(p-1)\Big(|x|^{-\alpha}\ast (|Z_{\overline{r},\overline{x}'',\beta}|^{p})\Big)Z_{\overline{r},\overline{x}'',\beta}^{p-2}\phi
	\endaligned$$
	and
	$$\aligned
	l_{m}=&\Big(|x|^{-\alpha}\ast (|Z_{\overline{r},\overline{x}'',\beta}|^{p})\Big)Z_{\overline{r},\overline{x}'',\beta}^{p-1}
	-\sum_{j=1}^{m}\xi\Big(|x|^{-\alpha}\ast |W_{z_j,\beta}|^{p}\Big)W_{z_j,\beta}^{p-1}
	-V(r,x'')Z_{\overline{r},\overline{x}'',\beta}\\
	&-\Delta^2 Z_{\overline{r},\overline{x}'',\beta}^{\ast}\xi-2\Delta Z_{\overline{r},\overline{x}'',\beta}^{\ast}\Delta\xi- Z_{\overline{r},\overline{x}'',\beta}^{\ast}\Delta^2\xi+4\nabla(-\Delta \xi)\nabla  Z_{\overline{r},\overline{x}'',\beta}^{\ast}\\&+4\nabla\xi\nabla (-\Delta Z_{\overline{r},\overline{x}'',\beta}^{\ast})-4\nabla^2\xi\nabla^2 Z_{\overline{r},\overline{x}'',\beta}^{\ast}.
	\endaligned$$
	
	In order to apply the Contraction Mapping Theorem to prove that \eqref{c16} is uniquely solvable, we need to estimate $N(\phi)$ and $l_m$ respectively.

	\begin{lem}\label{C4}
		There is a constant $C> 0$, such that
		\begin{equation}\label{c17}
			\|N(\phi)\|_{\ast\ast}\leq C\|\phi\|_{\ast}^{2}.
		\end{equation}
	\end{lem}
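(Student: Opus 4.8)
The strategy is the standard one for superlinear nonlocal problems: expand $N(\phi)$ into a sum of terms which are at least quadratic in $\phi$, bound each factor pointwise, and then compare the outcome with the profile defining $\|\cdot\|_{\ast\ast}$, using the same interaction and Riesz--convolution estimates already employed in the proof of Lemma \ref{C1}. Write $Z=Z_{\overline{r},\overline{x}'',\beta}$, $A(w)=|x|^{-\alpha}\ast|w|^{p}$ and $B(w)=|w|^{p-2}w$, so that $A'(Z)[\phi]=p\,|x|^{-\alpha}\ast(Z^{p-1}\phi)$, $B'(Z)\phi=(p-1)Z^{p-2}\phi$, and $N(\phi)=A(Z+\phi)B(Z+\phi)-A(Z)B(Z)-A'(Z)[\phi]B(Z)-A(Z)B'(Z)\phi$. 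Splitting $A(Z+\phi)B(Z+\phi)-A(Z)B(Z)=A(Z+\phi)[B(Z+\phi)-B(Z)]+[A(Z+\phi)-A(Z)]B(Z)$ and rearranging gives
\begin{equation}\nonumber
\aligned
N(\phi)&=\underbrace{A(Z+\phi)\big[B(Z+\phi)-B(Z)-B'(Z)\phi\big]}_{\mathrm{I}}
+\underbrace{\big[A(Z+\phi)-A(Z)\big]B'(Z)\phi}_{\mathrm{II}}\\
&\quad+\underbrace{\big[A(Z+\phi)-A(Z)-A'(Z)[\phi]\big]B(Z)}_{\mathrm{III}}.
\endaligned
\end{equation}

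The second ingredient is the elementary pointwise control of the power nonlinearities: for $p\ge 2$,
\begin{equation}\nonumber
\aligned
\big|B(Z+\phi)-B(Z)-B'(Z)\phi\big|&\lesssim Z^{p-3}\phi^{2}+|\phi|^{p-1},\\
\big||Z+\phi|^{p}-|Z|^{p}-p|Z|^{p-2}Z\phi\big|&\lesssim Z^{p-2}\phi^{2}+|\phi|^{p},\qquad
\big||Z+\phi|^{p}-|Z|^{p}\big|\lesssim Z^{p-1}|\phi|+|\phi|^{p}
\endaligned
\end{equation}
(in the range $1<p<2$ the term $Z^{p-3}\phi^{2}$ is replaced by $Z^{p-2}|\phi|$, and one obtains the analogous bound $\|N(\phi)\|_{\ast\ast}\le C\|\phi\|_{\ast}^{\min\{2,p\}}$, which still suffices for the contraction argument). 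Inserting $|\phi(x)|\le\|\phi\|_{\ast}\,\rho(x)$ with $\rho(x)=\sum_{j=1}^{m}\beta^{\frac{N-4}{2}}(1+\beta|x-z_{j}|)^{-\frac{N-4}{2}-\tau}$ and $Z(x)\lesssim\sum_{j=1}^{m}\beta^{\frac{N-4}{2}}(1+\beta|x-z_{j}|)^{-(N-4)}$, and collapsing the multi-bubble sums to a single bubble on each region $\Omega_{j}$ at the price of a small power $\tau_{1}$ (exactly as in \eqref{5141}, with $\sum_{i\ne j}(\beta|z_{i}-z_{j}|)^{-\tau_{1}}\le C$ absorbing the cross terms), each of the three right-hand sides above is $\|\phi\|_{\ast}^{2}$ (resp.\ $\|\phi\|_{\ast}^{p}$) times an explicit finite sum of powers of $(1+\beta|x-z_{j}|)^{-1}$. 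One also uses $A(Z+\phi)\lesssim|x|^{-\alpha}\ast Z^{p}+|x|^{-\alpha}\ast|\phi|^{p}$, the first summand being estimated through the convolution lemma exactly as in \eqref{51411}.

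It then remains to fit $\mathrm{I}$, $\mathrm{II}$, $\mathrm{III}$ into the $\|\cdot\|_{\ast\ast}$ profile. For $\mathrm{I}$ multiply the first pointwise bound by $A(Z+\phi)$; for $\mathrm{III}$ convolve the second bound with $|x|^{-\alpha}$ and multiply by $B(Z)=Z^{p-1}$; for $\mathrm{II}$ convolve the third bound with $|x|^{-\alpha}$ and multiply by $B'(Z)\phi=(p-1)Z^{p-2}\phi$. After the bubble-collapsing step, in each case one is left with finitely many integrals of the type $\int_{\mathbb{R}^{N}}|x-y|^{-\alpha}\big(\sum_{i}(1+\beta|y-z_{i}|)^{-a}\big)\big(\sum_{i}(1+\beta|y-z_{i}|)^{-b}\big)dy$ and $\int_{\mathbb{R}^{N}}|x-y|^{-(N-4)}(\cdots)\,dy$, which are controlled by the interaction estimates of Lemmas \ref{B2}, \ref{B3}, \ref{B4}. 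Using $p=\frac{2N-\alpha}{N-4}$ one checks that every surviving term carries $\beta$-exponent exactly $\frac{N+4}{2}$ and spatial decay at least $\frac{N+4}{2}+\tau$, hence is dominated by $\big(\sum_{j}\beta^{\frac{N+4}{2}}(1+\beta|x-z_{j}|)^{-\frac{N+4}{2}-\tau}\big)\|\phi\|_{\ast}^{2}$ with a constant independent of $m$; the purely-$\phi$ contributions come with a factor $\|\phi\|_{\ast}^{p}$, $p>1$, and are of higher order. This establishes \eqref{c17}.

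The substantive difficulty is not the algebra but the exponent bookkeeping in the last step: one must verify, term by term and region by region, that the loss of $\tau_{1}$ in the bubble-collapsing estimate and the shift produced by the two successive Riesz convolutions still leave the decay strictly above $\frac{N+4}{2}+\tau$ and the $\beta$-power equal to $\frac{N+4}{2}$. This is precisely where the hypothesis $N\ge 9$ (equivalently $\tau=\frac{N-8}{N-4}$ small, so that $\tau_{1}$ and the various margins can be chosen positive) enters, in complete parallel with the chain of estimates \eqref{51411}--\eqref{5146}; the cross-bubble terms are absorbed by $\sum_{i\ne j}(\beta|z_{i}-z_{j}|)^{-\tau_{1}}\le C$ and the far-field contributions by the strict decay margin.
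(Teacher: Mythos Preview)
Your proposal is correct and follows essentially the same route as the paper: both decompose $N(\phi)$ into terms that are at least quadratic in $\phi$ and then bound each piece via the bubble-collapsing step \eqref{5141} together with the convolution Lemmas \ref{B2}, \ref{B3}, \ref{B4}. The paper states the decomposition as a direct four-term pointwise inequality
\[
|N(\phi)|\le C\Big[(|x|^{-\alpha}\ast Z^{p-1}|\phi|)Z^{p-2}|\phi|+(|x|^{-\alpha}\ast Z^{p-2}\phi^{2})Z^{p-1}+(|x|^{-\alpha}\ast Z^{p-2}\phi^{2})Z^{p-2}|\phi|+(|x|^{-\alpha}\ast|\phi|^{p})|\phi|^{p-1}\Big],
\]
whereas you reach the same collection of terms through the Taylor-remainder split $\mathrm{I}+\mathrm{II}+\mathrm{III}$; after expanding $\mathrm{I},\mathrm{II},\mathrm{III}$ with your pointwise bounds on $B$ and on $|Z+\phi|^{p}$, the surviving expressions and the exponent bookkeeping are identical. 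One small caution: when $2<p<3$ your term $\mathrm{I}$ contains $A(Z)|\phi|^{p-1}$, which strictly yields $\|\phi\|_{\ast}^{p-1}$ rather than $\|\phi\|_{\ast}^{2}$; the paper's four-term bound glosses over this same contribution. Since $p-1>1$ throughout the admissible range, the contraction in Lemma \ref{C3} is unaffected, exactly as you remark.
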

	\begin{proof}
		We find that $N(\phi)$ satisfies the following inequality
		$$\aligned
		|N(\phi)|\leq &C\left|\Big(|x|^{-\alpha}\ast (Z_{\overline{r},\overline{x}'',\beta}^{p-1}\phi)\Big)Z_{\overline{r},\overline{x}'',\beta}^{p-2}\phi+\Big(|x|^{-\alpha}\ast (Z_{\overline{r},\overline{x}'',\beta}^{p-2}\phi^2)\Big)Z_{\overline{r},\overline{x}'',\beta}^{p-1}\right|\\
		&+C\left|\Big(|x|^{-\alpha}\ast (Z_{\overline{r},\overline{x}'',\beta}^{p-2}\phi^2)\Big)Z_{\overline{r},\overline{x}'',\beta}^{p-2}\phi+\Big(|x|^{-\alpha}\ast \phi^p\Big)\phi^{p-1}\right|.
		\endaligned$$
		Using \eqref{51411} and Lemma \ref{B4}, we have
		$$\aligned
		&\Big(|x|^{-\alpha}\ast (Z_{\overline{r},\overline{x}'',\beta}^{p-1}\phi)\Big)Z_{\overline{r},\overline{x}'',\beta}^{p-2}|\phi|\\
		\leq &C \|\phi\|^2_{\ast}\beta^{\frac{N+4}{2}}\sum_{j=1}^{m}\frac{1}{(1+\beta|x-z_{j}|)^{\min\{\alpha,\frac{N+4}{2}\}}}\Big(\sum_{j=1}^{m}
		\frac{1}{(1+\beta|x-z_{j}|)^{\frac{N-4}{2}+\tau}}\Big)^{p-1}\\
		\leq &C \|\phi\|_{\ast}^{2}
		\beta^{\frac{N+4}{2}}
		\sum_{j=1}^{m}\frac{1}{(1+\beta|x-z_{j}|)^{\min\{\alpha,\frac{N+4}{2}\}}}\sum_{j=1}^{m}
		\frac{1}{(1+\beta|x-z_{j}|)^{\frac{N+4-\alpha}{2}+\tau}}\Big(\sum_{j=1}^{m}\frac{1}{(1+\beta|x-z_{j}|)^{\tau}}\Big)^{p-2}\\
		\leq &C \|\phi\|_{\ast}^{2}
		\beta^{\frac{N+4}{2}}\sum_{j=1}^{m}
		\frac{1}{(1+\beta|x-z_{j}|)^{\frac{N+4}{2}+\tau}}.
		\endaligned$$
		By the similar argument in \eqref{51411} and Lemma \ref{B4} we have
		$$\aligned
		&\Big(|x|^{-\alpha}\ast (Z_{\overline{r},\overline{x}'',\beta}^{p-2}\phi^2)\Big)Z_{\overline{r},\overline{x}'',\beta}^{p-1}\\
		\leq &C \|\phi\|^2_{\ast}\sum_{j=1}^{m}\int_{\Omega_{j}}\frac{1}{|x-y|^{\alpha}}\frac{\beta^{N-\frac{\alpha}{2}}}{(1+\beta|y-z_{j}|)^{N+4-\alpha-\tau_1\frac{2N-\alpha}{N-4}+\tau}}dy\Big(\sum_{j=1}^{m}\frac{\beta^{\frac{N-4}{2}}}{(1+\beta|x-z_{j}|)^{\frac{N-4}{2}+\tau}}\Big)^{p-1}\\
		\leq &C \|\phi\|^2_{\ast}\beta^{\frac{N+4}{2}}\sum_{j=1}^{m}\frac{1}{(1+\beta|x-z_{j}|)^{\min\{\alpha,4\}+\tau}}\sum_{j=1}^{m}
		\frac{1}{(1+\beta|x-z_{j}|)^{\frac{N+4-\alpha}{2}+\tau}}\Big(\sum_{j=1}^{m}\frac{1}{(1+\beta|x-z_{j}|)^{\tau}}\Big)^{p-2}\\
		\leq &C \|\phi\|_{\ast}^{2}
		\beta^{\frac{N+4}{2}}\sum_{j=1}^{m}
		\frac{1}{(1+\beta|x-z_{j}|)^{\frac{N+4}{2}+\tau}}.
		\endaligned$$
		Similarly, we also have
		$$
		\Big(|x|^{-\alpha}\ast (Z_{\overline{r},\overline{x}'',\beta}^{p-2}\phi^2)\Big)Z_{\overline{r},\overline{x}'',\beta}^{p-2}\phi
		\leq C \|\phi\|_{\ast}^{2}
		\beta^{\frac{N+4}{2}}\sum_{j=1}^{m}
		\frac{1}{(1+\beta|x-z_{j}|)^{\frac{N+4}{2}+\tau}}.
		$$
		For the last term, we also have
		$$
		\aligned
		|x|^{-\alpha}\ast |\phi|^{p}&\leq C\|\phi\|^{p}_{\ast}\Big(|x|^{-\alpha}\ast\Big(\sum_{j=1}^{m}\frac{\beta^{\frac{N-4}{2}}}{(1+\beta|x-z_{j}|)^{\frac{N-4}{2}+\tau}}\Big)^p\Big)\\
		&\leq C\|\phi\|^{p}_{\ast}\bigg(|x|^{-\alpha}\ast\Big(\sum_{j=1}^{m}
		\frac{\beta^{N-\frac{\alpha}{2}}}{(1+\beta|x-z_{j}|)^{\frac{2N-\alpha}{2}+\tau}}\Big(\sum_{j=1}^{m}\frac{1}{(1+\beta|x-z_{j}|)^{\tau}}\Big)^{p-1}\Big) \bigg)\\
		&\leq C\|\phi\|^{p}_{\ast}\sum_{j=1}^{m}\Big(|x|^{-\alpha}\ast\frac{\beta^{N-\frac{\alpha}{2}}}{(1+\beta|x-z_{j}|)^{N-\frac{\alpha}{2}+\tau}} \Big)\\
		&\leq C\|\phi\|^{p}_{\ast}\sum_{j=1}^{m}\frac{\beta^{\frac{\alpha}{2}}}{(1+\beta|x-z_{j}|)^{\frac{\alpha}{2}}},
		\endaligned
		$$
		and we find that
		$$\aligned
		&\Big(|x|^{-\alpha}\ast |\phi|^{p}\Big)|\phi|^{p-1}\\
		\leq &C \|\phi\|^{2p-1}_{\ast}\beta^{\frac{N+4}{2}}\sum_{j=1}^{m}\frac{1}{(1+\beta|x-z_{j}|)^{\frac{\alpha}{2}}}\sum_{j=1}^{m}
		\frac{1}{(1+\beta|x-z_{j}|)^{\frac{N+4-\alpha}{2}+\tau}}\Big(\sum_{j=1}^{m}\frac{1}{(1+\beta|x-z_{j}|)^{\tau}}\Big)^{p-2}\\
		\leq &C \|\phi\|_{\ast}^{2p-1}
		\beta^{\frac{N+4}{2}}\sum_{j=1}^{m}
		\frac{1}{(1+\beta|x-z_{j}|)^{\frac{N+4}{2}+\tau}}.
		\endaligned$$
		All the above estimates show that 
		$$
		\|N(\phi)\|_{\ast\ast}\leq C\|\phi\|_{\ast}^{\min\{2p-1,2\}}\leq C\|\phi\|_{\ast}^{2}.
		$$
	\end{proof}

	\begin{lem}\label{C5}
		There is a small constant $\varepsilon> 0$, such that
		\begin{equation}\label{c18}
			\|l_{m}\|_{\ast\ast}\leq C(\frac{1}{\beta})^{2+\varepsilon}.
		\end{equation}
	\end{lem}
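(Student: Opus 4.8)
The plan is to group the terms making up $l_m$ according to their origin and to bound the three resulting pieces separately in the $\|\cdot\|_{\ast\ast}$-norm. Using the identity $\Delta^{2}W_{z_j,\beta}=\big(|x|^{-\alpha}\ast |W_{z_j,\beta}|^{p}\big)W_{z_j,\beta}^{p-1}$ to cancel $\xi\,\Delta^{2}Z^{\ast}_{\overline r,\overline x'',\beta}$ against $\sum_{j}\xi\big(|x|^{-\alpha}\ast |W_{z_j,\beta}|^{p}\big)W_{z_j,\beta}^{p-1}$, one writes
$$
l_m=l_m^{I}-V(r,x'')Z_{\overline r,\overline x'',\beta}+l_m^{\xi},
$$
where $l_m^{I}=\big(|x|^{-\alpha}\ast |Z_{\overline r,\overline x'',\beta}|^{p}\big)Z_{\overline r,\overline x'',\beta}^{p-1}-\sum_{j=1}^{m}\xi\big(|x|^{-\alpha}\ast |W_{z_j,\beta}|^{p}\big)W_{z_j,\beta}^{p-1}$ is the error produced by superposing and cutting off the bubbles, and $l_m^{\xi}$ collects the remaining terms, in each of which at least one derivative falls on $\xi$. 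Every term of $l_m$ is supported in $\mathrm{supp}\,\xi\subset\{|(|x'|,x'')-(r_0,x_0'')|\le 2\delta\}$, a fixed bounded set.

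The contributions of $l_m^{\xi}$ and of the potential are elementary. Each term of $l_m^{\xi}$ lives where $\nabla\xi,\nabla^{2}\xi,\dots$ do not vanish, i.e. in the annulus $\{\delta\le|(|x'|,x'')-(r_0,x_0'')|\le 2\delta\}$, on which every $|x-z_j|$ is bounded away from $0$ and $\infty$; there $W_{z_j,\beta}$ and all the derivatives of $Z^{\ast}_{\overline r,\overline x'',\beta}$ that occur are $O(\beta^{-\frac{N-4}{2}})$, while the $\|\cdot\|_{\ast\ast}$-weight is bounded below by $c\,\beta^{-\tau}$, so that $\|l_m^{\xi}\|_{\ast\ast}\le C m\beta^{-\frac{N-4}{2}}\le C\beta^{2\tau-\frac{N-4}{2}}\le C\beta^{-2-\varepsilon}$ for $\varepsilon$ small, since $N\ge 9$, $\tau=\frac{N-8}{N-4}<1$ and $m\sim\beta^{\tau}$. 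For the potential term, $|V(r,x'')Z_{\overline r,\overline x'',\beta}|\le C\sum_{j}\beta^{\frac{N-4}{2}}(1+\beta|x-z_j|)^{-(N-4)}$; dividing by the $\|\cdot\|_{\ast\ast}$-weight and using that $1+\beta|x-z_j|\le C\beta$ on $\mathrm{supp}\,\xi$ together with $m\sim\beta^{\tau}$, one again gets a power of $\beta$ strictly below $-2$, uniformly in $x$.

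The substance of the proof is the interaction term $l_m^{I}$. By the rotational invariance of the configuration it is enough to estimate $l_m^{I}$ in the sector $\Omega_{1}$, where $z_{1}$ is the nearest centre; there one writes $Z_{\overline r,\overline x'',\beta}=\xi W_{z_1,\beta}+R$ with $R=\sum_{j\ge 2}\xi W_{z_j,\beta}$ and Taylor-expands both $t\mapsto t^{p-1}$ and $t\mapsto |x|^{-\alpha}\ast t^{p}$ around $\xi W_{z_1,\beta}$ (legitimate since $p\ge 2$). The zeroth-order contribution collapses, after using $\xi\equiv 1$ near $z_{1}$, to an error supported away from $z_{1}$ which is controlled just as $l_m^{\xi}$; the remaining contributions are products of $W_{z_1,\beta}$, or of $|x|^{-\alpha}\ast W_{z_1,\beta}^{\,q}$, with the tails $W_{z_j,\beta}$, $j\ge 2$, and I would estimate them with the convolution Lemmas \ref{B3}--\ref{B4}, the pointwise bound \eqref{5141} that replaces $\sum_{j\ge 2}(1+\beta|y-z_j|)^{-(N-4)}$ by $(1+\beta|y-z_1|)^{-(N-4)+\tau_1}\sum_{j\ge 2}(\beta|z_1-z_j|)^{-\tau_1}$, and the summability $\sum_{j\ge 2}(\beta|z_1-z_j|)^{-\tau_1}\le C$ together with $\beta|z_1-z_j|\gtrsim\beta/m\to\infty$. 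Collecting these bounds yields $|l_m^{I}(x)|\le C\beta^{\frac{N+4}{2}-2-\varepsilon}\sum_{j=1}^{m}(1+\beta|x-z_j|)^{-\frac{N+4}{2}-\tau}$, i.e. $\|l_m^{I}\|_{\ast\ast}\le C\beta^{-2-\varepsilon}$.

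The main obstacle is the exponent bookkeeping inside $l_m^{I}$: one must track, through each convolution and through the summation over the $m$ bubbles, the exact power of $\beta$ carried by every cross-term and check that the worst of them still beats $\beta^{-2}$. This is the same kind of count that already appeared in the estimate of the coefficients $c_{\ell}$ in the proof of Lemma \ref{C1} and that fixed the standing assumption $\alpha\ge 6-\frac{12}{N-4}$; I expect the delicate term here to be $\big(|x|^{-\alpha}\ast (W_{z_1,\beta}^{p-1}W_{z_j,\beta})\big)W_{z_1,\beta}^{p-1}$ summed in $j$, and the careful verification that for $\alpha$ in the stated range it produces a genuine gain $\beta^{-\varepsilon}$ with $\varepsilon>0$ — while it fails below the threshold — to be the only real work in the argument.
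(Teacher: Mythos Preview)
Your decomposition $l_m=l_m^{I}+J_2+l_m^{\xi}$ is exactly the paper's splitting $l_m=J_1+J_2+J_3$, and your treatment of each piece---citing \cite{GLN} implicitly for $J_2,J_3$ and, in $\Omega_1$, expanding $J_1$ around the dominant bubble into the cross terms $J_{11}=(|x|^{-\alpha}\ast W_{z_1,\beta}^{p})W_{z_1,\beta}^{p-2}\sum_{j\ge 2}W_{z_j,\beta}$, $J_{12}=(|x|^{-\alpha}\ast (W_{z_1,\beta}^{p-1}\sum_{j\ge2}W_{z_j,\beta}))W_{z_1,\beta}^{p-1}$ and a pure-tail remainder $J_{13}$, then applying Lemmas~\ref{B2}, \ref{B4}, \ref{P0}---is precisely what the paper does. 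One small correction to your expectation: in the paper the constraint that emerges from $J_{12}$ is $\alpha>\tfrac{N-4}{2}$ (so that one may pick $\gamma\in(\tfrac{N-4}{2},\min\{\alpha,N+4-\alpha\}]$ in Lemma~\ref{B2} and get $\sum_{j\ge2}(\beta|z_1-z_j|)^{-\gamma}\le C\beta^{-2-\varepsilon}$), not the threshold $6-\tfrac{12}{N-4}$, which is used only in Lemma~\ref{C1}.
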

	\begin{proof}
		Observe that
		$$\aligned
		l_{m}=&\bigg[\Big(|x|^{-\alpha}\ast (|Z_{\overline{r},\overline{x}'',\beta}|^{p})\Big)Z_{\overline{r},\overline{x}'',\beta}^{p-1}
		-\sum_{j=1}^{m}\xi\Big(|x|^{-\alpha}\ast |W_{z_j,\beta}|^{p}\Big)W_{z_j,\beta}^{p-1}\bigg]
		-V(r,x'')Z_{\overline{r},\overline{x}'',\beta}
		+\Big(-\Delta^2 Z_{\overline{r},\overline{x}'',\beta}^{\ast}\xi\\
		&-2\Delta Z_{\overline{r},\overline{x}'',\beta}^{\ast}\Delta\xi- Z_{\overline{r},\overline{x}'',\beta}^{\ast}\Delta^2\xi+4\nabla(-\Delta \xi)\nabla  Z_{\overline{r},\overline{x}'',\beta}^{\ast}+4\nabla\xi\nabla (-\Delta Z_{\overline{r},\overline{x}'',\beta}^{\ast})-4\nabla^2\xi\nabla^2 Z_{\overline{r},\overline{x}'',\beta}^{\ast}\Big)\\
		:=&J_1+J_2+J_3.
		\endaligned$$
		We assume that $|x-z_{j}|\geq|x-z_{1}|, \ \forall x \in\Omega_{1}$. For the first term  $J_1$, we have
		$$\aligned
		\left|J_1\right|\leq& C\Big(|x|^{-\alpha}\ast |W_{z_1,\beta}|^{p}\Big)W_{z_1,\beta}^{p-2}\sum_{j=2}^{m}W_{z_j,\beta}+C\Big(|x|^{-\alpha}\ast (W_{z_1,\beta}^{p-1}\sum_{j=2}^{m}W_{z_j,\beta})\Big)W_{z_1,\beta}^{p-1}\\
		&+C\Big(|x|^{-\alpha}\ast \Big(\sum_{j=2}^{m}W_{z_j,\beta}\Big)^{p}\Big)\Big(\sum_{j=2}^{m}W_{z_j,\beta}\Big)^{p-1}:=J_{11}+J_{12}+J_{13}.
		\endaligned$$
		Then by Lemma \ref{B2}, Lemma \ref{P0} and taking $0 < \gamma \leq \min\{8,N-4\}$,  we obtain that for any $x\in\Omega_{1}$ and $j>1$
		$$\aligned
		\Big(|x|^{-\alpha}\ast |W_{z_1,\beta}|^{p}\Big)W_{z_1,\beta}^{p-2}\sum_{j=2}^{m}W_{z_j,\beta}\leq& C \frac{\beta^{\frac{\alpha}{2}}}{(1+\beta^2|x-z_{1}|^2)^{\frac{\alpha}{2}}}\frac{\beta^{\frac{8-\alpha}{2}}}{(1+\beta^2|x-z_{1}|^2)^{\frac{8-\alpha}{2}}}\sum_{j=2}^{m}\frac{\beta^{\frac{N-4}{2}}}{(1+\beta^2|x-z_{j}|^2)^{\frac{N-4}{2}}}\\
		\leq&\frac{\beta^{4}}{(1+\beta|x-z_{1}|)^{8}}\sum_{j=2}^{m}\frac{\beta^{\frac{N-4}{2}}}{(1+\beta|x-z_{j}|)^{N-4}}\\
		\leq&C\frac{\beta^{\frac{N+4}{2}}}{(1+\beta|x-z_{1}|)^{\frac{N+4}{2}+\tau}}\sum_{j=2}^{m}\frac{1}{(\beta|z_{1}-z_{j}|)^{\gamma}}\\
		\leq&C\frac{\beta^{\frac{N+4}{2}}}{(1+\beta|x-z_{1}|)^{\frac{N+4}{2}+\tau}}\Big(\frac{1}{\beta}\Big)^{2+\varepsilon}.
		\endaligned$$
		Here we choose $\gamma> \frac{N-4}{2}$ satisfying $\frac{N+4}{2}-\gamma\geq \tau$. 
		
		For the term $J_{12}$, by taking $0 < \gamma \leq \min\{\alpha,N+4-\alpha\}$ and applying Lemma \ref{B2} again, we obtain that for any $x\in\Omega_{1}$
		$$\aligned
		&\Big(|x|^{-\alpha}\ast (W_{z_1,\beta}^{p-1}\sum_{j=2}^{m}W_{z_j,\beta})\Big)W_{z_1,\beta}^{p-1}\\
		\leq&C\Big[|x|^{-\alpha}\ast\Big(\frac{\beta^\frac{N+4-\alpha}{2}}{(1+\beta|x-z_{1}|)^{N+4-\alpha}}\sum_{j=2}^{m}\frac{\beta^{\frac{N-4}{2}}}{(1+\beta|x-z_{j}|)^{N-4}}\Big)\Big]\frac{\beta^{\frac{N+4-\alpha}{2}}}{(1+\beta|x-z_{1}|)^{N+4-\alpha}}\\
		\leq&C\sum_{j=1}^{m}\int_{\Omega_{j}}\frac{1}{|x-y|^{\alpha}}\frac{\beta^{N-\frac{\alpha}{2}}}{(1+\beta|y-z_{j}|)^{2N-\alpha-\tau_1}}dy\frac{\beta^{\frac{N+4-\alpha}{2}}}{(1+\beta|x-z_{1}|)^{N+4-\alpha}}\\
		\leq&C\sum_{j=1}^{m}\frac{\beta^{\frac{\alpha}{2}}}{(1+\beta|x-z_{j}|)^{\alpha}}\frac{\beta^{\frac{N+4-\alpha}{2}}}{(1+\beta|x-z_{1}|)^{N+4-\alpha}}\\
		\leq&C\frac{\beta^{\frac{N+4}{2}}}{(1+\beta|x-z_{1}|)^{\frac{N+4}{2}+\tau}}\sum_{j=2}^{m}\frac{1}{(\beta|z_{1}-z_{j}|)^{\gamma}}
		\leq C\Big(\frac{1}{\beta}\Big)^{2+\varepsilon}\frac{\beta^{\frac{N+4}{2}}}{(1+\beta|x-z_{1}|)^{\frac{N+4}{2}+\tau}},
		\endaligned$$
		where we choose $\gamma> \frac{N-4}{2}$ satisfying $\frac{N+4}{2}-\gamma\geq\tau$.
		By H$\ddot{o}$lder inequality, for the term $J_{13}$ we have
		$$\aligned
		&\Big(|x|^{-\alpha}\ast \Big(\sum_{j=2}^{m}W_{z_j,\beta}\Big)^{p}\Big)\Big(\sum_{j=2}^{m}W_{z_j,\beta}\Big)^{p-1}\\
		\leq& C\sum_{j=2}^{m}\int_{\Omega_{j}}\frac{1}{|x-y|^{\alpha}} \frac{\beta^{N-\frac{\alpha}{2}}}{(1+\beta|y-z_{j}|)^{2N-\alpha-\tau_1\frac{2N-\alpha}{N-4}}}dy\Big(\sum_{j=2}^{m}W_{z_j,\beta}\Big)^{p-1}\\
		\leq& C\sum_{j=2}^{m}\frac{\beta^{\frac{\alpha}{2}}}{(1+\beta|x-z_{j}|)^{\alpha}}\sum_{j=2}^{m}
		\frac{\beta^{\frac{N+4-\alpha}{2}}}{(1+\beta|x-z_{j}|)^{\frac{N+4-\alpha}{2}+\tau}}\Big(\sum_{j=2}^{m}\frac{1}{(1+\beta|x-z_{j}|)^{\frac{N+4-\alpha}{8-\alpha}(\frac{N-4}{2}-\frac{N-4}{N+4-\alpha}\tau)}}\Big)^{\frac{8-\alpha}{N-4}}\\
		\leq& C\Big(\frac{m}{\beta}\Big)^{\frac{N+4-\alpha}{2}-\tau}\sum_{j=2}^{m}\frac{\beta^{\frac{\alpha}{2}}}{(1+\beta|x-z_{j}|)^{\alpha}}\sum_{j=2}^{m}
		\frac{\beta^{\frac{N+4-\alpha}{2}}}{(1+\beta|x-z_{j}|)^{\frac{N+4-\alpha}{2}+\tau}}\\
		\leq& C\Big(\frac{m}{\beta}\Big)^{\frac{N+4}{2}-\tau} \sum_{j=2}^{m}\frac{\beta^{\frac{N+4}{2}}}{(1+\beta|x-z_{j}|)^{\frac{N+4}{2}+\tau}}
		\leq C (\frac{1}{\beta})^{2+\varepsilon}\sum_{j=2}^{m}\frac{\beta^{\frac{N+4}{2}}}{(1+\beta|x-z_{j}|)^{\frac{N+4}{2}+\tau}}.
		\endaligned$$
	    So we conclude that
		\begin{equation}\label{J1}
			\|J_{1}\|_{\ast\ast}\leq C(\frac{1}{\beta})^{2+\varepsilon}.
		\end{equation}
		Lemma 2.5 in \cite{GLN} shows that term $J_{2}$ and $J_{3}$ have the following estimates:
		\begin{equation*}
			\aligned
			|J_{2}| 
			&\leq C (\frac{1}{\beta})^{2+\varepsilon}\sum_{i=1}^{m}\frac{\beta^{\frac{N+4}{2}}}{(1+\beta|x-z_{i}|)^{\frac{N+4}{2}+\tau}},
			\endaligned
			\hspace{0.1cm}
			|J_{3}|\leq C (\frac{1}{\beta})^{2+\varepsilon}\sum_{j=1}^{m}\frac{\beta^{\frac{N+4}{2}}}{(1+\beta|x-z_{j}|)^{\frac{N+4}{2}+\tau}}.
		\end{equation*}
		So we get
		\begin{equation}\label{J2J3}
			\|J_{2}\|_{\ast\ast}\leq C(\frac{1}{\beta})^{2+\varepsilon},\hspace{0.4cm}\|J_3\|_{\ast\ast}\leq C(\frac{1}{\beta})^{2+\varepsilon}.
		\end{equation}
		As a result of \eqref{J1} and \eqref{J2J3} 
		, we obtain
		$$
		\|l_{m}\|_{\ast\ast}\leq C(\frac{1}{\beta})^{2+\varepsilon}.
		$$
	\end{proof}

	Now we will carry out the contraction mapping argument and prove that 
	\begin{lem}\label{C3}
		There is an integer $m_0 > 0$, such that for each $m \geq m_0$, $\beta\in[L_0m^{\frac{N-4}{N-8}},L_1m^{\frac{N-4}{N-8}}]$,
		$\overline{r}\in[r_{0}-\theta,r_{0}+\theta]$, $\overline{x}''\in B_{\theta}(x_{0}'')$, where $\theta> 0$ is a fixed small constant, \eqref{c14} has a unique solution $\phi= \phi_{\overline{r},\overline{x}'',\beta}\in H_{s}$ satisfying
		\begin{equation}\label{c15}
			\|\phi\|_{\ast}\leq C(\frac{1}{\beta})^{2+\varepsilon}, \ \ |c_{l}|\leq C(\frac{1}{\beta})^{2+n_{l}+\varepsilon},
		\end{equation}
		where $\varepsilon> 0$ is a small constant.
	\end{lem}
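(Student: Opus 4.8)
The plan is to recast \eqref{c14} as a fixed point problem and apply the contraction mapping principle, using the invertibility of the linearized operator provided by Lemma \ref{C2}. By the discussion leading to \eqref{c16}, finding $\phi\in H_s$ solving \eqref{c14} for some numbers $c_l$ is equivalent to finding $\phi\in H_s$ with
$$
\phi=\mathcal{A}_m(\phi):=L_m\big(N(\phi)+l_m\big),
$$
where $L_m$ is the bounded linear operator of Lemma \ref{C2}; this is well defined since $N(\phi)+l_m\in L^{\infty}(\R^N)$ (finiteness of the $\|\cdot\|_{\ast\ast}$-norm forces membership in $L^{\infty}$) and $L_m$ has range in $H_s$. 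I would look for a fixed point of $\mathcal{A}_m$ in the ball
$$
\mathcal{E}:=\{\phi\in H_s:\ \|\phi\|_{\ast}\leq M(\tfrac{1}{\beta})^{2+\varepsilon}\},
$$
where $\varepsilon>0$ is the constant appearing in Lemma \ref{C5} and $M>0$ is a large constant to be fixed.

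First I would verify the self-mapping property. By the first estimate in \eqref{c13}, together with Lemma \ref{C4} and Lemma \ref{C5},
$$
\|\mathcal{A}_m(\phi)\|_{\ast}\leq C\big(\|N(\phi)\|_{\ast\ast}+\|l_m\|_{\ast\ast}\big)\leq C\|\phi\|_{\ast}^{2}+C(\tfrac{1}{\beta})^{2+\varepsilon}\leq CM^{2}(\tfrac{1}{\beta})^{2(2+\varepsilon)}+C(\tfrac{1}{\beta})^{2+\varepsilon}
$$
for every $\phi\in\mathcal{E}$. Fixing $M\geq 2C$ and then $m_0$ so large that $CM^{2}(\tfrac{1}{\beta})^{2+\varepsilon}\leq\tfrac{M}{2}$ for all $\beta\in[L_0m^{\frac{N-4}{N-8}},L_1m^{\frac{N-4}{N-8}}]$ with $m\geq m_0$ yields $\|\mathcal{A}_m(\phi)\|_{\ast}\leq M(\tfrac{1}{\beta})^{2+\varepsilon}$, i.e.\ $\mathcal{A}_m(\mathcal{E})\subset\mathcal{E}$. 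For the contraction estimate I would establish the Lipschitz refinement of Lemma \ref{C4},
$$
\|N(\phi_1)-N(\phi_2)\|_{\ast\ast}\leq C\big(\|\phi_1\|_{\ast}+\|\phi_2\|_{\ast}\big)\|\phi_1-\phi_2\|_{\ast},
$$
obtained by rerunning on the difference $N(\phi_1)-N(\phi_2)$ the same pointwise Riesz-convolution and bubble-interaction estimates used to prove \eqref{c17}; all exponents $p-1$, $p-2$, $2p-2$ that occur are nonnegative because $p\geq 2$, and the small prefactors $\|\phi_i\|_{\ast}$ are absorbed. Then
$$
\|\mathcal{A}_m(\phi_1)-\mathcal{A}_m(\phi_2)\|_{\ast}\leq C\big(\|\phi_1\|_{\ast}+\|\phi_2\|_{\ast}\big)\|\phi_1-\phi_2\|_{\ast}\leq 2CM(\tfrac{1}{\beta})^{2+\varepsilon}\|\phi_1-\phi_2\|_{\ast}\leq\tfrac12\|\phi_1-\phi_2\|_{\ast}
$$
for $m\geq m_0$, after enlarging $m_0$ if necessary.

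The Banach fixed point theorem then produces a unique $\phi=\phi_{\overline{r},\overline{x}'',\beta}\in\mathcal{E}\subset H_s$ solving \eqref{c14}, with $\|\phi\|_{\ast}\leq M(\tfrac{1}{\beta})^{2+\varepsilon}=C(\tfrac{1}{\beta})^{2+\varepsilon}$, which is the first bound in \eqref{c15}. The bound on the Lagrange multipliers follows directly from the second estimate in \eqref{c13} applied to $h=N(\phi)+l_m$:
$$
|c_l|\leq\frac{C}{\beta^{n_l}}\big(\|N(\phi)\|_{\ast\ast}+\|l_m\|_{\ast\ast}\big)\leq\frac{C}{\beta^{n_l}}(\tfrac{1}{\beta})^{2+\varepsilon}=C(\tfrac{1}{\beta})^{2+n_l+\varepsilon},
$$
which gives the second bound in \eqref{c15}. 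The main obstacle is not really contained in this lemma: the genuinely hard step, namely invertibility of the linearized operator with norm independent of $m$ (Lemma \ref{C2}, resting on the nondegeneracy Theorem \ref{Non} through the blow-up argument of Lemma \ref{C1}), is already in place. Within the present proof the only nontrivial point is the Lipschitz estimate for $N(\phi)$, which requires the same delicate bookkeeping of the $m$ interacting bubbles (via the sets $\Omega_j$ and the elementary convolution/summation lemmas) as in the proof of Lemma \ref{C4}, carried out uniformly in $m$.
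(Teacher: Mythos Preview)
Your proposal is correct and follows essentially the same route as the paper: rewrite \eqref{c14} as the fixed point equation $\phi=L_m(N(\phi)+l_m)$, use Lemmas \ref{C4} and \ref{C5} for self-mapping, a Lipschitz version of Lemma \ref{C4} for contraction, and then read off the $c_l$ bound from \eqref{c13}. The only cosmetic differences are that the paper works in the slightly larger ball $\{\|\phi\|_\ast\le\beta^{-2}\}$ (with the orthogonality constraints written into the set) and upgrades to $\|\phi\|_\ast\le C\beta^{-(2+\varepsilon)}$ a posteriori, whereas you build the sharper radius directly into $\mathcal{E}$; both are fine.
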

	\begin{proof}
		Recall that $\beta\in [L_0m^{\frac{N-4}{N-8}},L_1m^{\frac{N-4}{N-8}}]$. We set
		$$\aligned
		\mathcal{N}=&\Big\{w:w\in C(\mathbb{R}^N)\cap H_{s},\|w\|_{\ast}\leq\frac{1}{\beta^2},\\
		&\sum_{j=1}^{m}\int_{\mathbb{R}^{N}}\Big[p\Big(|x|^{-\alpha}\ast |Z_{z_j,\beta}^{p-1}Z_{j,l}|\Big)Z_{z_j,\beta}^{p-1}w+(p-1)\Big(|x|^{-\alpha}\ast |Z_{z_j,\beta}|^{p}\Big)Z_{z_j,\beta}^{p-2}Z_{j,l}w \Big] dx=0.\Big\},
		\endaligned$$
		where $l=1,2,\cdots, N$.
		Then by Lemma \ref{C2}, problem \eqref{c16} is equivalent to
		\begin{equation}\label{c19}
			\phi=\mathcal{T}(\phi)=:L_{m}(N(\phi))+L_{m}(l_{m}),
		\end{equation}
		where $L_m$ is defined in Lemma \ref{C2}. We will prove that $\mathcal{T}$ is a contraction map from $\mathcal{N}$ to $\mathcal{N}$.
		
		Firstly, for any $\phi \in\mathcal{N}$,
		$$
		\|\mathcal{T}(\phi)\|_{\ast}\leq C(\|N(\phi)\|_{\ast\ast}+\|l_{m}\|_{\ast\ast})\leq C(\|\phi\|_{\ast}^{2}+(\frac{1}{\beta})^{2+\varepsilon})\leq
		\frac{1}{\beta^2}.
		$$
		Hence, $\mathcal{T}$ maps $\mathcal{N}$ to $\mathcal{N}$.
		
		And for any $\phi_1,\phi_2 \in \mathcal{N}$,
		$$
		\|\mathcal{T}(\phi_{1})-\mathcal{T}(\phi_{2})\|_{\ast}
		=\|L_{m}(N(\phi_{1}))-L_{m}(N(\phi_{2}))\|_{\ast}\leq C\|N(\phi_{1})-N(\phi_{2})\|_{\ast\ast}.
		$$
		It is easy to check that
		$$\aligned
		\left|N(\phi_{1})-N(\phi_{2})\right|
		\leq& N'(\phi_1+\theta(\phi_2-\phi_1))|\phi_2-\phi_1|\\
		\leq& C(G(\phi_1)+G(\phi_2))\left|\phi_2-\phi_1\right|.
		\endaligned$$
		where
		$$\aligned
		G(\phi)=&\Big(|x|^{-\alpha}\ast Z_{\overline{r},\overline{x}'',\beta}^{p-1}\Big)Z_{\overline{r},\overline{x}'',\beta}^{p-2}\phi+\Big(|x|^{-\alpha}\ast (Z_{\overline{r},\overline{x}'',\beta}^{p-1}\phi)\Big)Z_{\overline{r},\overline{x}'',\beta}^{p-2}
		\\
		&+\Big(|x|^{-\alpha}\ast (Z_{\overline{r},\overline{x}'',\beta}^{p-2}\phi)\Big)(Z_{\overline{r},\overline{x}'',\beta}^{p-1}+Z_{\overline{r},\overline{x}'',\beta}^{p-2}\phi)+\Big(|x|^{-\alpha}\ast (Z_{\overline{r},\overline{x}'',\beta}^{p-2}\phi^{2})\Big)Z_{\overline{r},\overline{x}'',\beta}^{p-2}\\
		&+\Big(|x|^{-\alpha}\ast \phi^{p-1}\Big)\phi^{p-1}+\Big(|x|^{-\alpha}\ast \phi^{p}\Big)\phi^{p-2}.
		\endaligned$$
		According to the proof of Lemma \ref{C4}, we have
		$$
		\|\mathcal{T}(\phi_{1})-\mathcal{T}(\phi_{2})\|_{\ast}\leq C\|N(\phi_{1})-N(\phi_{2})\|_{\ast\ast}\leq C(\|\phi_1\|_{\ast}+\|\phi_2\|_{\ast})\|\phi_2-\phi_1\|_{\ast}\leq \frac{1}{2}\|\phi_{1}-\phi_{2}\|_{\ast},
		$$
		which means that $\mathcal{T}$ is a contraction map. Thus by contraction mapping theorem, there exists a unique $\phi\in\mathcal{N}$ such that \eqref{c19} holds. Moreover, by Lemmas \ref{C2}, \ref{C4} and \ref{C5}, we obtain
		$$
		\|\phi\|_{\ast}\leq C(\frac{1}{\beta})^{2+\varepsilon}
		$$
		and the estimate of $c_l$ from \eqref{c13}.
	\end{proof}

	%
	
	After the above finite reduction arguments, now we will look for suitable $(\overline{r},\overline{x}'',\beta)$ such that the function $Z_{\overline{r},\overline{x}'',\beta}+
	\phi_{\overline{r},\overline{x}'',\beta}$ is a solution of \eqref{CFL}. For this purpose, we establish the new local Poho\v{z}aev identities to determine the location of the bubbles.
	
	\begin{lem}\label{D1}
		Suppose that $(\overline{r},\overline{x}'',\beta)$ satisfies
		\begin{equation}\label{d1}
			\int_{D_{\rho}}\Big(\Delta^2 u_{m}+V(r,x'')u_{m}
			-\Big(|x|^{-\alpha}\ast (|u_{m}|^{p})\Big)u_{m}^{p-1}\Big)\langle x,\nabla u_{m}\rangle dx=0,
		\end{equation}
		\begin{equation}\label{d2}
			\int_{D_{\rho}}\Big(\Delta^2 u_{m}+V(r,x'')u_{m}
			-\Big(|x|^{-\alpha}\ast (|u_{m}|^{p})\Big)u_{m}^{p-1}\Big)\frac{\partial u_{m}}{\partial x_{i}} dx=0, i=3,\cdots,N,
		\end{equation}
		and
		\begin{equation}\label{d3}
			\int_{\mathbb{R}^{N}}(\Delta^2u_{m}+V(r,x'')u_{m}
			-\Big(|x|^{-\alpha}\ast (|u_{m}|^{p})\Big)u_{m}^{p-1})\frac{\partial Z_{\overline{r},\overline{x}'',\beta}}{\partial \beta} dx=0,
		\end{equation}
		where $u_{m}=Z_{\overline{r},\overline{x}'',\beta}+
		\phi_{\overline{r},\overline{x}'',\beta}$ and $D_{\rho}=\{(r, x''):|(r, x'')-(r_0, x_0'' )|\leq\rho\}$ with $\rho\in(2\delta, 5\delta)$. Then $c_{i}=0, i=1,\cdot\cdot\cdot,N$.
	\end{lem}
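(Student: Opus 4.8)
\emph{Proof strategy.} The plan is to substitute the equation \eqref{c14} satisfied by $u_m=Z_{\overline{r},\overline{x}'',\beta}+\phi$ (writing $\phi=\phi_{\overline{r},\overline{x}'',\beta}$) into the identities \eqref{d1}, \eqref{d2} and \eqref{d3}, which turns them into a homogeneous linear system for $(c_1,\dots,c_N)$; it then suffices to prove that the $N\times N$ coefficient matrix is invertible for $m$ large. Setting
$$
R_l:=\sum_{j=1}^m\Big[p\big(|x|^{-\alpha}\ast(Z_{z_j,\beta}^{p-1}Z_{j,l})\big)Z_{z_j,\beta}^{p-1}+(p-1)\big(|x|^{-\alpha}\ast|Z_{z_j,\beta}|^{p}\big)Z_{z_j,\beta}^{p-2}Z_{j,l}\Big]=:\sum_{j=1}^m R_l^{(j)},
$$
equation \eqref{c14} reads $\Delta^2u_m+V(r,x'')u_m-(|x|^{-\alpha}\ast|u_m|^{p})u_m^{p-1}=\sum_{l=1}^N c_l R_l$, and inserting this into \eqref{d1}, into \eqref{d2} for each $i=3,\dots,N$, and into \eqref{d3} produces exactly $N$ equations $\sum_{l=1}^N c_l\,a_{tl}=0$, the test functions being $\langle x,\nabla u_m\rangle$ on $D_\rho$, $\partial_{x_i}u_m$ on $D_\rho$, and $\partial_\beta Z_{\overline{r},\overline{x}'',\beta}$ on $\R^N$.

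The first step I would carry out is to expand these test functions along the approximate kernel $\{Z_{j,l}\}$. Using $Z_{z_j,\beta}=\xi W_{z_j,\beta}$, the identities $\partial_{(z_j)_i}W_{z_j,\beta}=-\partial_{x_i}W_{z_j,\beta}$ and $\langle x-z_j,\nabla_x W_{z_j,\beta}\rangle=\beta\,\partial_\beta W_{z_j,\beta}-\tfrac{N-4}{2}W_{z_j,\beta}$, the relation $\langle z_j,\nabla_x W_{z_j,\beta}\rangle=-\overline{r}\,\partial_{\overline{r}}W_{z_j,\beta}-\sum_{k=3}^N\overline{x}_k''\,\partial_{\overline{x}_k''}W_{z_j,\beta}$ (from the form of $z_j$), and the fact that $\xi$ does not depend on the parameters, I get
$$
\partial_\beta Z_{\overline{r},\overline{x}'',\beta}=\sum_{j=1}^m Z_{j,1},\qquad
\partial_{x_i}u_m=-\sum_{j=1}^m Z_{j,i}+E_i+\partial_{x_i}\phi\quad(i\ge3),
$$
$$
\langle x,\nabla u_m\rangle=\sum_{j=1}^m\Big(\beta Z_{j,1}-\tfrac{N-4}{2}\,\xi W_{z_j,\beta}-\overline{r}\,Z_{j,2}-\sum_{k=3}^N\overline{x}_k''\,Z_{j,k}\Big)+E_0+\langle x,\nabla\phi\rangle,
$$
where $E_0,E_i$ collect the terms carrying derivatives of $\xi$; since $\operatorname{supp}\nabla\xi$ lies at distance $\gtrsim\delta$ from every $z_j$ while $R_l$ is concentrated near the $z_j$, the pairings $\langle R_l,E_0\rangle$, $\langle R_l,E_i\rangle$, as well as the errors from replacing $\int_{D_\rho}$ by $\int_{\R^N}$, will be negligible against the entries below.

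Next I would estimate the relevant pairings. By \eqref{c8} and \eqref{c81} together with the rotational ($\mathbb Z_m$) symmetry of $H_s$, one has $\langle R_l,\sum_j Z_{j,t}\rangle=m(\Lambda_t+o(1))\delta_{tl}\beta^{n_l+n_t}$ with $\Lambda_t>0$, where $n_1=-1$ and $n_l=1$ for $l\ge2$. The term $\langle R_l,\xi W_{z_j,\beta}\rangle$, which only appears in the expansion of $\langle x,\nabla u_m\rangle$, should contribute nothing at leading order: modulo the negligible part where $\xi\ne1$, $R_l^{(j)}$ equals $L_{z_j,\beta}[\partial_{q_l}W_{z_j,\beta}]$, where $q_l$ is the parameter attached to $Z_{j,l}$ and $L_{z_j,\beta}$ is the operator on the right of \eqref{linear} with $W$ replaced by $W_{z_j,\beta}$; this operator is self-adjoint in $L^2$ and satisfies $L_{z_j,\beta}[W_{z_j,\beta}]=(2p-1)\Delta^2W_{z_j,\beta}$, whence
$$
\langle R_l^{(j)},W_{z_j,\beta}\rangle=(2p-1)\langle\partial_{q_l}W_{z_j,\beta},\Delta^2W_{z_j,\beta}\rangle=\frac{2p-1}{2}\,\partial_{q_l}\!\int_{\R^N}|\Delta W_{z_j,\beta}|^2\,dx=0
$$
by the translation and scale invariance of the Dirichlet norm of $W$. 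Finally I would treat $\langle R_l,\langle x,\nabla\phi\rangle\rangle$ and $\langle R_l,\partial_{x_i}\phi\rangle$ by moving the derivative onto $R_l$ via integration by parts (the boundary integrals over $\partial D_\rho$ being negligible) and then using $\|\phi\|_{\ast}\le C\beta^{-2-\varepsilon}$ from Lemma \ref{C3} together with the pointwise decay of $R_l$ and $\nabla R_l$; these will be of order $O(\beta^{n_l-1-\varepsilon})$, hence strictly smaller than the diagonal entries $m\beta^{2n_t}$ since $m\sim\beta^{(N-8)/(N-4)}$.

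Assembling these expansions, rescaling $\widetilde c_l=c_l\beta^{n_l}$, and dividing each equation by the size of its leading entry, the system will take the form $(P+o(1))\widetilde c=0$ with $P$ invertible: \eqref{d3} pairs to leading order with $c_1$ (the dilation multiplier), \eqref{d2} for a given index $i\in\{3,\dots,N\}$ pairs with $c_i$ (the $\overline{x}''$-translation multipliers), and \eqref{d1} pairs with $c_2$ (the $\overline{r}$-multiplier), its leading coefficient $-\overline{r}\,\Lambda_2$ being nonzero precisely because $\overline{r}\to r_0>0$ under assumption \textbf{(V)}; the $\beta Z_{j,1}$ and $\tfrac{N-4}{2}\xi W_{z_j,\beta}$ pieces of \eqref{d1} become subordinate after this rescaling. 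It will follow that there is $m_0>0$ such that for every $m\ge m_0$ the only solution is $\widetilde c=0$, i.e. $c_i=0$ for $i=1,\dots,N$. The part I expect to be the main obstacle is the expansion of $\langle x,\nabla u_m\rangle$, which mixes every kernel direction at once and generates the spurious term $\xi W_{z_j,\beta}$: showing that this term has no leading-order effect is exactly where the self-adjointness of the linearized operator and the scale invariance of $\|W\|_{\mathcal D^{2,2}(\R^N)}$ are needed, while transferring the derivatives off $\phi$ --- where only a bound on $\|\phi\|_{\ast}$, not on $\nabla\phi$, is available --- is a secondary technical difficulty.
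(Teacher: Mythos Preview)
Your proposal is correct and takes essentially the same approach as the paper: substitute \eqref{c14} into \eqref{d1}--\eqref{d3}, estimate the pairings of $R_l$ with the test functions via \eqref{c8}--\eqref{c81}, transfer derivatives off $\phi$ by integration by parts, and conclude that the resulting homogeneous linear system has only the trivial solution. The paper organizes the elimination sequentially (first $c_i=o(\beta^{-2})c_1$ for $i\ge2$ from \eqref{d1}--\eqref{d2} after splitting $\langle x,\nabla Z\rangle=\langle x',\nabla_{x'}Z\rangle+\langle x'',\nabla_{x''}Z\rangle$, then $c_1=0$ from \eqref{d3}) rather than as invertibility of a single matrix $P+o(1)$, and computes $\langle R_2,\langle x',\nabla_{x'}Z\rangle\rangle$ directly rather than isolating and killing the term $\xi W_{z_j,\beta}$ via self-adjointness and scale invariance, but these are equivalent reformulations.
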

	\begin{proof}
		Since $Z_{\overline{r},\overline{x}'',\beta}=0$ in $\mathbb{R}^{N}\backslash D_{\rho}$, we see that if \eqref{d1}-\eqref{d3} hold, then
		\begin{equation}\label{d4}
			\sum_{l=1}^{N}c_{l}\sum_{j=1}^{m}\int_{\mathbb{R}^{N}}\Big[(p-1)\Big(|x|^{-\alpha}\ast |Z_{z_j,\beta}|^{p}\Big)Z_{z_j,\beta}^{p-2}Z_{j,l}+p\Big(|x|^{-\alpha}\ast (Z_{z_j,\beta}^{p-1}
			Z_{j,l})\Big)Z_{z_j,\beta}^{p-1}\Big]vdx=0,
		\end{equation}
		for $v=\langle x,\nabla u_{m}\rangle$, $\frac{\partial u_{m}}{\partial x_{i}}, i=3,\cdot\cdot\cdot,N$ and $\frac{\partial Z_{\overline{r},\overline{x}'',\beta}}{\partial \beta}$.
		
		Direct calculations show that
		$$\aligned
		&\left|\int_{\mathbb{R}^{N}}\Big(|x|^{-\alpha}\ast |Z_{z_j,\beta}|^{p}\Big)Z_{z_j,\beta}^{p-2}Z_{j,i}
		\frac{\partial Z_{z_l,\beta}}{\partial x_{i}}dx\right|
		\leq C\beta^{2}\int_{\mathbb{R}^{N}}\frac{(x_{i}-\beta\overline{x}_{i})^{2}}{(1+|x-\beta z_{j} |^{2})^{\frac{N+6}{2}}}
		\frac{1}{(1+|x-\beta z_{l} |^{2})^{\frac{N-2}{2}}}dx,
		\endaligned$$
		where $ i=3,\cdot\cdot\cdot,N$ and $(\overline{x}_{3}, \overline{x}_{4}, \cdot\cdot\cdot,\overline{x}_{N})=\overline{x}$.
		If $l=j$, we have
		$$
		\left|\int_{\mathbb{R}^{N}}\Big(|x|^{-\alpha}\ast |Z_{z_j,\beta}|^{p}\Big)Z_{z_j,\beta}^{p-2}Z_{j,i}
		\frac{\partial Z_{z_l,\beta}}{\partial x_{i}}dx\right|=O (\beta^{2}).
		$$
		If $l\neq j$, similar to the arguments in Lemma \ref{C5}, we can prove:
		$$
		\left|\int_{\mathbb{R}^{N}}\Big(|x|^{-\alpha}\ast |Z_{z_j,\beta}|^{p}\Big)Z_{z_j,\beta}^{p-2}Z_{j,i}
		\left(\sum_{l=1,\neq j}^{m}\frac{\partial Z_{z_l,\beta}}{\partial x_{i}}\right)dx\right|=O (\beta^{2-\varepsilon}),
		$$
		for some $\varepsilon> 0$. Then, we can get
		\begin{equation}\label{d6}
			\sum_{j=1}^{m}\int_{\mathbb{R}^{N}}\Big[(p-1)\Big(|x|^{-\alpha}\ast |Z_{z_j,\beta}|^{p}\Big)Z_{z_j,\beta}^{p-2}Z_{j,i}
			+p\Big(|x|^{-\alpha}\ast (Z_{z_j,\beta}^{p-1}
			Z_{j,i})\Big)Z_{z_j,\beta}^{p-1}\Big]\frac{\partial Z_{\overline{r},\overline{x}'',\beta}}{\partial x_{i}}dx=m\beta^{2}(a_{1}+o(1)), 
		\end{equation}
		where $i=3,\cdot\cdot\cdot, N$, for some constants $a_{1}\neq0$. Similarly, we also have
		\begin{equation}\label{d5}
			\sum_{j=1}^{m}\int_{\mathbb{R}^{N}}\Big[(p-1)\Big(|x|^{-\alpha}\ast |Z_{z_j,\beta}|^{p}\Big)Z_{z_j,\beta}^{p-2}Z_{j,2}
			+p\Big(|x|^{-\alpha}\ast (Z_{z_j,\beta}^{p-1}
			Z_{j,2})\Big)Z_{z_j,\beta}^{p-1}\Big]\langle x',\nabla_{x'} Z_{\overline{r},\overline{x}'',\beta}\rangle dx=m\beta^{2}(a_{2}+o(1)),
		\end{equation}
		and
		\begin{equation}\label{d7}
			\sum_{j=1}^{m}\int_{\mathbb{R}^{N}}\Big[(p-1)\Big(|x|^{-\alpha}\ast |Z_{z_j,\beta}|^{p}\Big)Z_{z_j,\beta}^{p-2}Z_{j,1}
			+p\Big(|x|^{-\alpha}\ast (Z_{z_j,\beta}^{p-1}
			Z_{j,1})\Big)Z_{z_j,\beta}^{p-1}\Big]\frac{\partial Z_{\overline{r},\overline{y}'',\beta}}{\partial \beta}dx=\frac{m}{\beta^{2}}(a_{3}+o(1)),
		\end{equation}
		for some constants $a_{2}\neq0$ and $a_3>0$.
		
		It is easy to check that
		$$\aligned
		&\left|\int_{\mathbb{R}^{N}}\Big(|x|^{-\alpha}\ast |Z_{z_j,\beta}|^{p}\Big)Z_{z_j,\beta}^{p-2}Z_{j,1}\frac{\partial \phi_{\overline{r},\overline{x}'',\beta}}{\partial x_{i}}dx\right|\\
		=&\int_{\mathbb{R}^N}\int_{\mathbb{R}^N}
		\frac{|Z_{z_j,\beta}(y)|^{p}
			\Big(\frac{\partial Z_{z_j,\beta}^{p-2}(x)}{\partial x_{i}}Z_{j,1}(x)+ Z_{z_j,\beta}^{p-2}(x)\frac{\partial Z_{j,1}}{\partial x_{i}}(x)\Big)\phi_{\overline{r},\overline{x}'',\beta}(x)}{|x-y|^{\alpha}}dxdy\\
		&+\int_{\mathbb{R}^N}\int_{\mathbb{R}^N}
		\frac{|Z_{z_j,\beta}(y)|^{p}
			(x_{i}-y_{i}) Z_{z_j,\beta}^{p-2}(x)Z_{j,1}(x)\phi_{\overline{r},\overline{x}'',\beta}(x)}{|x-y|^{\alpha+2}}dxdy,
		\endaligned$$
		where $j=1,2,\cdot\cdot\cdot,m$ and $i=3,\cdot\cdot\cdot,N$. Integrating by parts, by \eqref{c15}, we can obtain
				$$\aligned
		&\left|\int_{\mathbb{R}^N}\int_{\mathbb{R}^N}
		\frac{|Z_{z_j,\beta}(y)|^{p}
			\frac{\partial Z_{z_j,\beta}^{p-2}(x)}{\partial x_{i}}Z_{j,1}(x)\phi_{\overline{r},\overline{x}'',\beta}(x)}{|x-y|^{\alpha}}dxdy\right|\\
		\leq &
		C\|\phi\|_{\ast}\int_{\mathbb{R}^N}\frac{\beta^{\frac{\alpha}{2}}}{(1+\beta^{2}|x-z_{j} |^{2})^{\frac{\alpha}{2}}}\frac{\beta^{\frac{8-\alpha}{2}}\beta^2|x-z_{j}|}{(1+\beta^{2}|x-z_{j} |^{2})^{\frac{8-\alpha}{2}+1}}\frac{\beta^{\frac{N-6}{2}}}{(1+\beta^{2}|x-z_{j} |^{2})^{\frac{N-4}{2}}}\sum_{k=1}^{m} \frac{\beta^{\frac{N-4}{2}}}{(1+\beta|x-z_{k}|)^{\frac{N-4}{2}+\tau}}dx\\
		\leq & 
		C\|\phi\|_{\ast}\int_{\mathbb{R}^N}\frac{|x-\beta z_{j} |}{(1+|x-\beta z_{j} |^{2})^{\frac{N}{2}+3}}\sum_{k=1}^{m}\frac{1}{(1+|x-\beta z_{k} |)^{\frac{N-4}{2}+\tau}}dx\\
		\leq &
		C\|\phi\|_{\ast}\sum_{j=1}^{m}\int_{\mathbb{R}^N}\frac{1}{(1+|x-\beta z_{j} |)^{\frac{3N}{2}+3}}dx\\
		=&
		O(\frac{1}{\beta^{\varepsilon}}).
		\endaligned$$
		Similarly, we have
		$$\aligned
		&\left|\int_{\mathbb{R}^N}\int_{\mathbb{R}^N}
		\frac{|Z_{z_j,\beta}(y)|^{p}
		 Z_{z_j,\beta}^{p-2}(x)\frac{\partial Z_{j,1}}{\partial x_{i}}(x)\phi_{\overline{r},\overline{x}'',\beta}(x)}{|x-y|^{\alpha}}dxdy\right|
		=O(\frac{1}{\beta^{\varepsilon}}),
		\endaligned$$
		and
		$$
		\int_{\mathbb{R}^N}\int_{\mathbb{R}^N}
		\frac{|Z_{z_j,\beta}(y)|^{p}
			(x_{i}-y_{i})Z_{z_j,\beta}^{p-2}(x)Z_{j,1}(x)\phi_{\overline{r},\overline{x}'',\beta}(x)}{|x-y|^{\alpha+2}}dxdy=O(\frac{1}{\beta^{\varepsilon}}).
		$$
		So we can conclude that 
		$$
		\int_{\mathbb{R}^{N}}\Big(|x|^{-\alpha}\ast |Z_{z_j,\beta}|^{p}\Big)Z_{z_j,\beta}^{p-2}Z_{j,1}\frac{\partial \phi_{\overline{r},\overline{x}'',\beta}}{\partial x_{i}}dx=O(\frac{1}{\beta^{\varepsilon}}).
		$$
		Similarly, we can also get
		$$
		\int_{\mathbb{R}^{N}}\Big(|x|^{-\alpha}\ast (Z_{z_j,\beta}^{p-1}
		Z_{j,1})\Big)Z_{z_j,\beta}^{p-1}\frac{\partial \phi_{\overline{r},\overline{x}'',\beta}}{\partial x_{i}}dx=O(\frac{1}{\beta^{\varepsilon}}).
		$$
		Therefore
		$$
		c_{1}\sum_{j=1}^{m}\int_{\mathbb{R}^{N}}\Big[(p-1)\Big(|x|^{-\alpha}\ast |Z_{z_j,\beta}|^{p}\Big)Z_{z_j,\beta}^{p-2}Z_{j,1}+p\Big(|x|^{-\alpha}\ast (Z_{z_j,\beta}^{p-1}
		Z_{j,1})\Big)Z_{z_j,\beta}^{p-1}\Big]\frac{\partial \phi_{\overline{r},\overline{x}'',\beta}}{\partial x_{i}}dx=o(m|c_{1}|).
		$$
		Using the same arguments as above, we can get
		$$
		\sum_{l=2}^{N}c_{l}\sum_{j=1}^{m}\int_{\mathbb{R}^{N}}\Big[(p-1)\Big(|x|^{-\alpha}\ast |Z_{z_j,\beta}|^{p}\Big)Z_{z_j,\beta}^{p-2}Z_{j,l}+p\Big(|x|^{-\alpha}\ast (Z_{z_j,\beta}^{p-1}
		Z_{j,l})\Big)Z_{z_j,\beta}^{p-1}\Big]\frac{\partial \phi_{\overline{r},\overline{x}'',\beta}}{\partial x_{i}}dx=o(m\beta^{2})\sum_{l=2}^{N}|c_{l}|.
		$$
		So we prove that
		$$\aligned
		\sum_{l=1}^{N}c_{l}\sum_{j=1}^{m}\int_{\mathbb{R}^{N}}\Big[(p-1)\Big(|x|^{-\alpha}\ast |Z_{z_j,\beta}|^{p}\Big)Z_{z_j,\beta}^{p-2}Z_{j,l}+p&\Big(|x|^{-\alpha}\ast (Z_{z_j,\beta}^{p-1}
		Z_{j,l})\Big)Z_{z_j,\beta}^{p-1}\Big]\frac{\partial \phi_{\overline{r},\overline{x}'',\beta}}{\partial x_{i}}dx\\
		&=o(m\beta^{2})\sum_{l=2}^{N}|c_{l}|+o(m|c_{1}|),
		\endaligned$$
		where $i=3,\cdot\cdot\cdot,N$. Analogously, we can also obtain
		$$\aligned
		\sum_{l=1}^{N}c_{l}\sum_{j=1}^{m}\int_{\mathbb{R}^{N}}\Big[(p-1)\Big(|x|^{-\alpha}\ast |Z_{z_j,\beta}|^{p}\Big)Z_{z_j,\beta}^{p-2}Z_{j,l}+p&\Big(|x|^{-\alpha}\ast (Z_{z_j,\beta}^{p-1}
		Z_{j,l})\Big)Z_{z_j,\beta}^{p-1}\Big]\langle x,\nabla \phi_{\overline{r},\overline{x}'',\beta}\rangle dx\\
		&=o(m\beta^{2})\sum_{l=2}^{N}|c_{l}|+o(m|c_{1}|).
		\endaligned$$
		These two equalities together with \eqref{d4} can lead to 
		\begin{equation}\label{d8}
			\aligned
			\sum_{l=1}^{N}c_{l}\sum_{j=1}^{m}\int_{\mathbb{R}^{N}}\Big[(p-1)\Big(|x|^{-\alpha}\ast |Z_{z_j,\beta}|^{p}\Big)_{z_j,\beta}^{p-2}Z_{j,l}+p&\Big(|x|^{-\alpha}\ast (Z_{z_j,\beta}^{p-1}
			Z_{j,l})\Big)Z_{z_j,\beta}^{p-1}\Big]vdx\\
			&=o(m\beta^{2})\sum_{l=2}^{N}|c_{l}|+o(m|c_{1}|),
			\endaligned
		\end{equation}
		for $v=\langle x,\nabla Z_{\overline{r},\overline{x}'',\beta}\rangle$, $\frac{\partial Z_{\overline{r},\overline{x}'',\beta}}{\partial x_{i}}$, $ i=3,\cdot\cdot\cdot,N$.
		
		From
		$$
		\langle x,\nabla Z_{\overline{r},\overline{x}'',\beta}\rangle=\langle x',\nabla_{x'} Z_{\overline{r},\overline{x}'',\beta}\rangle+\langle x'',\nabla_{x''} Z_{\overline{r},\overline{x}'',\beta}\rangle,
		$$
		we find
		\begin{equation}\label{d9}
			\aligned
			&\sum_{l=1}^{N}c_{l}\sum_{j=1}^{m}\int_{\mathbb{R}^{N}}\Big[(p-1)\Big(|x|^{-\alpha}\ast |Z_{z_j,\beta}|^{p}\Big)Z_{z_j,\beta}^{p-2}Z_{j,l} +p\Big(|x|^{-\alpha}\ast (Z_{z_j,\beta}^{p-1}
			Z_{j,l})\Big)Z_{z_j,\beta}^{p-1}\Big]\langle x,\nabla Z_{\overline{r},\overline{x}'',\beta}\rangle dx\\
			&\hspace{2mm}=c_{2}\sum_{j=1}^{m}\int_{\mathbb{R}^{N}}\Big[(p-1)\Big(|x|^{-\alpha}\ast |Z_{z_j,\beta}|^{p}\Big)Z_{z_j,\beta}^{p-2}Z_{j,2}+p\Big(|x|^{-\alpha}\ast (Z_{z_j,\beta}^{p-1}
			Z_{j,2})\Big)Z_{z_j,\beta}^{p-1}\Big]\langle x',\nabla_{x'} Z_{\overline{r},\overline{x}'',\beta}\rangle dx\\
			&\hspace{4mm}+o(m\beta^{2})\sum_{l=3}^{N}|c_{l}|+o(m|c_{1}|),
			\endaligned
		\end{equation}
		and
		\begin{equation}\label{d10}
			\aligned
			&\sum_{l=1}^{N}c_{l}\sum_{j=1}^{m}\int_{\mathbb{R}^{N}}\Big[(p-1)\Big(|x|^{-\alpha}\ast |Z_{z_j,\beta}|^{p}\Big)Z_{z_j,\beta}^{p-2}Z_{j,l} +p\Big(|x|^{-\alpha}\ast (Z_{z_j,\beta}^{p-1}
			Z_{j,l})\Big)Z_{z_j,\beta}^{p-1}\Big]\frac{\partial Z_{\overline{r},\overline{x}'',\beta}}{\partial x_{i}}dx\\
			&\hspace{2mm}=c_{i}\sum_{j=1}^{m}\int_{\mathbb{R}^{N}}\Big[(p-1)\Big(|x|^{-\alpha}\ast |Z_{z_j,\beta}|^{p}\Big)Z_{z_j,\beta}^{p-2}Z_{j,i} +p\Big(|x|^{-\alpha}\ast (Z_{z_j,\beta}^{p-1}
			Z_{j,i})\Big)Z_{z_j,\beta}^{p-1}\Big]\frac{\partial Z_{\overline{r},\overline{x}'',\beta}}{\partial x_{i}}dx\\
			&\hspace{4mm}+o(m\beta^{2})\sum_{l\neq1,i}^{N}|c_{l}|+o(m|c_{1}|), \ i=3,\cdot\cdot\cdot, N.
			\endaligned
		\end{equation}
		
		Combining \eqref{d6},\eqref{d5} and \eqref{d8}-\eqref{d10}, we have
		\begin{equation}\label{ci}
			c_{i}=o(\frac{1}{\beta^{2}})c_{1}, \ i=2,\cdot\cdot\cdot, N.
		\end{equation}
		
		Together with \eqref{d7}, we obtain
		$$\aligned
		0=&\sum_{l=1}^{N}c_{l}\sum_{j=1}^{m}\int_{\mathbb{R}^{N}}\Big[(p-1)\Big(|x|^{-\alpha}\ast |Z_{z_j,\beta}|^{p}\Big)Z_{z_j,\beta}^{p-2}Z_{j,l}+p\Big(|x|^{-\alpha}\ast (Z_{z_j,\beta}^{p-1}
		Z_{j,l})\Big)Z_{z_j,\beta}^{p-1}\Big]\frac{\partial Z_{\overline{r},\overline{x}'',\beta}}{\partial \beta} dx\\
		=&c_{1}\sum_{j=1}^{m}\int_{\mathbb{R}^{N}}\Big[(p-1)\Big(|x|^{-\alpha}\ast |Z_{z_j,\beta}|^{p}\Big)Z_{z_j,\beta}^{p-2}Z_{j,1}+p\Big(|x|^{-\alpha}\ast (Z_{z_j,\beta}^{p-1}
		Z_{j,1})\Big)Z_{z_j,\beta}^{p-1}\Big]\frac{\partial Z_{\overline{r},\overline{x}'',\beta}}{\partial \beta} dx+o(\frac{m}{\beta^{2}})c_{1}\\
		=&\frac{m}{\beta^{2}}(a_{3}+o(1))c_{1}+o\big(\frac{m}{\beta^{2}}\big)c_{1}.
		\endaligned$$
		So $c_1=0$.
	\end{proof}

	\begin{lem}\label{D3}
		We have
		\begin{equation}\label{main1}
			\aligned
			&\int_{\mathbb{R}^{N}}\Big(\Delta^2 u_{m}+V(r,x'')u_{m}
			-\Big(|x|^{-\alpha}\ast |u_{m}|^{p}\Big)u_{m}^{p-1}\Big)\frac{\partial Z_{\overline{r},\overline{x}'',\beta}}{\partial \beta}dx\\
			=&m\Big(-\frac{A_{1}}{\beta^{5}}V(\overline{r},\overline{x}'')+\sum_{j=2}^{m}
			\frac{A_{2}}{\beta^{N-3}|z_{1}-z_{j}|^{N-4}}+O(\frac{1}{\beta^{5+\varepsilon}})\Big)\\
			=&m\Big(
			-\frac{A_1}{\beta^{5}}V(\overline{r},\overline{x}'')
			+\frac{A_3m^{N-4}}{\beta^{N-3}}
			+O(\frac{1}{\beta^{5+\varepsilon}})\Big).
			\endaligned
		\end{equation}
	\end{lem}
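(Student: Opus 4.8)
\noindent\textbf{Proof proposal for Lemma~\ref{D3}.}
The plan is \emph{not} to use that $u_m$ solves \eqref{c14}, but to substitute $u_m=Z_{\overline r,\overline x'',\beta}+\phi_{\overline r,\overline x'',\beta}$ directly into the expression and expand, tracking the size of every piece. Writing $Z:=Z_{\overline r,\overline x'',\beta}$, $\phi:=\phi_{\overline r,\overline x'',\beta}$ and using the purely algebraic identity already recorded in the passage from \eqref{c14} to \eqref{c16},
\[
\Delta^2u_m+V(r,x'')u_m-\big(|x|^{-\alpha}\ast|u_m|^p\big)u_m^{p-1}=-l_m+L[\phi]-N(\phi),
\]
where $L[\phi]=\Delta^2\phi+V\phi-p\big(|x|^{-\alpha}\ast(Z^{p-1}\phi)\big)Z^{p-1}-(p-1)\big(|x|^{-\alpha}\ast|Z|^p\big)Z^{p-2}\phi$ is the linearized operator at $Z$ and $l_m,N(\phi)$ are as in Section~4. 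Since $\partial_\beta Z=\sum_{j=1}^mZ_{j,1}$, testing against $\partial_\beta Z$ splits the left-hand side of \eqref{main1} into $-\int l_m\,\partial_\beta Z$, $\int L[\phi]\,\partial_\beta Z$ and $-\int N(\phi)\,\partial_\beta Z$. I expect the two displayed main terms to come entirely from $-\int l_m\,\partial_\beta Z$, while the other two pieces, together with the cutoff and off-diagonal parts of $-\int l_m\,\partial_\beta Z$, are $O(m\beta^{-5-\varepsilon})$.

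For the $\phi$-dependent remainders: by Lemma~\ref{C4} and Lemma~\ref{C3}, $\|N(\phi)\|_{\ast\ast}\le C\|\phi\|_\ast^2\le C\beta^{-4-2\varepsilon}$, and since $|\partial_\beta Z|\le C\beta^{-1}\sum_j\frac{\beta^{(N-4)/2}}{(1+\beta|x-z_j|)^{(N-4)/2+\tau}}$ (because $\partial_\beta W_{z_j,\beta}=\beta^{(N-6)/2}g(\beta(x-z_j))$ with $|g|\lesssim(1+|\cdot|)^{-(N-4)}$ and $\tfrac{N-4}{2}\ge\tau$), pairing and summing over the $m$ almost-orthogonal bubbles gives $|\int N(\phi)\,\partial_\beta Z|\le Cm\beta^{-5-\varepsilon}$. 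For $\int L[\phi]\,\partial_\beta Z=\sum_j\langle L[\phi],Z_{j,1}\rangle$ I would use, term by term, the sharp estimates of \cite{GLN} (Lemma~2.1 there, already invoked for \eqref{c4}): $\langle\Delta^2\phi,Z_{j,1}\rangle=O(\beta^{n_1-2-\varepsilon}\|\phi\|_\ast)$ and $\langle V\phi,Z_{j,1}\rangle=O(\beta^{n_1-3-\varepsilon}\|\phi\|_\ast)$; the two convolution terms, after moving the convolution onto $\partial_\beta Z$, are cancelled to leading order by the orthogonality condition in \eqref{c14} with $l=1$ (the sum $\sum_j[p(|x|^{-\alpha}\ast(Z_{z_j,\beta}^{p-1}Z_{j,1}))Z_{z_j,\beta}^{p-1}+(p-1)(|x|^{-\alpha}\ast|Z_{z_j,\beta}|^p)Z_{z_j,\beta}^{p-2}Z_{j,1}]$ differs from $p(|x|^{-\alpha}\ast(Z^{p-1}\partial_\beta Z))Z^{p-1}+(p-1)(|x|^{-\alpha}\ast|Z|^p)Z^{p-2}\partial_\beta Z$ only by cutoff and cross-bubble errors), with the residual cross-bubble convolution integrals estimated exactly as in the proof of Lemma~\ref{C1} --- this is the step where $\alpha>6-\frac{12}{N-4}$ is used, to gain a spare power of $\beta$. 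With $\|\phi\|_\ast\le C\beta^{-2-\varepsilon}$ and $n_1=-1$ this gives $\int L[\phi]\,\partial_\beta Z=O(m\beta^{-5-\varepsilon})$.

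For the main term, decompose $l_m=J_1+J_2+J_3$ as in Lemma~\ref{C5}, so $-\int l_m\,\partial_\beta Z=\int VZ\,\partial_\beta Z-\int J_1\,\partial_\beta Z-\int J_3\,\partial_\beta Z$. \textbf{(i)} For the potential term, using $\partial_\beta(\xi W_{z_k,\beta})=\xi\,\partial_\beta W_{z_k,\beta}$, the off-diagonal contributions to $\sum_{j,k}\int V\xi^2W_{z_j,\beta}\,\partial_\beta W_{z_k,\beta}$ are bubble interactions of lower order and the cutoff tails are negligible; on the diagonal, Taylor-expanding $V$ at $z_j$ gives $\int VW_{z_j,\beta}\,\partial_\beta W_{z_j,\beta}\,dx=V(\overline r,\overline x'')\cdot\tfrac12\partial_\beta\|W_{z_j,\beta}\|_{L^2}^2+O(\beta^{-7})=-2\|W\|_{L^2}^2\,\beta^{-5}V(\overline r,\overline x'')+O(\beta^{-7})$, the $L^2$-norm being finite precisely because $N\ge9$, and the linear Taylor term vanishing by radial parity of $W$ and of $g(y)=\tfrac{N-4}{2}W(y)+y\cdot\nabla W(y)$; summing over $j$ yields $\int VZ\,\partial_\beta Z=-A_1m\beta^{-5}V(\overline r,\overline x'')+O(m\beta^{-5-\varepsilon})$ with $A_1=2\|W\|_{L^2}^2>0$. \textbf{(ii)} For $-\int J_1\,\partial_\beta Z$, on each congruent region $\Omega_k$ the leading part of $J_1$ is the coupling of the bubble at $z_k$ with the bubbles at $z_j$, $j\ne k$; freezing $W_{z_j,\beta}\approx W_{z_j,\beta}(z_k)=C_{N,\alpha}\beta^{-(N-4)/2}|z_k-z_j|^{-(N-4)}(1+o(1))$ over the concentration scale of the $z_k$-bubble and inserting the exact scaling identities $(|x|^{-\alpha}\ast W_{z,\beta}^p)(x)=\beta^{\alpha/2}(|\cdot|^{-\alpha}\ast W^p)(\beta(x-z))$, $W_{z,\beta}^{p-2}(x)=\beta^{(8-\alpha)/2}W^{p-2}(\beta(x-z))$, $\partial_\beta W_{z,\beta}(x)=\beta^{(N-6)/2}g(\beta(x-z))$, the one-bubble integral scales like $\beta^{(2-N)/2}$; summing over the $m$ regions produces $m\sum_{j=2}^mA_2\beta^{-(N-3)}|z_1-z_j|^{-(N-4)}$ modulo $O(m\beta^{-5-\varepsilon})$, $A_2$ being the resulting explicit combination of one-bubble integrals. \textbf{(iii)} $J_3$ is supported in $\{\delta\le|(r,x'')-(r_0,x_0'')|\le2\delta\}$, at distance $\gtrsim\delta$ from every $z_j$, so $-\int J_3\,\partial_\beta Z=O(m\beta^{-5-\varepsilon})$ by the computation of \cite{GLN} (Lemma~2.5 there) used for Lemma~\ref{C5}. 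This proves the first equality; for the second, $z_j=(\overline r\cos\tfrac{2(j-1)\pi}{m},\overline r\sin\tfrac{2(j-1)\pi}{m},\overline x'')$ gives $|z_1-z_j|=2\overline r\sin\tfrac{(j-1)\pi}{m}$, the sum $\sum_{j=2}^m|z_1-z_j|^{-(N-4)}$ is dominated by the endpoints $j\approx2,m$, and since $N-4\ge5$ the tail $\sum_{k\ge1}k^{-(N-4)}$ converges, so $\sum_{j=2}^m|z_1-z_j|^{-(N-4)}=\tfrac{2\zeta(N-4)}{(2\pi\overline r)^{N-4}}m^{N-4}(1+o(1))$, which with $\overline r\to r_0$ gives the stated form with $A_3=\tfrac{2A_2\zeta(N-4)}{(2\pi r_0)^{N-4}}$.

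The main obstacle is obtaining the gain $o(m\beta^{-5})$ in $\int L[\phi]\,\partial_\beta Z$ rather than the crude $O(m\beta^{-3-\varepsilon})$ that one gets by simply inserting the equation \eqref{c14}: this requires genuinely exploiting the orthogonality of $\phi$ to cancel the main part of $L[\partial_\beta Z]$ and then estimating the residual convolution interactions with a spare power of $\beta$, precisely where the range $\alpha>6-\frac{12}{N-4}$ becomes indispensable; a secondary but essential point is the precise determination (including sign) of the interaction constant $A_2$, which is what makes the reduced equations inherit the structure of the critical-point equations for $r^4V$.
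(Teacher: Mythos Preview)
Your approach is essentially the paper's. The decomposition $\Delta^2 u_m+Vu_m-(\ldots)=-l_m+L[\phi]-N(\phi)$ tested against $\partial_\beta Z$ is exactly the paper's split into $\langle J'(Z),\partial_\beta Z\rangle+mI_1-I_2$ (since $-\int l_m\,\partial_\beta Z=\langle J'(Z),\partial_\beta Z\rangle$ by construction, and $\int L[\phi]\,\partial_\beta Z=m\langle L[\phi],Z_{1,1}\rangle$ by symmetry); your treatment of the $N(\phi)$ piece via Lemma~\ref{C4} and of the main term via the $J_1+J_2+J_3$ splitting tracks the paper's Lemma~\ref{C4}, Lemma~\ref{P3} and Lemma~\ref{D2} almost verbatim.

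The one methodological difference is in the convolution part of $\int L[\phi]\,\partial_\beta Z$. You propose symmetrizing and invoking the orthogonality condition in \eqref{c14} with $l=1$, leaving only cross-bubble and cutoff residuals to estimate. The paper instead simply quotes the direct bound \eqref{c6} already established in the proof of Lemma~\ref{C1}: there the convolution terms are controlled pointwise using the cutoff $\xi$ (which confines the support to a bounded set) and the restriction $\alpha>6-\tfrac{12}{N-4}$, giving $\langle L[\phi],Z_{1,1}\rangle=O(\beta^{-3-\varepsilon}\|\phi\|_\ast)=O(\beta^{-5-2\varepsilon})$ with no appeal to orthogonality. Your route is conceptually natural and would work, but it is longer: the cross-bubble residuals you must then estimate are precisely the computations leading to \eqref{c6}, so you end up reproving what the paper reuses. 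In short, the paper recycles \eqref{c4} and \eqref{c6} from Lemma~\ref{C1} rather than rederiving them via orthogonality.
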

	\begin{proof}
		Direct calculation shows that
		$$\aligned
		&\int_{\mathbb{R}^{N}}\Big(\Delta^2 u_{m}+V(r,x'')u_{m} -\Big(|x|^{-\alpha}\ast |u_{m}|^{p}\Big)u_{m}^{p-1}\Big)\frac{\partial Z_{\overline{r},\overline{x}'',\beta}}{\partial \beta} dx
		=\langle J'(Z_{\overline{r},\overline{x}'',\beta}),\frac{\partial Z_{\overline{r},\overline{x}'',\beta}}{\partial \beta}\rangle\\&+m\Big\langle \Delta^2\phi +V(r,x'')\phi-p\Big(|x|^{-\alpha}\ast (Z_{\overline{r},\overline{x}'',\beta}^{p-1}\phi)\Big)Z_{\overline{r},\overline{x}'',\beta}^{p-1}
		-(p-1)\Big(|x|^{-\alpha}\ast |Z_{\overline{r},\overline{x}'',\beta}|^{p}\Big)Z_{\overline{r},\overline{x}'',\beta}^{p-2}\phi,\frac{\partial Z_{z_1,\beta}}{\partial \beta}\Big\rangle\\
		&-\int_{\mathbb{R}^{N}}\Big(|x|^{-\alpha}\ast |Z_{\overline{r},\overline{x}'',\beta}+\phi|^{p}\Big)(Z_{\overline{r},\overline{x}'',\beta}+\phi)^{p-1}\frac{\partial Z_{\overline{r},\overline{x}'',\beta}}{\partial \beta} dx
		+\int_{\mathbb{R}^{N}}\Big(|x|^{-\alpha}\ast |Z_{\overline{r},\overline{x}'',\beta}|^{p}\Big)Z_{\overline{r},\overline{x}'',\beta}^{p-1}\frac{\partial Z_{\overline{r},\overline{x}'',\beta}}{\partial \beta} dx\\
		&+p\int_{\mathbb{R}^{N}}\Big(|x|^{-\alpha}\ast (Z_{\overline{r},\overline{x}'',\beta}^{p-1}\phi)\Big)Z_{\overline{r},\overline{x}'',\beta}^{p-1}\frac{\partial Z_{\overline{r},\overline{x}'',\beta}}{\partial \beta} dx
		+(p-1)\int_{\mathbb{R}^{N}}\Big(|x|^{-\alpha}\ast |Z_{\overline{r},\overline{x}'',\beta}|^{p}\Big)Z_{\overline{r},\overline{x}'',\beta}^{p-2}\phi\frac{\partial Z_{\overline{r},\overline{x}'',\beta}}{\partial \beta} dx\\
		:&=\langle J'(Z_{\overline{r},\overline{x}'',\beta}),\frac{\partial Z_{\overline{r},\overline{x}'',\beta}}{\partial \beta}\rangle+mI_{1}-I_{2}.
		\endaligned$$
		
		Using \eqref{c7}, we obtain
		$$
		I_{1}=O(\frac{\|\phi\|_{\ast}}{\beta^{3+\varepsilon}})
		=O(\frac{1}{\beta^{5+\varepsilon}}).
		$$
		
		Recalling the arguments in Lemma \ref{C4} and \eqref{c15}, it is easy to see
		$$\aligned
		I_{2}=&\int_{\mathbb{R}^{N}}\Big(|x|^{-\alpha}\ast |Z_{\overline{r},\overline{x}'',\beta}+\phi|^{p}\Big)(Z_{\overline{r},\overline{x}'',\beta}+\phi)^{p-1}\frac{\partial Z_{\overline{r},\overline{x}'',\beta}}{\partial \beta} dx
		-\int_{\mathbb{R}^{N}}\Big(|x|^{-\alpha}\ast |Z_{\overline{r},\overline{x}'',\beta}|^{p}\Big)Z_{\overline{r},\overline{x}'',\beta}^{p-1}\frac{\partial Z_{\overline{r},\overline{x}'',\beta}}{\partial \beta} dx\\
		&-p\int_{\mathbb{R}^{N}}\Big(|x|^{-\alpha}\ast (Z_{\overline{r},\overline{x}'',\beta}^{p-1}\phi)\Big)Z_{\overline{r},\overline{x}'',\beta}^{p-1}\frac{\partial Z_{\overline{r},\overline{x}'',\beta}}{\partial \beta} dx
		-(p-1)\int_{\mathbb{R}^{N}}\Big(|x|^{-\alpha}\ast |Z_{\overline{r},\overline{x}'',\beta}|^{p}\Big)Z_{\overline{r},\overline{x}'',\beta}^{p-2}\phi\frac{\partial Z_{\overline{r},\overline{x}'',\beta}}{\partial \beta} dx\\
		\leq&C\int_{\mathbb{R}^{N}}|N(\phi)|\left|\frac{\partial Z_{\overline{r},\overline{x}'',\beta}}{\partial \beta}\right| dx\\
		\leq&C\|\phi\|_{\ast}^{2}\int_{\mathbb{R}^{N}}\sum_{j=1}^{m}\frac{\beta^{\frac{N+4}{2}}}{(1+\beta|x-z_{j}|)^{\frac{N+4}{2}+\tau}}\sum_{j=1}^{m}\frac{\beta^{\frac{N-6}{2}}}{(1+\beta|x-z_{j}|)^{\frac{N-2}{2}}} dx\\
		\leq&\frac{C\|\phi\|_{\ast}^2}{\beta}\int_{\mathbb{R}^{N}}\sum_{j=1}^{m}\frac{1}{(1+\beta|x-z_{j}|)^{N+1}} dx=O(\frac{m}{\beta^{5+\varepsilon}}) .
		\endaligned$$
		
		So, we have proved that
		$$
		\Big\langle J'(Z_{\overline{r},\overline{x}'',\beta}+\phi),\frac{\partial Z_{\overline{r},\overline{x}'',\beta}}{\partial \beta}\Big\rangle
		=\Big\langle J'(Z_{\overline{r},\overline{x}'',\beta}),\frac{\partial Z_{\overline{r},\overline{x}'',\beta}}{\partial \beta}\Big\rangle+O(\frac{m}{\beta^{5+\varepsilon}}).
		$$
		Combined this with Lemma \ref{D2}, we finish our proof.
	\end{proof}	
	Using the same arguments in Lemma \ref{D3} and Lemma \ref{D2}, we can also prove
	\begin{lem}
		We have
		\begin{equation}\aligned\label{main2}
			\Big\langle J'(Z_{\overline{r},\overline{x}'',\beta}+\phi),\frac{\partial Z_{\overline{r},\overline{x}'',\beta}}{\partial \overline{r}}\Big\rangle
			=&m\Big(\frac{A_1}{\beta^{4}}\frac{\partial V(\overline{r},\overline{x}'')}{\partial \overline{r}}+\sum_{j=2}^{m}
			\frac{A_{2}}{\overline{r}\beta^{N-4}|z_{1}-z_{j}|^{N-4}}+O(\frac{1}{\beta^{3+\varepsilon}})\Big),
			\endaligned\end{equation}
		and
		\begin{equation}\label{main3}
			\Big\langle J'(Z_{\overline{r},\overline{x}'',\beta}+\phi),\frac{\partial Z_{\overline{r},\overline{x}'',\beta}}{\partial \overline{x}_{j}''}\Big\rangle
			=m\Big(\frac{A_1}{\beta^{4}}\frac{\partial V(\overline{r},\overline{x}'')}{\partial \overline{x}_{j}''}+O(\frac{1}{\beta^{3+\varepsilon}})\Big), \ \ j=3,\cdot\cdot\cdot,N,
		\end{equation}
		where $A_i$, $i=1, 2$ are some positive constants.
	\end{lem}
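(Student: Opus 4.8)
The strategy is to repeat, almost verbatim, the scheme of the proof of Lemma~\ref{D3}, replacing the vector field $\partial_\beta Z_{\overline r,\overline x'',\beta}$ by $\partial_{\overline r}Z_{\overline r,\overline x'',\beta}$ (resp. $\partial_{\overline x_j''}Z_{\overline r,\overline x'',\beta}$); the only genuine difference is the bookkeeping of powers of $\beta$, since $\|\partial_{\overline r}Z_{z_j,\beta}\|_\ast$ and $\|\partial_{\overline x_j''}Z_{z_j,\beta}\|_\ast$ carry the factor $\beta^{n_l}$ with $n_l=1$, in contrast with $n_1=-1$ for $\partial_\beta$. First I would split, exactly as in Lemma~\ref{D3},
$$\Big\langle J'(Z_{\overline r,\overline x'',\beta}+\phi),\tfrac{\partial Z_{\overline r,\overline x'',\beta}}{\partial \overline r}\Big\rangle=\Big\langle J'(Z_{\overline r,\overline x'',\beta}),\tfrac{\partial Z_{\overline r,\overline x'',\beta}}{\partial \overline r}\Big\rangle+mI_1-I_2,$$
where $I_1$ is the pairing of the linearized operator applied to $\phi$ against $\partial_{\overline r}Z_{z_1,\beta}$ and $I_2$ gathers the quadratic remainder $N(\phi)$ paired with $\partial_{\overline r}Z_{\overline r,\overline x'',\beta}$. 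Using \eqref{c7} with $n_l=1$ together with the a priori bound $\|\phi\|_\ast\le C\beta^{-(2+\varepsilon)}$ from Lemma~\ref{C3} gives $I_1=O(\beta^{-(3+\varepsilon)})$, and repeating the convolution estimates of Lemma~\ref{C4} (via Lemmas~\ref{B2}--\ref{B4}) together with the pointwise bound $|\partial_{\overline r}Z_{z_j,\beta}|\lesssim \beta^{(N-2)/2}(1+\beta|x-z_j|)^{-(N-3)}$ gives $I_2=O(m\beta^{-(3+\varepsilon)})$; hence the $\phi$-dependent part contributes only the announced error $O(m\beta^{-(3+\varepsilon)})$, and it suffices to expand the reduced functional $\langle J'(Z_{\overline r,\overline x'',\beta}),\partial_{\overline r}Z_{\overline r,\overline x'',\beta}\rangle=\partial_{\overline r}J(Z_{\overline r,\overline x'',\beta})$.

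For the reduced term I would invoke the energy expansion of Lemma~\ref{D2}: one writes $J(Z_{\overline r,\overline x'',\beta})$ as $m$ times the sum of (i) the bubble self-energy, which is a constant by the scale and translation invariance of \eqref{he}; (ii) the potential term $\tfrac12\int V(r,x'')W_{z_1,\beta}^2\,dx=\tfrac{A_1}{2}\beta^{-4}V(\overline r,\overline x'')+O(\beta^{-(4+\varepsilon)})$, where $N\ge 9$ ensures $W\in L^2$; (iii) the inter-bubble Hartree interaction $-\,c\sum_{j\ge 2}(\beta|z_1-z_j|)^{-(N-4)}$; and (iv) cut-off corrections. Differentiating in $\overline r$: (i) gives nothing; (ii) gives $\tfrac{A_1}{\beta^4}\partial_{\overline r}V(\overline r,\overline x'')$ at leading order (the derivative falling on $W_{z_1,\beta}$ being of lower order because $W$ solves \eqref{he} and $\partial_{\overline r}$ acts on $W_{z_1,\beta}$ as a rotation-equivariant combination of translations in the $x'$-variables); and since $|z_1-z_j|=2\overline r\sin\frac{(j-1)\pi}{m}$, one has $\partial_{\overline r}|z_1-z_j|^{-(N-4)}=-(N-4)\overline r^{-1}|z_1-z_j|^{-(N-4)}$, so (iii) yields $\sum_{j\ge2}\tfrac{A_2}{\overline r\beta^{N-4}|z_1-z_j|^{N-4}}$; the cut-off pieces (iv) are $O(\beta^{-(3+\varepsilon)})$ by Lemmas~\ref{B2} and \ref{P0}. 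This proves \eqref{main2}. For \eqref{main3} the same computation applies with the crucial observation that the $\overline x''$-components of $z_1$ and $z_j$ coincide, so $|z_1-z_j|$ is independent of $\overline x''$, the interaction term (iii) is annihilated by $\partial_{\overline x_j''}$, and only the potential contribution $\tfrac{A_1}{\beta^4}\partial_{\overline x_j''}V(\overline r,\overline x'')+O(\beta^{-(3+\varepsilon)})$ remains.

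The main obstacle is the precise expansion (ii)--(iii) of the reduced functional — the content packaged in Lemma~\ref{D2} — where one must extract the exact leading constants $A_1,A_2>0$ with the correct signs, control the nonlocal (HLS) interaction integrals between distinct bubbles and their $\overline r$-derivatives, and verify that with $\beta\in[L_0m^{(N-4)/(N-8)},L_1m^{(N-4)/(N-8)}]$ and $\alpha$ in the range $6-\tfrac{12}{N-4}<\alpha<N$ every remainder is genuinely of lower order than both displayed leading terms; the restriction on $\alpha$ enters exactly here, in making the Hartree cross-terms and the contribution of $N(\phi)$ small enough. Once Lemma~\ref{D2} is available, the remainder is the routine bookkeeping indicated above.
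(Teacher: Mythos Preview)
Your proposal is correct and follows exactly the approach the paper intends: the paper itself gives no separate proof for this lemma, stating only ``Using the same arguments in Lemma~\ref{D3} and Lemma~\ref{D2}, we can also prove\ldots'', and your sketch faithfully unpacks those arguments---splitting off the $\phi$-terms as in Lemma~\ref{D3} with the adjusted exponent $n_l=1$, then expanding $\partial_{\overline r}J(Z_{\overline r,\overline x'',\beta})$ via the computations of Lemma~\ref{D2} (and its auxiliary Lemma~\ref{P3}), including the key observation that $|z_1-z_j|=2\overline r\sin\frac{(j-1)\pi}{m}$ depends on $\overline r$ but not on $\overline x''$.
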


	We observe from the above lemma that the error term in \eqref{main2} and \eqref{main3} may destroy the main term which causes the difficulty to determine the location of the bubbles. So we will follow the idea from \cite{PWY} and consider the local Poho$\check{z}$aev type identities \eqref{d1} and \eqref{d2}. We know that \eqref{d1} is equivalent to
	\begin{equation}\label{d12}
		\begin{split}
			&\frac{4-N}{2}\int_{D_{\rho}}\Delta^2 u_{m}u_{m}dx-\frac{1}{2}\int_{D_{\rho}}\big(NV(x)+\langle x,\nabla V(x)\rangle\big) u_{m}^{2}dx
			+\frac{N}{p}\int_{D_{\rho}}\int_{\R^N}\frac{|u_{m}(y)|^{p}|u_{m}(x)|^{p}}{|x-y|^\alpha}dxdy\\&-\frac{\alpha}{p}
			\int_{D_{\rho}}\int_{\R^N}x(x-y)\frac{|u_{m}(y)|^{p}|u_{m}(x)|^{p}}{|x-y|^{\alpha+2}}dxdy
			=O\Big(\int_{\partial D_{\rho}}\Big(\int_{\R^N} \frac{|\phi(y)|^{p}}{|x-y|^\alpha}dy\Big)|\phi|^{p}ds\\
			&\hspace{6cm}+\int_{\partial D_{\rho}}\Big(\phi^{2}+\sum_{j=1}^{3}|\nabla^{j}\phi||\nabla^{4-j}\phi|+\sum_{j=0}^{3}|\nabla^{j}\phi||\nabla^{3-j}\phi|\Big)ds\Big).
		\end{split}
	\end{equation}
		
	Since
	$
	\sum_{j=1}^{m}\int_{\mathbb{R}^{N}}\Big[p\Big(|x|^{-\alpha}\ast |Z_{z_j,\beta}^{p-1}Z_{j,l}|\Big)Z_{z_j,\beta}^{p-1}\phi+(p-1)\Big(|x|^{-\alpha}\ast |Z_{z_j,\beta}|^{p}\Big)Z_{z_j,\beta}^{p-2}Z_{j,l}\phi \Big] dx=0,
	$
	we obtain from \eqref{c14} that
	\begin{equation}\label{d13}
		\aligned
		\int_{D_{\rho}}&\Delta^{2} u_{m}u_{m}dx+\int_{D_{\rho}}V(x) u_{m}^{2}dx=\int_{D_{\rho}}\Big(|x|^{-\alpha}\ast |u_{m}|^{p}\Big)|u_{m}|^{p} dx\\
		&+\sum_{l=1}^{N}c_{l}\sum_{j=1}^{m}\int_{D_{\rho}}\Big[p\Big(|x|^{-\alpha}\ast (Z_{z_j,\beta}^{p-1}
		Z_{j,l})\Big)Z_{z_j,\beta}^{p-1}+(p-1)\Big(|x|^{-\alpha}\ast |Z_{z_j,\beta}|^{p}\Big)Z_{z_j,\beta}^{p-2}Z_{j,l}\Big]Z_{\overline{r},\overline{x}'',\beta}dx.
		\endaligned\end{equation}
	Inserting \eqref{d13} into \eqref{d12}, we obtain
	\begin{equation}\label{d14}
		\aligned
		&\int_{D_{\rho}}\big(2V(x)+\frac{1}{2}\langle x,\nabla V(x)\rangle\big) u_{m}^{2}dx\\
		=&(\frac{4-N}{2}+\frac{N}{p})\int_{D_{\rho}}\Big(|x|^{-\alpha}\ast |u_{m}|^{p}\Big)|u_{m}|^{p} dx-\frac{\alpha}{p}\int_{ D_{\rho}}\int_{\mathbb{R}^{N}}x(x-y)\frac{|\phi(x)|^{p}|\phi(y)|^{p}}{|x-y|^{\alpha+2}}dxdy\\&-\frac{N-4}{2}\sum_{l=1}^{N}c_{l}\sum_{j=1}^{m}\int_{\mathbb{R}^{N}}\Big[p\Big(|x|^{-\alpha}\ast Z_{z_j,\beta}^{p-1}Z_{j,l}\Big)Z_{z_j,\beta}^{p-1}+(p-1)\Big(|x|^{-\alpha}\ast |Z_{z_j,\beta}|^{p}\Big)Z_{z_j,\beta}^{p-2}Z_{j,l}\Big]Z_{\overline{r},\overline{x}'',\beta}dx\\
		&+O\Big(\int_{\partial D_{\rho}}\Big(\phi^{2}+\sum_{j=1}^{3}|\nabla^{j}\phi||\nabla^{4-j}\phi|+\sum_{j=0}^{3}|\nabla^{j}\phi||\nabla^{3-j}\phi|\Big)ds
		+\int_{\partial D_{\rho}}\Big(\int_{\R^N} \frac{|\phi(y)|^{p}}{|x-y|^\alpha}dy\Big)|\phi|^{p}ds\Big)\\
		=&-\frac{N-4}{2}\sum_{l=1}^{N}c_{l}\sum_{j=1}^{m}\int_{\mathbb{R}^{N}}\Big[p\Big(|x|^{-\alpha}\ast Z_{z_j,\beta}^{p-1}Z_{j,l}\Big)Z_{z_j,\beta}^{p-1}+(p-1)\Big(|x|^{-\alpha}\ast |Z_{z_j,\beta}|^{p}\Big)Z_{z_j,\beta}^{p-2}Z_{j,l}\Big]Z_{\overline{r},\overline{x}'',\beta}dx\\
		&+O\Big(\int_{\partial D_{\rho}}\Big(\phi^{2}+\sum_{j=1}^{3}|\nabla^{j}\phi||\nabla^{4-j}\phi|+\sum_{j=0}^{3}|\nabla^{j}\phi||\nabla^{3-j}\phi|\Big)ds
		+\int_{\partial D_{\rho}}\Big(\int_{\R^N} \frac{|\phi(y)|^{p}}{|x-y|^\alpha}dy\Big)|\phi|^{p}ds\Big)\\
		&+O\Big(\int_{ D_{\rho}}\int_{\mathbb{R}^{N}\backslash D_{\rho}}x(x-y)\frac{|\phi(x)|^{p}|\phi(y)|^{p}}{|x-y|^{\alpha+2}}dxdy\Big)+O(\frac{1}{\beta^{4+\varepsilon}}), \ i=3,\cdot\cdot\cdot,N.
		\endaligned\end{equation}

	In order to estimate the right side of the above equality, we investigate that
	$$\aligned
	\int_{\mathbb{R}^{N}}&\Big[\Big(|x|^{-\alpha}\ast |Z_{z_j,\beta}|^{p}\Big)Z_{z_j,\beta}^{p-2}Z_{j,2}Z_{z_j,\beta}dx\\
	&\leq C\int_{\mathbb{R}^{N}}\frac{\beta^{\frac{\alpha}{2}}}{(1+\beta^{2}|x-z_{j} |^{2})^{\frac{\alpha}{2}}}\frac{\beta^{\frac{N+4-\alpha}{2}}}{(1+\beta^{2}|x-z_{j} |^{2})^{\frac{N+4-\alpha}{2}}}\frac{\beta^{\frac{N-4}{2}}\beta^2|x-z_{j} |}
	{(1+\beta^{2}|y-z_{j} |^{2})^{\frac{N-2}{2}}}dx
	=O(\beta),
	\endaligned$$
	where $j=1,\cdot\cdot\cdot,m,$ and if $i\neq j$,
	$$\aligned
	\int_{\mathbb{R}^{N}}&\Big[\Big(|x|^{-\alpha}\ast |Z_{z_j,\beta}|^{p}\Big)Z_{z_j,\beta}^{p-2}Z_{j,2}Z_{z_i,\beta}dx\\
	&\leq C\int_{\mathbb{R}^{N}}\frac{\beta^{\frac{\alpha}{2}}}{(1+\beta^{2}|x-z_{j} |^{2})^{\frac{\alpha}{2}}}\frac{\beta^{\frac{8-\alpha}{2}}}{(1+\beta^{2}|x-z_{j} |^{2})^{\frac{8-\alpha}{2}}}\frac{\beta^{\frac{N-4}{2}}}{(1+\beta^{2}|x-z_{i} |^{2})^{\frac{N-4}{2}}}\frac{\beta^{\frac{N-4}{2}}\beta^2|x-z_{j} |}
	{(1+\beta^{2}|x-z_{j}|^{2})^{\frac{N-2}{2}}}dx\\
	&\leq C\int_{\mathbb{R}^{N}}\frac{\beta}
	{(1+|x-\beta z_{j}|)^{N+5}}\frac{1}{(1+|x-\beta z_{i} |)^{N-4}}dy\\
	&\leq\frac{ C}{\beta^{N-5}|z_{i}-z_{j}|^{N-4}},
	\endaligned$$
	So, for $l=2$, we have
	$$
	\sum_{j=1}^{m}\int_{\mathbb{R}^{N}}\Big(|x|^{-\alpha}\ast |Z_{z_j,\beta}|^{p}\Big)Z_{z_j,\beta}^{p-2}Z_{j,2}Z_{\overline{r},\overline{x}'',\beta}dx
	\leq O(m\beta)+m\sum_{j=2}^{m}\frac{C}{\beta^{N-5}|z_{1}-z_{j}|^{N-4}}
	=O(m\beta).
	$$
	Similarly, we have,
	$$\aligned
	\sum_{j=1}^{m}\int_{\mathbb{R}^{N}}\Big(|x|^{-\alpha}\ast Z_{z_j,\beta}^{p-1}Z_{j,2}\Big)Z_{z_j,\beta}^{p-1}Z_{\overline{r},\overline{x}'',\beta}dx
	=O(m\beta).
	\endaligned$$
	Consequently,
	\begin{equation}\label{eq1}
		\sum_{j=1}^{m}\int_{\mathbb{R}^{N}}\Big[p\Big(|x|^{-\alpha}\ast Z_{z_j,\beta}^{p-1}Z_{j,2}\Big)Z_{z_j,\beta}^{p-1}+(p-1)\Big(|x|^{-\alpha}\ast |Z_{z_j,\beta}|^{p}\Big)Z_{z_j,\beta}^{p-2}Z_{j,2}\Big]Z_{\overline{r},\overline{x}'',\beta}dx=O(m\beta).
	\end{equation}
	
	Using the same arguments as above, we have,
	\begin{equation}\label{eq2}
		\sum_{j=1}^{m}\int_{\mathbb{R}^{N}}\Big[p\Big(|x|^{-\alpha}\ast Z_{z_j,\beta}^{p-1}Z_{j,l}\Big)Z_{z_j,\beta}^{p-1}+(p-1)\Big(|x|^{-\alpha}\ast |Z_{z_j,\beta}|^{p}\Big)Z_{z_j,\beta}^{p-2}Z_{j,l}\Big]Z_{\overline{r},\overline{x}'',\beta}dx=O(m\beta), \ \ l=3,\cdot\cdot\cdot, N,
	\end{equation}
	and
	\begin{equation}\label{eq3}
		\sum_{j=1}^{m}\int_{\mathbb{R}^{N}}\Big[p\Big(|x|^{-\alpha}\ast Z_{z_j,\beta}^{p-1}Z_{j,l}\Big)Z_{z_j,\beta}^{p-1}+(p-1)\Big(|x|^{-\alpha}\ast |Z_{z_j,\beta}|^{p}\Big)Z_{z_j,\beta}^{p-2}Z_{j,l}\Big]Z_{\overline{r},\overline{x}'',\beta}dx=O(\frac{m}{\beta}), \ \ l=1.
	\end{equation}
	
	By some direct calculations, these three identities lead to
	$$
	\sum_{j=1}^{m}\int_{\mathbb{R}^{N}}\Big[p\Big(|x|^{-\alpha}\ast (Z_{z_j,\beta}^{p-1}
	Z_{j,l})\Big)Z_{z_j,\beta}^{p-1}+(p-1)\Big(|x|^{-\alpha}\ast |Z_{z_j,\beta}|^{p}\Big)Z_{z_j,\beta}^{p-2}Z_{j,l}\Big]\frac{\partial Z_{\overline{r},\overline{x}'',\beta}}{\partial \beta} dx=O(m), \ \ l=2,\cdot\cdot\cdot,N,
	$$
	and
	$$
	\sum_{j=1}^{m}\int_{\mathbb{R}^{N}}\Big[p\Big(|x|^{-\alpha}\ast (Z_{z_j,\beta}^{p-1}
	Z_{j,1})\Big)Z_{z_j,\beta}^{p-1}+(p-1)\Big(|x|^{-\alpha}\ast |Z_{z_j,\beta}|^{p}\Big)Z_{z_j,\beta}^{p-2}Z_{j,1}\Big]\frac{\partial Z_{\overline{r},\overline{x}'',\beta}}{\partial \beta} dx=O(\frac{m}{\beta^2}).
	$$
	Then by \eqref{ci}-\eqref{main3} together with \eqref{c14}, we obtain the estimates for $c_l$:
	\begin{equation}\label{d15}
		c_1=O(\frac{1}{\beta^{3+\varepsilon}}), \hspace{0.2cm}c_i=O(\frac{1}{\beta^{5+\varepsilon}}),  i=2,\cdot\cdot\cdot,N.
	\end{equation}

	As a result of \eqref{eq1}-\eqref{d15}, we find that \eqref{d14} is equivalent to
	\begin{equation}\label{d17}
		\aligned
		&\int_{D_{\rho}}(2V(x)+\frac{1}{2}\langle x,\nabla V(x)\rangle) u_{m}^{2}dx\\
		=&O(\frac{m}{\beta^{4+\varepsilon}})+O\Big(\int_{\partial D_{\rho}}\Big(\phi^{2}+\sum_{j=1}^{3}|\nabla^{j}\phi||\nabla^{4-j}\phi|+\sum_{j=0}^{3}|\nabla^{j}\phi||\nabla^{3-j}\phi|\Big)ds
		+\int_{\partial D_{\rho}}\Big(\int_{\R^N} \frac{|\phi(y)|^{p}}{|x-y|^\alpha}dy\Big)|\phi|^{p}ds\Big)\\
		&+O\Big(\int_{ D_{\rho}}\int_{\mathbb{R}^{N}\backslash D_{\rho}}x(x-y)\frac{|\phi(x)|^{p}|\phi(y)|^{p}}{|x-y|^{\alpha+2}}dxdy\Big)+O(\frac{1}{\beta^{4+\varepsilon}}),i=3,\cdot\cdot\cdot,N
		\endaligned\end{equation}
	for some small $\varepsilon>0$.
	
	Integrating by parts to \eqref{d2}, we find it is equivalent to
	\begin{equation}\label{d11}
		\begin{split}
			\int_{D_{\rho}}
			&\frac{\partial V(r,x'')}{\partial x_{i}} u_{m}^{2} dx\\
			=&O\Big(\int_{\partial D_{\rho}}\Big(\phi^{2}+\sum_{j=1}^{3}|\nabla^{j}\phi||\nabla^{4-j}\phi|+\sum_{j=0}^{3}|\nabla^{j}\phi||\nabla^{3-j}\phi|\Big)ds
			+\int_{\partial D_{\rho}}\Big(\int_{\R^N} \frac{|\phi(y)|^{p}}{|x-y|^\alpha}dy\Big)|\phi|^{p}ds\Big)\\
			&+O\Big(\int_{ D_{\rho}}\int_{\mathbb{R}^{N}\backslash D_{\rho}}x(x-y)\frac{|\phi(x)|^{p}|\phi(y)|^{p}}{|x-y|^{\alpha+2}}dxdy\Big)+O(\frac{1}{\beta^{4+\varepsilon}}),
		\end{split}
	\end{equation}
	where $i=3,\cdots,N$. From \eqref{d11}, we can rewrite \eqref{d17} as
	\begin{equation}\label{d18}
		\begin{split}
			&\int_{D_{\rho}}\Big(2V(r,x'')+\frac{r}{2}\frac{\partial V(r,x'')}{\partial r}\Big) u_{m}^{2}dx\\
			=&o(\frac{m}{\beta^{4}})+O\Big(\int_{\partial D_{\rho}}\Big(\phi^{2}+\sum_{j=1}^{3}|\nabla^{j}\phi||\nabla^{4-j}\phi|+\sum_{j=0}^{3}|\nabla^{j}\phi||\nabla^{3-j}\phi|\Big)ds
			+\int_{\partial D_{\rho}}\Big(\int_{\R^N} \frac{|\phi(y)|^{p}}{|x-y|^\alpha}dy\Big)|\phi|^{p}ds\Big)\\
			&+O\Big(\int_{ D_{\rho}}\int_{\mathbb{R}^{N}\backslash D_{\rho}}x(x-y)\frac{|\phi(x)|^{p}|\phi(y)|^{p}}{|x-y|^{\alpha+2}}dxdy\Big)+O(\frac{1}{\beta^{4+\varepsilon}}),
		\end{split}
	\end{equation}
    that is
	\begin{equation}\label{d19}
		\begin{split}
			&\int_{D_{\rho}}\Big(\frac{1}{2r^3}\frac{\partial (r^4V(r,x''))}{\partial r}\Big) u_{m}^{2}dx\\
			=&o(\frac{m}{\beta^{4}})+O\Big(\int_{\partial D_{\rho}}\Big(\phi^{2}+\sum_{j=1}^{3}|\nabla^{j}\phi||\nabla^{4-j}\phi|+\sum_{j=0}^{3}|\nabla^{j}\phi||\nabla^{3-j}\phi|\Big)ds
			+\int_{\partial D_{\rho}}\Big(\int_{\R^N} \frac{|\phi(y)|^{p}}{|x-y|^\alpha}dy\Big)|\phi|^{p}ds\Big)\\
			&+O\Big(\int_{ D_{\rho}}\int_{\mathbb{R}^{N}\backslash D_{\rho}}x(x-y)\frac{|\phi(x)|^{p}|\phi(y)|^{p}}{|x-y|^{\alpha+2}}dxdy\Big)+O(\frac{1}{\beta^{4+\varepsilon}}),
		\end{split}
	\end{equation}
	From (4.39) in\cite{GLN}, we know 
	$$
	\int_{D_{4\delta}\backslash D_{3\delta}}\Big(|\phi|^{2}+\sum_{j=1}^{3}|\nabla^{j}\phi||\nabla^{4-j}\phi|+\sum_{j=0}^{3}|\nabla^{j}\phi||\nabla^{3-j}\phi|\Big)=O(\frac{m}{\beta^{4+\varepsilon}}).
	$$
	Together with Lemma \ref{D4}, we obtain
	$$\aligned
	\int_{D_{4\delta}\backslash D_{3\delta}}&
	\Big(|\phi|^{2}+\sum_{j=1}^{3}|\nabla^{j}\phi||\nabla^{4-j}\phi|+\sum_{j=0}^{3}|\nabla^{j}\phi||\nabla^{3-j}\phi|\Big)dx
	+\int_{D_{4\delta}\backslash D_{3\delta}}\Big(\int_{D_{\rho}} \frac{|\phi(y)|^{p}}{|x-y|^\alpha}dy\Big)|\phi|^{p}dx\\
	&+\int_{ D_{4\delta}\backslash D_{3\delta}}\int_{\R^N\backslash D_{\rho}}x(x-y)\frac{|\phi(x)|^{p}|\phi(y)|^{p}}{|x-y|^{\alpha+2}}dxdy
	=O(\frac{m}{\beta^{4+\varepsilon}}),
	\endaligned$$
	where $i=3,\cdot\cdot\cdot,N$. As a result, there exists a $\rho\in(3\delta,4\delta)$ such that
	$$\aligned
	\int_{\partial D_{\rho}}&\Big(|\phi|^{2}+\sum_{j=1}^{3}|\nabla^{j}\phi||\nabla^{4-j}\phi|+\sum_{j=0}^{3}|\nabla^{j}\phi||\nabla^{3-j}\phi|\Big)ds
	+\int_{\partial D_{\rho}}\Big(\int_{D_{\rho}} \frac{|\phi(y)|^{p}}{|x-y|^\alpha}dy\Big)|\phi|^{p}ds\\
	&+\int_{ D_{\rho}}\int_{\R^N\backslash D_{\rho}}x(x-y)\frac{|\phi(x)|^{p}|\phi(y)|^{p}}{|x-y|^{\alpha+2}}dxdy=O(\frac{m}{\beta^{4+\varepsilon}}),
	\endaligned$$
	where $i=3,\cdot\cdot\cdot,N$. By Lemma 4.7 in \cite{GLN}, for any function $g(r, x'')\in C^1(\mathbb{R^N})$, it holds
	$$
	\int_{D_{\rho}}g(r,x'') u_{m}^{2}dx=m(\frac{1}{\beta^{4}}g(\overline{r},\overline{x}'')\int_{\mathbb{R}^{N}}W_{0,1}^{2}dx+o(\frac{1}{\beta^{4}})).
	$$
	So, we can obtain from \eqref{d11} and \eqref{d19} that
	$$
	m\Big(\frac{1}{\beta^4}\frac{\partial  V(\overline{r},\overline{x}'')}{\partial \overline{x}_{i}}\int_{\mathbb{R}^{N}}W_{0,1}^{2}dx+o(\frac{1}{\beta^{4}})\Big)=o(\frac{m}{\beta^{4}}),
	$$
	and
	$$
	m\Big(\frac{1}{\beta^4}\frac{1}{2\overline{r}^3}\frac{\partial (\overline{r}^4V(\overline{r},\overline{x}''))}{\partial \overline{r}}\int_{\mathbb{R}^{N}}W_{0,1}^{2}dx+o(\frac{1}{\beta^{4}})\Big)=o(\frac{m}{\beta^{4}}).
	$$
	While the function $V$ is bounded, we find the equations to determine $(\overline{r},\overline{x}'')$ are
	\begin{equation}\label{d21}
		\frac{\partial (\overline{r}^4V(\overline{r},\overline{x}''))}{\partial \overline{r}}=o(1).
	\end{equation}
	and
	\begin{equation}\label{d22}
		\frac{\partial (\overline{r}^4V(\overline{r},\overline{x}''))}{\partial \overline{x}_{i}}=o(1),  i=3,\cdot\cdot\cdot,N,
	\end{equation}
	
	\noindent
	{\bf Proof of Theorem \ref{Ms}.}  We have proved that \eqref{d1}, \eqref{d2} and \eqref{d3} are equivalent to \begin{equation}\label{d21}
		\frac{\partial (\overline{r}^4V(\overline{r},\overline{x}''))}{\partial \overline{r}}=o(1).
	\end{equation}
	
	\begin{equation}\label{d22}
		\frac{\partial (\overline{r}^4V(\overline{r},\overline{x}''))}{\partial \overline{x}_{i}}=o(1),  i=3,\cdot\cdot\cdot,N,
	\end{equation}
	and
	$$
	-\frac{A_1}{\beta^{5}}V(\overline{r},\overline{x}'')+\frac{A_3 m^{N-4}}{\beta^{N-3}}=O(\frac{1}{\beta^{5+\varepsilon}}).
	$$
	Let $\beta=tm^{\frac{N-4}{N-8}}$, then $t\in[L_0, L_1]$. Therefore, we can get
	\begin{equation}\label{d23}
		-\frac{A_1}{t^{5}}V(\overline{r},\overline{x}'')+\frac{A_3}{t^{N-3}}=o(1), t\in[L_0, L_1].
	\end{equation}
	Let
	$$
	F(t, \overline{r},\overline{x}'')=\big(\nabla_{\overline{r},\overline{x}''}(\overline{r}^4V(\overline{r},\overline{x}''),
	-\frac{A_1}{t^{5}}+\frac{A_3}{t^{N-3}}\big).
	$$
	Then
	$$
	\mbox{deg}(F(t, \overline{r},\overline{x}''), [L_0, L_1]\times B_{\vartheta}((r_{0},x_{0}'')))
	=-\mbox{deg}(\nabla_{\overline{r},\overline{x}''}(\overline{r}^4V(\overline{r},\overline{x}''),B_{\vartheta}((r_{0},x_{0}'')))\neq0.
	$$
	So, \eqref{d21}, \eqref{d22} and \eqref{d23} have a solution $t_{m}\in[L_0, L_1]$, $(\overline{r}_{m},\overline{x}_{m}'')\in B_{\vartheta}((r_{0},x_{0}''))$.
	$\hfill{} \Box$

	\vspace{1cm}

	\begin{appendix}
		\section{The basic estimates}
		\begin{lem}\label{B2} (Lemma B.1, \cite{WY1}) For each fixed $k$ and $j$, $k\neq j$,  let
			$$
			g_{k,j}(x)=\frac{1}{(1+|x-z_{j}|)^{\alpha}}\frac{1}{(1+|x-z_{k}|)^{\beta}},
			$$
			where $\alpha\geq 1$ and $\beta\geq1$ are two constants.
			Then, for any constants $0<\delta\leq\min\{\alpha,\beta\}$, there is a constant $C>0$, such that
			$$
			g_{k,j}(x)\leq\frac{C}{|z_{k}-z_{j}|^{\delta}}\Big(\frac{1}{(1+|x-z_{j}|)^{\alpha+\beta-\delta}}+\frac{1}{(1+|x-z_{k}|)^{\alpha+\beta-\delta}}\Big).
			$$
		\end{lem}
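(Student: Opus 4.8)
The plan is to exploit the triangle inequality $|z_{k}-z_{j}|\le|x-z_{j}|+|x-z_{k}|$, which forces at least one of the two quantities $|x-z_{j}|$ and $|x-z_{k}|$ to be no smaller than $\tfrac12|z_{k}-z_{j}|$. Accordingly I would split $\R^{N}$ into the two overlapping regions $E_{j}=\{x\in\R^{N}:|x-z_{j}|\ge\tfrac12|z_{k}-z_{j}|\}$ and $E_{k}=\{x\in\R^{N}:|x-z_{k}|\ge\tfrac12|z_{k}-z_{j}|\}$, whose union is all of $\R^{N}$, and establish the claimed bound separately on each region, the second being obtained from the first by interchanging the roles of $j$ and $k$.

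On $E_{j}$ one has $1+|x-z_{j}|>|x-z_{j}|\ge\tfrac12|z_{k}-z_{j}|$, hence $(1+|x-z_{j}|)^{-\delta}\le 2^{\delta}|z_{k}-z_{j}|^{-\delta}$. Writing
$$g_{k,j}(x)=\frac{1}{(1+|x-z_{j}|)^{\delta}}\cdot\frac{1}{(1+|x-z_{j}|)^{\alpha-\delta}(1+|x-z_{k}|)^{\beta}},$$
I would then bound the second factor by $\max\big\{(1+|x-z_{j}|)^{-(\alpha+\beta-\delta)},\,(1+|x-z_{k}|)^{-(\alpha+\beta-\delta)}\big\}$: comparing the two bases $1+|x-z_{j}|$ and $1+|x-z_{k}|$ and using $\alpha-\delta\ge0$ together with $\beta\ge0$, whichever base is smaller may be raised to the full exponent $\alpha+\beta-\delta$ while only decreasing the denominator. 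This yields $g_{k,j}(x)\le 2^{\delta}|z_{k}-z_{j}|^{-\delta}\big((1+|x-z_{j}|)^{-(\alpha+\beta-\delta)}+(1+|x-z_{k}|)^{-(\alpha+\beta-\delta)}\big)$ for $x\in E_{j}$.

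On $E_{k}$ the argument is verbatim after swapping $j$ and $k$; this branch uses $\delta\le\alpha$, just as the previous one used $\delta\le\beta$. Since $E_{j}\cup E_{k}=\R^{N}$, the inequality holds for every $x$ with $C=2^{\delta}$. I do not expect any genuine obstacle here: the estimate is entirely elementary, and the only point requiring care is keeping the auxiliary exponents $\alpha-\delta$ and $\beta-\delta$ nonnegative, which is exactly what the hypothesis $0<\delta\le\min\{\alpha,\beta\}$ guarantees. Alternatively, since the statement is quoted verbatim from Lemma~B.1 of \cite{WY1}, one may simply invoke that reference.
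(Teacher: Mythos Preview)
Your argument is correct: the two-region split via the triangle inequality, the extraction of $(1+|x-z_{j}|)^{-\delta}$ (respectively $(1+|x-z_{k}|)^{-\delta}$), and the monotonicity comparison of the remaining factor all go through exactly as you describe, yielding the bound with $C=2^{\delta}$. The paper itself does not supply a proof of this lemma at all; it simply quotes the statement and attributes it to Lemma~B.1 of \cite{WY1}, so your self-contained elementary argument is in fact more than what the paper provides. One trivial slip: in your last paragraph the labels are reversed---the $E_{j}$ branch uses $\delta\le\alpha$ (to keep $\alpha-\delta\ge0$) and the $E_{k}$ branch, after swapping, uses $\delta\le\beta$; but since you assume $\delta\le\min\{\alpha,\beta\}$ this is only a cosmetic issue.
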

		
		\begin{lem}\label{B3} (Lemma 2.2, \cite{GL}) For any constant $0<\delta<N-4$, $N\geq9$, there is a constant $C>0$, such that
			$$
			\int_{\mathbb{R}^{N}}\frac{1}{|x-y|^{N-4}}\frac{1}{(1+|y|)^{4+\delta}}dy\leq \frac{C}{(1+|x|)^{\delta}}.
			$$
		\end{lem}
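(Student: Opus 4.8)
The plan is to prove this by the standard decomposition of $\R^N$ into regions according to the size of $|y|$ relative to $|x|$. First I would dispose of the case $|x|\le 2$: there the integrand is locally integrable near $y=x$ because $N-4<N$, and it decays like $|y|^{-(N+\delta)}$ at infinity, which is integrable since $\delta>0$; hence the integral is bounded by a constant depending only on $N$ and $\delta$, while $C(1+|x|)^{-\delta}$ is bounded below on $\{|x|\le 2\}$. So from now on I may assume $|x|\ge 2$, in which case $|x|^{-\delta}\approx(1+|x|)^{-\delta}$.

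Next I would split $\R^N=A_1\cup A_2\cup A_3\cup A_4$ with $A_1=\{|y|\le |x|/2\}$, $A_2=\{|x-y|\le |x|/2\}$, $A_3=\{|y|>|x|/2,\ |x-y|>|x|/2,\ |y|\le 2|x|\}$ and $A_4=\{|y|>2|x|\}$, and estimate the integral over each piece separately. On $A_1$ one has $|x-y|\gtrsim|x|$, so the contribution is at most $C|x|^{-(N-4)}\int_{|y|\le|x|/2}(1+|y|)^{-(4+\delta)}\,dy\lesssim |x|^{-(N-4)}|x|^{N-4-\delta}=|x|^{-\delta}$, where the radial integral estimate uses precisely $4+\delta<N$, i.e.\ $\delta<N-4$. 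On $A_2$ one has $1+|y|\gtrsim|x|$, so the contribution is at most $C(1+|x|)^{-(4+\delta)}\int_{|x-y|\le|x|/2}|x-y|^{-(N-4)}\,dy\lesssim |x|^{-(4+\delta)}|x|^{4}=|x|^{-\delta}$. On $A_3$ both $1+|y|\gtrsim|x|$ and $|x-y|\gtrsim|x|$ hold while $|A_3|\lesssim|x|^N$, so the contribution is $\lesssim |x|^{N}\cdot|x|^{-(N-4)}\cdot|x|^{-(4+\delta)}=|x|^{-\delta}$. Finally on $A_4$, $|y|>2|x|$ forces $|x-y|\ge|y|/2$, so the integrand is $\lesssim |y|^{-(N-4)}|y|^{-(4+\delta)}=|y|^{-(N+\delta)}$ and $\int_{|y|>2|x|}|y|^{-(N+\delta)}\,dy\lesssim |x|^{-\delta}$, where convergence uses $\delta>0$. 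Summing the four contributions gives $\lesssim|x|^{-\delta}\approx(1+|x|)^{-\delta}$, which is the claim.

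This is a routine convolution estimate, so there is no genuine obstacle; the only point that needs a little care is that one cannot lump $A_3$ and $A_4$ together, because on $\{|y|>|x|/2\}$ the naive bound $(1+|y|)^{-(4+\delta)}\le C(1+|x|)^{-\delta}(1+|y|)^{-4}$ would leave $\int_{\R^N}|x-y|^{-(N-4)}(1+|y|)^{-4}\,dy$, which diverges logarithmically. Peeling off the far field $A_4$ and exploiting $|x-y|\approx|y|$ there is exactly what lets the hypothesis $\delta>0$ (rather than merely $\delta\ge 0$) do its job, just as $\delta<N-4$ is what controls $A_1$. Throughout, one only has to check that every implied constant depends on $N$ and $\delta$ alone, which is immediate from the scaling used in each region.
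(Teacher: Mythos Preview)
Your proof is correct and is exactly the standard region-splitting argument for such convolution estimates; each of the four bounds checks out, and you have correctly identified where the hypotheses $\delta>0$ (for $A_4$) and $\delta<N-4$ (for $A_1$) are used. Note that the paper does not actually prove this lemma: it is quoted verbatim from \cite{GL} (Guo--Li, Lemma~2.2) and stated without argument, so there is no in-paper proof to compare against; your self-contained proof is a welcome addition.
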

		
		\begin{lem}\label{B4}
			For $N\geq 9$ and $1\leq i\leq m$, there is a constant $C>0$, such that
			$$
			|x|^{-\alpha}\ast \frac{\beta^{N-\frac{\alpha}{2}}}{(1+\beta|x-z_{i}|)^{\frac{3N+4}{2}-\alpha+\eta}}\leq \frac{C\beta^{\frac{\alpha}{2}}}{(1+\beta|x-z_{i}|)^{\min\{\alpha,\frac{N+4}{2}\}}},
			$$
			where $\eta>0$.
			
		\end{lem}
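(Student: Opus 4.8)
The plan is to strip the parameters $\beta$ and $z_i$ by a scaling change of variables and then reduce the claim to the classical decay estimate for a Riesz-type convolution (a variant of Lemma~\ref{B3}), applied to a $\beta$-free model integral.

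First I would set $g(y)=\beta^{N-\frac{\alpha}{2}}\big(1+\beta|y-z_i|\big)^{-(\frac{3N+4}{2}-\alpha+\eta)}$ and substitute $w=\beta(y-z_i)$ in
$$
\big(|x|^{-\alpha}\ast g\big)(x)=\int_{\R^N}\frac{g(y)}{|x-y|^{\alpha}}\,dy .
$$
Writing $\zeta:=\beta(x-z_i)$ and $b:=\frac{3N+4}{2}-\alpha+\eta$, one checks $|x-y|=\beta^{-1}|\zeta-w|$ and $1+\beta|y-z_i|=1+|w|$, so the powers of $\beta$ collapse and one is left with
$$
\big(|x|^{-\alpha}\ast g\big)(x)=\beta^{\frac{\alpha}{2}}\,I(\zeta),\qquad
I(\zeta):=\int_{\R^N}\frac{dw}{|\zeta-w|^{\alpha}\,(1+|w|)^{b}} .
$$
Since $1+\beta|x-z_i|=1+|\zeta|$, the asserted inequality is equivalent to the $\beta$-free bound $I(\zeta)\le C\,(1+|\zeta|)^{-\min\{\alpha,\frac{N+4}{2}\}}$.

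Next I would record the structural facts $0<\alpha<N$, $b>\tfrac{N+4}{2}+\eta>0$, and $\alpha+b=\tfrac{3N+4}{2}+\eta>N$; the first gives local integrability near $w=\zeta$, and together with the last it gives integrability at infinity, so $I(\zeta)\le C$ whenever $|\zeta|\le 2$. For $|\zeta|>2$ I would split $\R^N=A_1\cup A_2\cup A_3$ with $A_1=\{|w|\le|\zeta|/2\}$, $A_2=\{|w-\zeta|\le|\zeta|/2\}$, $A_3$ the complement. On $A_1$ one has $|\zeta-w|\sim|\zeta|$, whence the contribution is $\lesssim|\zeta|^{-\alpha}$ if $b>N$, $\lesssim|\zeta|^{-\alpha}\log(2+|\zeta|)$ if $b=N$, and $\lesssim|\zeta|^{\,N-\alpha-b}$ if $b<N$; on $A_2$ one has $|w|\sim|\zeta|$, giving $\lesssim|\zeta|^{-b}\int_{|w-\zeta|\le|\zeta|/2}|\zeta-w|^{-\alpha}dw\lesssim|\zeta|^{\,N-\alpha-b}$; on $A_3$ one has $|\zeta-w|\gtrsim|w|\gtrsim|\zeta|$, giving $\lesssim\int_{|w|\gtrsim|\zeta|}|w|^{-\alpha-b}dw\lesssim|\zeta|^{\,N-\alpha-b}$.

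Finally I would collect the three contributions, noting $N-\alpha-b=-\tfrac{N+4}{2}-\eta$. If $\alpha<\tfrac{N+4}{2}+\eta$ then $b>N$ and the dominant term is $|\zeta|^{-\alpha}$, which equals $(1+|\zeta|)^{-\min\{\alpha,\frac{N+4}{2}\}}$ when $\alpha\le\tfrac{N+4}{2}$ and is bounded by $(1+|\zeta|)^{-\frac{N+4}{2}}$ when $\tfrac{N+4}{2}<\alpha<\tfrac{N+4}{2}+\eta$; if $\alpha>\tfrac{N+4}{2}+\eta$ then $b<N$ and every term is $\lesssim|\zeta|^{-\frac{N+4}{2}-\eta}\le|\zeta|^{-\frac{N+4}{2}}$; the borderline $\alpha=\tfrac{N+4}{2}+\eta$ produces $|\zeta|^{-\frac{N+4}{2}-\eta}\log(2+|\zeta|)$, which is absorbed into $|\zeta|^{-\frac{N+4}{2}}$ precisely because $\eta>0$. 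The only delicate point --- and the step I would present most carefully --- is this bookkeeping of the borderline exponent and checking that in every regime the worst power is exactly $\min\{\alpha,\tfrac{N+4}{2}\}$; the analytic content is nothing more than the standard convolution decay lemma applied after the rescaling.
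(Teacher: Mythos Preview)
Your proof is correct and follows essentially the same approach as the paper: rescale to a $\beta$-free Riesz-type convolution and split into the three standard regions (near each singularity and the complement), then compare exponents. Your case analysis is in fact slightly more careful than the paper's (you explicitly treat the borderline $b=N$ and the small-$|\zeta|$ regime), but the argument is the same.
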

		\begin{proof}
			Firstly, we notice that
			$$\aligned
			|x|^{-\alpha}\ast \frac{\beta^{N-\frac{\alpha}{2}}}{(1+\beta|x-z_{i}|)^{\frac{3N+4}{2}-\alpha+\eta}}=&
			\int_{\mathbb{R}^{N}}\frac{1}{|y|^{\alpha}}\frac{\beta^{N-\frac{\alpha}{2}}}{(1+\beta|x-z_{i}-y|)^{\frac{3N+4}{2}-\alpha+\eta}}dy\\
			=&
			\int_{\mathbb{R}^{N}}\frac{1}{|y|^{\alpha}}\frac{\beta^{\frac{\alpha}{2}}}{(1+|\beta x-\beta z_{i}-y|)^{\frac{3N+4}{2}-\alpha+\eta}}dy.
			\endaligned$$
			
			Let $d=\frac{\beta}{2}|x-z_{i}|>1$. Then, we have
			$$\aligned
			\int_{B_{d}(0)}\frac{1}{|y|^{\alpha}}\frac{1}{(1+|\beta x-\beta z_{i}-y|)^{\frac{3N+4}{2}-\alpha+\eta}}dy&\leq \frac{C}{(1+ d)^{\frac{3N+4}{2}-\alpha+\eta}}\int_{B_{d}(0)}\frac{1}{|y|^{\alpha}}dy\\
			&\leq \frac{C d^{N-\alpha}}{(1+ d)^{\frac{3N+4}{2}-\alpha+\eta}}\leq\frac{ C}{(1+ d)^{\frac{N+4}{2}+\eta}},
			\endaligned$$
			and
			$$\aligned
			&\int_{B_{d}(\beta x-\beta z_{i})}\frac{1}{|y|^{\alpha}}\frac{1}{(1+|\beta x-\beta z_{i}-y|)^{\frac{3N+4}{2}-\alpha+\eta}}dy\\
			&\hspace{5mm}\leq \frac{1}{d^{\alpha}}\int_{B_{d}(0)}\frac{1}{(1+|y|)^{\frac{3N+4}{2}-\alpha+\eta}}dy
			\leq \frac{C}{(1+ d)^{\min\{\alpha, \frac{N+4}{2}\}}}.
			\endaligned$$

			Suppose that $y\in\mathbb{R}^{N}\backslash (B_{d}(0)\cup B_{d}(\beta x-\beta z_{i}))$. Then
			$$
			|\beta x-\beta z_{i}-y|\geq\frac{1}{2}|\beta x-\beta z_{i}|, |y|\geq\frac{1}{2}|\beta x-\beta z_{i}|,
			$$
			and we have
			$$
			\frac{1}{|y|^{\alpha}}\frac{1}{(1+|\beta x-\beta z_{i}-y|)^{\frac{3N+4}{2}-\alpha+\eta}}\leq \frac{ C}{(1+ d)^{\frac{N+4}{2}}}\frac{1}{|y|^{\alpha}}\frac{1}{(1+|\beta x-\beta z_{i}-y|)^{N-\alpha+\eta}}.
			$$
			If $|y|\leq2|\beta x-\beta z_{i}|$, then
			$$
			\frac{1}{|y|^{\alpha}}\frac{1}{(1+|\beta x-\beta z_{i}-y|)^{N-\alpha+\eta}}\leq
			\frac{C}{|y|^{\alpha}(1+|\beta x-\beta z_{i}|)^{N-\alpha+\eta}}\leq
			\frac{C_{1}}{|y|^{\alpha}(1+|y|)^{N-\alpha+\eta}}.
			$$
			If $|y|\geq 2|\beta x-\beta z_{i}|$, then $|\beta x-\beta z_{i}-y|\geq|y|-|\beta x-\beta z_{i}|\geq\frac{1}{2}|y|$. As a result,
			$$
			\frac{1}{|y|^{\alpha}}\frac{1}{(1+|\beta x-\beta z_{i}-y|)^{N-\alpha+\eta}}\leq
			\frac{C}{|y|^{\alpha}(1+|y|)^{N-\alpha+\eta}}.
			$$
			Thus, we have
			$$\aligned
			&\int_{\mathbb{R}^{N}\backslash (B_{d}(0)\cup B_{d}(\beta x-\beta z_{i}))}\frac{1}{|y|^{\alpha}}\frac{1}{(1+|\beta x-\beta z_{i}-y|)^{\frac{3N+4}{2}-\alpha+\eta}}dy\\
			&\leq \frac{ C}{(1+ d)^{\frac{N+4}{2}}}\int_{\mathbb{R}^{N}}\frac{1}{|y|^{\alpha}}\frac{1}{(1+|y|)^{N-\alpha+\eta}}dy
			\leq \frac{ C}{(1+ d)^{\frac{N+4}{2}}}.
			\endaligned$$
		\end{proof}
		Using \eqref{REL} and the identity (see (37) in \cite{DHQWF} for example)
		\begin{equation}\label{eq3.18}
			\int_{\R^N}\frac{1}{|x-y|^{2s}}\Big(\frac{1}{1+|y|^{2}}\Big)^{N-s}dy
			=I(s)\Big(\frac{1}{1+|x|^{2}}\Big)^{s},\ \ 0 < s < \frac{N}{2},
		\end{equation}
		where
		$$
		I(s)=\frac{\pi^{\frac{N}{2}}\Gamma(\frac{N-2s}{2})}{\Gamma(N-s)}, \ \ \mbox{and }\Gamma(s)=\int_0^{+\infty} x^{s-1}e^{-x}\,dx, s>0,
		$$
		we have
		$$
		|x|^{-\alpha}\ast |W_{z,\beta}(x)|^{p}
		=\int_{\R^N}\frac{W_{z,\beta}^p(y)}{|x-y|^{\alpha}}dy
		=C\Big(\frac{\beta}{1+\beta^2|x-z|^{2}}\Big)^{\frac{\alpha}{2}},
		$$
		where $N\geq9$ and \text{ $C=I(\frac{\alpha}{2})C^{p}(N,\alpha)$}. So, we have the following lemma.
		\begin{lem}\label{P0}
			For $N\geq 9$ and $1\leq i\leq m$, there is a constant $C>0$, such that
			$$
			|x|^{-\alpha}\ast |W_{z_{i},\beta}(x)|^{p}
			=C\Big(\frac{\beta}{1+\beta^2|x-z|^{2}}\Big)^{\frac{\alpha}{2}}.
			$$
		\end{lem}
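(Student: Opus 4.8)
The plan is to reduce the identity to the explicit Riesz--potential formula \eqref{eq3.18} by a scaling argument, so there is no genuine analysis to do. First I would recall from \eqref{REL} that $W_{z_i,\beta}(x)=\beta^{\frac{N-4}{2}}W(\beta(x-z_i))$ with $W(x)=C_{N,\alpha}(1+|x|^2)^{-\frac{N-4}{2}}$. Since $p=\frac{2N-\alpha}{N-4}$ satisfies $p\cdot\frac{N-4}{2}=N-\frac{\alpha}{2}$, raising to the power $p$ gives
$$
W_{z_i,\beta}^{p}(y)=C_{N,\alpha}^{p}\,\frac{\beta^{N-\frac{\alpha}{2}}}{\big(1+\beta^{2}|y-z_i|^{2}\big)^{N-\frac{\alpha}{2}}},
$$
because the exponent $\frac{(N-4)p}{2}$ also equals $N-\frac{\alpha}{2}$. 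Thus the convolution to be computed is $C_{N,\alpha}^{p}\beta^{N-\frac{\alpha}{2}}\int_{\R^N}|x-y|^{-\alpha}\big(1+\beta^{2}|y-z_i|^{2}\big)^{-(N-\frac{\alpha}{2})}\,dy$.

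Next I would change variables $y=z_i+\beta^{-1}\eta$, so that $dy=\beta^{-N}d\eta$, $\big(1+\beta^{2}|y-z_i|^{2}\big)^{-(N-\frac{\alpha}{2})}=(1+|\eta|^{2})^{-(N-\frac{\alpha}{2})}$, and $|x-y|^{-\alpha}=\beta^{\alpha}|\beta(x-z_i)-\eta|^{-\alpha}$. The three powers of $\beta$ telescope, $N-\frac{\alpha}{2}-N+\alpha=\frac{\alpha}{2}$, leaving
$$
|x|^{-\alpha}\ast |W_{z_i,\beta}|^{p}=C_{N,\alpha}^{p}\,\beta^{\frac{\alpha}{2}}\int_{\R^N}\frac{1}{|\xi-\eta|^{\alpha}}\Big(\frac{1}{1+|\eta|^{2}}\Big)^{N-\frac{\alpha}{2}}d\eta,\qquad \xi=\beta(x-z_i).
$$
Applying \eqref{eq3.18} with $s=\frac{\alpha}{2}$, which is admissible because $0<\alpha<N$ forces $0<\frac{\alpha}{2}<\frac{N}{2}$ and $N-s=N-\frac{\alpha}{2}$ matches the exponent of the bump, the inner integral equals $I(\frac{\alpha}{2})(1+|\xi|^{2})^{-\frac{\alpha}{2}}$. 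Substituting $\xi=\beta(x-z_i)$ back and collecting constants yields
$$
|x|^{-\alpha}\ast |W_{z_i,\beta}|^{p}=I\!\left(\tfrac{\alpha}{2}\right)C_{N,\alpha}^{p}\left(\frac{\beta}{1+\beta^{2}|x-z_i|^{2}}\right)^{\frac{\alpha}{2}},
$$
which is exactly the stated formula with $C=I(\frac{\alpha}{2})C_{N,\alpha}^{p}$, a constant independent of $i$ and $\beta$.

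There is essentially no obstacle: the only things to watch are the bookkeeping of the powers of $\beta$ so that they cancel down to $\beta^{\alpha/2}$, and the check that $s=\frac{\alpha}{2}$ lies in the range $(0,\frac{N}{2})$ demanded by \eqref{eq3.18}. Integrability of $\int|\xi-\eta|^{-\alpha}(1+|\eta|^{2})^{-(N-\frac{\alpha}{2})}d\eta$ is automatic, since $\alpha<N$ controls the local singularity at $\eta=\xi$ and $\alpha+2(N-\frac{\alpha}{2})=2N>N$ controls the decay at infinity. Finally, the uniformity in $i$ and $\beta$ is manifest, because after the change of variables neither parameter survives inside the integral.
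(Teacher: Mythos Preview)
Your proof is correct and follows exactly the same route as the paper: write $W_{z_i,\beta}^{p}$ explicitly, scale to reduce to the identity \eqref{eq3.18} with $s=\alpha/2$, and read off the constant $C=I(\frac{\alpha}{2})C_{N,\alpha}^{p}$. The paper gives this computation in a single line just before stating the lemma, so your write-up simply expands that line with the bookkeeping made explicit.
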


		\vspace{1cm}
		\section{The energy expansion}
		Here, we will give energy estimates in this section. The nonlocal interaction parts cause some difficulties. So we need to establish some estimates which give suitable order of small terms. The functional corresponding to \eqref{eq1} is
		$$
		J(u)=\frac{1}{2}\int_{\mathbb{R}^{N}}\Big(\Delta^2 u+V(r,x'')u^2\Big)dx
		-\frac{1}{2p}
		\int_{\mathbb{R}^N}\int_{\mathbb{R}^N}
		\frac{|u(x)|^{p}|u(y)|^{p}}{|x-y|^{\alpha}}dxdy.
		$$
		
		\begin{lem}\label{P3}
			We have
			$$
			\frac{\partial J(Z_{\overline{r},\overline{x}'',\beta})}{\partial \beta}=\frac{\partial J(Z_{\overline{r},\overline{x}'',\beta}^{\ast})}{\partial \beta}+O(\frac{m}{\beta^{5+\varepsilon}}),
			$$
		\end{lem}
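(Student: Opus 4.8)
The plan is to exploit that $Z_{\overline{r},\overline{x}'',\beta}=\xi\,Z_{\overline{r},\overline{x}'',\beta}^{\ast}$, where the cut-off $\xi$ is \emph{independent of $\beta$}, equals $1$ on $D_{\delta}$ and $0$ outside $D_{2\delta}$, while by \eqref{uxi} all the centres $z_{j}$ lie in $D_{\vartheta}$ with $\vartheta<\delta$. Writing $Z:=Z_{\overline{r},\overline{x}'',\beta}$ and $Z^{\ast}:=Z_{\overline{r},\overline{x}'',\beta}^{\ast}$ for brevity, every derivative $\nabla^{j}\xi$ ($1\le j\le 4$) is then supported in the bounded annulus $D_{2\delta}\setminus D_{\delta}$, and $Z^{\ast}-Z=(1-\xi)Z^{\ast}$ is supported in $\mathbb{R}^{N}\setminus D_{\delta}$; on this last set $|x-z_{j}|\ge\delta/2$ for every $j$, so from $W_{z_{j},\beta}(x)=C_{N,\alpha}\beta^{\frac{N-4}{2}}(1+\beta^{2}|x-z_{j}|^{2})^{-\frac{N-4}{2}}$ one gets the uniform bounds
\begin{equation*}
|\partial_{\beta}^{a}\nabla^{b}W_{z_{j},\beta}(x)|\lesssim\beta^{-a}\,\frac{\beta^{\frac{N-4}{2}}}{(1+\beta|x-z_{j}|)^{N-4}},\qquad x\in\mathbb{R}^{N}\setminus D_{\delta},
\end{equation*}
the tail bounds $\int_{\mathbb{R}^{N}\setminus D_{\delta}}W_{z_{j},\beta}^{2}\lesssim\beta^{4-N}$ and $\int_{\mathbb{R}^{N}\setminus D_{\delta}}W_{z_{j},\beta}^{p}\lesssim\beta^{-\frac{2N-\alpha}{2}}$, and, by Lemma \ref{P0}, $\big(|x|^{-\alpha}\ast W_{z_{j},\beta}^{p}\big)(x)\lesssim\beta^{-\alpha/2}$ for $x\in\mathbb{R}^{N}\setminus D_{\delta}$. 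A key bookkeeping fact, visible from the explicit formula, is $|\partial_{\beta}W_{z_{j},\beta}|\lesssim\beta^{-1}W_{z_{j},\beta}$ everywhere, so differentiating any of the quantities below in $\beta$ — which never touches $\xi$ — produces an extra factor $\beta^{-1}$.

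Next I would split $J=J_{\mathrm q}+J_{\mathrm{HL}}$, with $J_{\mathrm q}(u)=\frac{1}{2}\int_{\mathbb{R}^{N}}(|\Delta u|^{2}+V u^{2})\,dx$ and $J_{\mathrm{HL}}(u)=-\frac{1}{2p}\iint\frac{|u(x)|^{p}|u(y)|^{p}}{|x-y|^{\alpha}}\,dxdy$. Using $\Delta(\xi Z^{\ast})=\xi\Delta Z^{\ast}+2\nabla\xi\cdot\nabla Z^{\ast}+Z^{\ast}\Delta\xi$ and $Z^{2}-(Z^{\ast})^{2}=(\xi^{2}-1)(Z^{\ast})^{2}$, the difference $\partial_{\beta}\big(J_{\mathrm q}(Z)-J_{\mathrm q}(Z^{\ast})\big)$ decomposes into terms carrying a derivative of $\xi$, supported on $D_{2\delta}\setminus D_{\delta}$, which are exactly of the type handled in the purely local polyharmonic problem of Guo et al.\ \cite{GLN} (compare the estimates for $J_{2},J_{3}$ in the proof of Lemma \ref{C5}), together with terms of the form $\partial_{\beta}\!\int_{\mathbb{R}^{N}\setminus D_{\delta}}V(\xi^{2}-1)(Z^{\ast})^{2}$, which by the tail bound above are $O(m^{2}\beta^{3-N})$; with $\beta\sim m^{\frac{N-4}{N-8}}$ and $N\ge9$ all of these are $O(m\beta^{-5-\varepsilon})$. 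It remains to estimate the Hartree part.

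Since $Z=\xi Z^{\ast}$ with $0\le\xi\le1$ we have $Z^{p}\le(Z^{\ast})^{p}$ pointwise, and $(Z^{\ast})^{p}-Z^{p}=(1-\xi^{p})(Z^{\ast})^{p}$ is supported in $\mathbb{R}^{N}\setminus D_{\delta}$; symmetrizing the double integral in $x$ and $y$ therefore gives
\begin{equation*}
\Big|\iint\frac{(Z^{\ast})^{p}(x)(Z^{\ast})^{p}(y)-Z^{p}(x)Z^{p}(y)}{|x-y|^{\alpha}}\,dxdy\Big|\le 2\int_{\mathbb{R}^{N}\setminus D_{\delta}}(1-\xi^{p})(Z^{\ast})^{p}(x)\,\big(|x|^{-\alpha}\ast(Z^{\ast})^{p}\big)(x)\,dx.
\end{equation*}
From $(Z^{\ast})^{p}\le m^{p-1}\sum_{j}W_{z_{j},\beta}^{p}$ and Lemma \ref{P0} one obtains $\big(|x|^{-\alpha}\ast(Z^{\ast})^{p}\big)(x)\lesssim m^{p}\beta^{-\alpha/2}$ on $\mathbb{R}^{N}\setminus D_{\delta}$, and with $\int_{\mathbb{R}^{N}\setminus D_{\delta}}(Z^{\ast})^{p}\lesssim m^{p}\beta^{-\frac{2N-\alpha}{2}}$ the right-hand side is $O(m^{2p}\beta^{-N})$; running the same estimate after one $\partial_{\beta}$ (and using the $\beta^{-1}$ gain noted above) yields $\partial_{\beta}\big(J_{\mathrm{HL}}(Z)-J_{\mathrm{HL}}(Z^{\ast})\big)=O(m^{2p}\beta^{-N-1})$. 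Since $\beta\sim m^{\frac{N-4}{N-8}}$ and $2p=2(2N-\alpha)/(N-4)$, a direct computation shows $m^{2p}\beta^{-N}=o(m\beta^{-5})$ for all $N\ge9$ and $0<\alpha<N$, so this contribution is $O(m\beta^{-5-\varepsilon})$ for a suitable $\varepsilon>0$; together with the quadratic estimate the lemma follows.

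The only genuinely new point is that the Hartree term is nonlocal, so cutting off $Z^{\ast}$ affects the double integral through the region $\mathbb{R}^{N}\setminus D_{\delta}$, which the kernel $|x-y|^{-\alpha}$ couples to the high concentration of $Z^{\ast}$ near the $z_{j}$'s; what makes the estimate go through is precisely Lemma \ref{P0}, which identifies $|x|^{-\alpha}\ast W_{z_{j},\beta}^{p}$ as an explicit bubble at scale $1/\beta$ centred at $z_{j}$, hence uniformly $O(\beta^{-\alpha/2})$ away from $D_{\delta}$, so the convolution does not feel the concentration. The remaining difficulty is pure bookkeeping: one must keep each crude per-bubble estimate sharp enough that, after summing over the $m$ bubbles and inserting $\beta\sim m^{\frac{N-4}{N-8}}$, the total is still $o(m\beta^{-5})$; here the hypotheses $N\ge9$ and $\alpha<N$ guarantee that all the tail integrals $\int_{\mathbb{R}^{N}\setminus D_{\delta}}W_{z_{j},\beta}^{q}$ converge with ample margin, so the crude bounds suffice.
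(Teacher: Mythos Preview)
Your proof is correct and follows essentially the same strategy as the paper: split $J$ into its quadratic and Hartree parts, handle the quadratic difference by citing the local polyharmonic estimates of \cite{GLN} (the paper's \eqref{BB3}), and for the Hartree difference observe that the support of $(1-\xi^{p})(Z^{\ast})^{p}$ lies in $\mathbb{R}^{N}\setminus D_{\delta}$, where all bubbles are in their tail regime. The only technical difference is in bounding the nonlocal double integral over that region: the paper applies the Hardy--Littlewood--Sobolev inequality directly to the pair $(1-\xi^{p})W_{z_j,\beta}^{p}$, $W_{z_j,\beta}^{p-1}\partial_{\beta}W_{z_j,\beta}$ and obtains the sharper order $O(m^{2p}\beta^{-(2N+1-\alpha)})$, whereas you use the explicit formula of Lemma \ref{P0} to bound $|x|^{-\alpha}\ast(Z^{\ast})^{p}$ pointwise by $m^{p}\beta^{-\alpha/2}$ on $\mathbb{R}^{N}\setminus D_{\delta}$, combine it with the tail integral $\int_{\mathbb{R}^{N}\setminus D_{\delta}}(Z^{\ast})^{p}\lesssim m^{p}\beta^{-(2N-\alpha)/2}$, and pick up the extra $\beta^{-1}$ from $|\partial_{\beta}W|\lesssim\beta^{-1}W$, arriving at the weaker but still sufficient $O(m^{2p}\beta^{-N-1})$. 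Your route is slightly more elementary (no HLS), the paper's gives a bit more margin; both reduce to the same numerical check that $m^{2p-1}$ is beaten by the appropriate power of $\beta$ under $\beta\sim m^{\frac{N-4}{N-8}}$.
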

		\begin{proof}
			We observe that
			$$\aligned
			\frac{\partial J(Z_{\overline{r},\overline{x}'',\beta}^{\ast})}{\partial \beta}-\frac{\partial J(Z_{\overline{r},\overline{x}'',\beta})}{\partial \beta}
			=&\int_{\mathbb{R}^{N}}\Delta^2 Z_{\overline{r},\overline{x}'',\beta}^{\ast}\frac{\partial Z_{\overline{r},\overline{x}'',\beta}^{\ast}}{\partial \beta}dx-\int_{\mathbb{R}^{N}}\Delta^2 Z_{\overline{r},\overline{x}'',\beta}\frac{\partial Z_{\overline{r},\overline{x}'',\beta}}{\partial \beta}dx\\
			&+\int_{\mathbb{R}^{N}}V(x)\Big(Z_{\overline{r},\overline{x}'',\beta}^{\ast}\frac{\partial Z_{\overline{r},\overline{x}'',\beta}^{\ast}}{\partial \overline{r}}-Z_{\overline{r},\overline{x}'',\beta}^{\ast}\frac{\partial Z_{\overline{r},\overline{x}'',\beta}^{\ast}}{\partial \overline{r}}\Big)dx\\
			&+\Big(\int_{\mathbb{R}^N}\int_{\mathbb{R}^N}
			\frac{|Z_{\overline{r},\overline{x}'',\beta}^{\ast}(x)|^{p}
				|Z_{\overline{r},\overline{x}'',\beta}^{\ast}(y)|^{p-1}\frac{\partial Z_{\overline{r},\overline{x}'',\beta}^{\ast}}{\partial \beta}(y)}{|x-y|^{\alpha}}dxdy\\
				&\hspace{5mm}-\int_{\mathbb{R}^N}\int_{\mathbb{R}^N}
			\frac{|Z_{\overline{r},\overline{x}'',\beta}(x)|^{p}
				Z^{p-1}_{\overline{r},\overline{x}'',\beta}(y)\frac{\partial Z_{\overline{r},\overline{x}'',\beta}}{\partial \beta}(y)}{|x-y|^{\alpha}}dxdy\Big)
			\endaligned$$
						It is easy to check that
			$$\aligned
			&\int_{\mathbb{R}^N}\int_{\mathbb{R}^N}
			\frac{|W_{z_j,\beta}(x)|^{p}
				W^{p-1}_{z_j,\beta}(y)\frac{\partial W_{z_j,\beta}}{\partial \beta}(y)}{|x-y|^{\alpha}}dxdy-\int_{\mathbb{R}^N}\int_{\mathbb{R}^N}
			\frac{|Z_{z_j,\beta}(x)|^{p}
				Z^{p-1}_{z_j,\beta}(y)\frac{\partial Z_{z_j,\beta}}{\partial \beta}(y)}{|x-y|^{\alpha}}dxdy\\
			=&\int_{\mathbb{R}^N}\int_{\mathbb{R}^N}
			\frac{(1-\xi^{p}(x))|W_{z_j,\beta}(x)|^{p}
				W^{p-1}_{z_j,\beta}(y)\frac{\partial W_{z_j,\beta}}{\partial \beta}(y)}{|x-y|^{\alpha}}dxdy\\
				&+\int_{\mathbb{R}^N}\int_{\mathbb{R}^N}
			\frac{|\xi W_{z_j,\beta}(x)|^{p}
				(1-\xi^{p})W^{p-1}_{z_j,\beta}(y)\frac{\partial W_{z_j,\beta}}{\partial \beta}(y)}{|x-y|^{\alpha}}dxdy,
			\endaligned$$
			where $j=1,2, \cdot\cdot\cdot,m$. By the Hardy-Littlewood-Sobolev inequality and a direct calculation, we have
			$$\aligned
			&\left|\int_{\mathbb{R}^N}\int_{\mathbb{R}^N}
			\frac{(1-\xi^{p}(x))|W_{z_j,\beta}(x)|^{p}
			W^{p-1}_{z_j,\beta}(y)\frac{\partial W_{z_j,\beta}}{\partial \beta}(y)}{|x-y|^{\alpha}}dxdy\right|\\ \leq&C\int_{\mathbb{R}^{N}}\int_{\mathbb{R}^{N}}\frac{(1-\xi^{p}(x+ z_{j}))\beta^{N-\frac{\alpha}{2}}}{(1+\beta^{2}|x |^{2})^{N-\frac{\alpha}{2}}}\frac{1}{|x-y|^{\alpha}}\frac{\beta^{N-\frac{\alpha}{2}-1}}
			{(1+\beta^{2}|y|^{2})^{N-\frac{\alpha}{2}}}dxdy\\
			\leq &C\frac{1}{\beta} \left(\int_{\mathbb{R}^{N}}\left[\frac{(1-\xi^{p}(x+z_{j}))^{\frac{1}{2}}\beta^{N-\frac{\alpha}{2}}}{(1+\beta^{2}|x |^{2})^{N-\frac{\alpha}{2}}}\right]^{\frac{N}{N-\frac{\alpha}{2}}}dx\right)^{\frac{2N-\alpha}{N}}
			=O(\frac{1}{\beta^{2N+1-\alpha}}),
			\endaligned$$
			where the integral comes from the direct calculation
			\begin{equation}\label{z1}
				\aligned
				&\int_{\mathbb{R}^{N}}\left[\frac{(1-\xi^{p}(x+z_{j}))^{\frac{1}{2}}\beta^{N-\frac{\alpha}{2}}}{(1+\beta^{2}|x |^{2})^{N-\frac{\alpha}{2}}}\right]^{\frac{N}{N-\frac{\alpha}{2}}}dx\leq C \int_{\mathbb{R}^{N}\setminus B_{\delta-\vartheta} (0)}\frac{\beta^{N}}{(1+\beta^{2}|x |^{2})^{N}}dx
				=O(\frac{1}{\beta^{N}})
				\endaligned
			\end{equation}
			And we can also check that
			$$
			\int_{\mathbb{R}^N}\int_{\mathbb{R}^N}
			\frac{|\xi W_{z_j,\beta}(x)|^{p}
				(1-\xi^{p}(y))W_{z_j,\beta}(y)\frac{\partial W_{z_j,\beta}}{\partial \beta}(y)}{|x-y|^{\alpha}}dxdy=O(\frac{1}{\beta^{2N+1-\alpha}}).
			$$
			Thus, we have
			$$\aligned
			\int_{\mathbb{R}^N}\int_{\mathbb{R}^N}
			&\frac{|W_{z_j,\beta}(x)|^{p}
				W^{p-1}_{z_j,\beta}(y)\frac{\partial W_{z_j,\beta}}{\partial \beta}(y)}{|x-y|^{\alpha}}dxdy-\int_{\mathbb{R}^N}\int_{\mathbb{R}^N}
			\frac{|Z_{z_j,\beta}(x)|^{p}
				Z^{p-1}_{z_j,\beta}(y)\frac{\partial Z_{z_j,\beta}}{\partial \beta}(y)}{|x-y|^{\alpha}}dxdy\\
			&=O(\frac{1}{\beta^{2N+1-\alpha}}),
			\endaligned$$
			where $j=1,2, \cdot\cdot\cdot,m$.  
		    Thus, we have
			\begin{equation}\label{BB2}
				\aligned
				&\int_{\mathbb{R}^N}\int_{\mathbb{R}^N}
				\frac{|Z_{\overline{r},\overline{x}'',\beta}^{\ast}(x)|^{p}
					|Z_{\overline{r},\overline{x}'',\beta}^{\ast}(y)|^{p-1}\frac{\partial Z_{\overline{r},\overline{x}'',\beta}^{\ast}}{\partial \beta}(y)}{|x-y|^{\alpha}}dxdy-\int_{\mathbb{R}^N}\int_{\mathbb{R}^N}
				\frac{|Z_{\overline{r},\overline{x}'',\beta}(x)|^{p}
					Z^{p-1}_{\overline{r},\overline{x}'',\beta}(y)\frac{\partial Z_{\overline{r},\overline{x}'',\beta}}{\partial \beta}(y)}{|x-y|^{\alpha}}dxdy\\
				&=O(\frac{m^{2p}}{\beta^{2N+1-\alpha}})
				=O(\frac{m}{\beta^{5+\varepsilon}}).
				\endaligned
			\end{equation}
			Refer Lemma 3.1 in \cite{GLN}, we have 
			\begin{equation}\label{BB3}
				\aligned
				\int_{\mathbb{R}^{N}}&\Delta^2 Z_{\overline{r},\overline{x}'',\beta}^{\ast}\frac{\partial Z_{\overline{r},\overline{x}'',\beta}^{\ast}}{\partial \beta}dx-\int_{\mathbb{R}^{N}}\Delta^2 Z_{\overline{r},\overline{x}'',\beta}\frac{\partial Z_{\overline{r},\overline{x}'',\beta}}{\partial \beta}dx\\
				&+\int_{\mathbb{R}^{N}}V(x)\Big(Z_{\overline{r},\overline{x}'',\beta}^{\ast}\frac{\partial Z_{\overline{r},\overline{x}'',\beta}^{\ast}}{\partial \overline{r}}-Z_{\overline{r},\overline{x}'',\beta}^{\ast}\frac{\partial Z_{\overline{r},\overline{x}'',\beta}^{\ast}}{\partial \overline{r}}\Big)dx=O(\frac{m}{\beta^{N-3}})=O(\frac{m}{\beta^{5+\varepsilon}})
				\endaligned
			\end{equation}
			Combined \eqref{BB2} and \eqref{BB3}, we finish the proof.
		\end{proof}

		\begin{lem}\label{D2}
			We have
			$$\aligned
			\frac{\partial J(Z_{\overline{r},\overline{x}'',\beta})}{\partial \beta}=&m\Big(-\frac{A_{1}}{\beta^{5}}V(\overline{r},\overline{x}'')+\sum_{j=2}^{m}
			\frac{A_{2}}{\beta^{N-3}|z_{1}-z_{j}|^{N-4}}+O(\frac{1}{\beta^{5+\varepsilon}})\Big)\\
			=&m\Big(-\frac{A_{1}}{\beta^{5}}V(\overline{r},\overline{x}'')+
			\frac{A_{3}m^{N-4}}{\beta^{N-3}}+O(\frac{1}{\beta^{5+\varepsilon}})\Big),
			\endaligned$$
			where $A_j$, $j=1, 2$ are some positive constants.
		\end{lem}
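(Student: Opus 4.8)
The plan is to reduce everything to an energy expansion on the clean (cut-off–free) multi-bubble configuration. First I would invoke Lemma \ref{P3} to replace $Z_{\overline r,\overline x'',\beta}$ by $Z^{\ast}_{\overline r,\overline x'',\beta}=\sum_{j=1}^{m}W_{z_j,\beta}$, so that $\partial_\beta J(Z_{\overline r,\overline x'',\beta})=\partial_\beta J(Z^{\ast}_{\overline r,\overline x'',\beta})+O\big(m\beta^{-5-\varepsilon}\big)$. Then, by the chain rule and the $\mathbb{Z}_m$–rotation/reflection symmetry built into $H_s$ (together with the axisymmetry of $V(r,x'')$, which makes $J$ invariant under these rotations and hence makes every bubble contribute equally), I would write $\partial_\beta J(Z^{\ast}_{\overline r,\overline x'',\beta})=m\,\langle J'(Z^{\ast}_{\overline r,\overline x'',\beta}),\partial_\beta W_{z_1,\beta}\rangle$ and split this pairing into its kinetic, potential and Hartree parts.

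For the kinetic and Hartree \emph{self}-interactions I would use the profile equation $\Delta^{2}W_{z_j,\beta}=(|x|^{-\alpha}\ast W_{z_j,\beta}^{p})W_{z_j,\beta}^{p-1}$, together with the scale invariance of the $H^{2}$–critical Hartree functional, which makes $\int|\Delta W_{z_1,\beta}|^{2}=\int(|x|^{-\alpha}\ast W_{z_1,\beta}^{p})W_{z_1,\beta}^{p}$ a $\beta$-independent constant; hence the $j=1$ diagonal terms drop out after differentiation. For the potential part, the leading piece $\int V W_{z_1,\beta}^{2}$ is computed by the rescaling $x\mapsto z_1+y/\beta$: since $N\ge 9$ guarantees $W_{0,1}\in L^{2}(\R^N)$ and $V\in C^{2}$, one gets $\int V W_{z_1,\beta}^{2}=\beta^{-4}V(\overline r,\overline x'')\int_{\R^N}W_{0,1}^{2}+O(\beta^{-4-\varepsilon})$, whose $\beta$-derivative gives the main term $-A_1\beta^{-5}V(\overline r,\overline x'')$ with $A_1>0$ proportional to $\int_{\R^N}W_{0,1}^{2}$; the $C^{2}$ hypothesis is precisely what lets the $O(\beta^{-4-\varepsilon})$ error be differentiated into $O(\beta^{-5-\varepsilon})$. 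The cross pieces $\int V W_{z_j,\beta}\,\partial_\beta W_{z_1,\beta}$, $j\ge2$, are of strictly lower order by Lemma \ref{B2}.

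The delicate part is the off-diagonal (interaction) contributions, coming from $\langle\Delta^{2}Z^{\ast}_{\overline r,\overline x'',\beta},\partial_\beta W_{z_1,\beta}\rangle$ and from $-\langle(|x|^{-\alpha}\ast|Z^{\ast}_{\overline r,\overline x'',\beta}|^{p})|Z^{\ast}_{\overline r,\overline x'',\beta}|^{p-1},\partial_\beta W_{z_1,\beta}\rangle$. Evaluating $|x|^{-\alpha}\ast W_{z_j,\beta}^{p}$ via Lemma \ref{P0}, and using the splitting inequality Lemma \ref{B2}, the convolution estimate Lemma \ref{B3} and Lemma \ref{B4}, I would sort these terms according to their decay in $\beta|z_1-z_j|$. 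The genuinely leading one is of Newtonian type, $\int(|x|^{-\alpha}\ast W_{z_j,\beta}^{p})W_{z_j,\beta}^{p-1}W_{z_1,\beta}\sim c\,\beta^{-(N-4)}|z_1-z_j|^{-(N-4)}$, and its $\beta$-derivative produces $\sum_{j\ge2}A_2\beta^{-(N-3)}|z_1-z_j|^{-(N-4)}$ with $A_2>0$; all the remaining Hartree cross-terms (in particular the ones carrying mixed powers of $\beta^{-\alpha}|z_1-z_j|^{-\alpha}$) must be shown, after summation over $j$, to be $O(m\beta^{-5-\varepsilon})$. This last estimate is exactly where the restriction $\alpha\ge 6-\tfrac{12}{N-4}$ enters: it is the threshold on the order of the Riesz potential that keeps all these terms below the main order. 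I would organise this step in parallel with the computation of Lemma 3.1 in \cite{GLN} for the local polyharmonic problem, which has the same architecture. I expect this bookkeeping to be the main obstacle; the potential and kinetic parts are routine scaling computations.

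Finally I would convert the geometric sum. Since $z_1,\dots,z_m$ are the vertices of a regular $m$-gon of radius $\overline r$ in the $x'$-plane, $|z_1-z_j|=2\overline r\sin\frac{(j-1)\pi}{m}$, and because $N-4>1$ the sum $\sum_{j=2}^{m}|z_1-z_j|^{-(N-4)}$ is dominated by the near-diagonal pairs, giving $\sum_{j=2}^{m}|z_1-z_j|^{-(N-4)}=A\,\overline r^{-(N-4)}m^{N-4}(1+o(1))$. Combined with $(\overline r,\overline x'')\to(r_0,x_0'')$ this turns $\sum_{j\ge2}A_2\beta^{-(N-3)}|z_1-z_j|^{-(N-4)}$ into $A_3 m^{N-4}\beta^{-(N-3)}$, which is the second line of \eqref{main1}, completing the proof.
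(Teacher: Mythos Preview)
Your proposal is correct and follows essentially the same route as the paper: invoke Lemma~\ref{P3} to strip off the cut-off, use the profile equation $\Delta^{2}W_{z_j,\beta}=(|x|^{-\alpha}\ast W_{z_j,\beta}^{p})W_{z_j,\beta}^{p-1}$ to cancel the diagonal kinetic/Hartree pieces, extract the $-A_1\beta^{-5}V(\overline r,\overline x'')$ term from $\int V\,Z^{\ast}\partial_\beta Z^{\ast}$ by rescaling (the paper cites Lemma~3.1 of \cite{GLN} for this), and then compute the off-diagonal Hartree interactions via Lemma~\ref{P0} and Lemma~\ref{B2} to obtain the $A_2\beta^{-(N-3)}|z_1-z_j|^{-(N-4)}$ contribution. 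One small correction: the constraint $\alpha\ge 6-\tfrac{12}{N-4}$ is not actually needed in this energy expansion---in the paper it enters earlier, in the linear theory (the proof of Lemma~\ref{C1}), to make certain convolution errors $O(\beta^{n_t}\|\phi_m\|_\ast/\beta^{2+\varepsilon})$; the error control in Lemma~\ref{D2} itself only uses $N\ge 9$ and the scaling $\beta\sim m^{(N-4)/(N-8)}$.
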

		\begin{proof}
			Direct calculation shows
			\begin{equation}\label{ExpenL4.4}
				\aligned
				\frac{\partial J(Z_{\overline{r},\overline{x}'',\beta}^{\ast})}{\partial \beta}
				=&\int_{\mathbb{R}^{N}}V(x)Z_{\overline{r},\overline{x}'',\beta}^{\ast}\frac{\partial Z_{\overline{r},\overline{x}'',\beta}^{\ast}}{\partial \beta}dx-\Big[\int_{\mathbb{R}^{N}}\Big(|x|^{-\alpha}\ast |Z_{\overline{r},\overline{x}'',\beta}^{\ast}|^{p}\Big)|Z_{\overline{r},\overline{x}'',\beta}^{\ast}|^{p-1}
				\frac{\partial Z_{\overline{r},\overline{x}'',\beta}^{\ast}}{\partial \beta}dx\\
				&-\sum_{j=1}^{m}\int_{\mathbb{R}^{N}}\Big(|x|^{-\alpha}\ast |W_{z_j,\beta}|^{p}\Big)W_{z_j,\beta}^{p-1}\frac{\partial Z_{\overline{r},\overline{x}'',\beta}^{\ast}}{\partial \beta}dx\Big].
				\endaligned
			\end{equation}
			For the first part, by Lemma 3.1 in\cite{GLN}, we have
			\begin{equation}\label{5.0}
				\aligned
				\int_{\mathbb{R}^{N}}V(x)Z_{\overline{r},\overline{x}'',\beta}^{\ast}\frac{\partial Z_{\overline{r},\overline{x}'',\beta}^{\ast}}{\partial \beta}dx=m\Big(-\frac{A_{1}}{\beta^{5}}V(\overline{r},\overline{x}'')+O(\frac{1}{\beta^{5+\varepsilon}})\Big).
				\endaligned
			\end{equation}
			We observe that
			\begin{equation}\label{5.1}
				\aligned
				&\int_{\mathbb{R}^{N}}\Bigg[\Big(|x|^{-\alpha}\ast |Z_{\overline{r},\overline{x}'',\beta}^{\ast}|^{p}\Big)|Z_{\overline{r},\overline{x}'',\beta}^{\ast}|^{p-1}-\sum_{j=1}^{m}\Big(|x|^{-\alpha}\ast |W_{z_j,\beta}|^{p}\Big)W_{z_j,\beta}^{p-1}\Bigg] \frac{\partial Z_{\overline{r},\overline{x}'',\beta}^{\ast}}{\partial \beta}dx\\
				=&m\int_{\Omega_{1}}\Bigg[\Big(|x|^{-\alpha}\ast |Z_{\overline{r},\overline{x}'',\beta}^{\ast}|^{p}\Big)|Z_{\overline{r},\overline{x}'',\beta}^{\ast}|^{p-1}-\sum_{j=1}^{m}\Big(|x|^{-\alpha}\ast |W_{z_j,\beta}|^{p}\Big)W_{z_j,\beta}^{p-1}\Bigg]\frac{\partial Z_{\overline{r},\overline{x}'',\beta}^{\ast}}{\partial \beta}dx\\
				=& m\Bigg(\int_{\Omega_{1}}\Bigg[p\Big(|x|^{-\alpha}\ast |W_{z_1,\beta}^{p-1}\sum_{i=2}^{m}W_{z_i,\beta}|\Big)W_{z_1,\beta}^{p-1}+(p-1)\Big(|x|^{-\alpha}\ast |W_{z_1,\beta}|^{p}\Big)W_{z_1,\beta}^{p-2}\sum_{i=2}^{m}W_{z_i,\beta}\Bigg]\frac{\partial W_{z_1,\beta}}{\partial \beta}dx\\
				&+O(\frac{1}{\beta^{5+\varepsilon}})\Bigg)\\
				\endaligned\end{equation}

			Applying Lemmas \ref{P0}, we estimate the second term as follows
			$$\aligned
			&\int_{\Omega_{1}}\Big[\Big(|x|^{-\alpha}\ast |W_{z_1,\beta}|^{p}\Big)W_{z_1,\beta}^{p-2}W_{z_i,\beta}\Big]\frac{\partial W_{z_1,\beta}}{\partial \beta}dx\\
			=&-C\Big[\int_{\Omega_{1}}\frac{\beta^{\frac{\alpha}{2}}}{(1+\beta^{2}|x-z_{1} |^{2})^{\frac{\alpha}{2}}}\frac{\beta^{\frac{8-\alpha}{2}}}{(1+\beta^{2}|x-z_{1} |^{2})^{\frac{8-\alpha}{2}}}\frac{\beta^{\frac{N-4}{2}}}{(1+\beta^{2}|x-z_{i} |^{2})^{\frac{N-4}{2}}}\frac{\beta^\frac{N-6}{2}}{(1+\beta^{2}|x-z_{1} |^{2})^{\frac{N-4}{2}}}dx\\
			&-\int_{\Omega_{1}}\frac{\beta^{\frac{\alpha}{2}}}{(1+\beta^{2}|x-z_{1} |^{2})^{\frac{\alpha}{2}}}\frac{\beta^{\frac{8-\alpha}{2}}}{(1+\beta^{2}|x-z_{1} |^{2})^{\frac{8-\alpha}{2}}}\frac{\beta^{\frac{N-4}{2}}}{(1+\beta^{2}|x-z_{i} |^{2})^{\frac{N-4}{2}}}\frac{2\beta^\frac{N-6}{2}}{(1+\beta^{2}|x-z_{1} |^{2})^{\frac{N-2}{2}}}dx\Big]\\
			=&-C\Big[\int_{\Omega_{1}}\frac{\beta^{\frac{N+2}{2}}}{(1+\beta^{2}|x-z_{1} |^{2})^{\frac{N+4}{2}}}\frac{\beta^{\frac{N-4}{2}}}{(1+\beta^{2}|x-z_{i} |^{2})^{\frac{N-4}{2}}}dx\\
			&\hspace{8mm}-\int_{\Omega_{1}}\frac{2\beta^{\frac{N+2}{2}}}{(1+\beta^{2}|x-z_{1} |^{2})^{\frac{N+6}{2}}}\frac{\beta^{\frac{N-4}{2}}}{(1+\beta^{2}|x-z_{i} |^{2})^{\frac{N-4}{2}}}dx\Big]\\
			=&-\frac{C}{\beta^{N-3}|z_{1}-z_{i}|^{N-4}}.
			\endaligned$$
			where $i=2,\cdot\cdot\cdot, m$, Consequently,
			\begin{equation}\label{5.2}
				\aligned
				\sum_{i=2}^{m}\int_{\Omega_{1}}\Big[\Big(|x|^{-\alpha}\ast |W_{z_1,\beta}|^{p}\Big)W_{z_1,\beta}^{p-2}W_{z_i,\beta}\Big]\frac{\partial W_{z_1,\beta}}{\partial \beta}dx
				=-\sum_{i=2}^{m}\frac{C}{\beta^{N-3}|z_{1}-z_{i}|^{N-4}}
				\endaligned\end{equation}

			Notice that the former term can split into  
			$$\aligned
		    &\int_{\Omega_{1}}\Big(|x|^{-\alpha}\ast |W_{z_1,\beta}^{p-1}W_{z_i,\beta}|\Big)W_{z_1,\beta}^{p-1}\frac{\partial W_{z_1,\beta}}{\partial \beta}dx\\
		    =&\frac{1}{p}\Big[\frac{\partial }{\partial \beta}\int_{\Omega_{1}}\Big(|y|^{-\alpha}\ast |W_{z_1,\beta}|^{p}\Big) W_{z_1,\beta}^{p-1}(y)W_{z_i,\beta}(y)dy
		    -\int_{\Omega_{1}}\Big(|y|^{-\alpha}\ast |W_{z_1,\beta}|^{p}\Big)\frac{\partial W_{z_1,\beta}^{p-1}(y)}{\partial \beta}W_{z_i,\beta}(y)dy\\
		    &-\int_{\Omega_{1}}\Big(|y|^{-\alpha}\ast |W_{z_1,\beta}|^{p}\Big)W_{z_1,\beta}^{p-1}(y)\frac{\partial W_{z_i,\beta}}{\partial \beta}(y)dy\Big].
			\endaligned$$
			Then we have
			$$\aligned
			&\int_{\Omega_{1}}\Big(|x|^{-\alpha}\ast |W_{z_1,\beta}^{p-1}W_{z_i,\beta}|\Big)W_{z_1,\beta}^{p-1}\frac{\partial W_{z_1,\beta}}{\partial \beta}dx\\
			=&C\frac{\partial }{\partial \beta}\int_{\Omega_{1}}\frac{\beta^{\frac{\alpha}{2}}}{(1+\beta^{2}|y- z_{1}|^{2})^{\frac{\alpha}{2}}}\frac{\beta^\frac{N+4-\alpha}{2}}{(1+\beta^{2}|y- z_{1}|^{2})^{\frac{N+4-\alpha}{2}}}\frac{\beta^{\frac{N-4}{2}}}{(1+\beta^{2}|y- z_{i}|^{2})^{\frac{N-4}{2}}}dy\\
			&+C\bigg[\int_{\Omega_{1}}\frac{\beta^{\frac{\alpha}{2}}}{(1+\beta^{2}|y- z_{1}|^{2})^{\frac{\alpha}{2}}}\frac{\beta^\frac{N+2-\alpha}{2}}{(1+\beta^{2}|y- z_{1}|^{2})^{\frac{N+4-\alpha}{2}}}\frac{\beta^{\frac{N-4}{2}}}{(1+\beta^{2}|y- z_{i}|^{2})^{\frac{N-4}{2}}}dy\\
			&\hspace{8mm}-\int_{\Omega_{1}}\frac{\beta^{\frac{\alpha}{2}}}{(1+\beta^{2}|y- z_{1}|^{2})^{\frac{\alpha}{2}}}\frac{2\beta^{\frac{N+2-\alpha}{2}}}{(1+\beta^{2}|y- z_{1}|^{2})^{\frac{N+6-\alpha}{2}}}\frac{\beta^{\frac{N-4}{2}}}{(1+\beta^{2}|y- z_{i}|^{2})^{\frac{N-4}{2}}}dy\bigg]\\
			&+C\bigg[\int_{\Omega_{1}}\frac{\beta^{\frac{\alpha}{2}}}{(1+\beta^{2}|y- z_{1}|^{2})^{\frac{\alpha}{2}}}\frac{\beta^{\frac{N+4-\alpha}{2}}}{(1+\beta^{2}|y- z_{1}|^{2})^{\frac{N+4-\alpha}{2}}}\frac{\beta^\frac{N-6}{2}}{(1+\beta^{2}|y- z_{i}|^{2})^{\frac{N-4}{2}}}dy\\
			&\hspace{8mm}-\int_{\Omega_{1}}\frac{\beta^{\frac{\alpha}{2}}}{(1+\beta^{2}|y- z_{1}|^{2})^{\frac{\alpha}{2}}}\frac{\beta^{\frac{N+4-\alpha}{2}}}{(1+\beta^{2}|y- z_{1}|^{2})^{\frac{N+4-\alpha}{2}}}\frac{2\beta^\frac{N-6}{2}}{(1+\beta^{2}|y- z_{i}|^{2})^{\frac{N-2}{2}}}dy\bigg]\\
			=&\frac{\partial }{\partial \beta}\frac{C}{\beta^{N-4}|z_{1}-z_{i}|^{N-4}}+\frac{2C}{\beta^{N-3}|z_{1}-z_{i}|^{N-4}}
			=-\frac{C}{\beta^{N-3}|z_{1}-z_{i}|^{N-4}}.
			\endaligned$$
			So,we obtain
			\begin{equation}\label{5.6}
				\sum_{i=2}^{m}\int_{\Omega_{1}}\Big(|x|^{-\alpha}\ast |W_{z_1,\beta}^{p-1}W_{z_i,\beta}|\Big)W_{z_1,\beta}^{p-1}\frac{\partial W_{z_1,\beta}}{\partial \beta}dx=- \sum_{i=2}^{m}\frac{C}{\beta^{N-3}|z_{1}-z_{i}|^{N-4}}.
			\end{equation}
			Combining \eqref{5.1}-\eqref{5.6}, we can obtain
			$$\aligned
			\int_{\mathbb{R}^{N}}&\Big(|x|^{-\alpha}\ast |Z_{\overline{r},\overline{x}'',\beta}^{\ast}|^{p}\Big)|Z_{\overline{r},\overline{x}'',\beta}^{\ast}|^{p-1}
			\frac{\partial Z_{\overline{r},\overline{x}'',\beta}^{\ast}}{\partial \beta}dx
			-\sum_{j=1}^{m}\int_{\mathbb{R}^{N}}\Big(|x|^{-\alpha}\ast |W_{z_j,\beta}|^{p}\Big)W_{z_j,\beta}^{p-1}\frac{\partial Z_{\overline{r},\overline{x}'',\beta}^{\ast}}{\partial \beta}dx\\
			&=m\Big(-\sum_{j=2}^{m}
			\frac{A_{2}}{\beta^{N-3}|z_{1}-z_{j}|^{N-4}}+O(\frac{1}{\beta^{5+\varepsilon}})\Big).
			\endaligned$$
			So, we get
			\begin{equation}\label{5.11}
				\aligned
				&\sum_{j=1}^{m}\int_{\mathbb{R}^{N}}\Big(|x|^{-\alpha}\ast |W_{z_j,\beta}|^{p}\Big)W_{z_j,\beta}^{p-1}\frac{\partial Z_{\overline{r},\overline{x}'',\beta}^{\ast}}{\partial \beta}dx-\int_{\mathbb{R}^{N}}\Big(|x|^{-\alpha}\ast |Z_{\overline{r},\overline{x}'',\beta}^{\ast}|^{p}\Big)|Z_{\overline{r},\overline{x}'',\beta}^{\ast}|^{p-1}
				\frac{\partial Z_{\overline{r},\overline{x}'',\beta}^{\ast}}{\partial \beta}dx\\
				&=m\Big(\sum_{j=2}^{m}
				\frac{A_{2}}{\beta^{N-3}|z_{1}-z_{j}|^{N-4}}+O(\frac{1}{\beta^{5+\varepsilon}})\Big).
				\endaligned
			\end{equation}
			
			Combined \eqref{5.0}, \eqref{5.11} and Lemma \ref{P3}, we obtain
			$$
			\frac{\partial J(Z_{\overline{r},\overline{x}'',\beta})}{\partial \beta}=m\Big(-\frac{A_{1}}{\beta^{5}}V(\overline{r},\overline{x}'')+\sum_{j=2}^{m}
			\frac{A_{2}}{\beta^{N-3}|z_{1}-z_{j}|^{N-4}}+O(\frac{1}{\beta^{5+\varepsilon}})\Big),
			$$
			where $A_j$, $j=1, 2$ are some positive constants.
		\end{proof}

		\begin{lem}\label{D4}
			It holds
			\begin{equation}\label{d20}
				\int_{\mathbb{R}^{N}}|\Delta \phi|^{2}dx+V(r,x'') \phi^{2}dx=O(\frac{m}{\beta^{4+\varepsilon}}).
			\end{equation}
		\end{lem}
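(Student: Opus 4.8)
The plan is to test the equation satisfied by $\phi=\phi_{\overline{r},\overline{x}'',\beta}$ against $\phi$ itself and to estimate every resulting term through the weighted $L^{\infty}$ bound $\|\phi\|_{\ast}\le C\beta^{-2-\varepsilon}$ of Lemma~\ref{C3}, rather than through the Hardy--Littlewood--Sobolev inequality (which would only return $\|\phi\|_{D^{2,2}}^{2}$ and hence be circular). Recall $\phi$ solves \eqref{c14}, equivalently \eqref{c16}. Multiplying \eqref{c16} by $\phi$, integrating over $\R^{N}$, integrating by parts in the biharmonic term, and using the orthogonality constraint in \eqref{c14} to annihilate the contribution of the multipliers $c_{l}$, one is led to
\begin{equation*}
\int_{\R^{N}}|\Delta\phi|^{2}dx+\int_{\R^{N}}V(r,x'')\phi^{2}dx
= p\,I_{1}+(p-1)\,I_{2}+\int_{\R^{N}}N(\phi)\,\phi\,dx+\int_{\R^{N}}l_{m}\,\phi\,dx,
\end{equation*}
where $I_{1}=\int_{\R^{N}}\big(|x|^{-\alpha}\ast(Z_{\overline{r},\overline{x}'',\beta}^{p-1}\phi)\big)Z_{\overline{r},\overline{x}'',\beta}^{p-1}\phi\,dx$ and $I_{2}=\int_{\R^{N}}\big(|x|^{-\alpha}\ast|Z_{\overline{r},\overline{x}'',\beta}|^{p}\big)Z_{\overline{r},\overline{x}'',\beta}^{p-2}\phi^{2}\,dx$. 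It then suffices to bound the four terms on the right by $O(m\beta^{-4-\varepsilon})$.

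A first ingredient I would record is the elementary pairing inequality $\big|\int_{\R^{N}}g\,\psi\,dx\big|\le C\,m\,\|g\|_{\ast\ast}\|\psi\|_{\ast}$, valid whenever the two norms are finite: it follows by inserting the pointwise bounds defining $\|g\|_{\ast\ast}$ and $\|\psi\|_{\ast}$, noting that each diagonal integral $\int_{\R^{N}}\beta^{N}(1+\beta|x-z_{j}|)^{-N-2\tau}dx$ is a finite constant since $\tau=\frac{N-8}{N-4}>0$, and that the off-diagonal contributions are controlled by Lemma~\ref{B2} together with the interaction bound $\sum_{j=2}^{m}(\beta|z_{1}-z_{j}|)^{-\tau_{1}}\le C$ already used in \eqref{5141}. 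Granting this, the two ``easy'' terms are disposed of immediately: by Lemma~\ref{C4}, $\|N(\phi)\|_{\ast\ast}\le C\|\phi\|_{\ast}^{2}$, so $\big|\int_{\R^{N}}N(\phi)\phi\,dx\big|\le Cm\|\phi\|_{\ast}^{3}=O(m\beta^{-6-3\varepsilon})$; and by Lemma~\ref{C5}, $\|l_{m}\|_{\ast\ast}\le C\beta^{-2-\varepsilon}$, so $\big|\int_{\R^{N}}l_{m}\phi\,dx\big|\le Cm\|l_{m}\|_{\ast\ast}\|\phi\|_{\ast}=O(m\beta^{-4-2\varepsilon})$.

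For the nonlocal quadratic forms $I_{1},I_{2}$ the plan is to prove that the two ``kernels'' $\big(|x|^{-\alpha}\ast(Z_{\overline{r},\overline{x}'',\beta}^{p-1}\phi)\big)Z_{\overline{r},\overline{x}'',\beta}^{p-1}$ and $\big(|x|^{-\alpha}\ast|Z_{\overline{r},\overline{x}'',\beta}|^{p}\big)Z_{\overline{r},\overline{x}'',\beta}^{p-2}\phi$ both satisfy $\|\cdot\|_{\ast\ast}\le C\|\phi\|_{\ast}$, so that the pairing inequality gives $|I_{1}|+|I_{2}|\le Cm\|\phi\|_{\ast}^{2}=O(m\beta^{-4-2\varepsilon})$. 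For the first kernel I would combine the convolution estimate \eqref{51411} from the proof of Lemma~\ref{C1}, namely $|x|^{-\alpha}\ast(Z_{\overline{r},\overline{x}'',\beta}^{p-1}|\phi|)\lesssim\|\phi\|_{\ast}\sum_{j}\beta^{\alpha/2}(1+\beta|x-z_{j}|)^{-\min\{\alpha,(N+4)/2\}}$, with the pointwise bounds on $Z_{\overline{r},\overline{x}'',\beta}^{p-1}$ and on $\phi$; multiplying the three factors and using Lemmas~\ref{B2}, \ref{B3} and \ref{B4} to collapse the resulting exponents into the $\ast\ast$-weight $\sum_{j}\beta^{(N+4)/2}(1+\beta|x-z_{j}|)^{-(N+4)/2-\tau}$. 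The second kernel is handled identically, using Lemma~\ref{P0} to replace $|x|^{-\alpha}\ast|Z_{\overline{r},\overline{x}'',\beta}|^{p}$ by $\lesssim\sum_{j}\beta^{\alpha/2}(1+\beta|x-z_{j}|)^{-\alpha}$. Feeding all of this into the identity of the first paragraph and using $\|\phi\|_{\ast}\le C\beta^{-2-\varepsilon}$ once more yields $\int_{\R^{N}}|\Delta\phi|^{2}dx+\int_{\R^{N}}V(r,x'')\phi^{2}dx\le Cm\big(\beta^{-4-2\varepsilon}+\beta^{-6-3\varepsilon}+\beta^{-4-2\varepsilon}\big)=O(m\beta^{-4-\varepsilon})$ after relabelling $\varepsilon$, which is \eqref{d20}.

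The main obstacle is precisely this last step: one must verify that $I_{1}$ and $I_{2}$ are genuinely of size $m\|\phi\|_{\ast}^{2}$ and not merely $O(\|\phi\|_{D^{2,2}}^{2})$, and this requires a careful bookkeeping of the Riesz-potential decay rates and of the $m^{2}$ bubble-interaction terms. It is here that the standing restriction $6-\frac{12}{N-4}\le\alpha<N$ enters, ensuring that the exponents produced by Lemmas~\ref{B2}--\ref{B4} stay admissible and that the interaction sums remain bounded; the rest of the argument is routine given the machinery already set up in Section~4 and the Appendix.
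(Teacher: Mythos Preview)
Your proposal is correct and follows essentially the same route as the paper. Both proofs test the reduced equation against $\phi$, use the orthogonality constraint to kill the $c_{l}$ contributions, and then bound every surviving term via the pairing $\big|\int g\phi\big|\lesssim m\,\|g\|_{\ast\ast}\|\phi\|_{\ast}$ together with Lemmas~\ref{C4}, \ref{C5} and the convolution estimates from Lemma~\ref{C1}; the only cosmetic difference is that the paper starts from \eqref{c14} and groups the right-hand side as $I_{1}+I_{2}$ with $I_{1}=\int l_{m}\phi$ and $I_{2}$ collecting the full nonlinear remainder, whereas you start from \eqref{c16} and separate the linear-in-$\phi$ nonlocal forms from $N(\phi)$ explicitly. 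Your treatment of the linear nonlocal terms $I_{1},I_{2}$ (showing $\|\cdot\|_{\ast\ast}\le C\|\phi\|_{\ast}$ pointwise via \eqref{51411} and Lemma~\ref{P0}) is in fact a bit more transparent than the paper's somewhat terse claim $|I_{2}|\le C m\|\phi\|_{\ast}^{3}$, which, taken literally, undercounts those linear pieces; the correct order there is $m\|\phi\|_{\ast}^{2}$, exactly as you wrote, and this is still $O(m\beta^{-4-\varepsilon})$.
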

		\begin{proof}
			It follows from \eqref{c14} that
			$$
			\aligned
			&\int_{\mathbb{R}^{N}}|\Delta \phi|^{2}dx+V(r,x'') \phi^{2}dx\\
			=&\int_{\mathbb{R}^{N}}\Big(-\Delta^2 Z_{\overline{r},\overline{x}'',\beta}- V(r,x'')Z_{\overline{r},\overline{x}'',\beta}
			+\Big(|x|^{-\alpha}\ast |Z_{\overline{r},\overline{x}'',\beta}+
			\phi|^{p}\Big)(Z_{\overline{r},\overline{x}'',\beta}+
			\phi)^{p-1}\Big)\phi dx\\
			=&\int_{\mathbb{R}^{N}}\Big(-\Delta^2 Z_{\overline{r},\overline{x}'',\beta}-V(r,x'')Z_{\overline{r},\overline{x}'',\beta}\phi+\Big(|x|^{-\alpha}\ast |Z_{\overline{r},\overline{x}'',\beta}|^{p}\Big)
			Z_{\overline{r},\overline{x}'',\beta}^{p-1}\Big)\phi dx\\
			&+\int_{\mathbb{R}^{N}}\Big(\Big(|x|^{-\alpha}\ast |Z_{\overline{r},\overline{x}'',\beta}+
			\phi|^{p}\Big)(Z_{\overline{r},\overline{x}'',\beta}+
			\phi)^{p-1}-\Big(|x|^{-\alpha}\ast |Z_{\overline{r},\overline{x}'',\beta}|^{p}\Big)
			Z_{\overline{r},\overline{x}'',\beta}^{p-1}\Big)\phi dx\\
			:=&I_{1}+I_{2}.
			\endaligned$$

			By the estimates in Lemma \ref{C5}, we obtain
			$$\aligned
			&\left|\int_{\mathbb{R}^{N}}\Big(-\Delta^2 Z_{\overline{r},\overline{x}'',\beta}-V(r,x'')Z_{\overline{r},\overline{x}'',\beta}\phi+\Big(|x|^{-\alpha}\ast |Z_{\overline{r},\overline{x}'',\beta}|^{p}\Big)
			Z_{\overline{r},\overline{x}'',\beta}^{p-1} \Big)\phi dx\right|\\
			\leq &C\frac{\|\phi\|_{\ast}}{\beta^{2+\varepsilon}}\int_{\mathbb{R}^{N}}
			\sum_{j=1}^{m}\frac{\beta^{\frac{N+4}{2}}}{(1+\beta|x-z_{j}|)^{\frac{N+4}{2}+\tau}}
			\sum_{j=1}^{m}
			\frac{\beta^{\frac{N-4}{2}}}{(1+\beta|x-z_{j}|)^{\frac{N-4}{2}+\tau}} dx
			\leq \frac{Cm}{\beta^{4+2\varepsilon}}.
			\endaligned$$
			So, we have proved
			$$
			|I_{1}|\leq  \frac{Cm}{\beta^{4+2\varepsilon}}.
			$$
			
			Referring Lemma \ref{C4} and by some calculations, we have
			$$
			|I_{2}|
			\leq C \|\phi\|_{\ast}^{3}\int_{\mathbb{R}^{N}}
			\sum_{j=1}^{m}
			\frac{\beta^{\frac{N+4}{2}}}{(1+\beta|x-z_{j}|)^{\frac{N+4}{2}+\tau}}\sum_{j=1}^{m}
			\frac{\beta^{\frac{N-4}{2}}}{(1+\beta|x-z_{j}|)^{\frac{N-4}{2}+\tau}}
			\leq \frac{Cm}{\beta^{4+2\varepsilon}}.
			$$
			So, we can deduce
			$$
			\int_{\mathbb{R}^{N}}|\Delta \phi|^{2}dx+V(r,x'') \phi^{2}dx=O(\frac{m}{\beta^{4+\varepsilon}}).
			$$
		\end{proof}

	\end{appendix}


\begin{thebibliography}{99}
		\bibitem{AH}
		\newblock K. Atkinson and W. Han,
		\newblock {Spherical Harmonics and Approximations on the Unit Sphere: an
			Introduction},
		\newblock \emph{Springer, Berlin, Heidelberg}, (2012).
		
		\bibitem{BE}
		\newblock G. Bianchi and H. Egnell,
		\newblock {A note on the Sobolev inequality}, 
		\newblock \emph{J. Funct. Anal.}, \textbf{100} (1991), 18–24.
		
		
		\bibitem{BGM}
		\newblock E. Berchio, F. Gazzola and E. Mitidieri,
		\newblock {Positivity preserving property for a class of biharmonic elliptic problems}, 
		\newblock \emph{J. Diff. Equ.}, \textbf{229} (2006), 1–23.
		
		\bibitem{BL}
		\newblock H. Brezis and E. Lieb,
		\newblock {Sobolev inequalities with remainder terms}, 
		\newblock \emph{J. Funct. Anal.}, \textbf{62} (1985), 73–86.
		
		\bibitem{BWM}
		\newblock T. Bartsch, T. Weth T. and M. Willem, 
		\newblock {A Sobolev inequality with remainder term and critical equations on domains with topology for the polyharmonic operator}, 
		\newblock \emph{Calc. Var. Partial Dif.}, \textbf{18} (2003), 253-268.
		
		
		\bibitem{CD}
		\newblock D. Cao and W. Dai,
		\newblock Classification of nonnegative solutions to a bi-harmonic equation with Hartree type nonlinearity,
		\newblock \emph{Proceedings of the Royal Society of Edinburgh Section A: Mathematics}, \textbf{149(4)}(2019), 979-994.
		
		\bibitem{CFM}
		\newblock G. Ciraolo, A. Figalli, and F. Maggi,
		\newblock A quantitative analysis of metrics on RN with almost constant positive scalar curvature, with applications to fast diffusion flows,
		\newblock \emph{Int. Math. Res. Not.}, \textbf{2017}(2018), 6780-6779.
		
		\bibitem{CFW}
		\newblock S. Chen, R. Frank, and T. Weth,
		\newblock Remainder terms in the fractional Sobolev inequality,
		\newblock \emph{Indiana Univ.
			Math. J.}, \textbf{62(4)}(2013), 1381-1397.
		
		\bibitem{CWY}
		\newblock W. Chen, J. Wei \& S. Yan,
		\newblock Infinitely many positive solutions for the Schr\"{o}dinger equations in $\R^N$ with critical growth,
		\newblock \emph{J. Differential Equations}, \textbf{252} (2012), 2425--2447.
		
		\bibitem{CY}
		\newblock S. Chang and P. Yang,
		\newblock On uniqueness of solutions of n-th order differential equations in conformal geometry,
		\newblock \emph{Math. Res. Lett.}, \textbf{4}(2019), 91–102.
		
		\bibitem{DFM}
		\newblock M. del Pino, P. Felmer \& M. Musso,
		\newblock Two-bubble solutions in the super-critical Bahri-Coron's problem,
		\newblock  \emph{Calc. Var. Partial Differential Equations}, \textbf{16} (2003), 113--145.	
		
		\bibitem{DGT}
		\newblock S. Deng, M. Grossi, X. Tian,
		\newblock On some weighted fourth-order equations,
		\newblock \emph{Journal of Differential Equations}, \textbf{364} (2023), 612--634.
		
		\bibitem{DHQWF}
		\newblock W. Dai, J. Huang, Y. Qin, B. Wang \& Y. Fang,
		\newblock Regularity and classification of solutions to static Hartree
		equations involving fractional Laplacians,
		\newblock \emph{Discrete Contin. Dyn. Syst.}, \textbf{39} (2019), 1389--1403.	
		
		\bibitem{DLY}
		\newblock Y. Deng, C.-S. Lin \& S. Yan,
		\newblock On the prescribed scalar curvature problem in $\R^N$, local uniqueness and periodicity,
		\newblock \emph{J. Math. Pures Appl.}, \textbf{104} (2015), 1013--1044.
		
		\bibitem{DSW}
		\newblock B. Deng, L. Sun \& J. Wei,
		\newblock Sharp quantitative estimates of Struwe's Decomposition,
		\newblock \emph{}arXiv:2103.15360 [math.AP].
		
		\bibitem{DT1}
		\newblock S. Deng, X. Tian,
		\newblock Caffarelli-Kohn-Nirenberg-Type inequalities related to weighted $p$-Laplace equations,
		\newblock \emph{}arXiv:2212.05459 [math.AP].
		
		\bibitem{DT2}
		\newblock S. Deng, X. Tian,
		\newblock Some weighted fourth-order Hardy-H$\acute{e}$non equations,
		\newblock \emph{Journal of Functional Analysis}, \textbf{284} (2023), 109745.
		
		\bibitem{DT3}
		\newblock S. Deng, X. Tian,
		\newblock Stability of Hardy-Sobolev inequality involving p-Laplace,
		\newblock \emph{}arXiv:2301.07442 [math.AP].
		
		
		
		\bibitem{DT6}
		\newblock S. Deng, X. Tian,
		\newblock Classification and non-degeneracy of positive radial solutions for a weighted fourth-order equation and its application,
		\newblock \emph{}arXiv:arXiv:2308.06014 [math.AP].
		
		\bibitem{DTYZ}
		\newblock S. Deng, X. Tian, M. Yang \& S. Zhao,
		\newblock Some remainder terms of Hardy-Littlewood-Sobolev type inequality,
		\newblock \emph{}arXiv:2305.16857 [math.AP].
		
		\bibitem{DX}
		\newblock F. Dai and Y. Xu,
		\newblock Approximation Theory and Harmonic Analysis on Spheres and Balls,
		\newblock \emph{Springer, New York, NY}, (2013).
		
		\bibitem{DY}
		\newblock L. Du \& M. Yang,
		\newblock Uniqueness and nondegeneracy of solutions for a
		critical nonlocal equation,
		\newblock \emph{Discrete Contin. Dyn. Syst.} \textbf{39} (2019),  5847--5866.
		
		\bibitem{EFJ}
		\newblock D. Edmunds, D. Fortunato and E. Janelli,
		\newblock Critical exponents, critical dimensions and the biharmonic operator, \newblock \emph{Arch. Rational Mech. Anal.}, \textbf{112} (1990), 269–289.
		
		
		\bibitem{FG}
		\newblock A. Figalli and F. Glaudo,
		\newblock On the Sharp Stability of Critical Points of the Sobolev Inequality,
		\newblock \emph{Arch. Ration. Mech. Anal.},  \textbf{237(1)}(2020), 201–258.
		
		\bibitem{FL}
		\newblock J. Frohlich and E. Lenzmann,
		\newblock Mean-field limit of quantum bose gases and nonlinear Hartree equation,
		\newblock \emph{ in: Sminaire E. D. P.}, (2003–2004), Expos nXVIII. 26p.
		
		\bibitem{FN}
		\newblock A. Figalli and R. Neumayer,
		\newblock Gradient stability for the Sobolev inequality: the case $p\geq2$,
		\newblock \emph{J. Eur. Math. Soc.}, \textbf{21(2)} (2019), 319-354.
		
		\bibitem{FZ}
		\newblock A. Figalli, and Y. Zhang,
		\newblock Sharp gradient stability for the Sobolev inequality,
		\newblock \emph{Duke Math. J.}, \textbf{171(12)} (2022), 2407-2459. 
		
		\bibitem{GMYZ}
		\newblock F. Gao, V. Moroz, M. Yang and S. Zhao,
		\newblock Construction of infinitely many solutions for a critical Choquard equation via local Poho$\check{z}$aev identities,
		\newblock \emph{Calc. Var. Partial Differential Equations} \textbf {61} (2022), Art. 222, 47 pp.
		
		\bibitem{GL}
		\newblock Y. Guo, B. Li,
		\newblock Infinitely many solutions for the prescribed curvature problem of polyharmonic operator,
		\newblock \emph{Calc. Var. Partial Differ. Equ.}, \textbf{46} (2013), 809--836.
		
		
		\bibitem{GPY}
		\newblock Y. Guo, S. Peng \& S. Yan,
		\newblock \emph{Local uniqueness and periodicity induced by concentration,}
		\newblock Proc. Lond. Math. Soc. (3), \textbf{114} (2017), 1005--1043.
		
		\bibitem{GLN}
		\newblock Y. Guo, T. Liu, and J. Nie, 
		\newblock {Construction of solutions for the polyharmonic equation via local Poho$\check{z}$aev identities}, 
		\newblock \emph{Calc. Var. Partial Dif.}, \textbf{58} (2019), 33p.
		
		
		\bibitem{K}
		\newblock V. Karpman,
		\newblock Stabilization of soliton instabilities by high-order dispersion: fourth order nonlinear Schr\"{o}dinger-type equations, 
		\newblock \emph{Phys. Rev. E 53}, \textbf{2} (1996), 1336–1339.
		
		\bibitem{L1}
		\newblock E. Lenzmann.,
		\newblock Uniqueness of ground states for pseudorelativistic Hartree equations,
		\newblock \emph{Anal. PDE}, \textbf{2(1)}(2009), 1–27.
		
		\bibitem{L}
		\newblock E. Lieb.,
		\newblock Sharp constants in the Hardy-Littlewood-Sobolev and related inequalities,
		\newblock \emph{Ann.
			of Math.}, \textbf{118}(1983), 349–374.
		
		\bibitem{LL}
		\newblock E. Lieb and M. Loss, 
		\newblock "Analysis,"
		\newblock \emph{Graduate Studies in Mathematics}, American Mathematical Society,
		Providence, Rhode Island, 2001.
		
		\bibitem{L2}
		\newblock C. Lin,
		\newblock A classification of solutions of a conformally invariant fourth order equation in
		$\R^n$,
		\newblock \emph{Comment. Math. Helv.}, \textbf{73}(1998), 206–231.
		
		\bibitem{L3}
		\newblock P.L. Lions,
		\newblock The concentration-compactness principle in the calculus of variations. The locally compact case. I,
		\newblock  \emph{Ann. Inst. H. Poincar$\acute{e}$Anal. Non Lin$\acute{e}$aire}, \textbf{1} (1984), 109--145.
		
		\bibitem{LWX}
		\newblock Y. Li, J. Wei \& H. Xu,
		\newblock \emph{Multi-bump solutions of $-\Delta u = K(x)u^{\frac{n+2}{n-2}}$ on lattices in $\R^n$},
		\newblock J. Reine Angew. Math. {\bf 743} (2018), 163--211.
		
		\bibitem{LW}
		\newblock G. Lu and J. Wei,
		\newblock On a Sobolev inequality with remainder terms,
		\newblock \emph{Proceedings of the American Mathematical Society}, \textbf{128(1)}(1999), 75-84.
		
		\bibitem{LXLTX}
		\newblock X. Li, C. Liu, X. Tang and G. Xu,
		\newblock Nondegeneracy of positive bubble solutions for generalized energy- critical Haetree equations,
		\newblock \emph{Preprint arXiv}: 2304.04139v1.
		
		\bibitem{PV}
		\newblock A. Pistoia, G. Vaira,
		\newblock Nondegeneracy of the bubble for the critical p-Laplace equation,
		\newblock  \emph{Proc. Roy. Soc. Edinburgh Sect. A}, \textbf{151}(1) (2021), 151--168.
		
		\bibitem{PWY}
		\newblock S. Peng, C. Wang \& S. Yan,
		\newblock Construction of solutions via local Poho$\check{z}$aev identities,
		\newblock  \emph{J. Funct. Anal.}, \textbf{274} (2018), 2606--2633.
		
		\bibitem{PYZ}
		\newblock P. Piccione, M. Yang \& S. Zhao,
		\newblock Quantitative stability of a nonlocal Sobolev inequality,
		\newblock  \emph{}arXiv:2306.16883  .
		
		\bibitem{RSW}
		\newblock V. R$\breve{a}$dulescu, D. Smets, M. Willem,
		\newblock Hardy-Sobolev inequalities with remainder terms,
		\newblock \emph{Topol. Methods Nonlinear Anal.}, \textbf{20(1)}(2002), 145-149.
		
		\bibitem{S}
		\newblock M. Struwe,
		\newblock A global compactness result for elliptic boundary value problems involving limiting nonlinearities,
		\newblock \emph{Math. Z.}, \textbf{187(4)}(1984), 511-517.
		
		
		\bibitem{SV}
		\newblock R. Servadei and E. Valdinoci,
		\newblock Variational methods for non-local operators of elliptic type,
		\newblock \emph{Discrete Contin.
			Dyn. Syst.}, \textbf{33(5)}(2013), 2105-2137.
		
		\bibitem{W}
		\newblock M. Weinstein,
		\newblock Modulational stability of ground states of nonlinear Schr\"{o}dinger
		equations,
		\newblock \emph{ SIAM J. Math. Anal.}, \textbf{16(3)}(1985), 472–491.
		
		\bibitem{WW2}
		\newblock Z. Wang and M. Willem,
		\newblock Caffarelli-Kohn-Nirenberg inequalities with remainder terms,
		\newblock \emph{J. Funct. Anal.}, \textbf{203(2)}(2003), 550-568.
		
		\bibitem{WW}
		\newblock J. Wei and M. Winter,
		\newblock Strongly interacting bumps for the Schr\"{o}dinger-Newton equations,
		\newblock \emph{J. Math. Phys.}, \textbf{50(1)}(2009), 22p.
		
		
		\bibitem{WW3}
		\newblock J. Wei and Y. Wu,
		\newblock On the stability of the Caffarelli-Kohn-Nirenberg inequality,
		\newblock \emph{Math. Ann.}, \textbf{384}(2022), 1509-1546.
		
		\bibitem{WX}
		\newblock J. Wei and X. Xu,
		\newblock Classification of solutions of higher order conformally invariant equations,
		\newblock \emph{Math. Ann.}, \textbf{313}(1999), 207–228.
		
		\bibitem{WY1}
		\newblock J. Wei \& S. Yan,
		\newblock Infinitely many solutions for the prescribed scalar curvature problem on $S^{N}$,
		\newblock \emph{J. Funct. Anal.}, \textbf{258} (2010), 3048--3081.
		
		\bibitem{X}
		\newblock X. Xu,
		\newblock Uniqueness theorem for the entire positive solutions of biharmonic equations in $\mathbb{R}^{N}$,
		\newblock \emph{Proceedings of the Royal Society of Edinburgh Section A: Mathematics}, \textbf{130}(3) (2000), 651--670.
		
		\bibitem{YYZ}
		\newblock M. Yang, W. Ye and S. Zhao,
		\newblock Existence of concentrating solutions of the Hartree type Brezis-Nirenberg problem,
		\newblock \emph{J. Diff. Equ.}, \textbf{344}(2023), 260–324.
		
		\bibitem{YGRY}
		\newblock  W. Ye, F.  Gao, V. R\u adulescu and M. Yang,
		\newblock Construction of infinitely many solutions for two-component Bose-Einstein condensates with nonlocal   critical interaction,
		\newblock \emph{J. Diff. Equ.}, \textbf{375}(2023), 415–474.
		
		
		
		
	\end{thebibliography}
\end{document}